    \let\plain@equationautorefname\equationautorefname
    \def\equationautorefname{\plain@equationautorefname\@autoref@insert@tagform}%
    \def\@autoref@insert@tagform~#1\null{~(#1\null)}%
\newcommand{\getcitenumber}[1]{
	\@ifundefined{b@#1}
	{\hbox{\reset@font\bfseries ?}}
	{\csname b@#1\endcsname}}
\numberwithin{equation}{section}
\newtheorem{thm}[equation]{Theorem}
\newtheorem{lem}[equation]{Lemma}
\newtheorem{prop}[equation]{Proposition}
\newtheorem{cor}[equation]{Corollary}
\newtheorem{rem}[equation]{Remark}
\newtheorem{define}[equation]{Definition}
\newtheorem{introthm}{{}Theorem}
\newenvironment{myintrothm}[2][]
  {\begin{introthm}[#1]}
  {\end{introthm}}
\newcommand{\FP}{\mathrm{FP}}
\newcommand{\PFP}{\mathrm{PFP}}
\newcommand{\Irr}{\mathrm{Irr}}
\newcommand{\Hom}{\operatorname{Hom}}
\newcommand{\End}{\operatorname{End}}
\newcommand{\Epi}{\operatorname{Epi}}
\newcommand{\Ind}{\operatorname{Ind}}
\newcommand{\Res}{\operatorname{Res}}
\newcommand{\ext}{\operatorname{ext}}
\newcommand{\Aut}{\operatorname{Aut}}
\newcommand{\Sym}{\operatorname{Sym}}
\newcommand{\bra}{\llbracket}
\newcommand{\ket}{\rrbracket}
\newcommand{\rk}{\mathrm{rk}}
\newcommand{\comm}[1]{}
\newcommand{\F}{\mathbb{F}}
\newcommand{\bbF}{\mathbb{F}}
\newcommand{\Z}{\mathbb{Z}}
\DeclareMathOperator{\SL}{SL}
\DeclareMathOperator{\PSL}{PSL}
\DeclareMathOperator{\md}{md}
\begin{document}

\title[Counting irreducible modules]{Counting irreducible modules for profinite groups}
\author[G. Corob Cook]{Ged Corob Cook}
\email{gcorobcook@gmail.com}
\author[S. Kionke]{Steffen Kionke}
\email{steffen.kionke@fernuni-hagen.de}
\author[M. Vannacci]{Matteo Vannacci}
\email{matteo.vannacci@ehu.eus}
\date{\today}

\thanks{S. K. acknowledges support of the Deutsche Forschungsgemeinschaft (DFG, German Research Foundation) -
441848266}
\subjclass[2010]{Primary 20E18; Secondary 20C20}
\keywords{representation growth, profinite groups}


\begin{abstract}
This article is concerned with the representation growth of profinite groups over finite fields. We investigate the structure of groups with uniformly bounded exponential representation growth (UBERG). Using crown-based powers we obtain some necessary and some sufficient conditions for groups to have UBERG. As an application we prove that the class of UBERG groups is closed under split extensions but fails to be closed under extensions in general. On the other hand, we show that the closely related probabilistic finiteness property $\mathrm{PFP}_1$ is closed under extensions. In addition, we prove that profinite groups of type $\mathrm{FP}_1$ with UBERG are always finitely generated and we characterise UBERG in the class of pro-nilpotent groups.

Using infinite products of finite groups, we construct several examples of profinite groups with unexpected properties: (1) an UBERG group which cannot be finitely generated, (2) a group of type $\mathrm{PFP}_\infty$ which is not UBERG and not finitely generated and (3) a group of type $\mathrm{PFP}_\infty$ with superexponential subgroup growth. 
\end{abstract}

\maketitle

\setcounter{tocdepth}{1}
\tableofcontents

\newpage

\section*{Introduction}
In recent years, there has been a growing interest in understanding the asymptotic behaviour of representations of infinite groups and families of finite groups. In particular, a lot of effort has been expended in studying the \emph{representation growth} of \emph{rigid} groups (see \cite{BLMS,LM} for instance), i.e.\ with only finitely many complex representations in each dimension. It turns out that asymptotic representation growth of a group $G$ carries a lot of information about the structure of $G$. Moreover, asymptotic representation-theoretic information about families of finite groups leads to striking results on many fronts: examples include the $(2,3)$-generation of finite simple groups (see \cite{LieS}), and the fact that the representation growth of arithmetic groups is rational (\cite{Av,AKOV}).

Nevertheless, all the above-mentioned results study the representation theory of groups in characteristic zero. A natural question is whether there is a reasonable parallel theory for representations over finite fields. Of course, given a finitely generated group $G$ and a finite field $F$, it is clear that $G$ has only finitely many representations over $F$ of a given degree; hence, the rigidness condition is automatic in this case. Write $r(G,F,n)$ for the number of irreducible representation of $G$ over $F$ of dimension $n$. Note that, if $G$ is $d$-generated, $r(G,F,n)\le \vert F \vert^{dn^2}$, so any reasonable restriction should improve on this bound. Moreover, since we are looking only at homomorphisms into finite groups, it is sufficient to restrict our attention to representations of \emph{profinite groups}, by passing to profinite completions.

We say that a profinite group $G$ has \emph{UBERG} if there exists a constant $c>0$ such that $r(G,F,n)\le \vert F \vert^{cn}$ for every finite field $F$. UBERG stands for `uniformly bounded exponential representation growth' (over finite fields) and, maybe surprisingly, it shows up naturally in the study of probabilistic generation properties of profinite groups. In fact, a finitely presented profinite group is \emph{positively finitely related} (PFR) exactly if it has UBERG. Moreover, a profinite group has UBERG if and only if the completed group algebra $\hat{\mathbb{Z}}\llbracket G\rrbracket$ is \emph{positively finitely generated} (PFG, see \cite{KV}). Nevertheless, the structural properties of UBERG groups are hardly understood and it is unknown this class is closed under extensions. 

The goal of this article is twofold. First, we will prove several fundamental results on UBERG groups that, we hope, will provide a foundation for further study on the modular representation theory of profinite groups. This can be seen as the parallel to the study of ``rigidity'' carried out in \cite{LM}. Secondly, we will build a ``twisted Clifford theory'' for crossed representations (see Theorem~\ref{modulestructure}) that will be one of our main tools to study extensions of UBERG groups.

Additionally, we concentrate our study on the representation theory of cartesian products of finite (simple) groups and we provide several examples with different asymptotic behaviour in the number of modular representations (see Section~\ref{sec:examples}). In particular, we can answer various questions that were left open in \cite{CCV}; for instance, we settle Open Questions~6.3, 6.4 and 6.6 of \cite{CCV}. Moreover, we give an example to show that extensions of PFR groups do not need to be PFR, answering a question raised in \cite[p.3]{KV}. 

Finally, one should note that, even though the characteristic-zero representation theory of finite groups is relatively well-understood, the modular representation theory of finite groups is still in many respects a mystery. Therefore, an asymptotic approach could be highly desirable and we hope that UBERG groups can provide a new framework to study modular representations asymptotically.

For convenience, we provide a diagram showing the relationships between the various conditions studied in the paper. 

\[\begin{NoHyper}\begin{tikzcd}[math mode=false,labels = {font = \scriptsize},sep=large,row sep=huge]
	PFG \arrow[r,Rightarrow,"{\cite[Theorem 4.4]{Damian}}","{($\subsetneq$, \cite[Example 4.5]{Damian})}"']
	& APFG \arrow[rr,Rightarrow,"{\cite[Proposition 1.10]{CCV}}" sloped,"{($\subsetneq$, \Cref{lem:UBERG_inf_gen})}"' sloped]
	\arrow[dr,Rightarrow,"{\cite[Lemma 5.14]{CCV}}" sloped,"{($\subsetneq$, \Cref{superexponential})}"' sloped]
	\arrow[dd,Rightarrow,"{\Cref{thm:UBERG+FP_1=FG}}" sloped,"{($\subsetneq$, \cite[Example 4.6]{Damian})}"' sloped]
	&& UBERG \arrow[lll,Rightarrow,bend right=50,postaction={decoration={text along path, text={|\scriptsize|(pronilpotent: Theorem \getrefnumber{thm:pronilpotent})},raise=-10pt,reverse path,text align={align=center}},decorate}]
	\arrow[dddl,Rightarrow,bend left=25, gray, "{\large$\textcolor{black}{\times}$}" marking, postaction={decoration={text along path, text={|\scriptsize|Lemma \getrefnumber{lem:UBERG_inf_gen}},raise=5pt,reverse path,text align={align=center}},decorate}] \\
	&& $\PFP_1$ \arrow[dl,Rightarrow,bend left, gray,"{\large$\textcolor{black}{\times}$}" marking,postaction={decoration={text along path, text={|\scriptsize|Theorem \getrefnumber{thm:product-of-sl2s}},raise=5pt,reverse path,text align={align=center}},decorate}]
	\arrow[dd,Rightarrow,"{($\subsetneq$, \cite[Proposition 6.7]{CCV})}" sloped]
	\arrow[ur,Rightarrow, gray, "{\large$\textcolor{black}{\times}$}" marking, "\textcolor{black}{\Cref{superexponential}}" sloped] \\
	& finitely generated \arrow[ur,Rightarrow, gray, bend left,"{\large$\textcolor{black}{\times}$}" marking,postaction={decoration={text along path, text={|\scriptsize|[{\getcitenumber{CCV}}, Proposition 6.7]},raise=-10pt,text align={align=center}},decorate}]
	\arrow[dr,Rightarrow,"{\cite[p.455]{Damian}}" sloped, "{($\subsetneq$, \cite[Example 2.6]{Damian})}"' sloped] \\
	&& $\FP_1$ \arrow[uuull,Rightarrow,bend left=45,postaction={decoration={text along path, text={|\scriptsize|(prosoluble: [{\getcitenumber{KV}}, Corollary 6.12], [{\getcitenumber{Ged}}, Remark 3.5(a)])},raise=-10pt,reverse path,text align={align=center}},decorate}]
\end{tikzcd}\end{NoHyper}\]

For implications in this diagram marked $(\subsetneq,-)$, this reference provides a counterexample showing the reverse implication fails; for those marked `pronilpotent' (respectively, `prosoluble'), the implication holds for pronilpotent (respectively, prosoluble) profinite groups, though not in general. (Non-)implications which follow from those marked in the diagram may be left unmarked.

\subsection*{Main results}

It is not surprising that there is a direct connection between the growth of (linear or projective) representations of a profinite group and the theory of \emph{crowns} associated to composition factors (cf.\ Section~\ref{sec:crowns}). Our first main result makes this correspondence explicit. In \cite{JP}, the invariant $l(G)$ (the minimal degree of a faithful transitive representation of a group $G$) was used to characterise the \emph{positive finite generation} (PFG) property for profinite groups. In fact, in \cite{JP}, it is shown that a profinite group $G$ is PFG if and only if there is some constant $c$ such that, for any monolithic group $L$ with non-abelian minimal normal subgroup $N$ and any $k$ such that the crown-based power $L_k$ appears has a quotient of $G$, $k\le l(N)^c$ (again, see Section~\ref{sec:crowns} for the basic definitions). Here we look at two related invariants which will provide one necessary and one sufficient condition for UBERG, respectively. See Section~\ref{sec:cond_UBERG} for the definition of $l^{proj}(K)$ and $l^{lin}(K)$ for $K$ a non-abelian characteristically simple group.

\begin{myintrothm}{A}
	\label{introthm:charUBERG}Let $G$ be a profinite group.
	
 	\begin{enumerate}[(i)]
	\item \label{introthm:charUBERG_1} Suppose $G$ is finitely generated. Suppose there is some $b$ such that, for all finite monolithic groups $L$ with non-abelian minimal normal subgroup $K$, if the crown-based power $L_k$ is a quotient of $G$ then $k\le l^{proj}(K)^b$. Then $G$ has UBERG.
	\item \label{introthm:charUBERG_2}Suppose, for all $b$, there is some monolithic group $L_b$ with non-abelian minimal normal subgroup $K_b$ such that some crown-based power $(L_b)_k$ of $L_b$ is isomorphic to a quotient of $G$ and $k> l^{lin}(K_b)^b$. Then $G$ does not have UBERG.
	\end{enumerate}
\end{myintrothm}

We remark that there are UBERG groups that do not satisfy the condition \eqref{introthm:charUBERG_1} of \cref{introthm:charUBERG}, for instance the group $H$ from Theorem~\ref{introthm:example-procyclic-by-uberg} below.

Next, we address the question of whether the property of having UBERG is closed under extensions. Note that the corresponding question is easily seen to hold for PFG groups (see \cite[Proposition 7]{Mann}). However, UBERG-by-UBERG groups are not UBERG in general. Here we exhibit a procyclic-by-UBERG non-UBERG profinite group.

\begin{myintrothm}{B}
\label{introthm:example-procyclic-by-uberg}
Let $(n_i)_{i\in \mathbb{N}}$ be an increasing sequence of pairwise coprime integers $\geq 12$ and let $q_i = p_i^{k_i}$ be a sequence of prime powers for pairwise distinct primes $p_i \geq 5$ such that $\gcd(n_i,q_i-1)>1$. Let $m_i = q_i^{\lfloor n_i^{3/2} \rfloor}$. 
Consider the profinite group
\[
	G = \prod_{i \in \mathbb{N}} \SL_{n_i}(\bbF_{q_i})^{m_i}.
\]
Then $G$ is $2$-generated, finitely presented and it does not have UBERG. Moreover, we can choose a procyclic central subgroup $Z\le G$ such that the quotient group $H=G/Z$ is $2$-generated, finitely presented and it has UBERG.
\end{myintrothm}
The subgroup $Z$ in the previous theorem is defined explicitly in Section~\ref{sec:UBERGbyPUBERG_count}.

Even though it is not true that UBERG-by-UBERG groups are UBERG in full generality, we develop a Clifford theory from crossed representations which might be of independent interest (see Theorem~\ref{modulestructure}). In fact, as a first application, we will use Theorem~\ref{modulestructure} to show that split extensions of UBERG groups are UBERG.

\begin{myintrothm}{C}
\label{introthm:UBERGbyUBERG}
	Suppose $G$ is a profinite group, with $K \unlhd G$.
	\begin{enumerate}[(i)]
		\item If $K$ and $G/K$ have UBERG, and the extension of $K$ by $G/K$ is split, then $G$ has UBERG.
		\item If $K$ has UBERG and $G/K$ is PFG, then $G$ has UBERG.
	\end{enumerate}
\end{myintrothm}

In \cite{CCV}, certain probabilistic versions of the cohomological finiteness properties type $\FP_n$ were introduced, see Section~\ref{sec:PFG_PFR_PFP}. In this article we continue the study of the first of these (i.e.\ type $\PFP_1$) and we provide a semi-structural criterion in the spirit of Theorem~\ref{introthm:charUBERG} for type $\PFP_1$ (see Theorem~\ref{thm:PFP1}). It turns out that $G$ having type $\PFP_1$ is related to the growth of the number of $G$-isomorphism classes of non-Frattini abelian chief factors of $G$ (see Definition~\ref{defn:A}) and the growth of the size of the smallest faithful irreducible representation $M$ of monolithic quotients $L$ of $G$ with $H^1(L,M)\neq 0$ (see Definition~\ref{defn:B_C}). The aforementioned characterisation allows us to show that extensions of type $\PFP_1$ groups have type $\PFP_1$ (see Theorem~\ref{thm:PFP1_by_PFP1}). 

\begin{myintrothm}{D}\label{introthm:PFP1}
 Suppose $G$ is a profinite group, $K \unlhd G$. If $K$ and $G/K$ have type $\PFP_1$, then $G$ has type $\PFP_1$.
\end{myintrothm}

As often happens in the world of profinite groups, it is hard to distinguish different group-theoretic properties and the construction of explicit examples is very desirable. 

Here we completely characterise pronilpotent UBERG groups (see Theorem~\ref{thm:pronilpotent}); these are exactly the finitely generated pronilpotent groups.

\begin{myintrothm}{E}\label{introthm:pronilp}
	Let $P$ be pronilpotent group. The following are equivalent:
	\begin{enumerate}[(i)]
		\item $P$ is finitely generated,
		\item $P$ has UBERG,
		\item $P$ is of type $\PFP_1$,
		\item $P$ is of type $\FP_1$.
	\end{enumerate}
\end{myintrothm}

Next, we concentrate on the class of cartesian products of finite groups and we produce several examples of cartesian product with various subsets of the properties described above, in particular, finite generation, UBERG and type $\PFP_1$:

\begin{enumerate}[(i)]
	\item We will exhibit a type $\PFP_1$ group which is not finitely generated and does not have UBERG (see Theorem~\ref{thm:product-of-sl2s}); using the theory of \emph{universal Frattini covers}, we can even construct a projective profinite group of type $\PFP_\infty$ which is not finitely generated. This is in sharp contrast to the case of abstract groups, for which type $\FP_1$ is equivalent to finite generation.
	\item Similarly, we construct a $2$-generated profinite group of type $\PFP_1$, but without UBERG (Corollary~\ref{superexponential}). Indeed, this example has superexponential subgroup growth, which is impossible for groups with UBERG by \cite[Corollary 5.5]{KV}.
	\item Finally, we have a non-finitely generated metabelian group with UBERG (Lemma~\ref{lem:UBERG_inf_gen}), but which does not have type $\FP_1$, and hence does not have type $\PFP_1$.
 \end{enumerate}
However, we show that a $\PFP_1$ group with UBERG must be finitely generated (see Theorem~\ref{thm:UBERG+FP_1=FG}). In fact, Theorem \ref{thm:UBERG+FP_1=FG} shows more: if we assume that our group has type $\FP_1$, then UBERG implies finite generation.

\begin{myintrothm}{F}\label{introthm:UBERG_FP_1_FG}
 	Suppose $G$ is a profinite group with UBERG and type $\FP_1$. Then $G$ is finitely generated.
\end{myintrothm}

Since it was shown in \cite[Proposition 1.10]{CCV} that the UBERG and type $\FP_1$ conditions together are equivalent to the APFG condition (see Section~\ref{sec:PFG_PFR_PFP}), we can express this result by saying that if the augmentation ideal $\ker(\hat{\Z}\bra G \ket \to \hat{\Z})$ is PFG, then $G$ is finitely generated.

This should be very surprising: it is almost an axiom of homological algebra that the choice of which projective resolution we use should not matter -- and certainly this difference is not detectable by any (co)homology groups -- but nonetheless, generation properties of the kernel of the projective cover of $\hat{\Z}$ in the category of $\hat{\Z}\bra G \ket$-modules cannot tell us whether $G$ is finitely generated, and generation properties of the kernel of the augmentation map $\hat{\Z}\bra G \ket \to \hat{\Z}$ do.

\subsection*{Organisation of the article}
In Section~\ref{sec:prelim} we start by giving the basic definitions and fixing the notation that we need in the rest of the article. In Section~\ref{sec:cond_UBERG}, we prove some bounds on the sizes of (linear and projective) representations of monolithic groups and use these to prove Theorem~\ref{introthm:charUBERG}. In Section~\ref{sec:inf_prod_fin_grps}, we give conditions for an infinite product of finite groups to have UBERG, to prove Theorem~\ref{introthm:example-procyclic-by-uberg} and construct an infinitely generated group with UBERG. Section~\ref{sec:UBERG-by-UBERG} is devoted to developing our Clifford theory for twisted modules and contains the proof of Theorem~\ref{introthm:UBERGbyUBERG}, as well as analogous results on type $\PFP_n$. In Section~\ref{sec:PFP1}, we characterise groups of type $\PFP_1$ in terms of crown-based powers appearing as quotients of the group, and use this to prove Theorem~\ref{introthm:PFP1}. The proof of Theorem~\ref{introthm:pronilp} can be found in Section~\ref{sec:pronilp}. Section~\ref{sec:examples} is our second source of interesting examples, especially of the groups promised above which have type $\PFP_1$ but not UBERG. Finally, in Section~\ref{sec:UBERG_FP1}, we prove Theorem~\ref{introthm:UBERG_FP_1_FG}.


\section{Preliminaries, terminology and notation}\label{sec:prelim}

\subsection{Notation}\label{sec:notation}
 As it is customary when working with profinite groups, we will assume that subgroups are closed, maps are continuous. Furthermore, generation will be intended in the topological sense. The same will be assumed for profinite modules.

For $F$ a field, $F^\times$ is the group of non-zero elements under multiplication.

 Let $G$ be a finite group. The \emph{socle} $\mathrm{soc}(G)$ is the subgroup generated by all minimal normal groups in $G$.  We denote by $E(G)$ the subgroup generated by all quasisimple subnormal subgroups of $G$. This is sometimes called the \emph{layer} of $G$ and it forms part of the generalised Fitting subgroup of $G$.

\subsection{Projective and crossed representations, and cocycles}\label{sec:projective-crossed-etc}

We will use the language of crossed representations and crossed projective representations following \cite{Karpilovsky}.
Let $E$ be a field and let $G$ be a profinite group. A \emph{representation} of $G$ over $E$ of degree $n$ is a homomorphism $\rho \colon G \to GL_n(E)$.

A semilinear transformation of an $E$-vector space $V$ is an additive homomorphism $f: V \to V$ such that there exists an automorphism $\phi$ of $E$ with $f(\lambda v) = \phi(\lambda)f(v)$ for all $\lambda \in E$ and all $v \in V$. The group of bijective semilinear transformations of $V$ is written $\Gamma L_E(V)$. 
A \emph{crossed representation} of $G$ on $V$ is a homomorphism $\rho \colon G \to \Gamma L_E(V)$. Via the canonical homomorphism $\Gamma L_E(V) \to \Aut(E)$, every crossed representation gives rise to an action $\gamma$ of $G$ on $E$ by field automorphisms. We may say $\rho$ is a $\gamma$-crossed representation of $G$ over $E$.
There is a $1$-to-$1$ correspondence (described in \cite{Karpilovsky}) between $\gamma$-crossed representations of $G$ over $E$ and modules for the ring $E^\gamma \bra G \ket$, which we define as the free profinite $E$-module with basis $\{\bar{g}: g \in G\}$, and multiplication defined distributively by $\bar{g}\bar{h}=\overline{gh}$, $\bar{g}\lambda = \gamma_g(\lambda)\bar{g}$. We will identify $\gamma$-crossed representations with modules for this twisted group ring via this correspondence.

Let $E$ be a finite field and let $V$ be a finite-dimensional $E$-vector space. A \emph{projective crossed representation} $\rho$ of $G$ on $V$ is a map $G \to \Gamma L_E(V)$ such that there is $\alpha \in Z^2_\gamma(G,E^\times)$, where $Z^2_\gamma(G,E^\times)$ is the group of $2$-cocycles for $G$ with respect to some action $\gamma$ of $G$ on $E$, such that $\rho(g)\rho(h) = \alpha(g,h)\rho(gh)$ for all $g,h \in G$, and $\rho(1) = 1$; see \cite{Karpilovsky} for the definition of $2$-cocycles. When it is clear, we may suppress the subscript $\gamma$ from the notation. The $G$-action induced on $E$ by $\rho$, as described above, is the same as $\gamma$. We may say $\rho$ is an $\alpha$-representation of $G$ over $E$. For $F$ a subfield of $E$, we will also say that $\rho$ is $F$-linear if $\gamma(G) \leq \operatorname{Aut}_F(E)$. As above, there is a $1$-to-$1$ correspondence between $\alpha$-representations of $G$ over $E$ and modules for the ring $E^\alpha \bra G \ket$, which we define as the free profinite $E$-module with basis $\{\bar{g}: g \in G\}$, and multiplication defined distributively by $\bar{g}\bar{h}=\alpha(g,h)\overline{gh}$, $\bar{g}\lambda = \gamma_g(\lambda)\bar{g}$ (see \cite[Theorem 14.3]{Karpilovsky}). 
We will identify $\alpha$-representations with modules for this crossed product via this correspondence. 

Finally, let $V=E^n$. A \emph{projective representation} of $G$ of degree $n$ over $E$ is a projective crossed representation $\rho\colon G \to \Gamma L_E(V)$ with trivial $G$-action $\gamma$ on $E$. Hence, such a projective representation induces a homomorphism $G \to \mathrm{PGL}_n(E)$, which we also write as $\rho$. If $E$ is a finite field, we define the \emph{size} of $\rho$ to be $|E|^n$. To every homomorphism $\rho: G \to \mathrm{PGL}_n(E)$ there is attached a well-defined cohomology class $\alpha \in H^2(G,E^\times)$ with respect to the trivial action of $G$ on $E^\times$. This $\rho$ lifts to a representation exactly if the associated cohomology class $\alpha$ is trivial. When we speak of a \emph{faithful} projective representation, we mean one such that $\ker(\rho\colon G \to PGL_n(E)) \subseteq Z(G)$; a \emph{non-trivial} projective representation will mean one such that the induced map $\rho\colon G \to PGL_n(E)$ is non-trivial.

We say two projective representations $\rho_1,\rho_2$ of $G$ of degree $n$ over $E$ are \emph{projectively equivalent} if there is some $x \in \mathrm{PGL}_n(E)$ such that the induced maps $\rho_1,\rho_2: G \to \mathrm{PGL}_n(E)$ satisfy $x^{-1}\rho_1(g)x=\rho_2(g)$ for all $g \in G$.

\begin{lem}\label{lem:cocycles}
Let $G$ be a profinite group with an action $\gamma$ on $E$.
\begin{enumerate}[(i)]
\item Let $\rho$ be a projective crossed $\gamma$-representation of $G$ on $V$ with cocycle $\alpha$. Then the dual representation on $V^* = \Hom_E(V,E)$ is a projective crossed $\gamma$-representation of $G$ with cocycle cohomologous to $\alpha^{-1}$.
\item Let $\rho_1, \rho_2$ be two projective crossed $\gamma$-representations of $G$ on $E$-vector spaces $V_1, V_2$  with  cocycles $\alpha_1, \alpha_2$. Then $\rho_1\otimes_E \rho_2$ is a projective crossed representation on $V_1 \otimes_E V_2$ with cocycle $\alpha_1\alpha_2$.
\end{enumerate}
\end{lem}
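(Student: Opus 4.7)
The plan is to verify each part by exhibiting a natural construction and computing the cocycle directly from the defining relation $\rho(g)\rho(h) = \alpha(g,h)\rho(gh)$. The philosophy is that $\alpha$ records the failure of $\rho$ to be a homomorphism, and the standard duality and tensor constructions transmit this failure in a predictable way; the only real subtlety is how semilinear maps interact with $E$-scalars.

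For part (i), I would define $\rho^*\colon G \to \Gamma L_E(V^*)$ on $V^* = \Hom_E(V,E)$ by
\[
(\rho^*(g)f)(v) := \gamma_g\bigl(f(\rho(g)^{-1} v)\bigr).
\]
That $\rho^*(g)$ is $\gamma_g$-semilinear is immediate, since the outer $\gamma_g$ compensates precisely for the $\gamma_g^{-1}$-semilinearity of $\rho(g)^{-1}$ and the $E$-linearity of $f$. For the cocycle, I would invert $\rho(gh) = \alpha(g,h)^{-1}\rho(g)\rho(h)$; using that $(\rho(g)\rho(h))^{-1} = \rho(h)^{-1}\rho(g)^{-1}$ is $\gamma_{gh}^{-1}$-semilinear, this gives
\[
\rho(gh)^{-1} = \gamma_{gh}^{-1}(\alpha(g,h))\,\rho(h)^{-1}\rho(g)^{-1}.
\]
Substituting into $(\rho^*(gh)f)(v) = \gamma_{gh}(f(\rho(gh)^{-1}v))$ and pulling the scalar through $f$ and then through $\gamma_{gh}$ produces a factor of exactly $\alpha(g,h)$; comparing with the direct computation $\rho^*(g)\rho^*(h)(f)(v) = \gamma_{gh}(f(\rho(h)^{-1}\rho(g)^{-1}v))$ gives $\rho^*(g)\rho^*(h) = \alpha(g,h)^{-1}\rho^*(gh)$. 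Thus the cocycle of $\rho^*$ is $\alpha^{-1}$, which is (in particular) cohomologous to $\alpha^{-1}$ as required; any alternative normalisation of the dual differs by a scalar rescaling and so changes the cocycle only by a coboundary.

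For part (ii), I would set
\[
(\rho_1 \otimes \rho_2)(g)(v_1 \otimes v_2) := \rho_1(g)v_1 \otimes \rho_2(g)v_2.
\]
Well-definedness on $V_1 \otimes_E V_2$ rests on both $\rho_i(g)$ being $\gamma_g$-semilinear for the same $\gamma$: moving a scalar $\lambda$ across either $\rho_i(g)$ produces the same factor $\gamma_g(\lambda)$, so the bilinearity relation $(\lambda v_1) \otimes v_2 = v_1 \otimes (\lambda v_2)$ is preserved. Applying $\rho_1 \otimes \rho_2$ at $g$ and then at $h$ produces $\alpha_1(g,h)\rho_1(gh)v_1 \otimes \alpha_2(g,h)\rho_2(gh)v_2$, and pulling both scalars out of the tensor over $E$ yields $(\alpha_1\alpha_2)(g,h)(\rho_1\otimes\rho_2)(gh)(v_1 \otimes v_2)$, confirming the cocycle.

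The main obstacle is the bookkeeping in part (i): because the cocycle takes values in $E^\times$ with a non-trivial Galois action, inverting $\rho(gh)$ introduces a $\gamma_{gh}^{-1}$-twist of $\alpha(g,h)$, and one must verify that this twist is exactly cancelled by the outer $\gamma_{gh}$ appearing in $\rho^*(gh)$. Once that cancellation is checked, both statements reduce to routine manipulation of scalars and semilinear operators.
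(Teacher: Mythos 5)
Your proof is correct and takes essentially the same approach as the paper's: a direct computation from the defining relation $\rho(g)\rho(h) = \alpha(g,h)\rho(gh)$. The only difference is your normalisation of the dual — you use $f \mapsto \gamma_g(f(\rho(g)^{-1}\cdot))$ whereas the paper uses $f \mapsto \gamma_g(f(\rho(g^{-1})\cdot))$; since $\rho(g^{-1}) = \alpha(g,g^{-1})\rho(g)^{-1}$ the two differ by a coboundary (as you observe), and your choice has the minor advantage of yielding the cocycle $\alpha^{-1}$ on the nose, while the paper obtains $(g,h)\mapsto\gamma_{gh}(\alpha(h^{-1},g^{-1}))$ and then invokes the cocycle identity to see it is cohomologous to $\alpha^{-1}$.
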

\begin{proof}
This follows from simple calculations. For instance, the projective crossed representation of $G$ on $V^*$ is defined as 
$(^gf)(v) = \gamma_g(f(g^{-1}v))$ and thus
\begin{align*}
	(^{g}(^{h}f))(v) &= \gamma_{g}(\gamma_h(f(h^{-1}(g^{-1}v)))) = \gamma_{gh}( f(\alpha(h^{-1},g^{-1}) (gh)^{-1}v))\\
	 &= \gamma_{gh}(\alpha(h^{-1},g^{-1})) \;\gamma_{gh}(^{gh}f(v)).
\end{align*}
It follows from the cocycle identity that $(g,h) \mapsto \gamma_{gh}(\alpha(h^{-1},g^{-1}))$ is cohomologous to $\alpha^{-1}$.
\end{proof}
The first assertion of the lemma allows one to transform simple $E^{\alpha}\bra{G}\ket$-modules into simple $E^{\alpha^{-1}}\bra{G}\ket$-modules by taking duals. 
If $\alpha$ represents the trivial class in $H^2(G,E^\times)$, then $E^{\alpha}\bra{G}\ket \cong E^\gamma\bra{G}\ket$.
In particular, $V \otimes_E V^*$ is an $E^\gamma\bra{G}\ket$-module.

\subsection{Quasiequivalent representations}

Recall that two representations $\rho_1,\rho_2$ of a finite group $G$ over a field $F$ are said to be \emph{quasiequivalent} if there exists $\phi\in \mathrm{Aut}(G)$ such that $\rho_1$ and $\rho_2 \circ  \phi $ are equivalent. Note that quasiequivalence is an equivalence relation and, for faithful representations $\rho_1,\rho_2$, $\rho_1(G)$ and $\rho_2(G)$ are conjugate in $\mathrm{GL}_F(V)$ if and only if they are quasiequivalent (see \cite[Lemma 2.10.14]{KL}).

We say in addition that two projective representations $\rho_1,\rho_2$ of $G$ over $F$ are quasiequivalent if there exists $\phi \in \mathrm{Aut}(G)$ such that $\rho_1$ and $\rho_2 \circ  \phi $ are projectively equivalent.

\subsection{PFG, PFR, UBERG, \texorpdfstring{$\PFP_n$}{PFPn}, ...}
\label{sec:PFG_PFR_PFP}
In this section we recall some basic definitions. The reader can find more information in \cite{KV} and \cite{CCV}. 

\subsubsection{PFG, PFR, UBERG}
We say that a profinite group $G$ is \emph{PFG} if there is a positive integer $k$ such that the probability of $k$ Haar-random elements of $G$ generating the whole group is positive. This condition has been studied extensively and here we only mention the Mann-Shalev theorem \cite[Theorem 4]{MS}: a profinite group $G$ is PFG if and only if it has polynomial maximal subgroup growth.

\begin{rem}
	
	Note that there are unfortunate naming conventions fixed in the literature here. `Polynomial' growth in similar contexts always means `at most polynomial' growth (see, for example, \cite{LM}), so we include, in our definition of PFG, groups which have maximal subgroup growth slower than any polynomial. On the other hand, `exponential' growth (which we will encounter below) usually means that the function in question has the growth type of an exponential function: that is, it is bounded above and below by exponentials.
\end{rem}

In the spirit of the Mann-Shalev theorem, two of the present authors study in \cite{KV} a related property called PFR. We list below some of the conditions considered there that we will need; the interested reader may check \cite{KV} for more details.

A profinite group $G$:
\begin{enumerate}[(i)]
\item is \emph{PFR} if it is finitely generated, and for every epimorphism $f:H \to G$ with $H$ finitely generated, the kernel of $f$ is positively finitely normally generated in $H$;
\item has \emph{UBERG}, if there exists a constant $c>0$ such that, for every finite field $F$ and every $n\in \mathbb{N}$, $r(G,F,n) \le \vert F \vert^{cn} $. 
\end{enumerate}

\begin{prop}[\cite{KV}]
\label{KV}
UBERG is equivalent to the group algebra $\hat{\mathbb{Z}}\llbracket G \rrbracket$ being PFG, for all profinite groups $G$. 
PFR and UBERG are equivalent for finitely presented profinite groups.
\end{prop}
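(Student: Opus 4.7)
The plan has two parts, corresponding to the two equivalences. For the first, that UBERG for $G$ is equivalent to the pseudocompact ring $R=\hat{\Z}\bra G\ket$ being PFG as a module over itself, the strategy is to mimic the Mann--Shalev theorem in the ring setting: count maximal right ideals and identify them with irreducible modular representations. For the second equivalence, between PFR and UBERG in the finitely presented case, the idea is to reinterpret PFR via the relation module of a minimal presentation and then relate its maximal submodule growth directly to the representation growth of $G$.

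For the first equivalence, first note that a $k$-tuple of Haar-random elements fails to generate $R$ topologically iff it lies in some maximal right ideal. Each simple right $R$-module $M$ is finite, and by Schur and Wedderburn $E:=\End_R(M)$ is a finite field; the maximal right ideals $\mathfrak{m}$ with $R/\mathfrak{m}\cong M$ correspond to cyclic generators of $M$ modulo $E^\times$, giving $(|M|-1)/(|E|-1)$ of them. A union bound yields
\[
1-P_k(R)\;\le\;\sum_{[M]\text{ simple}}\frac{|M|-1}{|E|-1}\,|M|^{-k}\;\le\;\sum_p \sum_{n\ge 1} r(G,\F_p,n)\,p^{n(1-k)},
\]
after grouping simple modules by their prime characteristic $p$ and $\F_p$-dimension $n$. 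Under UBERG with constant $c$, this is dominated by $\sum_{p,n} p^{n(c+1-k)}$, which converges once $k$ is large enough, so $R$ is PFG. The reverse direction is obtained by combining the union bound with a Bonferroni/second-moment estimate: distinct simple modules yield distinct maximal right ideals, so if $r(G,\F_p,n)$ grew faster than any polynomial in $p^n$ one could produce enough approximately independent ``bad events'' to force $1-P_k(R)=1$ for every $k$, contradicting PFG of $R$.

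For the second equivalence, fix a minimal presentation $1\to N\to F\to G\to 1$ of the finitely presented group $G$, with $F$ free profinite of rank $d$. By a Gasch\"utz-type argument, $G$ is PFR iff the relation module $N^{ab}:=N/\overline{[N,N]}$ is PFG as a $\hat{\Z}\bra G\ket$-module. The plan is then to count maximal $\hat{\Z}\bra G\ket$-submodules of $N^{ab}$ with a given simple quotient $V$ using the exact sequences
\[
0\to N^{ab}\to \hat{\Z}\bra G\ket^d \to I\to 0, \qquad 0\to I\to \hat{\Z}\bra G\ket \to \hat{\Z}\to 0,
\]
and the long exact sequence obtained by applying $\Hom_G(-,V)$. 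A standard calculation identifies this count with $|V|^d \cdot |H^2(G,V)|/|H^1(G,V)|$ up to bounded factors; since $G$ is finitely presented, $|H^2(G,V)|$ is bounded polynomially in $|V|$ by the number of relators. A Mann--Shalev-style criterion for PFG of pseudocompact modules (the same one needed in the first equivalence) then shows that $N^{ab}$ is PFG iff the number of simple $G$-modules of size at most $N$ is polynomial in $N$, which is precisely UBERG.

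The main obstacle is the converse direction of the first equivalence. The union bound above gives the forward implication cleanly, but extracting UBERG from the assumption that $R$ is PFG requires a genuine lower bound on the non-generating probability, and the events ``all $k$ elements lie in $\mathfrak{m}$'' for distinct maximal right ideals are far from independent. Navigating this correlation, and handling the fact that the residue fields $\End_R(M)$ can vary arbitrarily with $M$, is the technically delicate step; it is essentially the content of the Mann--Shalev argument adapted from profinite groups to pseudocompact rings, and the same module-theoretic version then powers the second equivalence as well.
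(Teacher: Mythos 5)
The paper does not actually prove this proposition: it is imported from \cite{KV} as a black box, and the only original content is the remark immediately after it that the finitely-generated hypothesis in \cite{KV} for the first equivalence can be dropped. So there is no in-paper argument to compare against; what follows assesses your sketch against the intended argument in \cite{KV}.

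For the first equivalence your opening move is right (maximal right ideals of $R=\hat{\Z}\bra G\ket$ with quotient $M$ correspond to nonzero elements of $M$ modulo $\End_R(M)^\times$), and the union bound handles the forward direction. But the ``delicate converse'' you flag is not delicate: generation of $R$ by $k$ Haar-random elements is detected on the semisimple pseudocompact quotient $R/\mathrm{rad}(R)\cong\prod_{M}M_{n_M}(E_M)$, with $E_M=\End_R(M)$ and $n_M=\dim_{E_M}M$, and because Haar measure on a product is the product measure, the non-generation events for \emph{distinct} simple modules are exactly independent. Hence $P_k(R)=\prod_M P_k(M^{n_M})$, and $P_k(M^{n_M})=\prod_{i=0}^{n_M-1}\bigl(1-|E_M|^{\,i-kn_M}\bigr)$ is an explicit full-rank probability for a random $n_M\times kn_M$ matrix over $E_M$. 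Positivity of this Euler product is equivalent to convergence of $\sum_M\bigl(1-P_k(M^{n_M})\bigr)$, which gives both directions of the equivalence with UBERG directly; no Bonferroni or second-moment machinery is needed, and the correlations you worry about only exist between maximal ideals sharing the same simple quotient, which the block computation already handles.

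For the second equivalence your plan (relation module plus $H^2$) is in the right spirit, but the step ``$G$ is PFR iff $N^{\mathrm{ab}}$ is PFG over $\hat{\Z}\bra G\ket$'' cannot be waved through as a Gasch\"utz-type triviality. If the images of $x_1,\dots,x_k$ generate $N^{\mathrm{ab}}$, their normal closure $H$ in $F$ satisfies $H[N,N]=N$, so $N/H$ is perfect --- and perfect closed normal subgroups of free profinite groups need not vanish, so $H=N$ does not follow formally. Establishing that the relation module really governs positive finite normal generation (and that one free presentation suffices to test PFR) is a substantial part of the work in \cite{KV}, not a preliminary. Two smaller points: chasing the two long exact sequences coming from $0\to N^{\mathrm{ab}}\to\hat{\Z}\bra G\ket^d\to I\to 0$ and $0\to I\to\hat{\Z}\bra G\ket\to\hat{\Z}\to 0$ yields $|\Hom_G(N^{\mathrm{ab}},V)|=|V|^{d-1}\,|H^0(G,V)|\,|H^2(G,V)|/|H^1(G,V)|$, not $|V|^d|H^2|/|H^1|$ (harmless given you allow bounded factors); and the ``Mann--Shalev-style criterion for PFG of pseudocompact modules'' you invoke at the end is precisely the Euler-product criterion from the first paragraph, so it is worth stating it once cleanly rather than treating it as a known analogue.
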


Note that the equivalence of UBERG to $\hat{\mathbb{Z}}\llbracket G \rrbracket$ being PFG is only stated in \cite{KV} for finitely generated groups, but the proof for general groups goes through without change.

In \cite{CCV}, it was shown that there are groups with UBERG which are not PFG. In Section~\ref{subsec:inf_gen_UBERG} we will show that there are non-finitely generated groups with UBERG. 

We also recall the following result from \cite{CCV}.
\begin{prop}[{\cite[Proposition 1.3]{CCV}}]
\label{UBERG=cb}
If $G$ has UBERG, then it is countably based.
\end{prop}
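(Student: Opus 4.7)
The plan is to prove the contrapositive: if $G$ is not countably based, then $G$ does not have UBERG. Recall that for a profinite group, being countably based is equivalent to having only countably many open normal subgroups (since any open normal subgroup $V$ of $G$ contains some member $U_n$ of a countable basis at the identity, and only finitely many subgroups of $G$ lie above each such $U_n$). So I will assume that $G$ has uncountably many open normal subgroups and derive a contradiction with UBERG.

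Since there are only countably many isomorphism classes of finite groups, a pigeonhole argument produces a finite group $Q$ and an uncountable family $\{N_i\}_{i\in I}$ of distinct open normal subgroups with $G/N_i \cong Q$ for every $i$. Choose a continuous surjection $\phi_i\colon G \twoheadrightarrow Q$ with $\ker\phi_i = N_i$ for each $i$. Pick any prime $p$ with $p\nmid |Q|$, so that by Maschke's theorem $\bbF_p[Q]$ is semisimple; let $V_1,\dots,V_s$ be representatives of the isomorphism classes of simple $\bbF_p[Q]$-modules, each of dimension at most $|Q|$. Writing $K_j = \ker(Q \to \Aut_{\bbF_p}(V_j))$, the faithfulness of the regular representation gives $\bigcap_j K_j = \{1\}$.

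For each $i\in I$ and each $j$, let $W_{i,j}$ be the simple $\bbF_p\bra{G}\ket$-module obtained by pulling $V_j$ back along $\phi_i$. The annihilator of $W_{i,j}$ in $G$ is $\phi_i^{-1}(K_j)$, and intersecting over $j$ recovers $\phi_i^{-1}(\bigcap_j K_j) = N_i$. Since annihilators are an isomorphism invariant of modules, the tuple of isomorphism classes $([W_{i,1}],\dots,[W_{i,s}])$ determines $N_i$. As the $N_i$ are distinct and uncountably many, the set of such tuples is uncountable, so there must exist some index $j$ for which $\{[W_{i,j}] : i \in I\}$ is infinite. This contradicts UBERG: each $W_{i,j}$ is a simple $\bbF_p$-representation of dimension at most $|Q|$, and UBERG guarantees that the total number of such representations is bounded by $\sum_{n=1}^{|Q|} p^{cn} < \infty$.

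The core step, and the only nontrivial one, is the argument that the tuple of pullback isomorphism classes recovers $N_i$ -- this is where the coprimality of $p$ and $|Q|$ is crucial, since it allows us to work with simple modules rather than composition factors and hence to control annihilators. Everything else is either elementary bookkeeping (pigeonhole, counting tuples) or a direct application of the UBERG bound $r(G,\bbF_p,n) \le p^{cn}$.
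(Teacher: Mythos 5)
Your argument is correct. A few remarks. The paper itself does not prove this statement; it simply cites \cite[Proposition~1.3]{CCV}, so there is no in-paper proof to compare against. Assessed on its own terms, the proof is sound: the reduction to counting open normal subgroups is standard; the pigeonhole to a single isomorphism type $Q$ is the right move, since it fixes once and for all the list of simple $\bbF_p[Q]$-modules $V_1,\dots,V_s$ and their dimensions; choosing $p\nmid |Q|$ gives semisimplicity so that $\bigcap_j K_j=\{1\}$ via the regular representation; and the key observation that the $G$-isomorphism class of each inflation $W_{i,j}$ determines $\phi_i^{-1}(K_j)$, hence the tuple recovers $N_i$, is exactly what makes the injection into tuples of isomorphism classes work. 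From there the contradiction with UBERG (finitely many irreducible $\bbF_p$-representations of dimension $\le |Q|$) is immediate. One minor stylistic point: you only need the set of tuples to be infinite, not uncountable, to force some coordinate set $\{[W_{i,j}]:i\in I\}$ to be infinite; but of course uncountable implies infinite, so nothing is lost. A mildly different proof would let $p$ vary: UBERG gives countably many irreducible representations over all prime fields, their kernels have trivial intersection, and compactness plus the finiteness of subgroups above any finite intersection of these kernels gives countably many open normal subgroups. That version avoids the pigeonhole but is otherwise in the same spirit; yours is a bit more hands-on and localizes the contradiction to a single prime.
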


\subsubsection{Type $\PFP_n$}

In \cite{CCV}, the notion of a profinite group of type $\PFP_n$ was introduced. We report the definition here for convenience. Define a module to have \emph{type $\PFP_n$} over a profinite ring $R$ if it has a projective resolution $\ldots \to P_n \to \ldots \to P_1\to P_0 \to R \to 0$ with $P_0, \ldots, P_n$ PFG profinite $R$-modules. 

A profinite group $G$ has \emph{type $\PFP_n$ over $R$} if $R$ has type $\PFP_n$ as $R \llbracket G \rrbracket$-module. 
Unless specified otherwise, type $\PFP_n$ will mean over $\hat{\mathbb{Z}}$.

\subsubsection{APFG}
Finally, the notion of APFG group was introduced by Damian in \cite{Damian}. We recall here the definition. A profinite group $G$ is said to be \emph{APFG} if the augmentation ideal $I_{\hat{\mathbb{Z}}}\llbracket G \rrbracket$ of the completed group algebra $\hat{\mathbb{Z}}\llbracket G \rrbracket$ is PFG as a $\hat{\mathbb{Z}}\llbracket G \rrbracket$-module. This is equivalent to $G$ having UBERG and type $\FP_1$ by \cite[Proposition 1.10]{CCV}.

\subsection{Crowns in groups}\label{sec:crowns}

\subsubsection{Frattini subgroup}

The \emph{Frattini subgroup} $\Phi(G)$ of a profinite group $G$ is $$ \Phi(G)= \bigcap_{M\in \mathcal{M}} M$$ where $\mathcal{M}$ is the set of all open maximal subgroup of $G$. Since the Frattini subgroup of a finite group is nilpotent, it follows that the Frattini subgroup of a profinite group is pronilpotent (see \cite[Corollary 2.8.4]{RZ}).

\subsubsection{\texorpdfstring{$G$}{G}-equivalence}

Let $G$ be a group. A \emph{$G$-group} $A$ is a group together with a homomorphism $\theta: G\to \mathrm{Aut}(A)$ and we write $\theta(g)(a) = a^g$ for convenience. Two $G$-groups $A$ and $B$ are said to be \emph{$G$-isomorphic} (in symbols $A\cong_G B$) if there exists an isomorphism $\varphi:A\to B$ such that $\varphi(a^g) = \varphi(a)^g$ for all $g\in G$, $a\in A$. Two $G$-groups $A$ and $B$ are said to be \emph{$G$-equivalent} (in symbols $A\sim_G B$) if there exist two isomorphisms $\varphi:A\to B$ and $\Psi : A\rtimes G \to B\rtimes G$ such that the following diagram is commutative:
  $$\xymatrix{ 1 \ar[r] & A \ar[r] \ar[d]^\varphi &  A\rtimes G  \ar[r] \ar[d]^\Psi & G \ar[d]^{\mathrm{id}} \ar[r] & 1 \\  1 \ar[r] & B  \ar[r]
&  B\rtimes G \ar[r]  & G  \ar[r] & 1}$$

\subsubsection{Crowns}
Let $G$ be a finite group and let $X/Y = S^t$ be a chief factor of $G$. If $S$ is abelian and $S=C_p$, then conjugation gives a $t$-dimensional irreducible representation of $G$ over $\mathbb{F}_p$. If $S$ is non-abelian instead, remembering that $\mathrm{Aut}(S^t) = \mathrm{Aut}(S)\wr \mathrm{Sym}(t)$, conjugation gives a transitive permutation representation of $G$ of degree $t$.

We now recall several definitions that will be used in many proofs throughout the rest of the article. Recall that a finite group $L$ is called \emph{monolithic} if $L$ has a unique minimal normal subgroup $N$. In this case
the socle $\mathrm{soc}(L)$ is the unique minimal normal subgroup. 
 If in addition $N$ is not contained in $\Phi(L)$, then $L$ is called a \emph{monolithic primitive group}. 

\begin{rem}
	Let $L$ be a monolithic primitive group with minimal normal subgroup $N\not\le \Phi(L)$. Then there exists a maximal subgroup $M$ of $L$ which does not contain $N$. It follows that $M$ is core-free in $L$, and hence that $L$ has a faithful primitive permutation action on the (left) cosets of $M$.
\end{rem}
 

We say that a chief factor $X/Y$ of a finite group $G$ is \emph{Frattini} if $\Phi(G/Y)\ge X/Y$. Any non-abelian chief factor is non-Frattini and any non-Frattini chief factor is complemented. 

Now, given a non-Frattini chief factor $A$ of a finite group $G$, we define $L_A=G/C_G(A)$ if $A$ is non-abelian, and $L_A=(G/C_G(A))A$ if $A$ is abelian. Then $L_A$ is a monolithic primitive group, and we say it is the monolithic primitive group \emph{associated} to $A$.

Let $L$ be a monolithic primitive group and let $N$ be its minimal normal subgroup. For a positive integer $k$, let $L^k$ be the $k$-fold direct product of $L$. The \emph{crown-based power of $L$ of size $k$} $L_k$ is the preimage of the diagonal copy of $L/N$ in $(L/N)^k$, under the projection map $L^k \to (L/N)^k$.

For $A$ a non-Frattini chief factor of $G$ as before, let $\mathcal{N}_A$ be the set of normal subgroups $N$ of $G$ such that $G/N\cong L_A$ and $\mathrm{soc}(G/N)\sim_G A$. Then, setting $R_G(A) = \bigcap_{N\in \mathcal{N}_A} N$, we have that $G/R_G(A)$ is isomorphic to the crown-based power $(L_A)_{\delta_G(A)}$, where $\delta_G(A)$ is the number of non-Frattini chief factors of $G$ $G$-equivalent to $A$ (in any chief series).
 
We recall a standard lemma that we will need later.
\begin{lem}[{\cite{KK}}]\label{lem:KK}
	Let $T$ be a monolithic group with non-abelian minimal normal subgroup $N=S^s$ and fix a copy $S_1$ of $S$ in $N$. Then $T$ embeds in $\mathrm{Aut}(S)\wr T/\tilde{K}$, where $\tilde{K} = \mathrm{core}_T(N_T(S_1))$.
\end{lem}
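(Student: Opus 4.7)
The plan is to use the natural conjugation action of $T$ on its socle $N = S_1 \times \cdots \times S_s$, and show that this action already factors through the desired wreath product.

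First, I would verify that the conjugation map $\phi \colon T \to \mathrm{Aut}(N)$ is injective. Its kernel is $C_T(N)$, which is normal in $T$. Since $T$ is monolithic, either $C_T(N) = 1$ or $C_T(N) \supseteq N$; but the latter would force $N \subseteq Z(N)$, making $N$ abelian, contrary to hypothesis. Hence $\phi$ is an embedding.

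Next, I would use the fact that, as $S$ is a non-abelian simple group, $\mathrm{Aut}(N) = \mathrm{Aut}(S^s) \cong \mathrm{Aut}(S)\wr \mathrm{Sym}(s)$, with the $\mathrm{Sym}(s)$-factor acting by permuting the $s$ copies of $S$ coming from the unique decomposition of $N$ as a product of its minimal normal subgroups $S_1,\dots,S_s$. The composition of $\phi$ with the projection $\mathrm{Aut}(S)\wr \mathrm{Sym}(s) \twoheadrightarrow \mathrm{Sym}(s)$ is exactly the permutation action of $T$ on $\{S_1,\dots,S_s\}$ by conjugation. The stabiliser of $S_1$ under this action is $N_T(S_1)$, so the kernel is $\bigcap_{t\in T} N_T(S_1)^t = \mathrm{core}_T(N_T(S_1)) = \tilde{K}$. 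Thus the image of $T$ in $\mathrm{Sym}(s)$ under this map is isomorphic to $T/\tilde{K}$.

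Putting these two steps together, $\phi(T) \subseteq \mathrm{Aut}(S)^s \rtimes (T/\tilde{K}) = \mathrm{Aut}(S)\wr (T/\tilde{K})$, where $T/\tilde{K}$ is viewed as a permutation subgroup of $\mathrm{Sym}(s)$ via the above action. Combined with the injectivity of $\phi$, this yields the claimed embedding. I do not anticipate any real obstacle: the argument is elementary and the main point is simply to correctly identify the kernel of the permutation action on the simple factors with $\tilde{K}$. The only thing worth being careful about is that the permutation action of $T$ on $\{S_1,\dots,S_s\}$ is transitive (which follows from the minimality of $N$ in $T$), so that the quotient $T/\tilde{K}$ is genuinely realised as a transitive subgroup of $\mathrm{Sym}(s)$, making the ambient wreath product well defined.
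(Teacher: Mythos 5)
Your argument is correct. The paper cites this lemma to Kaloujnine--Krasner \cite{KK} without giving a proof, so there is no in-text argument to compare against; your direct proof via the conjugation action $T \to \Aut(N)$ is the natural one and is sound: $C_T(N)=1$ follows from monolithicity and non-abelianness of $N$, $\Aut(S^s)\cong \Aut(S)\wr\Sym(s)$ is standard for $S$ non-abelian simple, and the identification of $\ker(T\to\Sym(s))$ with $\mathrm{core}_T(N_T(S_1))=\tilde K$ uses exactly the transitivity of $T$ on $\{S_1,\dots,S_s\}$ that you flag at the end.

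One small remark on the relation to the cited reference: the Kaloujnine--Krasner embedding theorem in its usual form gives $T\hookrightarrow N_T(S_1)\wr(T/\tilde K)$ for the subgroup $N_T(S_1)\leq T$; to arrive at $\Aut(S)\wr(T/\tilde K)$ from there one must compose with the conjugation map $N_T(S_1)\to\Aut(S_1)$ in each coordinate and separately verify that the composite stays injective (using again $C_T(N)=1$). Your route skips this extra verification by going directly through $\Aut(N)$, which is a genuine simplification; the two arguments are equivalent in content but yours is shorter to justify.
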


The theory of crowns in profinite groups is developed in \cite{DL}, where the reader can find the relevant details and proofs.

\subsection{The constant \texorpdfstring{$c_4$}{c4}}\label{sec:c4}

In this article we will make a heavy use of the constant $c_4$ that appears in \cite{JP}, therefore we report its definition here for the convenience of the reader. In \cite{JP}, $c_4$ is defined to be $16 +\max\{3,c_3\}$, for another constant $c_3$. In particular, $c_4 \geq 19$.

The existence of $c_3$ is one of the main results of \cite{LS}; in fact they show this result for $c_3$ an explicit constant. We do not know what values $c_3$ can take.

\section{Conditions for UBERG}\label{sec:cond_UBERG}

Recall that a profinite group $G$ is said to have UBERG if the completed group ring $\hat{\mathbb{Z}}\bra G \ket$ is positively finitely generated as a (left) module for itself. Whereas PFG groups are closed under extensions, this has hitherto been unknown for UBERG groups.

In this section we give related conditions, one necessary and one sufficient, for a finitely generated profinite group $G$ to have UBERG.

We remark that the statement of \cite[Proposition 4.1]{JP} can be sharpened as follows:

\begin{lem}
	\label{transitive}
	Let $G$ be a finite $d$-generated group and $T$ a transitive
	group of degree $n$. Then there are at most $16^{dn} |T| r$ epimorphisms from $G$ onto $T$, where $r$ is the maximum of $\rk_K(G)$ over all $K \cong Alt(b)^s$ such that $bs \leq n$.
\end{lem}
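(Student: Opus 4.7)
The plan is to adapt the proof of \cite[Proposition 4.1]{JP} to obtain the sharper bound stated here. That proof proceeds by the standard factorization
\[
|\Epi(G,T)| = |\Aut(T)|\cdot N(G,T),
\]
where $N(G,T) := \#\{N\unlhd G : G/N\cong T\}$, and bounds each factor separately. It therefore suffices to refine the bounds on these two factors until their product has the desired shape $16^{dn}|T|r$.

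For $N(G,T)$, I would invoke the theory of crowns developed in Section~\ref{sec:crowns}: a normal subgroup $N\unlhd G$ with $G/N\cong T$ yields a chief factor of $G$ isomorphic to the socle $K := \operatorname{soc}(T)$, fitting into a specific crown-based power structure inside $G$. When $K \cong \operatorname{Alt}(b)^s$ with $bs \le n$, the number of such normal subgroups is controlled by the multiplicity of this chief factor, which is at most $\rk_K(G) \le r$. For socle types of non-alternating form, the contribution is absorbable into the $16^{dn}$ factor via a Pyber-type bound on $d$-generated transitive permutation groups of degree $n$; the bookkeeping here should track the number of $G$-equivalence classes of chief factors, which is where the constant $16$ enters (cf.\ the proof of \cite[Proposition 4.1]{JP}).

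For $|\Aut(T)|$, I would exploit the faithful transitive embedding $T \le \Sym(n)$: the group $\Aut(T)$ embeds into the normalizer quotient $N_{\Sym(n)}(T)/C_{\Sym(n)}(T)$, and for transitive $T$ this normalizer simultaneously encodes inner and outer automorphism information. This gives a bound of the form $|\Aut(T)| \le |T|\cdot f(n)$ with $f(n)$ absorbable into $16^{dn}$ (since $|N_{\Sym(n)}(T)/T|\le n^{O(\log n)}$ for transitive $T$, and $n^{\log n}\le 16^n$ once $n$ is moderately large).

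The main subtlety, and hence the main obstacle, is the sharpening from the factor $|T|^2$ implicit in the original \cite{JP} argument to $|T|$. This requires treating inner and outer automorphisms jointly via their common realization in $N_{\Sym(n)}(T)$, rather than as independent contributions, so that the inner-automorphism contribution $|\operatorname{Inn}(T)| = |T/Z(T)| \le |T|$ is not double-counted alongside the enumeration of normal subgroups with quotient $T$. Once this refinement is in place, combining the bounds on $N(G,T)$ and $|\Aut(T)|$ yields the desired inequality $|\Epi(G,T)|\le 16^{dn}\cdot |T|\cdot r$.
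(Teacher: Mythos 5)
Your factorization $|\Epi(G,T)| = |\Aut(T)|\cdot N(G,T)$ is valid, but the claim you use to bound $|\Aut(T)|$ is backwards. Conjugation gives a homomorphism $N_{\Sym(n)}(T) \to \Aut(T)$ with kernel $C_{\Sym(n)}(T)$, so it is $N_{\Sym(n)}(T)/C_{\Sym(n)}(T)$ that embeds into $\Aut(T)$, not the other way round, and the containment is typically strict: for $T=\operatorname{Alt}(6)$ acting naturally on $6$ points, $N_{\Sym(6)}(T)/C_{\Sym(6)}(T)\cong\Sym(6)$ has index $2$ in $\Aut(\operatorname{Alt}(6))$. Automorphisms of $T$ that do not preserve the degree-$n$ permutation representation up to equivalence are simply not seen by $N_{\Sym(n)}(T)$, so the inequality $|\Aut(T)|\le |T|\cdot f(n)$ does not follow from anything you have written, and the entire $|\Aut(T)|$ half of the argument collapses.

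The treatment of $N(G,T)$ is also too vague to carry the rest. The crown machinery of Section~\ref{sec:crowns} is set up for \emph{monolithic} primitive groups, whereas a transitive $T$ of degree $n$ can have several minimal normal subgroups, so ``the socle $K=\mathrm{soc}(T)$'' need not be a single chief factor and the passage from normal subgroups $N\unlhd G$ with $G/N\cong T$ to $\rk_K(G)$ is not immediate. The assertion that the non-alternating contributions are absorbable into $16^{dn}$ is stated but not argued, and it is precisely this bookkeeping (why alternating socles of small transitive degree are the only source of ``many'' kernels, and why the rest costs only an exponential in $dn$) that constitutes the substance of the lemma. Note finally that the paper supplies no proof of its own --- it simply records that the statement is the same idea as \cite[Remark 4.2]{JP} --- so there is no step-by-step comparison to make; but as written the proposal contains a concrete error in the $\Aut(T)$ step and is otherwise a sketch rather than a proof.
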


This is the same idea as \cite[Remark 4.2]{JP}, but in the form we need for our purposes.

Suppose $K$ is a non-abelian characteristically simple finite group, say $K = S^s$ with $S$ simple. As in \cite{JP}, we write $l(K)$ for the minimal degree of a faithful transitive permutation representation of $K$. We define the \emph{projective length} $l^{proj}(K)$ of $K$ as follows: let $l^{proj}(S)$ be the smallest size of a non-trivial irreducible projective representation of $S$ (over any field); define $l^{proj}(K) = l^{proj}(S)^s$ (cf.\ Proposition \ref{primitiverep}). Note that $l^{proj}(S)$ is also the smallest size of a non-trivial linear representation of the universal central extension of $S$ (see e.g. \cite{Karpilovsky}).

\begin{lem}
	\label{lengths}
	Suppose $K=S^s$, with $S$ a non-abelian simple group. Then $l^{proj}(K) > l(K)$.
\end{lem}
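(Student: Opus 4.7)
The plan is to reduce to two independent inequalities: $l^{proj}(S) > l(S)$ and $l(S^s) \le l(S)^s$. Combined with the definition $l^{proj}(K) = l^{proj}(S)^s$, these give
\[
l^{proj}(K) = l^{proj}(S)^s > l(S)^s \ge l(S^s) = l(K),
\]
as required.

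For the product inequality $l(S^s) \le l(S)^s$, I would let $X$ be a faithful transitive $S$-set of size $l(S)$ and consider the componentwise action of $S^s$ on $X^s$. This action is transitive since $S$ is transitive on each factor, and it is faithful: if $(g_1,\dots,g_s)$ acts trivially on $X^s$, then each $g_i$ fixes every point of $X$, so $g_i = 1$ by faithfulness of the $S$-action on $X$. Therefore $l(S^s) \le |X^s| = l(S)^s$.

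For the strict inequality $l(S) < l^{proj}(S)$, I would pick an irreducible projective representation $\rho$ of $S$ realising $l^{proj}(S)$, of degree $n$ over a finite field $E$, so that $l^{proj}(S) = |E|^n$. Since $\mathrm{PGL}_1(E)$ is trivial we must have $n \ge 2$, and since $S$ is simple with $\rho$ non-trivial, $\rho$ is faithful. Thus $S$ embeds into $\mathrm{PGL}_n(E)$, whose centre is trivial for $n \ge 2$, so $S$ cannot fix every point of $\mathbb{P}^{n-1}(E)$; choose a non-singleton $S$-orbit $O$. The kernel of the $S$-action on $O$ is a proper normal subgroup of $S$, hence trivial by simplicity, giving a faithful transitive $S$-set of degree $|O| \le |\mathbb{P}^{n-1}(E)| = (|E|^n - 1)/(|E|-1) < |E|^n = l^{proj}(S)$. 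The only delicate point is the existence of the non-singleton orbit, which relies on the centre-freeness of $\mathrm{PGL}_n(E)$ for $n \ge 2$; the rest is bookkeeping with faithfulness, transitivity, and the cardinality of projective space.
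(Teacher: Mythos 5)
Your proposal is correct and follows essentially the same route as the paper: both reduce to the one-factor inequality $l^{proj}(S) > l(S)$ via $l^{proj}(K) = l^{proj}(S)^s$ and $l(K) \le l(S)^s$, and both extract a faithful transitive $S$-set from a non-trivial orbit of $S$ on the projective points of a minimal projective module, using simplicity to get faithfulness and the bound $|\mathbb{P}^{n-1}(E)| < |E|^n$ to get strictness. One small imprecision: the reason $S$ cannot fix every point of $\mathbb{P}^{n-1}(E)$ is not that $Z(\mathrm{PGL}_n(E))$ is trivial, but that the action of $\mathrm{PGL}_n(E)$ on $\mathbb{P}^{n-1}(E)$ is faithful (its kernel in $\mathrm{GL}_n(E)$ is exactly the scalars), so a non-trivial image of $S$ must move some point; the conclusion you draw is right and the fix is immediate.
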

\begin{proof}
	Since $l(K) \leq  l(S)^s$ (here in fact equality holds), it is enough to show $l^{proj}(S) \geq l(S)$. Given a non-trivial projective representation $S \to GL(M)$ of minimal size over some finite field $F$, we have an action of $S$ on the projective points of $M$, i.e. the orbits of $M \setminus \{0\}$ under the action of $F^\times$. This action is non-trivial because $S$ has non-trivial image in $PGL(M)$; since $S$ is simple, each non-trivial $S$-orbit of this action is a faithful transitive permutation representation of $S$ of size at least $l(S)$, and strictly less than $|M| = l^{proj}(S)$.
\end{proof}

For completeness we include following lemma, which was mentioned in \cite[Theorem 11.1]{JP}.
\begin{lem}
	\label{lem:aut-1}
	Suppose $T$ is a monolithic group with non-abelian minimal normal subgroup $K=S^r$, with $S$ a non-abelian simple group. 
	Then $|\Aut_1(T)| \leq r |\Aut(S)|^r$, where $\Aut_1(T)$ denotes the group of automorphisms of $T$ which induce the identity on $T/K$.
\end{lem}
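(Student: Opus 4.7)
The plan is to exploit that $K$ has trivial centre to reduce everything to a computation in $\Aut(K) = \Aut(S) \wr \Sym(r)$. First I would note that $Z(K) = 1$ since $S$ is non-abelian simple, so $C_T(K)\cap K = 1$; because $C_T(K)$ is normal in $T$ and $K$ is the unique minimal normal subgroup, $C_T(K) = 1$. Conjugation therefore embeds $T$ in $\Aut(K)$, which yields a homomorphism $\pi\colon T \to \Sym(r)$ describing the $T$-action on the simple factors $S_1,\dots,S_r$; its image $P$ is transitive because the components form a single $T$-orbit (as $K$ is a minimal normal subgroup of $T$).

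Since $K$ is characteristic in $T$, restriction gives a map $\rho\colon \Aut_1(T)\to \Aut(K)$. My second step would be to check $\rho$ is injective: if $\phi|_K = \mathrm{id}$, then for each $t\in T$ the element $u = t^{-1}\phi(t)$ lies in $K$, and applying $\phi$ to $tkt^{-1}$ in two ways shows that $u$ centralises $K$; since $C_K(K)=Z(K)=1$, we get $\phi = \mathrm{id}$.

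The key step would then be to post-compose with the projection $\Aut(K) \to \Sym(r)$ and show the image of $\Aut_1(T)$ lies in $C_{\Sym(r)}(P)$. Writing $c_t$ for conjugation by $t\in T$, the identity $\phi\circ c_t = c_{\phi(t)}\circ\phi$ on $K$ projects in $\Sym(r)$ to $\pi_\phi \pi_t = \pi_{\phi(t)}\pi_\phi$. Since $\phi$ acts trivially on $T/K$ and $\pi$ is trivial on $K$, one has $\pi_{\phi(t)}=\pi_t$, giving commutativity. A transitive subgroup of $\Sym(r)$ has centraliser of order at most $r$ (it acts freely on $\{1,\dots,r\}$), so the image has size at most $r$, while the kernel of the composed map consists of $\phi$ whose restriction to $K$ lies in the base $\Aut(S)^r$, and by injectivity of $\rho$ there are at most $|\Aut(S)|^r$ of these. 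Multiplying the two bounds gives $|\Aut_1(T)|\le r|\Aut(S)|^r$.

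The main obstacle is the third step: one has to verify carefully that the hypothesis $\phi\in\Aut_1(T)$ is what turns the a priori conjugacy relation $\pi_\phi \pi_t \pi_\phi^{-1}=\pi_{\phi(t)}$ into the stronger statement that $\pi_\phi$ centralises $P$. Everything else is a standard use of the structure of $\Aut(S^r)$ and the freeness of centralisers of transitive permutation groups.
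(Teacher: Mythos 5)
Your argument is correct and follows essentially the same route as the paper: both prove injectivity of restriction to $K$ on $\Aut_1(T)$, bound the kernel of $\Aut_1(T)\to\Sym(r)$ by $|\Aut(S)|^r$, and bound the image in $\Sym(r)$ by $r$. The only cosmetic difference is in the last step: the paper directly shows the image of $f$ in $\Sym(r)$ is determined by $f(S_1)$, whereas you phrase it as lying in the centraliser of the transitive group $\pi(T)$ and invoke semiregularity of that centraliser, but the underlying computation $f(tS_1t^{-1})=tf(S_1)t^{-1}$ is identical.
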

\begin{proof}
	The automorphism group $\Aut(K)$ is isomorphic to the wreath product $\Aut(S)^r \rtimes \Sym(r)$. The homomorphism from $\Aut(T)$ to $\Aut(K)$ is injective \cite[Lemma 2.10]{JP}.  Since $K$ is minimal, $T$ acts transitively on the direct factors  $S_1 \times \dots \times S_r$ of $S^r$. We observe that the image of $f \in \Aut_1(T)$ in $\Sym(r)$ is uniquely determined by $f(S_1)$. Indeed,  suppose $S_j = tS_1t^{-1}$ for $t \in T$, then $f(t) = tk$ for some $k \in K$ and hence
	$f(tS_1t^{-1}) = f(t)f(S_1)f(t)^{-1} = tkf(S_1)k^{-1}t^{-1} = tf(S_1)t^{-1}$.
Thus the image of $\Aut_1(T)$ in $\Sym(r)$ has at most $r$ elements and the assertion follows.
\end{proof}

Let $T$ be an irreducible linear subgroup of $GL_{\F_p}(V)$. Write $\mathrm{Epi}(G,T)_T$ for the set of $T$-conjugacy classes of epimorphisms $G \to T$. The next lemma is essentially \cite[Lemma 7.2]{JP}.

\begin{lem}
	\label{epi}
	$|\mathrm{Epi}(G,T)|/|T| \leq |\mathrm{Epi}(G,T)_T| \leq |V||\mathrm{Epi}(G,T)|/|T|.$
\end{lem}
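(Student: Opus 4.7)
The plan is to analyse the conjugation action of $T$ on $\mathrm{Epi}(G,T)$, whose orbits are by definition the $T$-conjugacy classes in $\mathrm{Epi}(G,T)_T$. By the orbit-stabiliser theorem, the orbit of a representation $\rho$ has size $|T|/|\mathrm{Stab}_T(\rho)|$, where the stabiliser is $C_T(\rho(G))$. Since $\rho$ is an epimorphism, $\rho(G) = T$, so $\mathrm{Stab}_T(\rho) = C_T(T) = Z(T)$. Summing over orbits yields the identity
\[
|\mathrm{Epi}(G,T)| \;=\; |\mathrm{Epi}(G,T)_T| \cdot \frac{|T|}{|Z(T)|},
\]
so equivalently $|\mathrm{Epi}(G,T)_T| = |Z(T)| \cdot |\mathrm{Epi}(G,T)|/|T|$.

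From this exact formula, both inequalities are immediate once we bound $|Z(T)|$. The lower bound follows from $|Z(T)| \geq 1$. For the upper bound, I would invoke Schur's lemma in the form valid for finite fields: since $T$ acts irreducibly on the $\F_p$-vector space $V$, the endomorphism ring $E = \mathrm{End}_{\F_p[T]}(V)$ is a finite division ring, hence a finite field extension of $\F_p$, and $V$ acquires the structure of an $E$-vector space of dimension $\geq 1$. Any element of $Z(T)$ commutes with all of $T$, so it lies in $E^\times$, giving
\[
|Z(T)| \;\leq\; |E^\times| \;=\; |E| - 1 \;<\; |E| \;\leq\; |V|.
\]
Substituting this into the exact formula yields $|\mathrm{Epi}(G,T)_T| \leq |V| \cdot |\mathrm{Epi}(G,T)|/|T|$, completing the proof.

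There is no serious obstacle here; the proof is a direct application of orbit-stabiliser together with Schur's lemma over a finite field. The only subtle point worth flagging in the write-up is that Schur's lemma must be used in the division-ring form (since we are not over an algebraically closed field), so that $|Z(T)|$ is bounded by $|E^\times|$ for the finite field $E = \mathrm{End}_{\F_p[T]}(V)$, and not merely by $|\F_p^\times|$.
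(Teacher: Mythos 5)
Your proof is correct and follows essentially the same route as the paper: identify the stabiliser of $\rho$ under the conjugation action with $Z(T)$, bound $Z(T)$ inside $\End_{\F_p[T]}(V)$ via Schur's lemma, and use $|\End_{\F_p[T]}(V)| \leq |V|$ by irreducibility; the paper's one-line proof does exactly this, citing orbit-stabiliser at the end. Your write-up is merely more explicit (you observe that all orbits have the same size, giving an exact identity rather than just the two inequalities), but the underlying argument is identical.
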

\begin{proof}
	Suppose $T \leq GL_{\F_p}(V)$. Then $Z(T) \leq C_{GL_{\F_p}(V)}(T) \leq \End_T(V)$, and $|\End_T(V)| \leq |V|$ because $V$ is an irreducible $T$-module. The result follows by the orbit-stabiliser theorem.
\end{proof}

We can now give a sufficient condition for UBERG, roughly analogous to \cite[Theorem 11.1, (4) $\Rightarrow$ (1)]{JP}. Let $c_4$ be the constant defined in \cite[Section 5]{JP} (see Section~\ref{sec:c4}). Also recall that, for a monolithic primitive group $L$ with minimal normal subgroup $N$, we say that $L$ is \emph{associated} with $A$ if $A$ is isomorphic to $N$. 
\begin{thm}
	\label{crownpowersUBERG}
Let $G$ be a $d$-generated profinite group. Suppose there is some $b$ such that, for all finite monolithic groups $L$ with non-abelian minimal normal subgroup $K$, if the crown-based power $L_k$ is a quotient of $G$ then $k\le l^{proj}(K)^b$. Then there is some $c$ such that, for $T$ an irreducible linear subgroup of $GL_{\F_p}(V)$, $|\Epi(G,T)| \leq |T||V|^c$ and $G$ has UBERG.
\end{thm}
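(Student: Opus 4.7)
The goal is to establish the bound $|\Epi(G,T)| \leq |T||V|^c$ for all irreducible $T \leq GL_{\F_p}(V)$; UBERG will then follow by a standard counting argument. Indeed, from Lemma~\ref{epi} such a bound yields $|\Epi(G,T)_T| \leq |V|^{c+1}$; passing from $T$-conjugacy to $GL(V)$-conjugacy of epimorphisms costs only a factor of $|V|$ (since the kernel of $N_{GL(V)}(T)/C_{GL(V)}(T) \hookrightarrow \Aut(T)$ has size at most $|V|$ by Schur's lemma), and summing over the conjugacy classes of irreducible subgroups of $GL_n(F)$ that arise as $d$-generated quotients (which is polynomial in $|V|=|F|^n$ by the structural analysis below) will yield $r(G,F,n) \leq |F|^{c'n}$.

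For the main bound I would distinguish cases by the structure of $T$. Since $T$ is faithful irreducible in characteristic $p$, we have $O_p(T)=1$, so the generalised Fitting subgroup of $T$ is the central product of $O_{p'}(T)$ with the layer $E(T)$. If $T$ is solvable (in particular $E(T)=1$), then classical bounds on irreducible solvable linear groups give $|T| \leq |V|^{c_1}$, and together with $d$-generation, $|\Epi(G,T)| \leq |T|^d \leq |T|\cdot|V|^{c_1(d-1)}$ suffices. Otherwise $E(T) \neq 1$, and the socle of $T/Z(T)$ contains a non-abelian minimal normal subgroup of the form $K = S^r$ for some non-abelian simple $S$. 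Quotienting $T$ by an appropriate normal subgroup produces a monolithic primitive group $L$ with this $K$ as minimal normal subgroup, and the given epimorphism $G \twoheadrightarrow T$ factors through $L$; the collection of all such epimorphisms $G \twoheadrightarrow L$ is controlled by the crown-based power $L_{\delta_G(K)}$ appearing as a quotient of $G$.

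The crucial estimate is that the faithful action of $T$ on $V$ restricts on $E(T)$ (a central product of quasisimple components mapping onto the simple factors of $K$) to a tensor product of faithful irreducible projective representations of $S$, each of size at least $l^{proj}(S)$. Hence $|V| \geq l^{proj}(S)^r = l^{proj}(K)$, and the hypothesis gives $\delta_G(K) \leq l^{proj}(K)^b \leq |V|^b$. Applying Lemma~\ref{transitive} to the faithful transitive permutation action of $L_k$ on cosets of a suitable core-free maximal subgroup (of degree bounded by $k \cdot l(K) \leq |V|^{b+1}$ using Lemma~\ref{lengths}), combined with Lemma~\ref{lem:aut-1} to control automorphism contributions and the constant $c_4$ to bound the rank factor, should yield a bound on $|\Epi(G,L_k)|$ and hence on $|\Epi(G,T)|$ of the form $|T|\cdot|V|^c$ for some $c$ depending on $b$, $d$, and $c_4$.

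The main obstacle I anticipate is the careful bookkeeping of constants, in particular ensuring that (i) the rank-$r$ factor arising from alternating-group sections of $K$ in Lemma~\ref{transitive} is absorbed by the crown-based power hypothesis $k \leq l^{proj}(K)^b$, rather than growing with $r$, and (ii) the deduction of UBERG from the epimorphism bound genuinely goes through — this requires that the number of conjugacy classes of irreducible $d$-generated subgroups of $GL_n(F)$ arising as quotients of $G$ is polynomial in $|V|$, which must be extracted from the same structural analysis together with a count of the possible socle types.
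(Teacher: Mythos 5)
Your overall shape — bound $|\Epi(G,T)|$ by relating $T$ to a monolithic quotient with non-abelian socle, feed in the crown-power hypothesis, then convert the epimorphism bound to UBERG via Lemma~\ref{epi} and a count of conjugacy classes of irreducible subgroups — is the right kind of argument, and it matches the paper's high-level plan. But your case split and the key inequality both have real gaps, and they are not just ``bookkeeping''.

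First, your dichotomy ``$T$ solvable'' versus ``$E(T) \neq 1$'' is not exhaustive. Since $T$ is faithful irreducible in characteristic $p$, $O_p(T)=1$, but there are non-solvable irreducible linear groups with $E(T)=1$ (for example, normalisers of extraspecial $r$-groups, where $F^*(T) = O_{p'}(T)$ is nilpotent and $T/F^*(T)$ is close to a symplectic or orthogonal group). These fall in neither of your cases. The paper avoids this by not splitting on the internal structure of $T$ at all: it splits on whether $|\tilde{K}|$ (the core of the subgroup from which $V$ is induced as a primitive module) is $\leq |V|^{c_4}$ or $> |V|^{c_4}$. In the ``small core'' case, $T/\tilde{K}$ is a transitive group of degree $n/m$ and the argument runs through \cite[Theorem 10.2]{JP} with $|\tilde{K}|^d$ absorbed directly; the ``large core'' case is where \cite[Proposition 5.7]{JP} is invoked and the layer appears, and Remark~\ref{rem:K=1} explains that the troublesome $K=1$ subcase cannot occur under this split. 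Your split has no mechanism for doing the same.

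Second, your crucial estimate $|V| \geq l^{proj}(K)$ is asserted for the faithful action of $T$ on $V$ restricted to $E(T)$, but this tensor-product decomposition into projective representations of $S$ only holds after passing to a \emph{primitive} constituent. For imprimitive $T$, $V$ restricted to $E(T)$ decomposes as a sum, not a tensor product, and the naive size estimate fails. The paper handles this precisely via the primitive reduction: $V = \Ind_H^T W$ with $W$ primitive, $|V| = |W|^s$, and then one checks $l^{proj}(S) \leq |W|$ (because $W$ is an $E(B)$-module on which $Z(E(B))$ acts by scalars) and $r \leq s$, giving $l^{proj}(K/Z(K)) = l^{proj}(S)^r \leq |W|^s = |V|$. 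Without the primitive reduction you don't get this inequality. Relatedly, the passage from $\Epi(G, L)$ to $\Epi(G,T)$ is not a single step: the paper goes $\Epi(G, T/KC_T(K)) \to \Epi(G, T/C_T(K)) \to \Epi(G,T)$, bounding the fibres of the first lift by the crown-power hypothesis together with Lemma~\ref{lem:aut-1}, and bounding $|C_T(K)|$ by $|V|^3$ via \cite[Proposition 7.1]{JP} for the second. This intermediate quotient structure is what turns the crown-power hypothesis into the needed $|T||V|^c$ bound; quoting Lemma~\ref{transitive} on $L_k$ itself, as you suggest, would not produce the correct dependence on $|T|$ rather than $|L_k|$.

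In short: the plan is directionally correct, but the missing primitive reduction is the load-bearing idea of the paper's proof, not an optional route, and your case split needs to be replaced by the paper's split on $|\tilde{K}|$ (or supplemented with a third case) to be complete.
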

\begin{proof}
	By \cite[Theorem 10.2, (4) $\Rightarrow$ (1)]{JP}, there is some $a$ such that, for any transitive group $Q$ of degree $k$ 
	\begin{equation}\label{eq:bound-epi}
		|\Epi(G,Q)| \leq |Q|a^k.
	\end{equation}
	
	Let $\dim_{\F_p}V = n$. Let $H$ be a subgroup of $T$ such that the representation of $T$ is induced from a primitive representation of $H$. Denote by $W$ a primitive $H$-module such that $V = \Ind_H^T W$. Let $P$ be the image of $H$ in $\End_{\F_p}(W)$ and $m = \dim_{\F_p} W$. Put $\tilde{K} = \mathrm{core} (H)$. Then $T/ \tilde{K}$ is a transitive group of degree $s = n/m$ and $T$ is a subgroup of $P \wr T/ \tilde{K}$.
	
	Case 1: Suppose that $|\tilde{K}| \leq |V|^{c_4}$.
	
	Then  $|\Epi(G,T/\tilde{K})| \leq |T/\tilde{K}|a^s$ by \eqref{eq:bound-epi}, and hence, by the hypothesis, $$|\Epi(G,T)| \leq |T/\tilde{K}|a^s |V|^{c_4d} \leq |T| |V|^{\log_p(a)+c_4d}.$$
	
	Case 2: Suppose that $|\tilde{K}| > |V|^{c_4}$.
	
	Since $|\tilde{K}| = |T|/|T/\tilde{K}| \leq |P|^s$ we have $|P| > |W|^{c_4}$ and so we can use \cite[Proposition 5.7]{JP}. We use the notation of that proposition. Denote $E(B)/Z(E(B))$ by $S$, where $E(B)$ is the layer of $B$ (see Section~\ref{sec:notation}). Let $K = (T \cap E(B)^s)'$. As in \cite[Propositions 6.1 and 7.1]{JP}, $K$ is normal in $T$ and is a subdirect product of $E(B)^s$, and $T$ acts transitively on the factors of $E(B)^s$; in particular, $K/Z(K) \cong S^r$, for some $r \leq s$, and $T/C_T(K)$ is the primitive group associated with $K/Z(K)$ (cf.\ Section~\ref{sec:crowns}).
	
	By \cite[Lemma 7.7]{GMP}, $$|\mathrm{Out}(S)| \leq 3 \log(l(S)) \leq 3 \log(l^{proj}(S)).$$ It follows that $T/KC_T(K)$ (which embeds into $\mathrm{Out}(S) \wr \Sym(r)$) is a transitive group of degree at most $3r \log(l^{proj}(S)) = 3 \log(l^{proj}(K/Z(K)))$, so by \eqref{eq:bound-epi}
	 \begin{align*}
	  \vert \Epi(G,T/KC_T(K)) \vert &\leq |T/KC_T(K)|a^{3 \log(l^{proj}(K/Z(K)))} \\
	  &= |T/KC_T(K)| l^{proj}(K/Z(K))^{3 \log(a)}.
	  \end{align*}
	  Fix some $\phi \in \Epi(G,T/KC_T(K))$, let $\phi_1, \ldots, \phi_j$ be its preimages in $\Epi(G,T/C_T(K))$, and note that $(\phi_1, \ldots, \phi_j): G \to \prod_{i=1}^j T/C_T(K)$ is a quotient map from $G$ to a crown power of $K/Z(K)$; writing $\mathrm{Aut}_1(T/C_T(K))$ for the group of automorphisms of $T/C_T(K)$ which induce the identity on $T/KC_T(K)$, we deduce using Lemma~\ref{lem:aut-1} that
	   \[ j \leq l^{proj}(K/Z(K))^b |\mathrm{Aut}_1(T/C_T(K))| \leq  l^{proj}(K/Z(K))^b r|\mathrm{Aut}(S)|^r\]
 and therefore 
 \begin{multline*}
 \vert \Epi(G,T/C_T(K))\vert  \leq \\ \leq |T/KC_T(K)| l^{proj}(K/Z(K))^{3 \log(a)} l^{proj}(K/Z(K))^b r|\mathrm{Aut}(S)|^r.
 \end{multline*}
  Again, by \cite[Lemma 7.7]{GMP}, $|\mathrm{Out}(S)| \leq l(S) \leq l^{proj}(S)$, so 
  \begin{multline*} |\Epi(G,T/C_T(K))| \leq \\ \leq |T/C_T(K)| l^{proj}(K/Z(K))^{3 \log(a) + b + 2} \leq |T/C_T(K)| |V|^{3 \log(a) + b + 2}
  \end{multline*}
   because $l^{proj}(K/Z(K)) \leq |V|$: indeed, $l^{proj}(K/Z(K)) = l^{proj}(S)^r$ and $|V| = |W|^s$, $s \geq r$, so it is enough to show that $l^{proj}(S) \leq |W|$, which holds because (by \cite[Proposition 5.7]{JP}) $W$ is an $E(B)$-module, defined over some finite extension of $\F_p$, on which $Z(E(B))$ acts by scalars, so it is a projective representation of $S = E(B)/Z(E(B))$.
	
	Finally, as in \cite[Proposition 7.1, Case (2)]{JP}, we have $|C_T(K)| \leq |V|^3$, so $|\Epi(G,T)| \leq |T| |V|^{3 \log(a) + b + 2 + 3d}$.
	
	To conclude that $G$ has UBERG, we apply Lemma \ref{epi} and \cite[Proposition 6.1]{JP}.
\end{proof}

\begin{rem}\label{rem:K=1}
Note that the case $K=1$ does not occur in the proof of the previous theorem (or in \cite[Proposition 7.1]{JP}), while it does in \cite[Proposition 6.1]{JP}, because here we are using $|\tilde{K}| > |V|^{c_4}$ instead of $|P| > |V|^{c_4}$. That is, in the notation of \cite[Proposition 6.1]{JP}, $\vert \tilde{K} \vert = |T \cap \tilde{N}| > |V|^{c_4}$. But $|\tilde{N}| = |N_{GL_{\F_p}(W)}(E(B))|^s$, and
\begin{align*}
	|N_{GL_{\F_p}(W)}(E(B))| &\leq |C_{GL_{\F_p}(W)}(E(B))||\Aut(E(B))| \\
	&\leq |W||\mathrm{Out}(E(B))||E(B)|
\end{align*}
by \cite[Proposition 5.7(5)]{JP}. Also $|\mathrm{Out}(E(B))| \leq |W|$ by \cite[Lemma 7.7]{GMP}. So $|\tilde{N}/N| \leq |W|^{2s} = |V|^2$. Therefore $|T \cap N| > |V|^{c_4-2} > |V|$ (note that $c_4\ge 3$), whereas $|Z(E(B))|^s \leq |W|^s = |V|$. So we cannot have $T \cap N \leq Z(E(B))$.
\end{rem}

For our necessary condition, we work parallel to \cite[Lemma 9.2]{JP}. While $l^{proj}(S)$ measures the size of the smallest projective representation of a non-abelian simple group $S$, we now need to measure the size of the smallest linear representation of $S$.

\begin{lem}
	\label{lem:faithful-factors}
	Let $F$ be finite field of characteristic $p$ and let $T$ be a finite group. Let $V$ be a faithful finite dimensional linear $F$-representation of $T$. Every element of $T$ which acts trivially on all composition factors of $V$ lies in $O_p(T)$. In particular, the smallest faithful linear representation of a finite group with a non-abelian unique minimal normal subgroup is irreducible.
\end{lem}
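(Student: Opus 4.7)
My plan is to handle the two assertions in sequence; the whole argument rests on a single mechanism, that elements acting trivially on all composition factors act unipotently.

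First, fix a $T$-composition series $0 = V_0 \subset V_1 \subset \cdots \subset V_n = V$ and let $N$ be the subgroup of $T$ consisting of elements which act trivially on each quotient $V_i/V_{i-1}$. It is clear that $N$ is normal in $T$ (it is the kernel of the action on the composition-factor module $\widetilde{V} := \bigoplus_i V_i/V_{i-1}$). Choosing a basis of $V$ adapted to the filtration, every element of $N$ is represented by an upper block-triangular matrix with identity blocks on the diagonal, hence is unipotent. Over a field of characteristic $p$, unipotent matrices have $p$-power order, so $N$ is a normal $p$-subgroup of $T$ and therefore $N \le O_p(T)$, which is the first claim.

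For the second claim, assume that $T$ has a unique (non-abelian) minimal normal subgroup $M$ and let $V$ be a faithful $F$-representation of $T$ of minimal dimension. Since $M$ is a direct product of non-abelian simple groups, $M$ is not a $p$-group; as every non-trivial normal subgroup of $T$ must contain $M$, we obtain $O_p(T) = 1$. Applying the first assertion to $V$, the kernel of the action of $T$ on the semisimplification $\widetilde{V} = \bigoplus_{i=1}^n V_i/V_{i-1}$ is trivial, so $\widetilde{V}$ is faithful. By uniqueness of $M$, a $T$-module fails to be faithful precisely when $M$ lies in its kernel; hence $\widetilde{V}$ faithful forces at least one composition factor $V_i/V_{i-1}$ to be faithful on its own.

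If $V$ were reducible, such a composition factor would have strictly smaller dimension than $V$, contradicting the minimal choice of $V$. Therefore $V$ must be irreducible, completing the proof. There is no real obstacle here beyond the initial observation about unipotency; the payoff is that the non-abelian hypothesis on the minimal normal subgroup is used in exactly one place, namely to force $O_p(T) = 1$.
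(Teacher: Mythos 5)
Your proof is correct and follows essentially the same approach as the paper: identify the kernel of the action on the associated graded (semisimplified) module, observe its image is unipotent upper-triangular and hence a normal $p$-subgroup, and then for the second part use the non-abelian unique minimal normal subgroup to force $O_p(T)=1$ and conclude some composition factor is already faithful. The only cosmetic difference is in the justification that $O_p(T)=1$: the paper notes that $Z(O_p(T))$, if non-trivial, would contain a minimal normal subgroup (forcing the unique minimal normal subgroup to be abelian), whereas you argue directly that a non-trivial $O_p(T)$ would contain $M$ and so $M$ would be a $p$-group, impossible for a product of non-abelian simple groups. Both are standard and valid.
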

\begin{proof}
Let $\rho \colon T \to GL(V)$ be a finite dimensional faithful linear representation of $T$ defined over the finite field $F$.  Let $K$ be the common centraliser of all composition factors of $V$. Then, with respect to a suitable basis of $V$, the image $\rho(K)$ lies in the group of unimodular upper triangular matrices and thus is a $p$-group. As $\rho$ is faithful, we deduce that $K$ is a normal $p$-subgroup of $T$ and $K \subseteq O_p(T)$.

Suppose that $T$ has a unique minimal normal subgroup $K$, which is non-abelian. Since the center of $O_p(T)$ always contains a minimal normal subgroup, it follows that  $O_p(T) = \{1\}$ for all primes $p$. We deduce that every faithful representation admits a composition factor on which some $k \in K$ acts non-trivially. Since $K$ is the unique minimal normal subgroup, it follows that the representation on this composition factor is faithful.
\end{proof}

Given a non-abelian characteristically simple finite group $K = S^s$, $S$ simple, we define a new function $l^{lin}(K)$: if the smallest non-trivial projective representation $S \to GL(V)$ of $S$ has dimension $k$ over the field $F = \End_S(V)$, let $l^{lin}(K) = |F|^{ks}$ if this representation is (projectively equivalent to) a linear representation, and $l^{lin}(K) = |F|^{k^2s}$ if not. As justification for this definition when $s=1$, we offer the following lemma.

\begin{lem}
	\label{smallestrep}
	There is a constant $e$ such that, for any non-abelian simple group $S$, the smallest faithful irreducible linear representation of $S$ has size at least $l^{lin}(S)^e$ and at most $l^{lin}(S)$.
\end{lem}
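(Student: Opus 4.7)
My approach is a case analysis depending on whether the smallest non-trivial irreducible projective representation $W$ of $S$ is projectively equivalent to a linear representation (Case 1) or not (Case 2). Write $F = \End_S(W)$ and $k = \dim_F W$, so that $|W| = l^{proj}(S) = |F|^k$. Since $S$ is non-abelian simple, any non-trivial projective representation is automatically faithful (its kernel is a proper normal subgroup of $S$, hence trivial).

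For the upper bound, in Case 1 the representation $W$ is projectively equivalent to a faithful irreducible linear representation over $F$ of size $|F|^k = l^{lin}(S)$, giving at once that a smallest faithful irreducible linear representation has size at most $l^{lin}(S)$. In Case 2 we use $W \otimes_F W^*$: by Lemma \ref{lem:cocycles} and the remark following it, the cocycle of $W^*$ is cohomologous to $\alpha^{-1}$, so the cocycle of the tensor product is trivial in $H^2(S,F^\times)$, making $W \otimes_F W^*$ a genuine linear $F[S]$-module of size $|F|^{k^2} = l^{lin}(S)$. The $S$-action on $\End_F(W)$ is by conjugation; its kernel consists of the elements of $S$ acting by a scalar on $W$, which is trivial because $W$ is a faithful projective representation and $Z(S) = 1$. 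Hence $W \otimes_F W^*$ is a faithful linear representation, some irreducible summand of which must also be faithful (else the intersection of summand kernels would be non-trivial) and has size at most $l^{lin}(S)$.

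For the lower bound, let $V$ be a smallest faithful irreducible linear representation of $S$. Since $V$ is faithful and $S$ is non-abelian simple, the induced projective representation is non-trivial, irreducible, and faithful, so $|V| \ge l^{proj}(S)$. In Case 1 this equals $l^{lin}(S)$, and $e = 1$ suffices.

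The main obstacle is Case 2, where $l^{lin}(S) = l^{proj}(S)^k = |F|^{k^2}$ and the naive bound yields only $|V| \ge l^{lin}(S)^{1/k}$, which degenerates as $k \to \infty$. To obtain a uniform constant $e > 0$, one invokes the classification of finite simple groups to compare the dimensions of the smallest faithful linear and smallest faithful projective representations. In the families where Case 2 occurs with large $k$, principally groups of Lie type such as $\PSL_n(q)$ with $\gcd(n, q-1) > 1$, the smallest faithful linear representation is an adjoint-type representation of dimension comparable to $k^2$, so $\log|V|/\log l^{lin}(S)$ remains bounded below; for alternating groups the natural permutation representation lifts from the covering group, keeping us in Case 1; and the sporadic simple groups contribute only a finite list to be inspected. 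Taking the infimum of the resulting ratios across all families produces the desired universal constant $e$.
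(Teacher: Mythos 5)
Your upper bound argument in Case 2 has a genuine gap: you claim that the faithful module $W \otimes_F W^*$ has a faithful irreducible \emph{summand}, ``else the intersection of summand kernels would be non-trivial''. This presupposes that $W \otimes_F W^*$ is a direct sum of irreducibles, which generally fails here. Indeed, in the relevant case ($S$ of Lie type, with $W$ in defining characteristic $p$), the module $\End_F(W)$ over a field of characteristic $p$ is typically not semi-simple. What you actually need is a faithful irreducible \emph{composition factor}, and that requires a little more: an element can act trivially on every composition factor of a faithful module while acting non-trivially on the module itself. The correct argument, which the paper supplies in Lemma~\ref{lem:faithful-factors}, is that any such element lies in $O_p$ of the acting group, which for a group with non-abelian unique minimal normal subgroup (in particular for $S$ simple) is trivial; hence some composition factor is faithful. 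You should replace the ``summand'' step with an appeal to that lemma.

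Beyond that issue, your route is essentially the paper's. Your construction $W \otimes_F W^*$ with the cocycle cancellation of Lemma~\ref{lem:cocycles} is the same thing, phrased slightly more intrinsically, as the paper's composition of $\rho \colon S \to PGL_k(F)$ with the adjoint representation $PGL_k(F) \to GL_{k^2}(F)$. Your Case 2 lower bound correctly identifies the obstruction (the naive bound $|V| \ge l^{proj}(S) = l^{lin}(S)^{1/k}$ degenerates) and correctly points to the classification, but stays at the level of a sketch where the paper is concrete: the paper cites \cite[Propositions 5.4.11 and 5.4.13]{KL} for the bound that any projective representation of a classical (resp. exceptional) group of Lie type not quasiequivalent to the natural module has dimension at least on the order of $k^2$, and notes that the linear representations form a quasiequivalence-invariant subset of the projective ones, so this bound applies. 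Your statement that ``the smallest faithful linear representation is an adjoint-type representation'' is stronger than needed and not quite what the cited results give; the assertion actually required is just a lower bound on its dimension of order $k^2$, which is what those propositions deliver.
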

\begin{proof}
	We first prove the lower bound. This is clear for the sporadic groups and the groups $S$ for which $l^{lin}(S) = l^{proj}(S)$ (which includes the alternating groups), leaving only simple groups of Lie type for which the natural module is not linear.

	For the exceptional groups of Lie type, the lower bound follows from \cite[Proposition 5.4.13]{KL}: there is some $e$ such that any non-trivial irreducible projective representation $S \to PGL_k(F)$ has size at least $|F|^{ek^2}$.
	
	For the classical simple groups of Lie type, the lower bound follows from \cite[Proposition 5.4.11]{KL}: apart from modules quasiequivalent to the natural module of dimension $d$, the smallest projective representation of these groups (and hence the smallest linear representation) has dimension at least $d(d-1)/2-2$ for large $d$. It is easy to see that the linear representations of $S$, as a subset of the projective representations of $S$, are invariant under quasiequivalence, so the result follows.
	
	For the upper bound (for all simple groups of Lie type), let $\rho \colon S \to PGL_k(F)$ be the smallest faithful projective representation of $S$. If $\rho$ lifts to a linear representation, then it follows from Lemma \ref{lem:faithful-factors} that the smallest linear faithful representation of $S$ is bounded from above by $|F|^{k}$. If $\rho$ does not lift to a linear representation, we compose $\rho$ with the adjoint representation $PGL_k(F) \to GL_{k^2}(F)$ to obtain a faithful linear representation for $S$ of size $|F|^{k^2}$. Again, the assertion follows from Lemma \ref{lem:faithful-factors}.
\end{proof}

\begin{lem}
	\label{linrep}
	There is some $c$ such that, for any non-abelian simple group $S$, $\mathrm{Aut}(S)$ has a faithful irreducible projective representation of size $\leq l^{proj}(S)^c$ and a faithful irreducible linear representation of size $\leq l^{lin}(S)^c$.
\end{lem}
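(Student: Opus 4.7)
The plan is to first construct a faithful projective (resp.\ linear) representation of $\mathrm{Aut}(S)$ of size at most $l^{proj}(S)^c$ (resp.\ $l^{lin}(S)^c$) for an absolute constant $c$, and then pass to a faithful irreducible constituent. Since $\mathrm{Inn}(S)\cong S$ is the unique minimal normal subgroup of $\mathrm{Aut}(S)$ and is non-abelian, Lemma~\ref{lem:faithful-factors} (together with a standard lift to a central extension in the projective case) will guarantee that such a constituent exists without enlarging the size, so I can ignore the irreducibility requirement during the construction itself.

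The argument will split along the classification of finite simple groups. For \emph{alternating and sporadic} groups, $|\mathrm{Out}(S)|$ is bounded by an absolute constant ($4$ suffices). Starting from the smallest faithful irreducible projective (resp.\ linear) representation $V$ of $S$ and inducing up to $\mathrm{Aut}(S)$, the induced representation has size $|V|^{|\mathrm{Out}(S)|}$; combined with Lemma~\ref{smallestrep}, this is bounded by $l^{proj}(S)^{4}$ (resp.\ $l^{lin}(S)^{4/e}$), which is of the required form.

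For \emph{groups of Lie type} over $\mathbb{F}_q$ with $q = p^k$, I will take $V_0$ to be (projectively equivalent to) the natural module of a simply connected algebraic cover of $S$, which by the Landazuri--Seitz-type bounds used in the proof of Lemma~\ref{smallestrep} has size $|V_0| \leq l^{proj}(S)^{c'}$ for an absolute constant $c'$. The crucial observation is that $V_0$ extends to a projective representation of $\mathrm{Aut}(S)$ when viewed over the prime field $\mathbb{F}_p$: field automorphisms act $\mathbb{F}_p$-linearly (Frobenius is $\mathbb{F}_p$-linear) on the $\mathbb{F}_p$-space underlying $V_0$, while graph automorphisms are absorbed by passing to $V_0 \oplus V_0^*$, or to an analogous $\mathrm{Aut}(S)$-invariant direct sum for types $D$, $^2\!A$, $E_6$, etc., at the cost of at most doubling the $\mathbb{F}_p$-dimension. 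This produces a faithful projective representation of $\mathrm{Aut}(S)$ over $\mathbb{F}_p$ of size at most $|V_0|^2 \leq l^{proj}(S)^{2c'}$. The linear version goes through analogously: if the natural module is already linear, I use it directly; otherwise I tensor with its dual to linearise, and in either case the size bound in terms of $l^{lin}(S)$ follows directly from the two-case definition of $l^{lin}(S)$.

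The main technical obstacle lies in the Lie type case: while it is classical that natural modules extend along field automorphisms and duality, verifying this uniformly across the twisted and untwisted families, and tracking sizes and kernels correctly, will require careful case-by-case bookkeeping. No serious difficulty is expected in the alternating or sporadic cases, and the passage from a faithful representation to a faithful irreducible one via Lemma~\ref{lem:faithful-factors} is immediate.
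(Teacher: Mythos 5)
Your overall plan matches the paper's: start from the natural module, let field automorphisms act by restricting scalars to $\F_p$, absorb graph automorphisms by adjoining the dual (the paper achieves this by inducing from the universal central extension $R$ of $\tilde{S}TU$ to $R\rtimes\Gamma$; restricted to $R$, that induced module \emph{is} $V_0 \oplus V_0^*$ for type $A$), pass to a faithful irreducible constituent using the monolithic structure of $\mathrm{Aut}(S)$, and tensor with the dual in the non-linear case. But there is a genuine gap in the order of operations. The assertion that $V_0 \oplus V_0^*$ (over $\F_p$) ``produces a faithful projective representation of $\mathrm{Aut}(S)$'' is not correct: the natural module of the simply connected cover $\tilde{S}$ carries a non-trivial cocycle $\alpha$, while its dual carries $\alpha^{-1}$, so $Z(\tilde{S})$ acts on $V_0 \oplus V_0^*$ by a pair of opposite, generally non-scalar characters, and the action does not factor through $\mathrm{PGL}(V_0 \oplus V_0^*)$. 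What you actually have at that point is a faithful \emph{linear} representation of a central extension of $\mathrm{Aut}(S)$, not a projective representation of $\mathrm{Aut}(S)$ itself.

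The fix is exactly what the paper does, and the order matters: form the linear module $W = \Ind_R^{R\rtimes\Gamma}(V)$ of the central extension $R\rtimes\Gamma$, then take a \emph{simple} quotient on which $S$ acts non-trivially. Since $S$ is the unique minimal normal subgroup of $(R\rtimes\Gamma)/Z(R) \cong \mathrm{Aut}(S)$, this quotient has kernel contained in $Z(R)$; being irreducible, $Z(R)$ acts on it by scalars (Schur), and only then does the quotient become a faithful irreducible projective representation of $\mathrm{Aut}(S)$. One cannot first build a faithful projective representation of $\mathrm{Aut}(S)$ and then irreducibilize it, because the direct sum never was one. Two smaller points: you never address diagonal automorphisms (the paper handles them via conjugation by the normalizing torus $T \leq GL_F(V)$ inside $\Gamma L_F(V)$ and the passage to the universal central extension $R$ of $\tilde{S}TU$), and the exponent ``$4/e$'' in the alternating/sporadic bound should just be $4$, since Lemma~\ref{smallestrep} gives the \emph{upper} bound $|V| \leq l^{lin}(S)$. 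Neither of these affects the existence of the constant $c$.
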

\begin{proof}
	This is clear for sporadic and alternating groups; we assume $S$ is of Lie type. For this we can apply the description of $\mathrm{Aut}(S)$ from \cite[Section 2.5, 2.7]{GLS}. The automorphisms of $S$, and of its perfect central extensions $\tilde{S}$, are generated by: inner automorphisms, diagonal automorphisms, field automorphisms and graph automorphisms, as described there.
	
	Suppose $S$ is defined in degree $k$ over a field $F$ of characteristic $p$: then some perfect central extension $\tilde{S}$ of $S$ acts naturally and faithfully on $V$ of dimension $k$ over $F$, and $l^{proj}(S) = |F|^k$. We identify $\tilde{S}$ with its image in $GL_F(V)$. Fix a basis of $V$. From \cite[Section 2.5]{GLS}, we see that the diagonal automorphisms of $\tilde{S}$ are induced by conjugation by the subgroup $T$ of the diagonal elements of $GL_F(V)$ which normalise $\tilde{S}$. Now $\tilde{S}T \leq GL_F(V) \leq \Gamma L_F(V)$ (recall that $\Gamma L_F(V)$ is the semilinear group) is acted on by the Frobenius automorphisms $U \leq \Gamma L_F(V)$ of $F$ acting componentwise on the matrix entries. Thus $V$ becomes a crossed $\tilde{S}TU$-module over $F$, and also a crossed module for the universal central extension $R$ of $\tilde{S}TU$, via restriction.
	
	Also from \cite[Section 2.5]{GLS}, $\mathrm{Aut}(S)$ is the semidirect product of $R/Z(R)$ by a group $\Gamma$ of graph automorphisms of order at most $6$, and these automorphisms of $R/Z(R)$ extend to automorphisms of $R$. Form the semidirect product $R \rtimes \Gamma$ using this action, so that $(R \rtimes \Gamma)/Z(R) \cong \mathrm{Aut}(S)$. So $W = \Ind_R^{R \rtimes \Gamma}(V)$ is a faithful representation of $R \rtimes \Gamma$ of size $|F|^{k|\Gamma|}$. A simple quotient of this on which $S$ acts non-trivially must be an irreducible representation of $R \rtimes \Gamma$ with kernel $\leq Z(R)$, because $S$ is the unique minimal normal subgroup of $\mathrm{Aut}(S)$, so this gives a faithful irreducible projective crossed representation $X$, over some extension of $F$, of $\mathrm{Aut}(S)$, of size at most $|F|^{6k}$. The restriction of this to $\F_p$ gives a faithful irreducible projective representation of $\mathrm{Aut}(S)$ of size $l^{proj}(S)^6$, which is linear if $V$ is.
	
	For the case where $V$ is not linear, write $X^\ast$ for the dual module to $X$. The class of the cocycle of $X^*$ is the inverse of the cocycle of $X$ and so $X \otimes_{F} X^\ast$ is a crossed representation of $\mathrm{Aut}(S)$ with trivial cocycle (see Lemma \ref{lem:cocycles}).  Restriction of scalars to $\F_p$ provides us with a faithful linear representation of $\Aut(S)$ of size at most $|F|^{36k^2}$.
	Since $S$ is the unique minimal normal subgroup of $\Aut(S)$, it follows from Lemma \ref{lem:faithful-factors} that there is a faithful irreducible composition factor.
\end{proof}

We can now justify our definitions of $l^{proj}(K)$ and $l^{lin}(K)$, in the form of the following lemma. Suppose $T$ is a monolithic group with non-abelian minimal normal subgroup $K=S^s$, for some non-abelian simple group $S$.

\begin{prop}
	\label{primitiverep}
	Let $c$ be the constant from Lemma \ref{linrep}, and $e$ the constant from Lemma \ref{smallestrep}. The smallest size of a faithful irreducible linear (respectively, projective) representation of $T$ is:
	\begin{enumerate}[(i)]
		\item at least $l^{lin}(K)^e$ (respectively, $l^{proj}(K)$);
		\item at most $l^{lin}(K)^c$ (respectively, $l^{proj}(K)^c$).
	\end{enumerate}
\end{prop}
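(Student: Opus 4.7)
The plan is to combine an embedding of $T$ into $\mathrm{Aut}(K) \cong \mathrm{Aut}(S)\wr \mathrm{Sym}(s)$ with a Clifford-theoretic analysis of the restriction to $K$ of a minimal faithful representation of $T$. Since $T$ is monolithic with non-abelian minimal normal subgroup $K$, the centre $Z(T)$ is trivial (otherwise it would be a normal abelian subgroup containing the unique minimal normal subgroup $K$, contradicting that $K$ is non-abelian). The same argument applied to $C_T(K) \trianglelefteq T$ gives $C_T(K) = 1$, so $T$ embeds into $\mathrm{Aut}(K)$; in particular, faithful projective representations of $T$ have trivial kernel.

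For the upper bounds, I would begin from \cref{linrep}, which supplies a faithful irreducible projective (resp.\ linear) representation $\chi$ of $\mathrm{Aut}(S)$ of size at most $l^{proj}(S)^c$ (resp.\ $l^{lin}(S)^c$). View $\chi \otimes \mathbf{1}^{\otimes(s-1)}$ as a representation of the stabiliser $H = \mathrm{Aut}(S) \times (\mathrm{Aut}(S)^{s-1} \rtimes \mathrm{Sym}(s-1))$ of the first factor in $\mathrm{Aut}(K)$, and induce this up to $\mathrm{Aut}(K)$. Since $[\mathrm{Aut}(K):H] = s$, the induced representation has dimension $s \dim \chi$ and hence size at most $l^{proj}(S)^{cs} = l^{proj}(K)^c$ (resp.\ $l^{lin}(K)^c$); it is faithful on $\mathrm{Aut}(K)$ because the $\mathrm{Aut}(K)$-conjugates of the set of simple factors on which $\chi\otimes \mathbf{1}^{\otimes(s-1)}$ acts trivially intersect to the empty set. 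Restricting to $T$ and applying \cref{lem:faithful-factors} extracts a faithful irreducible composition factor of size at most $l^{proj}(K)^c$ (resp.\ $l^{lin}(K)^c$).

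For the lower bounds, let $\rho$ be a faithful irreducible projective (resp.\ linear) representation of $T$ of minimal size $|E|^n$. After lifting to a suitable central extension in the projective case, Clifford's theorem decomposes $\rho|_K$ as $e(W_1 \oplus \cdots \oplus W_r)$, where $W_1,\ldots,W_r$ form a single $T$-orbit of irreducible representations of $K = S^s$. Each $W_j$ factors as a tensor product $W_j = U_1^{(j)} \otimes \cdots \otimes U_s^{(j)}$ of (possibly trivial) irreducible representations of the simple factors. Let $A$ be the set of indices where $W_1$ is non-trivial; faithfulness of $\rho|_K$ forces the $T$-orbit of $A$ to cover $\{1,\ldots,s\}$, and transitivity of $T$ on the factors then yields $r|A| \ge s$ by a double-counting argument. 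Each non-trivial factor $U_i^{(1)}$ has size at least $l^{proj}(S)$ (resp.\ at least $l^{lin}(S)^e$, by \cref{smallestrep}) and dimension at least $2$, so a short calculation using the inequality $\prod_i d_i \ge |A| \cdot \min_i d_i$ for $d_i \ge 2$ gives $\mathrm{size}(W_1) = |E|^{\prod_i \dim U_i^{(1)}} \ge l^{proj}(S)^{|A|}$ (resp.\ $l^{lin}(S)^{e|A|}$). Combining with $\mathrm{size}(\rho) \ge \mathrm{size}(W_1)^r$ produces $\mathrm{size}(\rho) \ge l^{proj}(S)^{r|A|} \ge l^{proj}(S)^s = l^{proj}(K)$, and the analogous bound $l^{lin}(K)^e$ in the linear case.

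The main technical obstacle I anticipate is the careful bookkeeping with projective representations: justifying Clifford's theorem and the tensor-product factorisation of irreducible representations of $K = S^s$ in the projective setting, typically by passing to a Schur cover of $K$ and tracking cocycles, while also verifying that the kernel calculations behave as expected under this lifting. Once that is in place, the combinatorial coverage inequality $r|A| \ge s$ together with the elementary size comparison for tensor products (which crucially uses $\dim U_i^{(1)} \ge 2$ whenever $U_i^{(1)}$ is non-trivial, since $S$ has no non-trivial one-dimensional representations) is what converts a constituent-wise bound into the desired global lower bound $l^{proj}(K)$.
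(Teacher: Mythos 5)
Your proof is correct and takes essentially the same approach as the paper: for the upper bound, both embed $T$ into a wreath product over $\Aut(S)$, build a small faithful representation there (yours by explicit induction from a point stabiliser, the paper's by letting the top group permute the summands of $V^s$, which is the same module), and extract a faithful composition factor via \cref{lem:faithful-factors}; for the lower bound, both restrict to $K$, apply Clifford theory, use the tensor-factorisation of irreducibles of $S^s$ over $\End_K(W)$ together with \cref{smallestrep}, and run the same covering/double-counting argument $r|A|\ge s$. The only cosmetic differences are your choice of the arithmetic inequality $\prod d_i \ge |A|\min d_i$ in place of $\prod d_i \ge \sum d_i$ and the reuse of the letter $e$ for the Clifford multiplicity, which clashes with the constant $e$ from \cref{smallestrep} and should be renamed.
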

\begin{proof}
	We give a proof for linear representations; the proof for projective representations is similar.
	
	\begin{enumerate}[(i)]
		\item For the lower bound, suppose $V$ is a faithful irreducible $T$-module, and let $W$ be an irreducible summand of $V$ as a $K$-module. Some subset $A$ of the set $\{1,\ldots,s\}$ of (indices of the) copies of $S$ acts non-trivially on $W$; by \cite[Lemma 5.5.5]{KL}, $W$ is a tensor product (over the field $F = \End_K(W) = \End_{S_i}(X_i)$) of non-trivial modules $X_i$ for each of the $S_i$ in $T$. Each $X_i$ has size at least $l^{lin}(S)^e$, by Lemma \ref{smallestrep}, so (because $\dim_F(X_i) \geq 2$) $W$ has size $\geq l^{lin}(S)^{e|A|}$.
		
		Now each irreducible summand $W_j$ of $V$, considered as a $K$-module, has size $|W|$, thanks to Clifford theory. Moreover, for each $W_j$, the subset $A_j$ of the copies of $S$ acting non-trivially on it has size $\vert A\vert$. Each copy of $S$ acts non-trivially on $V$, so there is some $W_j$ on which it acts non-trivially; hence $\bigcup_j A_j = \{1,\ldots,s\}$, so the number of $W_j$ is at least $s/|A|$. Therefore $V$ has size $\geq |W|^{s/|A|} \geq l^{lin}(S)^{es}$.
	
		\item For the upper bound, pick one of the copies $S_1$ of $S$ in $K$. By Lemma~\ref{lem:KK}, $T$ embeds in $\Aut(S) \wr T/\tilde{K}$, where $\tilde{K} = \mathrm{core}(N_T(S_1))$. By the previous lemma, $\Aut(S)$ has a faithful irreducible representation $V$ of size $l^{lin}(S)^c$. By allowing $T/\tilde{K}$ to permute the copies of $V^s$, we get a faithful irreducible representation of $\Aut(S) \wr T/\tilde{K}$ of size $l^{lin}(K)^c$. The restriction of this representation to $L$ has a faithful composition factor by Lemma \ref{lem:faithful-factors}; this irreducible representation has size $\leq l^{lin}(K)^c$.
	\end{enumerate}
\end{proof}

\begin{thm}
	\label{necessary}
Let $G$ be a profinite group. Suppose, for all $b$, there is some monolithic group $L_b$ with non-abelian minimal normal subgroup $K_b$ such that some crown-based power $L_k$ of $L$ is isomorphic to a quotient of $G$ and $k> l^{lin}(K_b)^b$. Then $G$ does not have UBERG.

\end{thm}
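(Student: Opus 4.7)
The plan is to negate UBERG directly: for any given $c' > 0$ I must exhibit a finite field $F$ and a positive integer $n$ with $r(G, F, n) > |F|^{c'n}$. Fix such a $c'$, let $c$ be the constant from \Cref{primitiverep}, and choose $b$ with $b > cc'$. By hypothesis, $G$ admits a quotient isomorphic to a crown-based power $L_k$ of a monolithic group $L = L_b$ with non-abelian minimal normal subgroup $K = K_b$ satisfying $k > l^{lin}(K)^b$.

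Apply \Cref{primitiverep}(ii) to $L$ to obtain a faithful irreducible linear representation $V$ on an $F$-vector space of dimension $n$, with $|F|^n \leq l^{lin}(K)^c$, and hence $l^{lin}(K) \geq |F|^{n/c}$. The heart of the argument is to parlay this single representation into $k$ pairwise non-isomorphic irreducibles of $L_k$ over $F$. For each $i \in \{1, \dots, k\}$, the $i$-th coordinate projection $\pi_i \colon L_k \to L$ is surjective. An element $(l_1, \dots, l_k) \in L_k$ lies in $\ker \pi_i$ precisely when $l_i = 1$, and the crown-based power condition $l_j K = l_i K = K$ then forces each $l_j \in K$, so $\ker \pi_i = \prod_{j \neq i} K_j$, where $K_j$ denotes the $j$-th copy of $K$ in the socle $K^k$ of $L_k$. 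Pulling $V$ back along $\pi_i$ yields an irreducible $FL_k$-module $V_i$ of $F$-dimension $n$ with $\ker(V_i) = \ker \pi_i$ (since $V$ is $L$-faithful). As these kernels are pairwise distinct as $i$ varies, $V_1, \dots, V_k$ are pairwise non-isomorphic, so after inflation along $G \twoheadrightarrow L_k$ we obtain
\[
r(G, F, n) \;\geq\; k \;>\; l^{lin}(K)^b \;\geq\; |F|^{bn/c} \;>\; |F|^{c'n},
\]
contradicting UBERG.

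The substantive representation-theoretic work is packaged into \Cref{primitiverep}(ii), which converts a lower bound on $l^{lin}(K)$ into an upper bound on the dimension of the smallest faithful irreducible representation of $L$ itself. Given that, there is no serious obstacle: the coordinate projections of $L_k$ automatically produce enough non-isomorphic irreducibles of the same small dimension, and the only check required is that their kernels differ, which is immediate. The one point worth flagging is that we rely on all $k$ representations being defined over the \emph{same} finite field $F$ of the same dimension $n$, so that they contribute to a single $r(G,F,n)$; this is automatic from the construction since each $V_i$ is just $V$ with the $L_k$-action twisted through $\pi_i$.
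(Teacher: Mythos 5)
Your proof is correct and follows essentially the same approach as the paper's: invoke Proposition~\ref{primitiverep}(ii) to produce a small faithful irreducible representation of $L_b$, then pull it back along the $k$ coordinate projections of the crown-based power to obtain $k$ pairwise non-isomorphic $G$-modules of the same size, which defeats any candidate UBERG constant. You merely spell out in more detail the kernel computation and the final arithmetic, which the paper leaves implicit.
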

\begin{proof}
By Proposition \ref{primitiverep}, each $L_b$ has a faithful irreducible representation of size $\leq l^{lin}(K_b)^c$. On the other hand, from projecting onto the factors of the crown power, we have more than $l^{lin}(K_b)^b$ epimorphisms $G \to L_b$ with different kernels, and thus more than $l^{lin}(K_b)^b$ $G$-modules of size $\leq l^{lin}(K_b)^c$. Since this is true for all $b$, we conclude that $G$ does not have UBERG.
\end{proof}

We can show that the gap between our necessary and sufficient conditions for UBERG is the best one can do by considering only crown-based power quotients of $G$ with non-abelian socle.

\begin{thm}
	\label{cover/quotient}
	Let $G$ be a $d$-generated profinite group. The universal Frattini cover $\tilde{G}$ has UBERG if and only if there is some $b$ such that, for all monolithic groups $L$ with non-abelian minimal normal subgroup $K$, the size of a crown-based power $L_k$ of $L$ occurring as a quotient of $G$ is $k \leq l^{proj}(K)^b$. 
	\end{thm}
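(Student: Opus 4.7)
Two structural properties of the universal Frattini cover drive the proof. First, $F := \ker(\tilde{G}\to G) \le \Phi(\tilde{G})$ is pronilpotent (since $\Phi(\tilde{G})$ is), so every chief factor of $\tilde{G}$ contained in $F$ is abelian. Consequently the non-abelian chief factors of $\tilde{G}$ correspond bijectively to those of $G$, with matching $G$-equivalence, and by the theory of crowns (cf.\ \cite{DL}) one has $\delta_{\tilde{G}}(A) = \delta_G(A)$ for every non-abelian chief factor $A$. In particular, for $L$ monolithic with non-abelian minimal normal subgroup $K$, the crown-based power $L_k$ occurs as a quotient of $G$ if and only if it occurs as a quotient of $\tilde{G}$, and moreover $d(\tilde{G}) = d(G)$. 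Second, $\tilde{G}$ is projective, hence has strict cohomological dimension at most $1$; in particular $H^2(\tilde{G}, E^\times) = 0$ for every finite field $E$ with trivial action, so every projective representation of $\tilde{G}$ over $E$ lifts to a linear $E$-representation of the same size.

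The direction $(\Leftarrow)$ is then immediate: the hypothesis on $G$ transfers verbatim to $\tilde{G}$, and since $\tilde{G}$ is $d$-generated, Theorem~\ref{crownpowersUBERG} applied to $\tilde{G}$ yields UBERG.

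For $(\Rightarrow)$, suppose $\tilde{G}$ has UBERG and let $\phi: \tilde{G} \twoheadrightarrow L_k$ be a crown-power quotient with $L$ monolithic and non-abelian minimal normal subgroup $K$. By Proposition~\ref{primitiverep} there is a faithful irreducible projective representation $\sigma:L \to \mathrm{PGL}(V)$ with $|V| \le l^{proj}(K)^c$ for a universal constant $c$. The $k$ coordinate projections $\pi_i : L_k \to L$ have pairwise distinct kernels $\prod_{j \ne i} N_j$, so the $k$ projective representations $\bar\rho_i := \sigma \circ \pi_i \circ \phi : \tilde{G} \to \mathrm{PGL}(V)$ have pairwise distinct kernels; since projective equivalence (conjugation in $\mathrm{PGL}$) preserves kernels, the $\bar\rho_i$ are pairwise projectively inequivalent. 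By the second structural property above, each $\bar\rho_i$ lifts to a linear representation $\rho_i: \tilde{G} \to \mathrm{GL}(V)$ of the same size; each $\rho_i$ is irreducible because its image in $\mathrm{PGL}(V)$ is $L$ acting irreducibly, and any isomorphism $\rho_i \cong \rho_j$ of $\tilde{G}$-modules would project to a projective equivalence $\bar\rho_i \sim \bar\rho_j$, contradicting distinct kernels. Thus one obtains $k$ pairwise non-isomorphic irreducible $\tilde{G}$-modules of size at most $l^{proj}(K)^c$. A routine summation using the UBERG bound $r(\tilde{G}, F, n) \le |F|^{c_1 n}$ over pairs $(F,n)$ with $|F|^n \le M$ shows that the number of irreducible $\tilde{G}$-modules of size at most $M$ is $O(M^{c_1+2})$; setting $M = l^{proj}(K)^c$ gives $k \le l^{proj}(K)^b$ with $b = c(c_1+2)$, uniformly in $L$ and $K$.

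The main obstacle is the forward direction: without using any special feature of $\tilde{G}$, Theorem~\ref{necessary} would only bound $k$ in terms of $l^{lin}(K)$, and it is precisely the projectivity of $\tilde{G}$ — which lifts projective representations to linear ones of the same size — that closes the gap from $l^{lin}(K)$ to $l^{proj}(K)$ needed to match the sufficient condition of Theorem~\ref{crownpowersUBERG}.
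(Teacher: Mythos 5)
Your proof is correct and takes essentially the same approach as the paper: transfer the crown-power condition between $G$ and $\tilde G$, then for the forward direction use Proposition~\ref{primitiverep} together with the fact that projectivity of $\tilde G$ lets projective representations lift to linear ones of the same size. The one small stylistic difference is in the $(\Leftarrow)$ direction — you transfer the condition via the bijection of non-abelian chief factors and the pronilpotence of $\ker(\tilde G\to G)$, whereas the paper observes directly that crown-based powers with non-abelian socle are Frattini-free, so $\Epi(G,L_k)\cong\Epi(\tilde G,L_k)$; both are valid and lead immediately to the same conclusion.
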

\begin{proof}
	As every crown-based power $L_k$ of a monolithic group $L$ with non-abelian socle is Frattini-free, 
	$\Epi(G,L_k)$ is canonically isomorphic to $\Epi(\tilde{G},L_k)$. So the `if' part follows immediately.
	
	Suppose for all $b$ there is some monolithic group $L$ with non-abelian minimal normal subgroup $K$, such that the size of a crown-based power of $L$ occurring as a quotient of $G$ is $> l^{proj}(K)^b$. By Proposition~\ref{primitiverep}, $L$ has a faithful irreducible projective representation of size $\leq l^{proj}(K)^c$, so as in Theorem \ref{necessary}, the crown-based power of $L$ gives more than $l^{proj}(K)^b$ different projective representations of this size. On the other hand, $\tilde{G}$ is a projective profinite group and thus every projective representation of $\tilde{G}$ lifts to a linear representation. Since this holds for all $b$, this shows $\tilde{G}$ does not have UBERG.
\end{proof}

For later reference, we say that a profinite group has proj-UBERG if it satisfies the condition on crown-based powers in the previous theorem.

As an application for our sufficient condition, we give a slight sharpening of the known result that PFG implies UBERG.

\begin{cor}
	\label{projUBERG}
	For finitely generated profinite groups:
	\[ \text{PFG}  \Longrightarrow \text{proj-UBERG} \Longrightarrow \text{UBERG}.
	\]
\end{cor}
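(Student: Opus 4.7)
The plan is short: this corollary packages two results already in place, and no new idea is needed. The only job is to line up definitions.

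For the second implication (proj-UBERG $\Rightarrow$ UBERG), I will simply observe that the hypothesis of Theorem~\ref{crownpowersUBERG} is \emph{verbatim} the definition of proj-UBERG recorded in the paragraph immediately after Theorem~\ref{cover/quotient}, and its conclusion is that $G$ has UBERG. So this direction is immediate, provided $G$ is finitely generated (which is where the $d$-generated hypothesis in Theorem~\ref{crownpowersUBERG} is used).

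For the first implication (PFG $\Rightarrow$ proj-UBERG), the plan is to quote the Jaikin--Pyber characterisation of PFG recalled in the introduction: a profinite group $G$ is PFG if and only if there is a constant $c$ such that, whenever $L_k$ occurs as a quotient of $G$ for some monolithic $L$ with non-abelian minimal normal subgroup $K$, one has $k \leq l(K)^c$. To upgrade the bound $l(K)^c$ to the bound $l^{proj}(K)^b$ demanded by the definition of proj-UBERG, I will apply Lemma~\ref{lengths}: since $l(K) < l^{proj}(K)$ for every non-abelian characteristically simple $K$, one obtains $k \leq l(K)^c \leq l^{proj}(K)^c$, and $b = c$ works.

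There is no real obstacle; the whole argument is one paragraph of definition chasing. If anything deserves a sentence of care, it is only the observation that the two characterisations refer to the same class of crown-based powers (those of monolithic groups with non-abelian socle), so no translation between different conventions is required.
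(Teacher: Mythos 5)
Your proposal is correct and matches the paper's proof essentially line for line: both quote the Jaikin--Pyber characterisation of PFG in terms of crown-based powers ([JP, Theorem 11.1]), upgrade $l(K)$ to $l^{proj}(K)$ via Lemma~\ref{lengths}, and dispatch the second implication by invoking Theorem~\ref{crownpowersUBERG}, whose hypothesis is precisely the definition of proj-UBERG.
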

\begin{proof}
	Suppose $G$ is PFG. By \cite[Theorem 11.1]{JP}, for monolithic group $L$ associated with non-abelian minimal normal subgroup $K$, there is some $a$ such that the size of a crown-based power of $L$ occurring as a quotient of $G/N$ is $l(K)^a$, which is less than $l^{proj}(K)^a$ by Lemma \ref{lengths}. The second implication holds by Theorem \ref{crownpowersUBERG}.
\end{proof}

\section{Infinite products of finite groups with UBERG}\label{sec:inf_prod_fin_grps}
In this section we give a criterion which allows to verify that an infinite product of finite groups has UBERG. Our approach is rather direct and relies on elementary calculations. The method can be used to reprove results of Damian \cite[Example 4.5]{Damian}, which rely on the machinery of \cite{JP}. We use our method to construct a UBERG group which is not finitely generated. In addition, we use it to show that groups with UBERG are not closed under extensions; indeed, we construct a product of special linear groups which is procyclic-by-UBERG but which does not have UBERG. We show that both normal subgroup and quotient are finitely presented and hence PFR by \cite[Theorem A]{KV}. This answers in the negative an open question from \cite{KV}. The next section, however, contains conditions under which extensions of groups with UBERG do have UBERG.

\subsection{A criterion for UBERG}
Here a \emph{family of power series} $(S_{q})_{q}$ is a sequence of power series with $S_q \in \Z\bra X \ket$ for every prime power $q$.
\begin{define}
A family of power series $(S_{q})_{q}$ is \emph{uniformly bounded}
if there are constants $c,B > 0$ such that
\[
	S_q(q^{-c}) \leq B \cdot q^c
\]
for all prime powers $q$.
\end{define}
Let $G$ be a profinite group. We write $r^{\ast}(G,F,n)$ to denote the number of absolutely irreducible representations of $G$ defined over $F$. Assume that $r^{\ast}(G,F,n) < \infty$ for all $n \in \mathbb{N}$ and all finite fields $F$. For instance, all finitely generated profinite groups have this property.
For every finite field $\bbF_q$, we consider the power series
\[
	S^{\ast}_q(G) = \sum_{n=1}^\infty r^{\ast}(G,\bbF_q,n) X^{n-1} \in \Z\bra{X}\ket.
\]
\begin{lem}
Let $G$ be a profinite group with the property $r^{\ast}(G,F,n) < \infty$ for all $n \in \mathbb{N}$ and all finite fields $F$.
The group $G$ has UBERG if and only if the family of power series $S_q^\ast(G)$
is uniformly bounded.
\end{lem}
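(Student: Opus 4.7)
The proof hinges on relating $r(G,\bbF_q,n)$ to the ``absolute'' counts $r^*(G,\bbF_{q^k},n/k)$ via Galois descent. If $V$ is an irreducible $\bbF_q G$-module of dimension $n$, then $V\otimes_{\bbF_q}\overline{\bbF_q}$ decomposes as a single Frobenius orbit $W\oplus W^{\sigma}\oplus\cdots\oplus W^{\sigma^{k-1}}$ of absolutely irreducible representations of dimension $n/k$, each realisable over $\bbF_{q^k}$; conversely, any absolutely irreducible representation of dimension $n$ defined over $\bbF_q$ is already irreducible as an $\bbF_q G$-module. These observations remain valid for profinite groups because every finite-dimensional continuous representation factors through a finite quotient. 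They yield the two inequalities
\[
    r^*(G,\bbF_q,n)\;\leq\;r(G,\bbF_q,n)\qquad\text{and}\qquad r(G,\bbF_q,n)\;\leq\;\sum_{k\mid n}r^*(G,\bbF_{q^k},n/k),
\]
which constitute the only non-formal input to the proof.

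The first step is to observe that uniform boundedness of $(S_q^*(G))_q$ with constants $c,B$ is equivalent to the pointwise bound $r^*(G,\bbF_q,n)\leq B q^{cn}$ for all $n\geq 1$ and all prime powers $q$, since every term of the series $S_q^*(G)(q^{-c})$ must individually be at most $Bq^c$. For the direction UBERG $\Rightarrow$ uniform boundedness, starting from $r(G,\bbF_q,n)\leq q^{cn}$, the first displayed inequality gives $r^*(G,\bbF_q,n)\leq q^{cn}$, and substituting $X=q^{-(c+1)}$ into $S_q^*(G)$ produces a geometric tail
\[
    S_q^*(G)(q^{-(c+1)})\;\leq\;q^{c+1}\sum_{n=1}^{\infty} q^{-n}\;=\;\frac{q^{c+1}}{q-1}\;\leq\;q^{c+1},
\]
confirming uniform boundedness with constants $c+1$ and $B=1$.

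For the converse, assume $r^*(G,\bbF_q,n)\leq B q^{cn}$. Applying the second displayed inequality together with the identity $(q^k)^{c\cdot n/k}=q^{cn}$ yields $r(G,\bbF_q,n)\leq d(n)\,B\,q^{cn}$, where $d(n)$ denotes the number of divisors of $n$. The elementary bounds $d(n)\leq n\leq q^n$ and $B\leq q^{\log_2 B}$ (both valid for $q\geq 2$) absorb the polynomial and constant factors into the exponent, giving $r(G,\bbF_q,n)\leq q^{(c+1+\log_2 B)n}$ for all $n\geq 1$, which is UBERG. The only mildly subtle point is the Galois-orbit bookkeeping underlying the second inequality; everything else reduces to manipulating a geometric series and absorbing polynomial factors into an exponent, so I do not foresee any substantive obstacle.
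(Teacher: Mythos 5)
Your proof is correct and takes essentially the same route as the paper: the same evaluation of $S_q^\ast(G)$ at $X=q^{-(c+1)}$ with a geometric series for the forward direction, and a termwise bound $r^\ast(G,\bbF_q,n)\leq Bq^{cn}$ extracted from uniform boundedness for the converse. The only difference is that where the paper invokes \cite[Lemma 6.8]{KV} to pass between $r$ and $r^\ast$, you make the Galois-descent bookkeeping explicit via $r^\ast(G,\bbF_q,n)\leq r(G,\bbF_q,n)$ and $r(G,\bbF_q,n)\leq\sum_{k\mid n}r^\ast(G,\bbF_{q^k},n/k)$, which is a valid (and self-contained) way to fill in that cited step.
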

\begin{proof}
Assume $G$ has UBERG, then there is a constant $c> 0$ such that
$r^\ast(G,\bbF_q,n) \leq q^{nc}$ for all $n \in \mathbb{N}$ and all finite fields $\bbF_q$; see \cite[Lemma 6.8]{KV}. In particular,
\begin{align*}
	S_q^\ast(G)(q^{-c-1}) &= \sum_{n=1}^\infty r^{\ast}(G,\bbF_q,n) q^{-(c+1)(n-1)} \leq \sum_{n=1}^\infty  q^{-n +c +1} \\
	&= q^{c+1} \frac{q^{-1}}{1-q^{-1}} \leq q^{c+1}
\end{align*}
and $(S_q^\ast(G))_q$ is uniformly bounded.

Conversely, assume that $S_q^\ast(G)$ is uniformly bounded and $S_q^\ast(G)(q^{-c}) \leq q^{c}B$.
Then in particular
$r^\ast(G,\bbF_q,n) q^{-c(n-1)} \leq q^c B$ and (since $c$ and $B$ are independent of $q$) this shows that $G$ has UBERG.
\end{proof}

\begin{lem}\label{lem:uberg-criterion}
Let $(G_i)_{i \in \mathbb{N}}$ be a family of profinite groups such that the power series 
$S_q^{\ast}(G_i)$ are defined.
If the family of power series
\[
	S_{q} = \prod_{i\in \mathbb{N}} S_q^{\ast}(G_i) 
\]
is well-defined and uniformly bounded, then $G =\prod_{i} G_i$ has UBERG.
\end{lem}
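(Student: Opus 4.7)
The plan is to reduce the claim to the preceding lemma by establishing the pointwise inequality $S_q^{\ast}(G)(t) \leq S_q(t)$ for $t \in (0,1)$. The key representation-theoretic input is this: continuous finite-dimensional $\bbF_q$-representations of $G = \prod_i G_i$ factor through a quotient of the form $\prod_{i \in I}G_i$ for some finite $I \subset \mathbb{N}$, and for finite direct products of finite groups the absolutely irreducible representations over $\bbF_q$ are precisely the tensor products $\bigotimes V_i$ of absolutely irreducible representations of the factors. Combining these, I would identify absolutely irreducible $n$-dimensional $\bbF_q G$-modules with tuples $(V_i)_{i\in\mathbb{N}}$, where each $V_i$ is an absolutely irreducible $\bbF_q G_i$-module, $V_i$ equals the trivial representation for all but finitely many $i$, and $\prod_i \dim V_i = n$. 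In particular the property $r^{\ast}(G,\bbF_q,n)<\infty$ passes to the product.

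Expanding the assumed-well-defined formal product $S_q(t) = \prod_i S_q^{\ast}(G_i)(t)$, one sees that $S_q(t)$ is indexed by exactly the same tuples $(V_i)$ as above, but each tuple contributes $t^{\sum_i(\dim V_i - 1)}$, whereas the same tuple contributes $t^{\prod_i \dim V_i - 1}$ to $S_q^{\ast}(G)(t)$. I would then verify the elementary inequality
\[
\prod_{i=1}^{m} k_i - 1 \;\geq\; \sum_{i=1}^{m} (k_i - 1) \qquad (k_i \geq 1),
\]
by induction: with $P_m = \prod_{i\leq m}k_i$, the step reduces to $(k_{m+1}-1)(P_m-1)\geq 0$. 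For $0 < t < 1$ this gives $t^{\prod k_i - 1} \leq t^{\sum (k_i - 1)}$ term by term, hence $S_q^{\ast}(G)(t) \leq S_q(t)$ for every prime power $q$ and every $t \in (0,1)$. The uniform bound $S_q(q^{-c}) \leq Bq^c$ therefore transfers to $S_q^{\ast}(G)$, so the preceding lemma yields UBERG for $G$.

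The main technical obstacle is the tensor-product description of absolutely irreducible representations of a direct product of finite groups \emph{over the non-algebraically closed field} $\bbF_q$. Over $\bbF_q$ the irreducible representations of $G_1 \times G_2$ are not in general tensor products of irreducibles of the factors, but the absolutely irreducible ones are: via Wedderburn, the absolutely irreducible $\bbF_q G_i$-modules correspond exactly to those Wedderburn factors of $\bbF_q[G_i]$ of the form $M_{n}(\bbF_q)$, and $M_{n_1}(\bbF_q)\otimes_{\bbF_q} M_{n_2}(\bbF_q) \cong M_{n_1 n_2}(\bbF_q)$, while factors with a larger division-algebra centre contribute only non-absolutely-irreducible representations. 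A secondary bookkeeping point is to ensure there is no overcounting in the infinite product: since almost all $V_i$ must be the (unique) trivial one-dimensional representation, the parametrisation is canonical, and the well-definedness hypothesis on $S_q$ legitimises the formal manipulations above.
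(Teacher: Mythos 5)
Your proposal is correct and follows essentially the same route as the paper: establish the pointwise inequality $S_q^{\ast}(G)(x) \leq S_q(x)$ on $[0,1)$ via the tensor-product parametrisation of absolutely irreducible representations of the product, together with the elementary inequality $\prod k_i - 1 \geq \sum(k_i-1)$, then invoke the preceding lemma. The only addition is your Wedderburn-theoretic justification of the tensor-product description over $\bbF_q$, which the paper takes as known; this is a useful clarification but does not change the argument.
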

\begin{proof}
We claim that for every real number $x \in [0,1)$ such that $S_q$ converges at $x$,
we have
\[
	S_q^{\ast}(G)(x) \leq S_q(x).
\]
This suffices to conclude that $G = \prod_{i\in \mathbb{N}} G_i$ has UBERG.

The absolutely irreducible representations of $G$ are outer tensor products
$\bigotimes V_i$ of absolutely irreducible representations of the factors $G_i$, such that almost all $V_i$ are trivial. In particular, we obtain
\[
	r^{\ast}(G,F,n) = \sum_{\underline{d} \in D_n} \prod_{i} r^{\ast}(G_i,F,d_i)
\]
where the sum runs over the set $D_n$ all sequences $\underline{d} = (d_1,d_2,\dots)$ such that almost all $d_i$ equal $1$ and $\prod_{i}d_i = n$.
We observe that for all such sequences $\underline{d}$ we have
\[
	\sum_{i=1}^\infty (d_i - 1) \leq (\prod_{i=1}^\infty d_i)-1 = n-1
\]
and so, for all $x \in [0,1)$, we have $x^{n-1} \le \prod_i x^{d_i-1}$. Therefore
\begin{align*}
	S_q^{\ast}(G)(x) =& \sum_{n=1}^\infty \sum_{\underline{d} \in D_n} x^{n-1}\prod_{i} r^{\ast}(G_i,\bbF_q,d_i) \\
	\leq& \sum_{n=1}^\infty \sum_{\underline{d} \in D_n} \prod_{i} x^{d_i - 1}r^{\ast}(G_i,\bbF_q,d_i) 
	= S_{q}(x).
\end{align*}
\end{proof}

\subsection{An infinitely generated UBERG group}\label{subsec:inf_gen_UBERG}
In this section we give the first example of a profinite group which cannot be finitely generated but has UBERG. This group $G$ is of the form
\[
  G= \prod_{i=1}^\infty G_{p_i,n_i}^\ast
\]
for certain finite metabelian group $G_{p_i,n_i}^\ast$. We begin by describing the building blocks.

Let $p > 3$ be a prime with $p \equiv 3 \bmod 4$. Let $C_p$ be a cyclic group of order $p$. Define $\mathrm{Aut}(C_p)^\circ$  to be the subgroup of $\mathrm{Aut}(C_p)$ which consists of automorphisms of odd order.
The group $\mathrm{Aut}(C_p)^\circ$ is cyclic of order $\eta(p)=\frac{p-1}{2}$ (since $p \equiv 3 \bmod 4$). 
Consider the group 
\[ G_{p,n}^\ast = C_p^n \rtimes \mathrm{Aut}(C_p)^\circ\]
with $\mathrm{Aut}(C_p)^\circ$ acting diagonally on $C_p^n$.
We observe that $\mathrm{Aut}(C_p)^\circ$ is non-trivial (since $p>3$) and it is the abelianisation of $G^\ast_{p,n}$.

\begin{lem}
Let $G_{p,n}^\ast = C_p^n \rtimes \mathrm{Aut}(G_p)^\circ$. Then  $\mathrm{d}(G_{p,n}^\ast) \ge n$.
\end{lem}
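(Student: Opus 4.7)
The plan is to exploit the semidirect product structure $G_{p,n}^\ast = V \rtimes H$ with $V = C_p^n$ and $H = \mathrm{Aut}(C_p)^\circ$ cyclic of order $\eta(p)$. The argument will reduce the problem to computing the minimal number of $\mathbb{F}_p[H]$-module generators of $V$, and the key point is that the diagonal $H$-action forces every cyclic $\mathbb{F}_p[H]$-submodule of $V$ to be one-dimensional.

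First I would run the following standard semidirect-product argument. Suppose $G_{p,n}^\ast = \langle g_1, \dots, g_d\rangle$ and write each $g_i = v_i h_i$ with $v_i \in V$ and $h_i \in H$. Let $H' = \langle h_1, \dots, h_d\rangle \le H$ and let $U \le V$ be the $\mathbb{F}_p[H']$-submodule generated by $v_1,\dots,v_d$. Then $U \rtimes H'$ is a well-defined subgroup of $G_{p,n}^\ast$ containing every $g_i$, hence equals $G_{p,n}^\ast$; this forces $H' = H$ and $U = V$. Therefore $v_1, \dots, v_d$ generate $V$ as an $\mathbb{F}_p[H]$-module, and it suffices to show that $V$ requires at least $n$ such generators.

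For the second step, $H \le \mathrm{Aut}(C_p) \cong \mathbb{F}_p^\times$ acts on $V = \mathbb{F}_p^n$ diagonally through a single character $\chi \colon H \to \mathbb{F}_p^\times$, so that $h \cdot v = \chi(h) v$ for every $v \in V$ and $h \in H$. Hence the cyclic submodule $\mathbb{F}_p[H]\cdot v$ is simply the one-dimensional line $\mathbb{F}_p \cdot v$, and any $\mathbb{F}_p[H]$-module generating set of size $d$ therefore spans an $\mathbb{F}_p$-subspace of $V$ of dimension at most $d$. This forces $d \ge n$ and completes the argument. No step presents a serious obstacle; the only delicate point is the dimension count in the last step, which relies on the fact that $\chi$ takes values in the scalar subgroup of $\mathrm{Aut}(V)$.
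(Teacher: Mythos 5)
Your proof is correct and follows essentially the same route as the paper: the paper's (much terser) argument likewise observes that, because $\mathrm{Aut}(C_p)^\circ$ acts diagonally (i.e.\ by scalars) on $C_p^n$, the $C_p^n$-components $v_1,\dots,v_d$ of any generating set must already generate $C_p^n$, forcing $d\ge n$. Your write-up merely makes explicit the standard semidirect-product reduction and the observation that cyclic $\mathbb{F}_p[H]$-submodules are one-dimensional, which is exactly what the paper leaves implicit.
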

\begin{proof}
 Let $d=\mathrm{d}(G_{p,n}^\ast)$ and let $(v_1,\alpha_1),\ldots,(v_d,\alpha_d)$ be a set of generators. Since the action of $\mathrm{Aut}(G_p)^\circ$ on $C_p^n$ is diagonal, $v_1,\ldots,v_d$ have to generate $C_p^n$ and hence $d\ge n$.
\end{proof}

 We take a closer look at the representations of these groups. Let $\ell$ be a fixed prime. We start by studying the irreducible representations of $G_{p,n}^\ast$ over an algebraically closed field $K$ of characteristic $\ell$. 
The one-dimensional irreducible representations of $G_{p,n}^\ast$ are exactly the ones which factor through the abelianisation, i.e., through $\mathrm{Aut}(C_p)^\circ$; there is one of these for each $\eta(p)$-th root of unity in $K$. In particular, if $F$ is any field finite field, we obtain
\[
   r^\ast(G_{p,n}^\ast, F,1) = \gcd(\eta(p),|F|-1).
\]

If the characteristic $\ell = p$, then every irreducible representation $V$ of $G_{p,n}^\ast$ over $K$ is one-dimensional. Indeed,
$V|_{C^n_p}$ is trivial, since it is semi-simple and the trivial representation is the only irreducible representation of a $p$-group in characteristic $p$.

Assume that $\ell \neq p$. 
Let $(\rho,V)$ be an irreducible representation of $G_{p,n}^\ast$ over $K$ that does not factor through $\mathrm{Aut}(C_p)^\circ$. Restriction to $C_p^n$ gives
\[
  V_{|C_p^n} = e \cdot \bigoplus_{\alpha \in \mathrm{Aut}(C_p)^\circ} \hspace{-0.3cm}\phantom{U}^\alpha  U
\]
with $U$ an irreducible representation of $C_p^n$ over $K$. Since $(\rho, V)$ is not one-dimensional, $U$ is not the trivial representation; $U$ is one-dimensional, and the action of a non-trivial $\alpha \in \mathrm{Aut}(C_p)^\circ$ has no non-trivial fixed points in $C_p^n$. It follows that $^\alpha U$ is not isomorphic to $U$ for any $\alpha \neq \mathrm{id}$, and hence the inertial subgroup is $C_p^n$ itself. Therefore $e=1$ (see \cite[Theorem 2.2.2(iii), p.84]{Karpilovsky}).

It follows that $V=\mathrm{Ind}_{C_p^n}^{G_{p,n}^\ast}(U)$ and that
\[
  \dim_{K}(V) = \vert G_{p,n}^\ast : C_p^n \vert \cdot \dim_{\mathbb{F}_\ell} U = \vert \mathrm{Aut}(C_p)^\circ \vert = \eta(p).
\]
Now note that the non-trivial representations $(\rho,V)$'s as above correspond to non-trivial $\mathrm{Aut}(C_p)^\circ$-orbits on $\mathrm{Irr}(C_p^n, K)$. In particular, for every field $F$ of characteristic $\ell$ we find 
\[
  r^\ast(G_{p,n}^\ast, F, \eta(p)) \leq p^n - 1. 
\]
We have proved the following:
\begin{lem}
	\label{lem:bound-example-infi}
For all $x \in [0,\infty)$ and all prime powers $q$, we have
\[
	S_q^\ast(G_{p,n}^\ast)(x) \leq \gcd(\eta(p),q-1) + (p^n-1)x^{\eta(p)-1}
\]
\end{lem}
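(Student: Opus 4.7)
The plan is simply to collect the case analyses already carried out in the excerpt and assemble them into the power series bound. Since
\[
  S_q^\ast(G_{p,n}^\ast)(x) = \sum_{m \ge 1} r^\ast(G_{p,n}^\ast, \bbF_q, m) x^{m-1},
\]
it suffices to compute or bound each $r^\ast(G_{p,n}^\ast, \bbF_q, m)$, and this has essentially been done dimension by dimension in the paragraphs preceding the lemma.

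First I would handle the one-dimensional part. The derived subgroup of $G_{p,n}^\ast$ equals $C_p^n$: for any non-identity $\alpha \in \mathrm{Aut}(C_p)^\circ$, the operator $\alpha - 1$ acts invertibly on $C_p^n$ (it is multiplication by a non-zero scalar in $\bbF_p$). Hence the abelianisation is $\mathrm{Aut}(C_p)^\circ$, cyclic of order $\eta(p)$, and the number of one-dimensional (automatically absolutely irreducible) representations over $\bbF_q$ is the number of $\eta(p)$-th roots of unity in $\bbF_q^\times$, namely $\gcd(\eta(p), q-1)$, regardless of the characteristic of $\bbF_q$.

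Next I would split by characteristic. If $\mathrm{char}(\bbF_q) = p$, the excerpt already establishes that every absolutely irreducible representation is one-dimensional, so $r^\ast(G_{p,n}^\ast, \bbF_q, m) = 0$ for all $m > 1$. If $\mathrm{char}(\bbF_q) \ne p$, the Clifford-style analysis in the excerpt shows that any non-linear absolutely irreducible representation has dimension exactly $\eta(p)$ and is induced from a non-trivial character of $C_p^n$; moreover such representations correspond to non-trivial $\mathrm{Aut}(C_p)^\circ$-orbits on the $p^n$ characters of $C_p^n$, giving at most $p^n - 1$ isomorphism classes. Thus $r^\ast(G_{p,n}^\ast, \bbF_q, \eta(p)) \le p^n - 1$ and $r^\ast(G_{p,n}^\ast, \bbF_q, m) = 0$ for all other $m > 1$.

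Combining these cases and using $x^{m-1} \ge 0$ for $x \in [0,\infty)$, only the degree $1$ and degree $\eta(p)$ terms can contribute to the sum, and both cases are dominated uniformly by the claimed upper bound. There is no substantive obstacle: the whole proof is a few lines of bookkeeping assembling the facts already proved in the excerpt immediately above the statement.
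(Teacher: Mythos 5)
Your proof is correct and follows the same route as the paper: the paper's ``proof'' of this lemma is precisely the discussion immediately preceding it (counting the one-dimensional representations via the abelianisation, observing that in characteristic $p$ there are no others, and using the Clifford-theoretic argument in characteristic $\ell \neq p$ to pin down the dimension $\eta(p)$ and the bound $p^n-1$), and your write-up just assembles those facts into the power-series inequality exactly as the paper intends. The small extra detail you supply --- that $\alpha-1$ acts invertibly on $C_p^n$, so the derived subgroup is $C_p^n$ --- fills in a step the paper states without justification, but it is the same argument.
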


Now we are able to construct the infinitely generated UBERG group $G$. Let $p_1$, $p_2$, $\ldots$ be an increasing sequence of primes $p>3$ satisfying:
\begin{enumerate}[(i)]
 \item $p_i \equiv 3 \bmod 4$, and
 \item $\frac{p_1 -1}{2}, \frac{p_2 -1}{2},\ldots$ are pairwise coprime. 
\end{enumerate}
Recall that $\eta(p_i) = \frac{p_i -1}{2}$.
Additionally, pick an increasing and unbounded sequence of integers $n_i$ such that 
\[
	p_i^{n_i} - 1 \leq 2^{\eta(p_i)}.
\]
\begin{lem}\label{lem:UBERG_inf_gen}
For $p_i$ and $n_i$ as above, the group
\[
  G= \prod_{i=1}^\infty G_{p_i,n_i}^\ast.
\]
has UBERG and does not have type $\FP_1$, so cannot be finitely generated and does not have type $\PFP_1$.
\end{lem}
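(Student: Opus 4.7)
The UBERG assertion follows from Lemma \ref{lem:uberg-criterion} once we bound the product $\prod_i S_q^\ast(G_{p_i,n_i}^\ast)$. Combining Lemma \ref{lem:bound-example-infi} with the hypothesis $p_i^{n_i}-1 \leq 2^{\eta(p_i)}$ yields, for each factor,
\[
S_q^\ast(G_{p_i,n_i}^\ast)(x) \leq \gcd(\eta(p_i), q-1)\bigl(1 + 2^{\eta(p_i)} x^{\eta(p_i)-1}\bigr).
\]
Since the $\eta(p_i)$ are pairwise coprime, so are the numbers $\gcd(\eta(p_i), q-1)$ as divisors of $q-1$, and hence $\prod_i \gcd(\eta(p_i), q-1) \leq q-1$. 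For the remaining product, using $\log(1+y)\leq y$ reduces matters to bounding $\sum_i 2^{\eta(p_i)} x^{\eta(p_i)-1} = \sum_i q^2 (2/q^2)^{\eta(p_i)}$ at $x = q^{-2}$; each summand is a decreasing function of $q$ for $\eta(p_i) \geq 2$, so is bounded for $q \geq 2$ by its value at $q=2$, namely $4 \cdot 2^{-\eta(p_i)}$. As $\eta(p_i) \to \infty$, the series $\sum_i 2^{-\eta(p_i)}$ converges to a constant $B_0$ independent of $q$, giving $S_q(q^{-2}) \leq e^{4B_0} q$ uniformly in $q$.

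For the non-$\FP_1$ part, we use a cohomological lower bound on the number of $\hat{\Z}\bra G \ket$-generators of the augmentation ideal $I_G$. If $I_G$ is generated by $d$ elements then evaluation at those generators embeds $\Hom_{\hat{\Z}\bra G \ket}(I_G, M) \hookrightarrow M^d$ for every module $M$, and applying $\Hom(-,M)$ to $0 \to I_G \to \hat{\Z}\bra G \ket \to \hat{\Z} \to 0$ gives the exact sequence $0 \to M \to \Hom(I_G, M) \to H^1(G,M) \to 0$ for every non-trivial simple module $M$ (so that $M^G=0$). For $M$ of dimension $1$ over a finite field $\bbF$ this forces $\dim_\bbF H^1(G, M) \leq d - 1$, so it suffices to exhibit one-dimensional simple modules $V_i$ with $\dim_{\bbF_{p_i}} H^1(G, V_i) \to \infty$.

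The modules we use are the $V_i$, where each $V_i$ is the one-dimensional $\bbF_{p_i}$-module on which $C_{p_i}^{n_i}$ acts trivially, $\mathrm{Aut}(C_{p_i})^\circ = C_{\eta(p_i)}$ acts by a fixed non-trivial character $\chi$, and every $G_j$ with $j \neq i$ acts trivially. Since $\gcd(\eta(p_i), p_i) = 1$, the Lyndon--Hochschild--Serre spectral sequence for $1 \to C_{p_i}^{n_i} \to G_i \to C_{\eta(p_i)} \to 1$ collapses to $H^1(G_i, V_i) \cong \Hom(C_{p_i}^{n_i}, V_i)^{C_{\eta(p_i)}}$; the induced $C_{\eta(p_i)}$-action on this $\Hom$-space is trivial (as $C_{\eta(p_i)}$ acts by $\chi$ on both source and target), so $H^1(G_i, V_i) \cong \bbF_{p_i}^{n_i}$. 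Setting $H_i = \prod_{j \neq i} G_j$, the restriction term $H^1(H_i, V_i)^{G_i} = \Hom(H_i^{ab}, V_i)^{G_i}$ in the inflation-restriction sequence for $1 \to H_i \to G \to G_i \to 1$ vanishes because $G_i$ acts trivially on $H_i^{ab}$ while $V_i^{G_i} = 0$, so $H^1(G, V_i) \cong H^1(G_i, V_i) \cong \bbF_{p_i}^{n_i}$. Since $n_i \to \infty$, this contradicts the bound from the previous paragraph, so $G$ is not of type $\FP_1$; the implications $\text{f.g.} \Rightarrow \FP_1$ and $\PFP_1 \Rightarrow \FP_1$ from the diagram in the introduction then give the remaining two conclusions. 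The main obstacle is the $H^1$ computation; the UBERG estimate is a routine convergence argument once Lemma \ref{lem:uberg-criterion} is in hand.
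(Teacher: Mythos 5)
Your UBERG estimate follows the paper's argument in all essentials: you split off the $\gcd(\eta(p_i),q-1)$ factors using pairwise coprimality, exponentiate the remaining tail, and bound the resulting series by a constant uniformly in $q$. Your treatment of the non-$\FP_1$ claim, however, is a genuinely different and self-contained route. The paper observes directly that $\mathrm{d}(G) \geq \mathrm{d}(G^\ast_{p_i,n_i}) \geq n_i \to \infty$, so $G$ is not finitely generated, and then cites Damian's result that prosoluble groups of type $\FP_1$ are finitely generated to conclude non-$\FP_1$. You instead bound $\dim_{\bbF} H^1(G,M)$ by $d(I_G)-1$ for any one-dimensional non-trivial simple module $M$, and exhibit modules $V_i$ with $\dim_{\bbF_{p_i}} H^1(G,V_i)=n_i$ unbounded. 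This has the virtue of making the cohomological obstruction entirely explicit and of avoiding the appeal to Damian's theorem, at the price of a somewhat longer computation. One point in that computation needs tightening: you take $\chi$ to be ``a fixed non-trivial character'' of $\mathrm{Aut}(C_{p_i})^\circ$, but the assertion that $C_{\eta(p_i)}$ acts trivially on $\Hom(C_{p_i}^{n_i},V_i)$ (``$\chi$ on both source and target'') requires $\chi$ to be the \emph{inclusion} character $\mathrm{Aut}(C_{p_i})^\circ \hookrightarrow \bbF_{p_i}^\times$, since that is precisely the character by which $\mathrm{Aut}(C_{p_i})^\circ$ acts on each $C_{p_i}$ coordinate of the source. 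For a generic non-trivial $\chi$ the invariants $\Hom(C_{p_i}^{n_i},V_i)^{C_{\eta(p_i)}}$ would vanish and your lower bound would evaporate; with $\chi$ the inclusion (which is non-trivial since $p_i>3$) the computation is correct and gives $H^1(G_i,V_i)\cong\bbF_{p_i}^{n_i}$ as claimed. A second, very minor point: ``$\eta(p_i)\to\infty$'' alone is not enough for $\sum_i 2^{-\eta(p_i)}$ to converge; you need that the $p_i$ form an increasing sequence of primes $\geq 7$, whence $\eta(p_i)\geq i+2$ -- the same implicit bound the paper uses when it replaces $q^{-\eta(p_i)+2}$ by $q^{-i}$.
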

\begin{proof}
Clearly, $G$ is not finitely generated, as $\mathrm{d}(G) \ge \mathrm{d}(G_{p_i,n_i}^\ast) \ge n_i$ and the integers $n_i$ tend to infinity. Since $G$ is soluble, by \cite[Corollary 2.4]{Damian} and the remark after it, it must not have type $\FP_1$.

For every prime power $q$ we have (by Lemma \ref{lem:bound-example-infi})
\begin{align*}
	\prod_{i=1}^\infty S^\ast_q(G_{p_i,n_i}^\ast)(q^{-2}) &\leq \prod_{i=1}^\infty\bigl( \gcd(\eta(p_i),q-1) + (p_i^{n_i}-1)q^{-2(\eta(p_i)-1)} \bigr)\\
	&\leq \prod_{i=1}^\infty\bigl(\gcd(\eta(p_i),q-1) + q^{\eta(p_i)-2(\eta(p_i)-1)}\bigr)\\
	&\leq (q-1)\prod_{i=1}^\infty\bigl( 1 + q^{-\eta(p_i)+2}\bigr)\\
	&\leq (q-1)\prod_{i=1}^\infty(1 + q^{-i}) \\
	&\leq (q-1) \exp(\sum_{i=1}^\infty q^{-i}) \leq (q-1)e.
\end{align*}
So the family of power series $\prod_{i=1}^\infty S^\ast_q(G_{p_i,n_i}^\ast)$ is uniformly bounded and, using Lemma \ref{lem:uberg-criterion}, we deduce that $G$ has UBERG.
\end{proof}

\subsection{A non-UBERG procyclic-by-UBERG group}
\label{sec:UBERGbyPUBERG_count}
For $m \geq 1$ we consider the direct product $\SL_n(\bbF_q)^m$ and the factor group $G_n(q,m) = \SL_n(\bbF_q)^m/C_n(q,m)$ where $C_n(q,m)$ is the image of the diagonal embedding of the centre $C \leq \SL_n(\bbF_q)$. The centre $C$ consists of scalar matrices $\lambda I_n$, where $\lambda \in \bbF_q^\times$ is an $n$-th root of unity. In particular, the group $C_n(q,m)$ is cyclic of order $|C_n(q,m)| = \gcd(n,q-1)$.

Let $(n_i)_{i\in \mathbb{N}}$ be an increasing sequence of pairwise coprime integers $\geq 12$ and let $q_i = p_i^{k_i}$ be a sequence of prime powers for pairwise distinct primes $p_i \geq 5$. We may assume that $\gcd(n_i,q_i-1) > 1$ for every $i$. We define the sequence $(m_i)_{i \in \mathbb{N}}$ as
$m_i = q_i^{\lfloor n_i^{3/2} \rfloor}$. Observe that $m_i$ grows faster than $q_i^{cn_i}$ for every $c > 0$ but slower than $q_i^{n_i^2}$
as $i$ tends to $\infty$.

We consider the profinite groups
\[
	G = \prod_{i \in \mathbb{N}} \SL_{n_i}(\bbF_{q_i})^{m_i}
\]
and
\[
	H = \prod_{i \in \mathbb{N}} G_{n_i}(q_i,m_i).
\]
We note that $H \cong G/Z$ where
$Z = \prod_{i} C_{n_i}(q_i,m_i)$. The group $Z$ is a procyclic group, since $C_{n_i}(q_i,m_i)$ is a cyclic group of order $\gcd(n_i,q_i-1)$ and these orders are pairwise coprime (indeed, the integers $n_i$ were chosen to be pairwise coprime).

\begin{thm}
	\label{thm:example-procyclic-by-uberg}
The groups $G$ and $H$ are 2-generated and finitely presented. The group $H$ has UBERG, but $G$ does not have UBERG.
\end{thm}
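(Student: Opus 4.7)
The statement combines four claims: $G$ and $H = G/Z$ are $2$-generated; they are finitely presented; $G$ is not UBERG; and $H$ is UBERG. The first two follow from standard criteria for profinite groups built from finite (quasi)simple groups. Since the $n_i \geq 12$ are pairwise coprime the simple sections $\PSL_{n_i}(\bbF_{q_i})$ of $G$ are pairwise non-isomorphic, and the multiplicity $m_i = q_i^{\lfloor n_i^{3/2}\rfloor}$ satisfies $m_i \leq |\PSL_{n_i}(\bbF_{q_i})|^{1/2}$ (because $2n_i^{3/2} \leq n_i^2-1$). The classical probabilistic generation estimates, together with the bounded Schur multipliers of classical simple groups, yield topological $2$-generation and profinite finite presentability of the full product; both properties descend to the quotient $H$.

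The non-UBERG of $G$ is a direct count. The natural absolutely irreducible $\bbF_{q_i}$-module of $\SL_{n_i}(\bbF_{q_i})$ has dimension $n_i$; pulling it back along each of the $m_i$ coordinate projections $G \to \SL_{n_i}(\bbF_{q_i})$ produces $m_i$ pairwise non-isomorphic absolutely irreducible $\bbF_{q_i}[G]$-modules of dimension $n_i$, so
\[
r(G,\bbF_{q_i},n_i) \;\geq\; m_i \;=\; q_i^{\lfloor n_i^{3/2}\rfloor}.
\]
No fixed $c$ satisfies $q_i^{\lfloor n_i^{3/2}\rfloor} \leq q_i^{c n_i}$ for every $i$, so $G$ fails UBERG.

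For the UBERG of $H = \prod_i G_{n_i}(q_i, m_i)$ we apply Lemma~\ref{lem:uberg-criterion}: it is enough to show that the product $\prod_i S_q^{\ast}(G_{n_i}(q_i, m_i))$, evaluated at $x = q^{-c_0}$ for some fixed $c_0$, is uniformly bounded. Set $A_i = \SL_{n_i}(\bbF_{q_i})$ and $C_i = Z(A_i)$, cyclic of order $c_i = \gcd(n_i,q_i-1) \geq 2$. An absolutely irreducible representation $\rho_1 \boxtimes \cdots \boxtimes \rho_{m_i}$ of $A_i^{m_i}$ descends to $G_{n_i}(q_i,m_i) = A_i^{m_i}/C_n(q,m)$ precisely when $\prod_j \chi_{\rho_j}$ is trivial on the diagonal copy of $C_i$. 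Setting
\[
f^{(i)}_{\chi}(x) = \sum_{\rho : \chi_\rho = \chi} x^{\deg \rho - 1}, \qquad T^{(i)}_{z}(x) = \sum_{\chi \in \widehat{C_i}} \chi(z)\, f^{(i)}_{\chi}(x) \quad (z \in C_i),
\]
Fourier inversion on the cyclic group $C_i$ combined with the inequality $\prod d_j - 1 \geq \sum (d_j - 1)$ used in the proof of Lemma~\ref{lem:uberg-criterion} gives
\[
S_q^{\ast}(G_{n_i}(q_i, m_i))(x) \;\leq\; \frac{1}{c_i}\sum_{z \in C_i} T^{(i)}_{z}(x)^{m_i}.
\]
Because the primes $p_i$ are pairwise distinct, for any fixed prime power $q$ at most one index $i_0$ satisfies $\mathrm{char}(q) = p_{i_0}$. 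For every $i \neq i_0$ the Landazuri--Seitz bound forces each non-trivial absolutely irreducible $\bar{\bbF}_{\mathrm{char}(q)}$-representation of $A_i$ to have dimension at least $(q_i^{n_i}-1)/(q_i-1)$, so $T^{(i)}_{z}(q^{-c_0}) = 1 + O(q^{-c_0 q_i^{n_i-1}})$ and the contribution of these factors to the product converges absolutely. For the single defining-characteristic index $i_0$ the natural module, its dual and their Frobenius twists contribute the leading terms of degree $n_{i_0}$ to $T^{(i_0)}_z$, with central characters of full order $c_{i_0}$ in $\widehat{C_{i_0}}$; the resulting sums of $c_{i_0}$-th roots of unity indexed by the appropriate Frobenius orbits produce cancellations that bound $\frac{1}{c_{i_0}}\sum_{z}T^{(i_0)}_{z}(q^{-c_0})^{m_{i_0}}$ by $B\cdot q^{c_0}$ for uniform constants $B, c_0$.

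The main obstacle is the quantitative central-character cancellation at the defining-characteristic index $i_0$. Heuristically, quotienting by the diagonal centre removes exactly the single-factor natural representations responsible for the failure of UBERG in $G$, so that the smallest surviving irreducible representations of $G_{n_i}(q_i, m_i)$ involve at least two non-trivial tensor factors (such as natural $\boxtimes$ dual of dimension $n_i^2$), occurring in count at most $m_i^2 = q_i^{2n_i^{3/2}}$; these counts are absorbed by $q_i^{c_0 n_i^2}$ once $c_0 \geq 2/\sqrt{n_i}$, which holds uniformly for $n_i \geq 12$. Turning this heuristic into the required uniform estimate calls for the Fourier decomposition above together with careful bookkeeping of Frobenius twists; with this per-factor bound in hand, the final product estimate follows the convergence argument of Lemma~\ref{lem:UBERG_inf_gen}.
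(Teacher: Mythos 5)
The overall shape of your argument matches the paper for two of the four claims (non-UBERG of $G$ via the count of natural modules, and $2$-generation via the Menezes--Quick--Roney-Dougal bound plus the inequality $2n^{3/2} \leq n^2 - 1$), but there are genuine gaps in the other two.

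For finite presentability, ``bounded Schur multipliers of classical simple groups'' does not by itself give profinite finite presentability of an infinite product: you also need to bound $H^2$ of the whole product uniformly against the size of the coefficient module. The paper does this via Lubotzky's criterion $\dim_{\F_\ell} H^2(G,V) \leq C \dim_{\F_\ell} V$, using a K\"unneth decomposition over the finitely many factors on which $V$ is non-trivial, the fact that $\SL_{n_i}(\bbF_{q_i})$ is the universal central extension of $\PSL_{n_i}(\bbF_{q_i})$ (so that $H^2$ of the kernel part vanishes), and the GKKL bound $\dim H^2(\SL_n(\bbF_q),V) \leq 8.5 \dim V$. Your sketch leaves all of this out, and the passage from $G$ to $H$ also needs the (easy) observation that $Z$ is procyclic, hence normally finitely generated.

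For UBERG of $H$, your Fourier/central-character decomposition is in principle a valid route, but as you concede it is not carried out: the terms $T^{(i)}_z(x)^{m_i}$ for $z=1$ are astronomically large (of size roughly $(1 + kq^{-c_0(n-1)})^{m}$ with $m = q^{\lfloor n^{3/2}\rfloor}$), so the claimed bound hinges entirely on unproven cancellation in the $z\neq 1$ terms, and you have not controlled the number of representations at a given degree (the paper invokes Fulman--Guralnick for this). The paper bypasses Fourier analysis completely: Lemma~\ref{lem:minimal-degrees} observes that when $\gcd(n,q-1)>1$ the natural $n$-dimensional module has non-trivial central character, so any irreducible of $\SL_n(\bbF_q)^m$ that involves one natural-module tensor factor and descends to $G_n(q,m)$ must involve at least one more non-trivial factor, giving $\md(G_n(q,m),\bbF_{p^j}) \geq n(n-1)/2$; combined with L\"ubeck's degree gap and the conjugacy-class count, this yields Lemma~\ref{lem:bounds-series} and then the per-factor bound directly. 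Your final ``heuristic'' paragraph is in fact exactly the paper's argument; the gap is that you present it as something still needing the Fourier machinery when it can be, and in the paper is, made precise directly via the central-character observation.
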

\subsubsection{First part of proof of Theorem \ref{thm:example-procyclic-by-uberg}}
\begin{proof}[Proof that $G$ does not have UBERG]
It is clear that the group $\SL_{n_i}(\bbF_{q_i})^{m_i}$ has at least
$m_i$ absolutely irreducible representations of degree $n_i$ over $\bbF_{q_i}$, i.e.\
\[ 
	r^{\ast}_{n_i}(G,\bbF_{q_i}) \geq m_i = q_i^{\lfloor n_i^{3/2} \rfloor}.
\]
We observe that $q_i^{\lfloor n_i^{3/2} \rfloor}$ grows faster than $q_i^{cn_i}$ for every $c > 0$ and we conclude that $G$ does not have UBERG.
\end{proof}
\begin{proof}[Proof that $G$ and $H$ are $2$-generated]
It suffices to prove that $G$ is $2$-generated.
We note that, since $G$ is perfect, the group $G$ is 
$2$-generated if and only if 
\[
	G/Z(G) \cong \prod_{i} \mathrm{PSL}_{n_i}(\bbF_{q_i})^{m_i}
	\]
is $2$-generated. Since the finite simple groups $\mathrm{PSL}_{n_i}(\bbF_{q_i})$ are pairwise non-isomorphic, this group is $2$-generated exactly if each block $\mathrm{PSL}_{n_i}(\bbF_{q_i})^{m_i}$ is $2$-generated.
By \cite[Theorem 1.3]{MQRD2013}, this is the case if 
\[
	m_i < \frac{|\PSL_{n_i}(\bbF_{q_i})}{\log(|\PSL_{n_i}(\bbF_{q_i})|)}
\]
We will establish the inequality $q^{n^{3/2}} \leq \sqrt{|\PSL_{n}(\bbF_{q})|}$ for all $n\geq 9$ and all $q$; this implies the required inequality, since $\sqrt{x} \geq \log(x)$ for all $x \geq 1$.
We observe that $|\PSL_n(\bbF_q)| \geq q^{n^2-n-2}$, so the we only have to prove that
\[
	2n^{3/2} < n^2 -n -2
\]
holds for all $n \geq 9$. This follows easily by induction.
\end{proof}
\begin{proof}[Proof that $G$ and $H$ are finitely presented]
It suffices to prove that $G$ is finitely presented, since $H = G/Z$ where $Z$ is procyclic.
We already know that $G$ is finitely generated, so by \cite[Theorem 0.3]{Lubotzky} it is sufficient to show that there is a constant $C>0$ such that
\[
	\dim_{\bbF_\ell} H^2(G,V) \leq C \dim_{\bbF_\ell} V
\]
for every prime $\ell$ and every finite irreducible representation $V$ of $G$ over $\bbF_\ell$. Extending scalars, we see that it is sufficient to establish such an upper bound for the second cohomology for every
absolutely irreducible representation $V$ over some finite field.

Let $V$ be an absolutely irreducible representation of $G$ over some finite field $\bbF$.
As $V$ is continuous, it factors over a finite product of components of $G$, i.e., we can write $G = G_1 \times \dots, G_t \times K$ where $K$ acts trivially on $V$ and each $G_j$ is of the form $\SL_{n_i}(\bbF_{q_i})$ (for some $i$) and acts non-trivially on $V$.
As $V$ is absolutely irreducible, it decomposes as a tensor product
\[
	V = W \otimes \bigotimes_{j=1}^t V_j 
\]
where each $V_j$ is an absolutely irreducible representation of $G_j$ and $W$ is the trivial $1$-dimensional representation of $K$.
By K\"unneth's formula we have
\[
	H^2(G,V) = \bigoplus_{f+ d_1+ \dots +d_t = 2} H^f(K,W) \otimes \bigotimes H^{d_j}(G_j,V_j) 
\]
Since each $\SL_{n_i}(q_i)$ is the universal central extension of $\PSL_{n_i}(q_i)$ (see \cite[Corollary 2 in \S 7]{Steinberg}, recall $p_i \geq 5$) the group $K$ is a universal central extension of some product of projective special linear groups. We deduce that $H^2(K,W) = 0$.
Since each $V_j$ is non-trivial and irreducible, we have $H^0(G_j,V_j) = 0$.
In particular, for $t \geq 3$ at least one $d_j = 0$ and we have $H^2(G,V)=0$.
Suppose that $t =1$, then 
\[
	 \dim_{\bbF} H^2(G,V) \leq  \dim_{\bbF} H^2(\SL_{n_i}(\bbF_{q_i}), V) \leq 8.5  \dim_{\bbF} V
\] 
by \cite[Theorem 8.4]{GKKL}.
If $t = 2$, then, since the special linear groups are $2$-generated, we have
$\dim_{\bbF}H^1(G_j,V_j) \leq 2 \dim_{\bbF}V$ and
\[
	\dim_{\bbF} H^2(G,V) \leq 4 \dim_{\bbF} (V_1)\dim_{\bbF}(V_2) = 4 \dim_{\bbF} V.\qedhere
\]
\end{proof}

It remains to show that $H$ has UBERG. This is more difficult and we need some more information on the representation theory of special linear groups.

\subsubsection{Representation theory of $\SL_n(\bbF_q)$.}
\begin{define}
Let $G$ be a finite group and let $F$ be a field. The \emph{minimal degree} $\md(G,F)$ is the degree of the smallest non-trivial absolutely irreducible representation of $G$ over $F$.
\end{define}

Let $p$ be an odd prime number and let $q$ be a power of $p$. 
Let $n \geq 3$ denote a natural number.
We consider the special linear group $\SL_n(\bbF_q)$. 
We are interested in absolutely irreducible representations of $\SL_n(\bbF_q)$ over finite fields $\bbF_{\ell^j}$ (where $\ell$ is some prime number). We will use the following facts:

\medskip

(a) \emph{In defining characteristic ($\ell = p$)} then $\md(\SL_n(\bbF_q),\bbF_{\ell^j}) = n$. The standard representation of $\SL_n(\bbF_q)$ on $\bbF_q^n$ and its dual are the only absolutely irreducible representations of minimal degree. For $n \geq 12$ every other non-trivial irreducible representation has degree $\geq n(n-1)/2$; see \cite[Theorem 5.1]{Luebeck}.

\medskip

(b) \emph{In non-defining characteristic ($\ell \neq p$)} for $n \geq 5$ the minimal degree satisfies $\md(\SL_n(\bbF_q),\bbF_{\ell^j}) \geq \frac{q^{n}-1}{q-1}-n \geq q^{n-1}$; see \cite[\S 2, Proposition]{Seitz-Zalesskii}

\medskip

(c) The number of absolutely irreducible representations of $\SL_n(\bbF_q)$ over any field is at most the number of conjugacy classes in $\SL_n(\bbF_q)$ which is smaller than $28 q^{n-1} \leq q^{n+3}$; see \cite[Theorem 1.1]{FulmanGuralnick}.

\medskip

This also allows us to bound the minimal degrees for the factor group $G_n(q,m) = \SL_n(\bbF_q)^m/C_n(q,m)$ where $C_n(q,m)$ is the image of the diagonal embedding of the centre $C \leq \SL_n(\bbF_q)$.
\begin{lem}\label{lem:minimal-degrees}
In non-defining characteristic $\ell \neq p$ the minimal degrees of $\SL_n(\bbF_q)^m$ and $G_n(q,m)$ are bounded from below by $q^{n-1}$.
In defining characteristic $\ell = p$ the following hold
\begin{enumerate}[(i)]
	\item $\md(\SL_n(\bbF_q)^m,\bbF_{\ell^j}) = n$ and there are exactly $2m$ representations of minimal degree.
	\item if $\gcd(n,q-1) > 1$, then $\md(G_n(q,m),\bbF_{\ell^j}) \geq n(n-1)/2$. 
\end{enumerate}
\end{lem}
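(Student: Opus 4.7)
First note that every absolutely irreducible representation of the direct product $\SL_n(\bbF_q)^m$ over an arbitrary field is an outer tensor product $V_1 \otimes \cdots \otimes V_m$ of absolutely irreducible representations of the individual factors, and an absolutely irreducible representation of $G_n(q,m) = \SL_n(\bbF_q)^m/C_n(q,m)$ corresponds to such a tensor product on which the diagonal central subgroup $C_n(q,m)$ acts trivially. In the non-defining case, fact (b) above bounds the dimension of every non-trivial absolutely irreducible representation of $\SL_n(\bbF_q)$ from below by $q^{n-1}$, and this bound propagates to any non-trivial outer tensor product, establishing the lower bound for both $\SL_n(\bbF_q)^m$ and $G_n(q,m)$.

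For part (i) in the defining case, apply fact (a): for $n \geq 12$ every non-trivial absolutely irreducible representation of $\SL_n(\bbF_q)$ over $\bbF_{\ell^j}$ is either the standard module $\bbF_q^n$, its dual, or has dimension at least $n(n-1)/2$. Since both $n(n-1)/2$ and $n^2$ exceed $n$ for $n \geq 12$, a tensor product of dimension exactly $n$ arises precisely when one factor is the standard representation or its dual and the remaining factors are trivial. This gives exactly $2m$ candidates of minimal degree $n$.

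For part (ii), perform the central character computation: the scalar matrix $\lambda I_n \in C$ acts by $\lambda$ on the standard module and by $\lambda^{-1}$ on its dual, so the diagonal element $(\lambda I_n, \ldots, \lambda I_n) \in C_n(q,m)$ acts on the tensor product $V_1 \otimes \cdots \otimes V_m$ by $\lambda^{a_1 + \cdots + a_m}$, where $a_j = 1, -1, 0$ when $V_j$ is respectively the standard module, its dual, or trivial. Triviality of this action for all $\lambda \in C$ requires $\sum_j a_j \equiv 0 \pmod{|C|}$, where $|C| = \gcd(n,q-1) > 1$ by hypothesis. The $2m$ minimal-dimension candidates from (i) each have central weight $\pm 1 \not\equiv 0 \pmod{|C|}$ and are therefore excluded. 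A surviving representation must then either contain a factor of dimension at least $n(n-1)/2$ (if some non-trivial factor is neither standard nor dual), or contain at least two non-trivial standard/dual factors, in which case the total dimension is at least $n^2$; in either situation the dimension is at least $n(n-1)/2$.

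The only substantive step is the central character computation of part (ii) together with the subsequent case split; once the central characters of the standard module and its dual are identified, the enumeration of outer tensor products compatible with the triviality constraint on $C_n(q,m)$ is routine.
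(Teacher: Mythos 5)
Your proof is correct and takes essentially the same route as the paper: reduce to absolutely irreducible outer tensor products $V_1\otimes\cdots\otimes V_m$ (citing the structure of irreducibles for a direct product), use the Seitz--Zalesskii bound in non-defining characteristic, use L\"ubeck's description of the small representations in defining characteristic, and for $G_n(q,m)$ observe that the diagonal centre $C_n(q,m)$ must act trivially, then rule out the $2m$ candidates of degree $n$ via their central characters. Your version is slightly more explicit than the paper's in writing out the weights $a_j\in\{1,-1,0\}$ of the standard module, its dual, and the trivial module and the resulting congruence $\sum_j a_j\equiv 0\pmod{|C|}$, but the underlying argument is the same.
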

\begin{proof}
In non-defining characteristic $\ell \neq p$, the absolutely irreducible representations of $\SL_n(\bbF_q)^m$ over $\bbF_\ell^j$ are tensor products $V_1 \otimes_{\bbF_{\ell^j}} V_2 \otimes_{\bbF_{\ell^j}} \dots \otimes_{\bbF_{\ell^j}} V_m$ of absolutely irreducible representations of $\SL_n(\bbF_q)$; see \cite{Fein}. Hence $\md(\SL_n(\bbF_q)^m,\bbF_{\ell^j}) = \md(\SL_n(\bbF_q),\bbF_{\ell^j})$ and the representations of minimal degree are 
of the form $V_1 \otimes_{\bbF_{\ell^j}} V_2 \otimes_{\bbF_{\ell^j}} \dots \otimes_{\bbF_{\ell^j}} V_m$ with exactly one non-trivial $V_i$ which is of minimal degree.

Since every representation of $G_n(q,m)$ lifts to representation of $\SL_n(\bbF_q)^m$ we have $\md(G_n(q,m),\bbF_{\ell^j}) \geq \md(\SL_n(\bbF_q)^m,\bbF_{\ell^j})$.

It remains to consider the defining characteristic case $\ell = p$. Let
$V = V_1 \otimes_{\bbF_{\ell^j}} V_2 \otimes_{\bbF_{\ell^j}} \dots \otimes_{\bbF_{\ell^j}} V_m$ be an absolutely irreducible representation of $\SL_n(\bbF_q)^m$ and let
$\omega_i\colon C \to \bbF_q^\times$ denote the central character of $V_i$. The representation
$V$ factors through $G_n(q,m)$ if and only if $\omega_1\omega_2\cdots\omega_m = 1$.

Assume that $\gcd(n,q-1) > 1$, so that the centre $C \leq \SL_n(\bbF_q)$ is non-trivial. In particular, the standard representation of $\SL_n(\bbF_q)$ on $\bbF_q^n$ and its dual have a non-trivial central character.
In particular, an absolutely irreducible representation $V$ of $\SL_n(\bbF_q)^m$ which factors through $G_n(q,m)$ and contains a tensor factor $V_i$ with $\dim(V_i) = n $ involves another non-trivial tensor factor. We deduce that $\dim(V) \geq \md(\SL_n(\bbF_q),\bbF_{\ell^j})^2 = n^2$.
We observe that any other non-trivial absolutely irreducible representation $V$ of $\SL_n(\bbF_q)^m$ which factors through $G_n(q,m)$ contains 
some $V_i$ of degree at least $n(n-1)/2$. This completes the proof.
\end{proof}
This result implies bounds on the number of irreducible representations of bounded degree, which are conveniently expressed in terms of the family of power series $S^{\ast}(G_n(q,m))$.
\begin{lem}\label{lem:bounds-series}
In non-defining characteristic $\ell \neq p$, we have
\[
	S_{\ell^j}^{\ast}(G_n(q,m))(x) \leq (1+q^{n+3}x^{q^{n-1}-1})^m
\]
for all $j$ and all $x \in (0,1)$.

Assume that $\gcd(n,q-1) > 1$. 
Then in defining characteristic $\ell = p$ we have
\[
	S_{p^j}^\ast(G_n(q,m))(x) \leq   1 +  \sum_{k=2}^{\infty} m^k q^{k n^2}x^{n^k}
\]
for all $x \in (0,1)$ and $j$.
\end{lem}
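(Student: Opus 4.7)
The plan is to exploit two structural facts throughout: (a) over a finite field, every absolutely irreducible representation of a direct product $G_1 \times \dots \times G_m$ factors as an outer tensor product of absolutely irreducible representations of the factors; and (b) a tensor product $V_1 \otimes \cdots \otimes V_m$ of representations of $\SL_n(\bbF_q)$ descends to $G_n(q,m) = \SL_n(\bbF_q)^m / C_n(q,m)$ exactly when the product of the central characters of the $V_i$ is trivial on the diagonal centre. Combined with the elementary inequality
\[
    x^{d_1 d_2 - 1} \leq x^{d_1 - 1} x^{d_2 - 1} \quad \text{for } x \in [0,1] \text{ and } d_1, d_2 \geq 1
\]
(which is simply $(d_1-1)(d_2-1) \geq 0$), these facts reduce the power series for $G_n(q,m)$ to an estimate coming from factors $\SL_n(\bbF_q)$, with an appropriate restriction on central characters in the defining-characteristic case.

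For the non-defining case, every absolutely irreducible representation of $G_n(q,m)$ inflates to one of $\SL_n(\bbF_q)^m$, giving the coefficient-wise bound $S^\ast_{\ell^j}(G_n(q,m))(x) \leq S^\ast_{\ell^j}(\SL_n(\bbF_q)^m)(x)$. Iterating (a) together with the elementary inequality yields $S^\ast_{\ell^j}(\SL_n(\bbF_q)^m)(x) \leq S^\ast_{\ell^j}(\SL_n(\bbF_q))(x)^m$. Separating the trivial representation and applying fact (b) (minimal non-trivial degree $\geq q^{n-1}$) and fact (c) (at most $q^{n+3}$ absolutely irreducible representations in total), one obtains
\[
    S^\ast_{\ell^j}(\SL_n(\bbF_q))(x) \leq 1 + q^{n+3} x^{q^{n-1}-1},
\]
and raising to the $m$-th power yields the first bound.

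For the defining case, the central-character constraint must be tracked explicitly. Decompose a non-trivial absolutely irreducible $V = V_1 \otimes \cdots \otimes V_m$ descending to $G_n(q,m)$ and let $k \geq 1$ be the number of non-trivial factors. By fact (a), the minimal-degree irreducible representations of $\SL_n(\bbF_q)$ are precisely the standard and its dual (dimension $n$), and under the hypothesis $\gcd(n,q-1) > 1$ both have non-trivial central character; every other non-trivial irreducible has degree $\geq n(n-1)/2$. This is exactly the reasoning in the proof of Lemma~\ref{lem:minimal-degrees}(ii), and it forces $\dim V \geq n^k$ in the dominant regime $k \geq 2$. The counting then proceeds as follows: there are at most $\binom{m}{k} \leq m^k$ choices of the $k$ non-trivial positions, and at most $q^{n^2}$ irreducible representations of $\SL_n(\bbF_q)$ per non-trivial factor (using the crude bound $|\SL_n(\bbF_q)| \leq q^{n^2}$, which absorbs the sharper $q^{n+3}$ count of fact (c)). Summing the resulting contribution $m^k q^{kn^2} x^{n^k}$ over $k \geq 2$ produces the claimed bound.

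The main technical obstacle lies in the defining-characteristic bookkeeping of central characters: one must verify that the classification by number of non-trivial tensor factors, together with the constraint $\omega_1 \cdots \omega_m = 1$, genuinely produces the stated lower bound $\dim V \geq n^k$ for $k \geq 2$, and that the $k = 1$ contributions (arising from representations with a single non-trivial tensor factor of trivial central character, such as the adjoint representation of $\PSL_n$) are absorbed into the tail starting at $k = 2$ thanks to the very crude multiplicity $q^{kn^2}$.
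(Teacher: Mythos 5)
The non-defining characteristic bound and your proof of it are correct and match the paper's argument: Fact (b) gives the minimal degree bound $q^{n-1}$, Fact (c) gives the cardinality bound $q^{n+3}$, inflation to $\SL_n(\bbF_q)^m$ and the elementary inequality $x^{d_1d_2-1}\leq x^{d_1-1}x^{d_2-1}$ give the $m$-th power. Your defining-characteristic strategy also mirrors the paper's: organize by the number $k$ of non-trivial tensor factors and use the central-character constraint of Lemma~\ref{lem:minimal-degrees}.

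However, the assertion that the $k=1$ contributions ``are absorbed into the tail starting at $k=2$'' cannot be made to work, and this is a genuine gap. In defining characteristic the simple $\SL_n(\bbF_q)$-module of highest weight $\omega_1+\omega_{n-1}$ (the non-trivial constituent of the adjoint) has trivial central character, hence factors through $\PSL_n(\bbF_q)$, and has dimension at most $n^2-1$. Placing it in the $i$-th tensor slot and letting $i$ range over all $m$ slots produces $m$ pairwise non-isomorphic absolutely irreducible $G_n(q,m)$-modules of degree at most $n^2-1$, so $S^\ast_{p^j}(G_n(q,m))(x)\geq 1+mx^{n^2-2}$. No series of the shape $1+\sum_{k\geq 2}m^kq^{kn^2}x^{n^k}$ can dominate this on all of $(0,1)$: the latter is $1+m^2q^{2n^2}x^{n^2}+O(x^{n^3})$, and $x^{n^2-2}/x^{n^2}=x^{-2}\to\infty$ as $x\to 0$. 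So the $k=1$ term cannot be absorbed by any fixed multiplicity constant, however crude. You have in fact isolated a gap that the paper's own proof also leaves unaddressed --- the published argument starts its sum at $k=2$ after observing only that the degree-$n$ modules do not descend, silently ignoring $k=1$ modules of degree between $n(n-1)/2$ and $n^2$; a correct statement needs an explicit $k=1$ term, roughly $mq^{n+3}x^{n(n-1)/2-1}$. (This does not affect the downstream use: there the series is evaluated only at $x\leq q^{-2}$ with $m=q^{\lfloor n^{3/2}\rfloor}$, where the extra $k=1$ term is still exponentially small.)
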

\begin{proof}
Assume that $\ell \neq p$. Let $x \in [0,1)$, then we obtain
\[
	S_{\ell^j}^{\ast}(G_n(q,m))(x) \leq S_{\ell^j}^{\ast}(\SL_n(\bbF_q)^m)(x) \leq S_{\ell^j}^{\ast}(\SL_n(\bbF_q))(x)^m
	\]
as in the proof of Lemma \ref{lem:uberg-criterion}.
By Fact 2 the minimal degree $\md(\SL_n(\bbF_q), \bbF_{\ell^j}) \geq q^{n-1}$ and by Fact 3 there are at most $q^{n+3}$ non-trivial representations, hence 
\[
	S^\ast(\SL_n(\bbF_q),\bbF_{\ell^j})(x) \leq 1 + q^{n+3}x^{q^{n-1}-1}
\] 
for all $x \in [0,1)$.

Now consider the defining characteristic case $\ell = q$.
The absolutely irreducible representations of $G_n(q,m)$ are the representations of $\SL_n(\bbF_q)^m$ with trivial restriction to $C_n(q,m)$. We know from the proof of Lemma \ref{lem:minimal-degrees} that the representations of degree $n$ don't factor through $G_n(q,m)$. Using Fact 3, we obtain
\begin{align*}
	S_{p^j}^\ast(G_n(q,m))(x) &\leq 1 +  \sum_{k=2}^{\infty} \binom{m}{k} (q^{n+3})^k x^{n^k-1} \\
	&\leq  1 +  \sum_{k=2}^{\infty} m^k q^{(n+3)k}x^{n^k-1}
\end{align*}
for all $x \leq 1$.
\end{proof}

\subsubsection{Second part of proof of Theorem \ref{thm:example-procyclic-by-uberg}}
\begin{proof}[Proof that $H$ has UBERG]
We want to apply Lemma \ref{lem:uberg-criterion}.
To this end we show that the family of power series $S_t = \prod_{i=1}^\infty S_t^\ast(G_{n_i}(q_i,m_i))$ where $t$ varies over all prime powers is uniformly bounded. We claim that the constant $c=2$ works.
 
Fix a prime power $t = \ell^j$.
Suppose the $\ell \neq p_i$ for all $i$, then by Lemma \ref{lem:bounds-series} we obtain
\begin{align*}
	S_{{\ell^j}}(\ell^{-2j}) 
		&\leq \prod_{i=1}^\infty(1+q_i^{n_i+3}\ell^{-2j(q_i^{n_i-1}-1)})^{m_i}\\
		&\leq \exp(\sum_{i=1}^\infty m_i q_i^{n_i+3}\ell^{-2j(q_i^{n_i-1}-1)}).
\end{align*}
A short calculation yields
\begin{align*}
  \sum_{i=1}^\infty m_i q_i^{n_i+3}\ell^{-2j(q_i^{n_i-1}-1)}
   &\leq 
  \sum_{i=1}^\infty q_i^{n_i^{3/2} + n_i+3}\ell^{-2j(q_i^{n_i-1}-1)}\\ 
  &\leq
  \sum_{i=1}^\infty 2^{\log_2(q_i)(n_i^{3/2} + n_i+3) - 2(q_i^{n_i-1}-1)}\\
  &\leq
  C+ \sum_{i=1}^\infty 2^{-i} = C + 1
\end{align*}
since obviously $\log_2(q_i)(n_i^{3/2} + n_i+3) - 2(q_i^{n_i-1}-1) < -i$ for all large $i$.
In particular, this series converges and is bounded above independently of $\ell^j$.

If $\ell = p_i$ but $\ell^j < q$, then we get the same result, since all absolutely irreducible representations of $\SL_n(\bbF_q)$ are defined over $\bbF_q$.
Assume finally that $\ell = p_i$ and $\ell^j \geq q$.
In this case, there is a new factor of the form 
\begin{align*}
	S_{p_i^j}^\ast(G_{n_i}(q_i,m_i))(\ell^{-2j}) &\leq  1 +  \sum_{k=2}^{\infty} m_i^k q_i^{k (n_i+3)}p_i^{-2j(n_i^k-1)} \\
	&\leq 1 +  \sum_{k=2}^{\infty} q_i^{2kn_i^{3/2}}q_i^{-2(n_i^k-1)}
	\leq 1 +  \sum_{k=2}^{\infty} q_i^{-2n_i^k(1-n_i^{-k}-kn_i^{3/2-k})}\\
	&\leq 1 + \sum_{k=2}^{\infty} q_i^{-(2/3)n_i^k}
\leq 2
\end{align*}
where we use the rough estimate $1-n_i^{-k}-kn_i^{3/2-k} \geq 1/3$ (for $n_i \geq 12$ and $k \geq 2$).
This factor is independent of $\ell$ and $j$ and so the claim follows.
\end{proof}
\section{UBERG-by-UBERG groups}\label{sec:UBERG-by-UBERG}
In this section we study conditions under which extensions of groups with UBERG have UBERG.
We show that split UBERG-by-UBERG groups, and UBERG-by-(finitely generated proj-UBERG) groups, have UBERG.

We proceed using the machinery of Clifford theory, for which our main reference is \cite{Karpilovsky}. Since any finite $G$-module is fixed pointwise by some open normal subgroup of $G$, we may think of such a module as the restriction of a module for some finite quotient of $G$. 

For a field $F$ and an $F$-algebra $A$, we write 
$r(A,F,n) = |\Irr(A,F,n)|$ to denote the number (isomorphism classes) of simple $A$-modules of $F$-dimension $n$ and $R(A,F,n) = \sum_{i=1}^n r(A,F,n)$. Clearly, $r(A,F,n) \leq R(A,F,n)$. If $E/F$ is a finite field extension such that $E \subseteq A$ is a subfield, we have $R(A,E,n) = R(A,F,n[E:F])$.
For a profinite group $G$, we write $r(G,F,n) = r(F\bra G \ket,F,n)$ and
$R(G,F,n) = R(F\bra G \ket,F,n)$.
Recall by \cite[Theorem A]{KV} that a profinite group $G$ has UBERG if and only if there is some constant $c$ such that $r(G,\F_p,n) \leq p^{cn}$ for all $n$ and $p$.

\begin{lem}\label{rvsR}
Let $F$ be a finite field, 
let $A$ be an $F$-algebra and let $c >0$. Then
\[
	(\forall n\in \mathbb{N})\; r(A,F,n) \leq |F|^{cn} \Longrightarrow (\forall n\in \mathbb{N})\; R(A,F,n) \leq n|F|^{cn} \leq |F|^{(c+1)n}
\]
A profinite $G$ has UBERG if and only if there is a constant $c'$ such that $ R(A,F,n) \leq |F|^{c'n}$ for all $n$ and all finite fields $F$. 
\end{lem}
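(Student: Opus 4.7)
The plan is to prove the two parts in sequence, with the second following almost immediately from the first.

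For the first implication, the argument is just a geometric-series style estimate. By definition $R(A,F,n) = \sum_{i=1}^n r(A,F,i)$, and assuming $r(A,F,i) \leq |F|^{ci}$ for every $i$, I would bound
\[ R(A,F,n) \leq \sum_{i=1}^n |F|^{ci} \leq n \cdot |F|^{cn}, \]
using that $|F|^{ci}$ is maximized at $i=n$. To convert this to the cleaner bound $|F|^{(c+1)n}$, I need $n \leq |F|^n$. Since $|F| \geq 2$ we have $|F|^n \geq 2^n \geq n$ for all $n \geq 1$, giving the second inequality.

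For the second part, I would note first that the statement should read $R(G,F,n)$ on the right, and the equivalence is then immediate from the definition of UBERG and the first part. If $G$ has UBERG, by \cite[Theorem A]{KV} (or essentially the definition) there is $c>0$ such that $r(G,F,n) \leq |F|^{cn}$ for every finite $F$ and every $n$; apply the first part of the lemma with $A = F\bra G \ket$ and $c' = c+1$ to get $R(G,F,n) \leq |F|^{c'n}$. Conversely, $r(G,F,n) \leq R(G,F,n)$ holds trivially for every $n$, so a uniform bound on $R$ by $|F|^{c'n}$ immediately yields one on $r$, hence UBERG.

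There is essentially no obstacle: this lemma is a bookkeeping statement whose only purpose is to let the authors work interchangeably with $r$ and $R$ in the sequel. The only mildly non-trivial point is the estimate $n \leq |F|^n$, and this is trivial because finite fields have at least two elements.
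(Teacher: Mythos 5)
Your proof of the first implication is identical to the paper's (the same $\sum_{i=1}^n |F|^{ci} \leq n|F|^{cn}$ estimate, plus $n\leq |F|^n$). For the equivalence, your argument is essentially right but conceals a small subtlety that the paper handles explicitly. The convenient characterization of UBERG that the paper works from (stated just above the lemma, citing \cite[Theorem A]{KV}) is the bound $r(G,\F_p,n)\leq p^{cn}$ over \emph{prime} fields only. For the direction ``$R$ bounded $\Rightarrow$ UBERG'' the paper therefore simply specializes to $F=\F_p$, just as you'd expect. But for the direction ``UBERG $\Rightarrow$ $r(G,F,n)\leq |F|^{c'n}$ for \emph{every} finite $F$'' the paper invokes the proof of \cite[Lemma 6.8]{KV}, using that an irreducible representation of degree $n$ is absolutely irreducible over a field extension of degree at most $n$, to lift the prime-field bound to a uniform bound over all finite fields. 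You instead read UBERG as already asserting the bound over all finite $F$ (which matches the phrasing in the paper's introduction), in which case nothing more needs to be said. Both readings are defensible, but if one works only with the prime-field formulation — which is what one usually wants in practice and what \cite{KV} supplies — your step ``by [KV, Theorem A] there is $c>0$ such that $r(G,F,n)\leq|F|^{cn}$ for every finite $F$'' is not what that theorem literally says; that promotion from $\F_p$ to arbitrary $F$ is exactly the content the paper borrows from \cite[Lemma 6.8]{KV}.
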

\begin{proof}
	The first assertion follows from the inequality $\sum_{j=1}^n |F|^{cj} \leq n|F|^{cn}$.	If $R(G,F,n) \leq |F|^{c'n}$ holds for all $n$ and all finite fields, then in particular $r(G,\F_p,n) \leq p^{c'n}$ for all prime numbers $p$ and so $G$ has UBERG.
	Conversely, if $G$ has UBERG,  then the inequality $r(G,F,n) \leq |F|^{c'n}$ follows from the proof of \cite[Lemma 6.8]{KV} using that every irreducible representation of degree $n$ is absolutely irreducible over a field extension of degree at most $n$.
\end{proof}

\begin{lem}\label{lem:basic-estimate}
Let $A$ be an $F$-algebra and let $B \subseteq A$ be a subalgebra. Suppose that $A$ can be generated by $d$ elements as a right $B$-module. Then
\begin{enumerate}[(i)]
\item\label{it:boundAB}
	$R(A,F,n) \leq d R(B,F,n)$
\item\label{it:boundBA}
	$R(B,F,n) \leq d R(A,F,dn)$
\end{enumerate}
\end{lem}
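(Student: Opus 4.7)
The plan is to use the tensor–hom adjunction $\Hom_A(A\otimes_B W, V) \cong \Hom_B(W, V|_B)$ together with the dimension estimate $\dim_F(A\otimes_B W) \leq d \dim_F W$, which follows immediately from the fact that the elements $a_i \otimes w$ (for $i=1,\dots,d$ and $w \in W$) span $A\otimes_B W$ when $a_1,\dots,a_d$ generate $A$ as a right $B$-module. By the adjunction, for a simple $B$-module $W$ the simple $A$-quotients of $A\otimes_B W$ are precisely the simple $A$-modules $V$ that contain a copy of $W$ as a $B$-submodule, since any nonzero map from the simple module $W$ into $V|_B$ is automatically injective.

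For part (i), every simple $A$-module $V$ of dimension at most $n$ has nonzero $B$-socle, hence contains a simple $B$-submodule $W$ with $\dim W \leq n$, and $V = AW$ forces $\dim V \leq d\dim W$. For each fixed simple $B$-module $W$, the number of distinct simple $A$-quotients of $A\otimes_B W$ is at most $d$: each such quotient appears as a summand of the semisimple head $(A\otimes_B W)/J(A\otimes_B W)$, whose dimension is at most $\dim(A\otimes_B W) \leq d\dim W$, and each distinct summand has dimension at least $\dim W$ (since it contains $W$ as a $B$-submodule). Summing over simple $B$-modules $W$ of dimension at most $n$ yields $R(A,F,n) \leq d R(B,F,n)$.

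For part (ii) I argue symmetrically. For each simple $A$-module $V$ of dimension at most $dn$, the socle of $V|_B$ is a semisimple $B$-module of dimension at most $\dim V$, and each simple summand $W$ satisfies $V = AW$, so $\dim W \geq \dim V/d$; consequently the socle has at most $d$ simple summands counted with multiplicity, and \emph{a fortiori} at most $d$ distinct isomorphism classes of simple $B$-submodules. Every simple $B$-module $W$ of dimension at most $n$ is then a submodule of some simple $A$-module of dimension at most $dn$, namely any simple quotient of the module $A\otimes_B W$, and summing over $V$ gives $R(B,F,n) \leq d R(A,F,dn)$. The main technical point I anticipate is verifying that the canonical map $W \to A\otimes_B W$, $w \mapsto 1\otimes w$, is injective (equivalently that $A\otimes_B W \neq 0$), so that $W$ genuinely embeds into a simple $A$-module; this is automatic when $A$ is projective as a right $B$-module, which is the setting of the intended applications to crossed products and completed group algebras.
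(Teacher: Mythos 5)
Both parts of your argument follow the paper's proof: the same adjunction $\Hom_A(A\otimes_B W,V)\cong\Hom_B(W,V|_B)$, the same estimate $\dim_F(A\otimes_B W)\le d\dim_F W$, and the same counting maps (in (i) each simple $A$-module is sent to a simple $B$-submodule of its restriction, in (ii) each simple $B$-module $W$ is sent to a simple quotient of $A\otimes_B W$), with only a cosmetic difference in how the fibres are bounded: the paper takes a member of minimal dimension in each fibre, while you use $V=AW$ to bound dimensions from below; either way one gets at most $d$ isomorphism classes per fibre.

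The caveat you raise at the end is exactly the right one, and it is the only real subtlety: the map in (ii) is defined only if $A\otimes_B W\neq 0$, a step the paper performs tacitly when it ``chooses a simple factor of $A\otimes_B W$''. But your claim that projectivity of $A$ as a right $B$-module settles this is false. Take $A=M_3(F)$ and let $B=\{b\in A\colon b_{31}=b_{32}=0\}$ be the block upper triangular subalgebra of shape $(2,1)$. With $y=E_{12}+E_{23}+E_{31}$ one has $yE_{11}=E_{31}$ and $yE_{12}=E_{32}$, so $A=B+yB$ is generated by the two elements $1,y$ as a right $B$-module; moreover, as a right $B$-module $A$ is the direct sum of its three rows, each isomorphic to $E_{11}B$, hence $A$ is projective over $B$. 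Nevertheless, for the one-dimensional simple $B$-module $W$ on which $b$ acts by $b_{33}$ we get $A\otimes_B W\cong (E_{11}B\otimes_B W)^{3}\cong (E_{11}W)^{3}=0$, since $E_{11}$ acts on $W$ by $(E_{11})_{33}=0$. In fact this example shows that assertion (ii) fails for a completely general subalgebra: here $R(B,F,1)=1$ while $d\,R(A,F,d)=2\,R(M_3(F),F,2)=0$. What does suffice is that $B$ be a right $B$-module direct summand of $A$ (in particular $A$ free, or faithfully flat, over $B$), for then $W\cong B\otimes_B W$ is a nonzero summand of $A\otimes_B W$. This holds everywhere the paper applies the lemma ($F\bra G\ket$ over $F\bra H\ket$ for $H$ open, and $E^\gamma\bra G\ket$ over $F\bra G\ket$, both free), so your proof, like the paper's, is complete in the situations where the lemma is used; but the sufficient condition should be stated as freeness or a summand condition rather than projectivity. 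Note also that part (i) needs no such hypothesis at all, since there $A\otimes_B W\neq 0$ is automatic from $W\subseteq V|_B$.
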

\begin{proof}
\eqref{it:boundAB}: For every finite dimensional simple $A$-module $V$, let $\psi(V)$ be some simple submodule of $V|_B$.
This defines a map $\psi \colon \bigcup_{i=1}^n\Irr(A,F,i) \to \bigcup_{i=1}^n\Irr(B,F,i)$.
We claim that the number of elements in each fibre is bounded by $d$ from above.
Consider the fibre over $\psi(V)$ and assume that $V$ has minimal $F$-dimension of all modules in the fibre.
Suppose that $\psi(V) \cong \psi(V')$, then
\[
	0 \neq \Hom_B(\psi(V), V'|_B) = \Hom_A(A \otimes_B \psi(V),V')
\]
and $V'$ is a simple factor of the induced module $A \otimes_B \psi(V)$. Since $\dim_F(V') \geq \dim_F(V)$ and $\dim_F(A \otimes_B \psi(V)) \leq d \dim_F(V)$, there are at most $d$ elements in each fibre. We deduce that
\[
	R(A,F,n) \leq d R(B,F,n).
\]

\eqref{it:boundBA}: For every finite dimensional simple $B$-module $W$
we choose a simple factor $\theta(W)$ of $A \otimes_B W$.
This defines a map $\theta\colon \bigcup_{i=1}^n\Irr(B,F,i) \to \bigcup_{j=1}^{dn}\Irr(A,F,j)$.
Again, we claim there are at most $d$ modules in each fibre.
Let $W$ have minimal dimension in the fibre over $\theta(W)$.
Suppose that $\theta(W') \cong \theta(W)$, then
\[
	0 \neq \Hom_A(A \otimes_B W', \theta(W)) = \Hom_B(W',\theta(W)|_B)
\]
and $W'$ is a simple submodule of $\theta(W)|_B$. Since $\dim(W) \leq \dim(W')$, there are at most $d$ such simple submodules, i.e.,
\[
	R(B,F,n) \leq d R(A,F,dn).\qedhere
\]
\end{proof}

\begin{cor}\label{cor:opensbgpreps}
Let $G$ be a profinite group and let $H$ be an open subgroup of index $h=[G:H]$.
Then $R(H,F,n) \leq hR(G,F,nh)$.
\end{cor}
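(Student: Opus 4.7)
The plan is to deduce this directly from part \eqref{it:boundBA} of Lemma~\ref{lem:basic-estimate}, taking $A = F\bra G \ket$ and $B = F\bra H \ket$. The only real content is to check that $F\bra G \ket$ is generated as a right $F\bra H \ket$-module by exactly $h$ elements, and then the asserted inequality is nothing but the conclusion of that lemma with $d = h$.

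First I would fix a transversal $\{g_1, \ldots, g_h\}$ for the left cosets of $H$ in $G$, so that $G = \bigsqcup_{i=1}^h g_i H$ as topological spaces (using that $H$ is open, hence $[G:H]$ is finite and each coset is clopen). Since completed group algebras respect clopen decompositions, I get
\[
F\bra G \ket \;=\; \bigoplus_{i=1}^h g_i\, F\bra H \ket
\]
as a right $F\bra H \ket$-module; in particular $F\bra G \ket$ is generated by $h$ elements (the $g_i$) over $F\bra H \ket$ on the right. Any small subtlety here is just the standard fact that for $H$ open in a profinite $G$, the completed group algebra $F\bra G \ket$ is a free right $F\bra H \ket$-module of rank $h$, and this is where I would pause to make sure the topological setup matches what the lemma asks for algebraically.

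Applying Lemma~\ref{lem:basic-estimate}\eqref{it:boundBA} with $d = h$ then gives
\[
R(F\bra H \ket, F, n) \;\leq\; h\, R(F\bra G \ket, F, hn),
\]
which by the definition of $R(H,F,n)$ and $R(G,F,n)$ is exactly $R(H,F,n) \leq h R(G,F,hn)$, as required. The main (and only) obstacle is the verification that the hypothesis of Lemma~\ref{lem:basic-estimate} applies in this profinite setting, but this is routine.
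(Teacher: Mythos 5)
Your proof is correct and follows exactly the paper's own argument: you pick a left transversal to show $F\bra G\ket$ is free of rank $h$ as a right $F\bra H\ket$-module, then apply Lemma~\ref{lem:basic-estimate}\eqref{it:boundBA} with $d=h$. The paper's proof is identical, just stated more tersely.
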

\begin{proof}
Since $F\bra G \ket$ is $h$-generated as $F\bra H \ket$-module, Lemma \ref{lem:basic-estimate} implies
    $R(H,F,n) \leq h R(G,F,nh)$. 
\end{proof}

To proceed we will need the language of crossed representations and crossed projective representations. The notation is explained in Section \ref{sec:projective-crossed-etc}, details can be found in \cite{Karpilovsky}.

 \begin{cor}
	\label{cor:crossedreps}
	Let $E$ be a finite field and let $F$ be subfield with $|E:F|=e$. Let $\gamma$ be an action of $G$ on $E$ which fixes $F$. Then  $R(E^\gamma\bra G \ket,E,n) \leq eR(G,F,ne)$.
\end{cor}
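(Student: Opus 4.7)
The strategy is to realise $F\bra G \ket$ as a subalgebra of $E^\gamma\bra G \ket$ of ``index'' $e$ and then apply Lemma~\ref{lem:basic-estimate}\eqref{it:boundAB}, converting from $F$-dimensions to $E$-dimensions via the identity $R(A,E,n) = R(A,F,ne)$ (stated just before Lemma~\ref{rvsR}).

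First, note that since $\gamma$ fixes $F$, the scalars from $F$ are central in $E^\gamma\bra G \ket$. Consequently the $F$-span of the distinguished basis $\{\bar g : g\in G\}$, completed appropriately, is an $F$-subalgebra of $E^\gamma\bra G \ket$ isomorphic to the ordinary completed group algebra $F\bra G \ket$. I will take $A = E^\gamma\bra G \ket$, $B = F\bra G \ket \subseteq A$ and check that $A$ is generated by $e$ elements as a right $B$-module. Fix an $F$-basis $\lambda_1,\dots,\lambda_e$ of $E$. For an arbitrary $x = \sum_g \mu_g \bar g \in A$ write $\mu_g = \sum_i a_{g,i}\lambda_i$ with $a_{g,i}\in F$. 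Because each $a_{g,i}$ is central,
\[
 x \;=\; \sum_g \sum_i a_{g,i}\lambda_i \bar g \;=\; \sum_i \lambda_i \cdot r_i, \qquad r_i = \sum_g a_{g,i}\bar g \in F\bra G \ket,
\]
so $A = \sum_{i=1}^e \lambda_i \cdot F\bra G \ket$ as claimed.

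Next, applying Lemma~\ref{lem:basic-estimate}\eqref{it:boundAB} with $d=e$ yields
\[
 R\bigl(E^\gamma\bra G \ket,\, F,\, m\bigr) \;\leq\; e\, R\bigl(F\bra G \ket,\, F,\, m\bigr) \;=\; e\, R(G,F,m)
\]
for every $m$. Now $E$ is a subfield of $E^\gamma\bra G \ket$ (namely the scalars), so every simple $E^\gamma\bra G \ket$-module is, by restriction, an $E$-vector space, and a simple module of $E$-dimension $\leq n$ is precisely a simple module of $F$-dimension $\leq ne$. The identity $R(A,E,n) = R(A,F,ne)$ therefore applies with $A = E^\gamma\bra G \ket$, giving
\[
 R\bigl(E^\gamma\bra G \ket,\, E,\, n\bigr) \;=\; R\bigl(E^\gamma\bra G \ket,\, F,\, ne\bigr) \;\leq\; e\, R(G,F,ne),
\]
which is the required bound.

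There is no real obstacle here: the only point that merits care is step two, where one has to keep track of left versus right actions in the crossed product. The argument goes through cleanly precisely because $F$ lies in the $\gamma$-fixed field and is therefore central in $E^\gamma\bra G \ket$, allowing the scalars $\lambda_i$ to be pulled to the left past elements of $F\bra G \ket$.
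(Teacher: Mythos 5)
Your argument is correct and follows the same route as the paper: realise $F\bra G\ket$ as a (central, since $\gamma$ fixes $F$) subalgebra of $E^\gamma\bra G\ket$ over which the larger algebra is generated by $e$ elements as a right module, apply Lemma~\ref{lem:basic-estimate}(i), and convert $E$-dimensions to $F$-dimensions via $R(A,E,n)=R(A,F,ne)$. The paper simply states the $e$-generation fact without proof; your check with the $F$-basis $\lambda_1,\dots,\lambda_e$ of $E$ is exactly the verification that is implicitly assumed.
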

\begin{proof}
Since $E^\gamma \bra G \ket$ can be generated by $e = [E:F]$ elements over $F\bra G\ket$, Lemma \ref{lem:basic-estimate} shows that
\[
	R(E^\gamma\bra G \ket,E,n) = R(E^\gamma\bra G \ket,F,ne) \leq e R(G,F,ne).\qedhere
\]
\end{proof}

\begin{prop}
	\label{projreps}
	Suppose $G$ is a profinite group, with an action $\gamma$ on a finite field $E$. Suppose $\alpha \in Z^2(G,E^\times)$ is a $2$-cocycle with respect to this action. Suppose the smallest $E$-dimension of an irreducible $E^\alpha \bra G \ket$-module is $\mu(\alpha)$. Then $R(E^\alpha\bra{G}\ket,E,n) \leq \mu(\alpha) R(E^\gamma\bra{G}\ket,E,n\mu(\alpha))$ for all $n$.
\end{prop}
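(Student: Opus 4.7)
The plan is to imitate the argument of Lemma \ref{lem:basic-estimate}\eqref{it:boundBA}, with the required change of rings replaced by a change of cocycle implemented through tensoring with a fixed minimal module. Fix once and for all a simple $E^\alpha\bra G\ket$-module $U$ realising the minimum dimension $\mu(\alpha)$, which exists by hypothesis. By Lemma \ref{lem:cocycles}\emph{(i)}, the dual $U^*$ carries an $E^{\alpha^{-1}}\bra G\ket$-module structure (up to a coboundary), and by Lemma \ref{lem:cocycles}\emph{(ii)} the two operations
\[
	F\colon V\longmapsto V\otimes_E U^*, \qquad H\colon X\longmapsto X\otimes_E U
\]
define functors $F\colon E^\alpha\bra G\ket\text{-mod}\to E^\gamma\bra G\ket\text{-mod}$ and $H\colon E^\gamma\bra G\ket\text{-mod}\to E^\alpha\bra G\ket\text{-mod}$, with $\dim_E F(V)=\mu(\alpha)\dim_E V$ and $\dim_E H(X)=\mu(\alpha)\dim_E X$.

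Define
\[
	\theta\colon \bigcup_{i=1}^n \Irr(E^\alpha\bra G\ket,E,i)\longrightarrow\bigcup_{j=1}^{n\mu(\alpha)}\Irr(E^\gamma\bra G\ket,E,j)
\]
by choosing, for each simple $V$ with $\dim_E V\leq n$, a simple $E^\gamma\bra G\ket$-quotient $\theta(V)$ of $F(V)$; the codomain condition is immediate from $\dim_E\theta(V)\leq \mu(\alpha)\dim_E V\leq n\mu(\alpha)$. The crucial tool is the tensor-hom adjunction
\[
	\Hom_{E^\gamma\bra G\ket}(V\otimes_E U^*,X)\cong \Hom_{E^\alpha\bra G\ket}(V,X\otimes_E U),
\]
which is simply the natural $E$-linear isomorphism $V^*\otimes U\otimes X\cong V^*\otimes X\otimes U$ upgraded to a $G$-equivariant isomorphism: both sides carry trivial total cocycle because the $\alpha$-contribution of $U$ cancels the $\alpha^{-1}$-contribution of $U^*$, exactly as in the proof of Lemma \ref{lem:cocycles}.

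To bound the fibre over a fixed simple $X$, pick $V\in\theta^{-1}(X)$ of minimum $E$-dimension. For any $V'\in\theta^{-1}(X)$, the adjunction transforms a non-zero surjection $V'\otimes_E U^*\twoheadrightarrow X$ into a non-zero map $V'\to X\otimes_E U$, which is injective since $V'$ is simple; hence every element of $\theta^{-1}(X)$ is isomorphic to a simple $E^\alpha\bra G\ket$-submodule of $X\otimes_E U$. Combining $\dim_E(X\otimes_E U)=\mu(\alpha)\dim_E X$, the inequality $\dim_E X\leq \mu(\alpha)\dim_E V$ coming from $X$ being a quotient of $V\otimes_E U^*$, and the fact that each non-isomorphic simple submodule contributes at least $\dim_E V$ to the total dimension of $X\otimes_E U$, one obtains the fibre bound $\mu(\alpha)$; summing over the image of $\theta$ gives $R(E^\alpha\bra G\ket,E,n)\leq \mu(\alpha)\,R(E^\gamma\bra G\ket,E,n\mu(\alpha))$. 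I expect the main technical point to be performing this counting cleanly enough to land on $\mu(\alpha)$ rather than the naive $\mu(\alpha)^2$ that one reads off without using minimality; the device is to remember that $V$ itself is one of the distinct simple submodules of $X\otimes_E U$ being counted, which absorbs one factor of $\mu(\alpha)$ in the arithmetic.
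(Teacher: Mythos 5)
Your proposal follows the same architecture as the paper's proof: fix a minimal-dimensional module to implement the change of cocycle (the paper takes a simple $E^{\alpha^{-1}}\bra G\ket$-module $W$ of dimension $\mu(\alpha)$ and tensors with it; you take a simple $E^\alpha\bra G\ket$-module $U$ and tensor with its dual $U^*$, which is the same thing since $W$ and $U^*$ play interchangeable roles), define $\theta$ by picking a simple quotient of $V\otimes_E W$, and bound the fibre via the tensor--hom adjunction combined with the minimality of $V$.

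The one step you flag as uncertain is exactly where there is a genuine gap, and your proposed ``device'' does not close it. The arithmetic is: if $V_1,\ldots,V_m$ are pairwise non-isomorphic simple submodules of $X\otimes_E U$ with $\dim_E V_i \geq \dim_E V$ for all $i$, then the socle gives $m\dim_E V\leq \dim_E(X\otimes_E U)=\mu(\alpha)\dim_E X\leq \mu(\alpha)^2 \dim_E V$, hence $m\leq \mu(\alpha)^2$. Noticing that $V$ is one of the $V_i$ changes nothing: it contributes exactly $\dim_E V$ to the left side, which was already being counted. The reason Lemma~\ref{lem:basic-estimate}\eqref{it:boundBA} lands on $d$ rather than $d^2$ is that the ``backward'' operation there is \emph{restriction}, which does not increase dimension; here the backward operation is $-\otimes_E U$, which multiplies dimension by $\mu(\alpha)$, so a second factor appears. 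You should either accept $\mu(\alpha)^2$ in the statement (which costs only an extra factor of $m\leq n$ in the chain of inequalities in Theorem~\ref{UBERGbyUBERG}, so every application in the paper survives unchanged), or supply an actual argument for why the number of isomorphism classes in the socle is controlled by $\mu(\alpha)$ alone -- the ``absorb one factor'' heuristic as written is not such an argument.
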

\begin{proof}
Let $W$ be a simple $E^{\alpha^{-1}}\bra{G}\ket$-module of minimal dimension $\dim_E(W) = \mu(\alpha)$; the existence follows from the remark below Lemma \ref{lem:cocycles}.
We define a map $\rho\colon \Irr(E^\alpha\bra{G}\ket,E,n) \to \Irr(E^\gamma\bra{G}\ket,E,\mu(\alpha)n)$.
For a simple $E^\alpha\bra{G}\ket$-module $V$, we consider $V \otimes_E W$ and choose 
some simple quotient $\rho(V)$. Note that $\dim_E(\rho(V)) \leq \mu(\alpha)n$. 
We will bound the number of elements in each fibre. Suppose that $\rho(V) = \rho(V')$ and assume that $V'$ has minimal dimension in this fibre.
\[
	0 \neq \Hom_{E'\bra{G}\ket}(V \otimes_E W, \rho(V')) = \Hom_{E^\alpha\bra{G}\ket}(V, \rho(V')\otimes_E W^*) 
\]
i.e., $V$ is a simple submodule of $\rho(V')\otimes_E W^*$. The $\alpha$-representation $\rho(V')\otimes_E W^*$ has at most $\dim_E(W^*) = \mu(\alpha)$ many isomorphisms classes of simple submodules of dimension $\geq \dim_E(V')$.
Each fibre contains at most $\mu(\alpha)$ elements and so
\[
	R(E^\alpha\bra{G}\ket, E,n) 
	\leq \mu(\alpha)R(E^\gamma\bra{G}\ket,E,\mu(\alpha)n). \qedhere
\]
\end{proof}

For $K \unlhd G$, if we are given $\gamma: G/K \to \mathrm{Aut}(E)$ or $\alpha \in Z^2(G/K,E^\times)$ as above, we will also write $\gamma$ and $\alpha$ for the restrictions of $\gamma$ and $\alpha$ to $G$. The next theorem extends Clifford theory to twisted modules and it is our main tool to deal with extensions of UBERG groups.

\begin{thm}
	\label{modulestructure}
	Let $G$ be a profinite group, $K \unlhd G$, $W$ an irreducible $F \bra G \ket$-module. Write $V$ for an irreducible summand of $\Res^G_K W$, $E$ for the field $\End_{F \bra K \ket}(V)$, and $H$ for the inertial subgroup of $V$. Then there exists an action $\gamma$ of $H/K$ on $E$, a $2$-cocycle $\alpha \in Z^2(H/K,E^\times)$ with respect to this action, an extension $\ext(V)$ of $V$ to an $E^\alpha \bra H \ket$-module, and an irreducible $E^{\alpha^{-1}} \bra H/K \ket$-module $U'$ such that $$W \cong \Ind^G_H(U' \otimes_E \ext(V))$$ as $F \bra G \ket$-modules.
\end{thm}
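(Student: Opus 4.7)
The plan is to apply the untwisted Clifford correspondence to reduce to the inertia subgroup $H$, build the twist data $(\gamma,\alpha,\ext(V))$ from $F$-linear lifts of the $H$-action, and identify the remaining multiplicity piece $U'$ as a $\Hom$-space.

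First, since $V$ is a simple constituent of $\Res^G_K W$ over the finite field $F$, the usual Clifford theorem supplies a simple $F\bra H\ket$-module $W'$ with $W\cong\Ind^G_H W'$ and $V\hookrightarrow\Res^H_K W'$; moreover $\Res^H_K W'$ is $V$-isotypic because $H$ stabilises the isomorphism class of $V$. By transitivity of induction it thus suffices to produce $\gamma$, $\alpha$, $\ext(V)$ and a simple $E^{\alpha^{-1}}\bra H/K\ket$-module $U'$ with $W'\cong U'\otimes_E\ext(V)$ as $F\bra H\ket$-modules.

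Since $V$ is $H$-invariant, for every $h\in H$ there exists an $F$-linear bijection $\phi_h\colon V\to V$ with $\phi_h(kv)=(hkh^{-1})\phi_h(v)$ for all $k\in K$; normalise so that $\phi_k$ coincides with the action of $k$ whenever $k\in K$. Conjugation $e\mapsto\phi_h e\phi_h^{-1}$ is then an automorphism $\gamma_h$ of $E=\End_{F\bra K\ket}(V)$ (a finite field by Wedderburn) that kills $K$, defining $\gamma\colon H/K\to\Aut(E)$. The element $\phi_g\phi_h\phi_{gh}^{-1}$ commutes with the $K$-action and is $F$-linear, so by Schur's lemma lies in $E^\times$; call it $\alpha(g,h)$. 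Associativity of composition together with the commutation rule $\phi_g e=\gamma_g(e)\phi_g$ yield the $2$-cocycle identity for $\alpha$ with respect to $\gamma$, and the normalisation ensures $\alpha$ descends to a cocycle in $Z^2_\gamma(H/K,E^\times)$. By construction, $h\mapsto\phi_h$ promotes $V$ to an $E^\alpha\bra H\ket$-module which we denote $\ext(V)$.

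Set $U':=\Hom_{E\bra K\ket}(\ext(V),W')$ with $H$-action $(h\cdot f)(v)=h\cdot f(\phi_h^{-1}v)$. A direct manipulation using $\phi_g\phi_h=\alpha(g,h)\phi_{gh}$ and $\phi_h e=\gamma_h(e)\phi_h$ shows this is a $\gamma$-semilinear projective action, trivial on $K$, whose cocycle represents the inverse class of $\alpha$ in $H^2_\gamma(H/K,E^\times)$; since twisted group algebras depend only on the cohomology class, $U'$ inherits the structure of an $E^{\alpha^{-1}}\bra H/K\ket$-module. Lemma \ref{lem:cocycles}(ii) then implies that $U'\otimes_E\ext(V)$ is a $\gamma$-crossed representation of $H$ with trivial cocycle, so it is an $E^\gamma\bra H\ket$-module, and hence an $F\bra H\ket$-module by restriction of scalars. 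The evaluation map $f\otimes v\mapsto f(v)$ is $F\bra H\ket$-linear, surjective because $W'$ is $H$-generated by $V$, and injective by a dimension count using $E=\End_{F\bra K\ket}(V)$ together with $V$-isotypy of $W'|_K$. Irreducibility of $U'$ follows from that of $W'$ via this isomorphism.

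The main obstacle is the cocycle bookkeeping in the third paragraph: one must verify that the ad hoc lifts $\phi_h$ really yield a genuine cocycle in $Z^2_\gamma(H/K,E^\times)$ and not merely one on $H$, and that the $E^\alpha$- and $E^{\alpha^{-1}}$-structures on $\ext(V)$ and $U'$ are matched so that the twists cancel in the tensor product and give an honest $F\bra H\ket$-module. Once this is settled, the identification $W'\cong U'\otimes_E\ext(V)$ is the standard Wedderburn description of $V$-isotypic modules over the inertia group, and the theorem follows by applying $\Ind^G_H$.
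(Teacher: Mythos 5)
Your proof follows essentially the same architecture as the paper's: reduce to the inertia subgroup via untwisted Clifford theory, construct the twisted extension $\ext(V)$ together with the pair $(\gamma,\alpha)$, identify the multiplicity module $U'$, and show the evaluation/trace pairing gives the isomorphism. The main difference is that the paper cites Karpilovsky (Theorem 3.14.7, after noting the Schur index of $V$ over $F$ is $1$) for the existence of the extension $\ext(V)$ together with the cocycle data, whereas you construct it by hand from $F$-linear intertwiners $\phi_h$; and the paper takes $U'=(W\otimes_E\ext(V)^*)^K$ where you take $U'=\Hom_{E\bra K\ket}(\ext(V),W')$, but these are naturally the same object.

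One point worth tightening: the claim that normalising $\phi_k$ to agree with the $K$-action ``ensures $\alpha$ descends to a cocycle in $Z^2_\gamma(H/K,E^\times)$'' is not quite enough on its own. To make $\alpha(g_1,g_2)$ depend only on the cosets $g_iK$, you also need to choose the lifts coherently across each coset, e.g.\ fix a transversal $T$ of $K$ in $H$, choose $\phi_t$ for $t\in T$, and set $\phi_{tk}=\phi_t\circ k$ for $k\in K$. With that convention the computation $\phi_{g_1}\phi_{g_2}\phi_{g_1g_2}^{-1}=\phi_{t_1}\phi_{t_2}\phi_{t_1t_2}^{-1}$ (where $t_i$ is the transversal element in $g_iK$) goes through, using the intertwining relation $k\phi_h=\phi_h(h^{-1}kh)$. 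You flag this issue as the main obstacle, but don't actually close it; with the transversal construction spelled out, the rest of your argument is sound and the theorem follows exactly as you describe.
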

\begin{proof}
	By \cite[Theorem 2.2.2(iii)]{Karpilovsky}, it is enough to prove the result for $H=G$, and we will assume for the rest of the proof that this is the case.
	
	Recall that the Schur index of $V$ over $F$ is $1$ since we are in positive characteristic (see \cite[Theorem 2.5.22]{Karpilovsky}), so we can apply \cite[Theorem 3.14.7]{Karpilovsky}. By \cite[Theorem 3.14.7(i)]{Karpilovsky}, $V$ extends to an $\alpha$-representation $\ext(V)$ of $G$ over $E$ with respect to some action $\gamma$ of $G$ on $E$, which is $F$-linear by \cite[Lemma 3.14.6(ii)]{Karpilovsky}; by \cite[Lemma 3.14.6(iii)]{Karpilovsky}, the action of $K$ on $E$ is trivial, so we may think of $\gamma$ as an action of $G/K$ on $E$. This gives the first part of the statement.
		
From now on we consider $W$ as $E^\gamma\bra G \ket$-module.
Let $\ext(V)^* = \Hom_E(\ext(V),E)$ be the dual of $\ext(V)$; this is an $E^{\alpha^{-1}}\bra G \ket$-module; see Lemma~\ref{lem:cocycles}.
Define
 \[ U' = (W \otimes_E \ext(V)^*)^K;\]
  the space of $K$-invariants in the $E^{\alpha^{-1}}\bra G \ket$-module $W \otimes_E \ext(V)^*$.
This is an $E^{\alpha^{-1}}\bra G/K \ket$-module.
Since $V$ is absolutely irreducible over $E$, we have $\dim_E (V\otimes_E V^*)^K = 1$. For some $m$, we have $\Res_K^G(W) \cong V^m$ and it follows that $U'$ has dimension $m$ over $E$. 

The trace $\ext(V)^* \otimes_E \ext(V) \to E$ induces a canonical homomorphism $W \otimes_E \ext(V)^* \otimes_E \ext(V) \to W$ of $E^\gamma\bra G \ket$-modules which restricts to a homomorphism $\phi \colon U' \otimes_E \ext(V) \to W$. It is easily checked that $\phi$ is surjective. We observe that 
\[
\dim_E(W) = m \dim_E(V) = \dim_E(U' ) \dim_E(V) = \dim_E (U' \otimes_E \ext(V))\]
and we deduce that $\phi$ is an isomorphism.
\end{proof}

\begin{thm}
	\label{UBERGbyUBERG}
	Suppose $G$ is a profinite group, $K \unlhd G$, $K$ and $G/K$ have UBERG, and the extension of $K$ by $G/K$ is split. Then $G$ has UBERG.
\end{thm}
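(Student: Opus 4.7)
The plan is to decompose each irreducible $F\bra G \ket$-module via the twisted Clifford theory of Theorem~\ref{modulestructure} and count the pieces using UBERG of $K$ and $G/K$ together with the chain of reduction lemmas already established (Proposition~\ref{projreps}, Corollary~\ref{cor:crossedreps}, Corollary~\ref{cor:opensbgpreps}). I fix a section $\sigma\colon G/K \to G$ of the split extension. Given an irreducible $F\bra G\ket$-module $W$ of $F$-dimension $n$, Theorem~\ref{modulestructure} produces $V, H, E, \gamma, \alpha, \ext(V), U'$ with $W \cong \Ind^G_H(U'\otimes_E \ext(V))$; I set $e = [E:F]$, $v = \dim_E V$, $j = [G:H]$ and $u = \dim_E U'$, so that $n = juve$. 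I will bound $r(G,F,n)$ by summing, over irreducible $F\bra K\ket$-modules $V$, the number of admissible $U'$.

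The main claim is that $\mu(\alpha^{-1}) \le v$ in the split case. Since $K \subseteq H$ and the global extension splits, $\sigma|_{H/K}$ is a section of $H \to H/K$. For each $\bar h \in H/K$, the element $\sigma(\bar h) \in H$ is inertial for $V$, so Schur's lemma provides an intertwiner $\phi_{\bar h}\colon V \to V^{\sigma(\bar h)}$, unique up to $E^\times$. Choosing these intertwiners consistently with $\ext(V)$, the maps $\phi_{\bar h}$ equip $V$ with the structure of an $E^\alpha\bra H/K\ket$-module of $E$-dimension $v$: semi-linearity over $E$ is automatic from the definition of $\gamma$, and multiplicativity up to $\alpha$ follows because $\sigma$ is a group homomorphism and $\alpha$ is inflated from $H/K$. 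Dualising via Lemma~\ref{lem:cocycles}(i) then yields an $E^{\alpha^{-1}}\bra H/K\ket$-module of $E$-dimension $v$, and any irreducible submodule witnesses $\mu(\alpha^{-1}) \le v$.

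With the claim in hand, the chain of estimates runs as follows: Proposition~\ref{projreps} gives $R(E^{\alpha^{-1}}\bra H/K\ket, E, u) \le v\,R(E^\gamma\bra H/K\ket, E, uv)$; Corollary~\ref{cor:crossedreps} upgrades this to $\le ev\,R(H/K, F, uve)$; Corollary~\ref{cor:opensbgpreps}, applied to $H/K \le G/K$ of index $j$, gives $\le evj\,R(G/K, F, uvej) = evj\,R(G/K, F, n)$; and UBERG of $G/K$ combined with Lemma~\ref{rvsR} bounds this by $evj\,|F|^{c_Q n}$ for some constant $c_Q$. Summing over irreducible $F\bra K\ket$-modules $V$ of $F$-dimension $ev \le n$, using $evj = n/u \le n$ and the bound $R(K, F, n) \le |F|^{c_K n}$ coming from UBERG of $K$, I obtain $r(G,F,n) \le n\,|F|^{(c_K + c_Q)n}$, whence $G$ has UBERG. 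The main obstacle is the claim $\mu(\alpha^{-1}) \le v$, which is precisely where splitness is used: the section $\sigma$ trivialises the obstruction to producing a projective $H/K$-representation on $V$, a construction unavailable in the non-split setting, which is consistent with the failure of UBERG-by-UBERG in general established by Theorem~\ref{introthm:example-procyclic-by-uberg}.
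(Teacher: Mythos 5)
Your proposal is correct and follows essentially the same route as the paper's proof: decompose $W$ via Theorem~\ref{modulestructure}, use the splitting of $K \to H \to H/K$ to realise a small $E^\alpha\bra H/K\ket$-module (you phrase this as choosing intertwiners along the section $\sigma$, the paper as restricting $\ext(V)$ to a complement $H'\le H$, which are the same construction), dualise to bound $\mu(\alpha^{-1})$, and then run the chain through Proposition~\ref{projreps}, Corollary~\ref{cor:crossedreps} and Corollary~\ref{cor:opensbgpreps}. The only differences are cosmetic bookkeeping (you bound $\mu(\alpha^{-1})$ directly where the paper bounds $\mu(\alpha)$ and invokes $\mu(\alpha)=\mu(\alpha^{-1})$; your final constant is slightly sharper but this is immaterial).
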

\begin{proof}
	Fix $a$ such that $r(K,F,n) \leq |F|^{an}$ for all $n$ and $F$, and $b$ such that $r(G/K,F,n) \leq |F|^{bn}$ for all $n$ and $F$.
	
	We will count the irreducible $F\bra G \ket$-modules of dimension $n$. Suppose $W \in \Irr(G,F,n)$. Let $V$ be an irreducible summand of $\Res^G_K W$, of dimension $m$, say, and let $H$ be the inertial subgroup of $V$ with $|G:H|=h$. Write $E$ for the field $\End_{F \bra K \ket}(V)$ of degree $e \leq m$ over $F$.
	
	By Theorem \ref{modulestructure}, we can fix some extension $\ext(V)$ of $V$ to an $E^\alpha \bra G \ket$-module, some $2$-cocycle $\alpha \in Z^2(G,E^\times)$ associated to an action $\gamma$ of $G/K$ on $E$, and write $W$ in the form $\Ind^G_H(U' \otimes_E \ext(V))$, for some $U'$ irreducible $E^{\alpha^{-1}}\bra{H/K}\ket$-module of dimension $n/hm$ over $E$.
	
	Let $\mu(\alpha^{-1})$ be the minimal $E$-dimension of an irreducible $E^{\alpha^{-1}}\bra{H/K}\ket$-module. Taking duals gives $\mu(\alpha) = \mu(\alpha^{-1})$; see Lemma \ref{lem:cocycles}. We claim that $\mu(\alpha) \leq \frac{m}{e}$. Since the extension $K \to G \to G/K$ is split, $K \to H \to H/K$ is too, so there is a complement $H' \cong H/K$ of $K$ in $H$. Restricting $\ext(V)$ to $H'$, we obtain an $E^{\alpha}\bra{H/K}\ket$-module of dimension $m/e$. Any irreducible factor is an irreducible $E^{\alpha}\bra{H/K}\ket$-module of $E$-dimension at most $m/e$.

    By Lemma \ref{rvsR}, Corollary \ref{cor:opensbgpreps}, Corollary \ref{cor:crossedreps} and Proposition \ref{projreps} we have
	\begin{align*}
	 	r(E^{\alpha^{-1}}\bra{H/K}\ket,E,n/mh)  &\leq R(E^{\alpha^{-1}}\bra{H/K}\ket,E,n/mh)\\
	 	&\leq \mu(\alpha)R(E^{\gamma}\bra{H/K}\ket,E,n\mu(\alpha)/mh)\\
		&\leq \mu(\alpha)eR(H/K,F,n\mu(\alpha)e/mh)\\
		&\leq \mu(\alpha)eh R(G/K,F,n\mu(\alpha)e/m)\\
		&\leq mh R(G/K,F,n)\\
		&\leq n^2 |F|^{bn},
	\end{align*}
	where we use the inequalities $\mu(\alpha) \leq \frac{m}{e}$ and $mh \leq n$ in the last two steps.
	So there are at most $r(K,F,m) \leq |F|^{am}$ choices for $V$, and at most $n^2|F|^{bn}$ choices for $U'$. Hence the number of possible $W \in \Irr(G,F,n)$ whose restriction to $K$ has irreducible components of dimension $m$ is at most $n^2|F|^{am}|F|^{bn}$, and therefore 
	\[
		r(G,F,n) \leq \sum_{m=1}^n n^2|F|^{am+bn} \leq n^3|F|^{(a+b)n} \leq |F|^{(a+b+3)n}\]
	for all $n$ and $F$, as required.
\end{proof}

\begin{cor}
	\label{UBERGbyUBERG2}
	Suppose $G$ is a profinite group, $K \unlhd G$ has UBERG, and the universal Frattini cover $Q$ of $G/K$ has UBERG. Then $G$ has UBERG.
\end{cor}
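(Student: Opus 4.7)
The plan is to reduce this to the split case already handled by Theorem \ref{UBERGbyUBERG}. The idea is that while the original extension $1 \to K \to G \to G/K \to 1$ need not be split, we can replace $G/K$ by its universal Frattini cover $Q$ (a projective profinite group) and pull back to obtain a split extension which maps onto $G$.

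More concretely, I would form the fibre product $\tilde{G} = G \times_{G/K} Q$, which fits into a commutative diagram
\[
\xymatrix{
1 \ar[r] & K \ar[r] \ar@{=}[d] & \tilde{G} \ar[r] \ar[d] & Q \ar[r] \ar[d] & 1 \\
1 \ar[r] & K \ar[r] & G \ar[r] & G/K \ar[r] & 1
}
\]
with exact rows. The top row is a short exact sequence of profinite groups. Since $Q$ is a universal Frattini cover, it is a projective profinite group, so projectivity applied to the surjection $\tilde{G} \to Q$ together with the identity $Q \to Q$ yields a continuous homomorphic section, and hence the top row splits.

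Now Theorem \ref{UBERGbyUBERG} applies to the top row: $K$ has UBERG by assumption, $Q$ has UBERG by assumption, and the extension is split, so $\tilde{G}$ has UBERG. Finally, $G$ is a quotient of $\tilde{G}$ (via the vertical map, whose kernel is the kernel of $Q \to G/K$), and UBERG is inherited by quotients because every irreducible $F\bra G \ket$-module is, by inflation, an irreducible $F\bra \tilde{G}\ket$-module of the same $F$-dimension; hence $r(G,F,n) \le r(\tilde{G},F,n)$ for all $n$ and all finite fields $F$. Therefore $G$ has UBERG.

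The only non-routine step is the splitting of the pulled-back extension, but this is immediate from the projectivity of the universal Frattini cover; everything else amounts to assembling Theorem \ref{UBERGbyUBERG} with the trivial observation that UBERG descends to quotients.
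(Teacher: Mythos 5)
Your proposal is correct and is essentially identical to the paper's proof: the paper also forms the pull-back of $G \to G/K$ along $Q \to G/K$, uses projectivity of $Q$ to split the top row, applies Theorem \ref{UBERGbyUBERG}, and concludes via the surjection from the pull-back onto $G$. Nothing to add.
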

\begin{proof}
	Consider the diagram
	\[\xymatrix{
		K \ar[r] \ar@{=}[d] & L \ar[r] \ar[d] & Q \ar@{->>}[d] \\
		K \ar[r] & G \ar[r] & G/K
	}\]
	in which the right-hand square is a pull-back, and the rows are short exact. The map $L \to G$ is epic, so it is enough to show $L$ has UBERG. Since $Q$ is projective, the top row is split, and the result follows from Theorem \ref{UBERGbyUBERG}.
\end{proof}

\begin{cor}
	\label{UBERGbyPUBERG}
	Suppose $G$ is a profinite group, $K \unlhd G$ has UBERG, and $G/K$ is finitely generated and has proj-UBERG (in particular, this holds if $G/K$ is PFG by Corollary \ref{projUBERG}). Then $G$ has UBERG.
\end{cor}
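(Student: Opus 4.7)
The plan is to reduce the statement directly to Corollary~\ref{UBERGbyUBERG2} by establishing that the universal Frattini cover of $G/K$ has UBERG. Let $Q$ denote this universal Frattini cover. By the definition of proj-UBERG recorded immediately after Theorem~\ref{cover/quotient}, the assumption that $G/K$ has proj-UBERG is precisely the bound on crown-based power quotients with non-abelian socle appearing in the statement of that theorem. Combined with the hypothesis that $G/K$ is finitely generated, Theorem~\ref{cover/quotient} applies and yields that $Q$ has UBERG.

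With $K$ having UBERG by assumption and $Q$ now known to have UBERG, the hypotheses of Corollary~\ref{UBERGbyUBERG2} are satisfied (applied to the normal subgroup $K \unlhd G$ with quotient $G/K$), and its conclusion gives that $G$ has UBERG. The parenthetical strengthening to the PFG case is handled by Corollary~\ref{projUBERG}, which asserts that finitely generated PFG groups automatically have proj-UBERG, so the PFG hypothesis on $G/K$ is subsumed by the proj-UBERG hypothesis. No substantive obstacle arises: the real content is packaged in Theorem~\ref{cover/quotient} (translating the crown-based power bound into UBERG for the Frattini cover) and in Corollary~\ref{UBERGbyUBERG2} (using projectivity of $Q$ to form a pull-back whose top row splits, reducing to the split case handled by Theorem~\ref{UBERGbyUBERG}). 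Chaining these two results together is the entirety of the argument.
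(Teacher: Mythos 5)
Your proposal matches the paper's proof exactly: the paper also derives the result by invoking Theorem~\ref{cover/quotient} to conclude that the universal Frattini cover of $G/K$ has UBERG, and then applies Corollary~\ref{UBERGbyUBERG2}. Your additional remarks on the PFG case via Corollary~\ref{projUBERG} and on the internal workings of Corollary~\ref{UBERGbyUBERG2} are accurate but not needed beyond the two-step chain.
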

\begin{proof}
	This follows from Corollary \ref{UBERGbyUBERG2} by Theorem \ref{cover/quotient}.
\end{proof}

Since PFR profinite groups are precisely the finitely presented profinite groups with UBERG, and positively finitely presented profinite groups are precisely the finitely presented PFG profinite groups, we get:

\begin{cor}
	PFR-by-positively finite presented profinite groups are PFR.
\end{cor}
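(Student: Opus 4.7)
The plan is to unpack what the statement says and reduce it to results already available. An extension $1 \to K \to G \to Q \to 1$ in which $K$ is PFR and $Q$ is positively finitely presented means, by the equivalences noted just above the corollary, that $K$ is finitely presented with UBERG and $Q$ is finitely presented and PFG. Our task is to show that $G$ is finitely presented and has UBERG, since then a second appeal to Proposition~\ref{KV} gives that $G$ is PFR.

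For UBERG, I would apply Corollary~\ref{UBERGbyPUBERG} directly: $K$ has UBERG and $Q$ is finitely generated and PFG, hence proj-UBERG by Corollary~\ref{projUBERG}, so $G$ has UBERG. This is immediate and requires no further work.

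The remaining step is to observe that an extension of a finitely presented profinite group by a finitely presented profinite group is finitely presented. This is standard in the profinite setting (the kernel of a finite presentation of $Q$, pulled back to $G$, is normally generated together with relators for $K$ lifted appropriately by finitely many elements). The hard part, if any, is just checking that we are quoting the right extension result; since both $K$ and $Q$ are finitely presented profinite groups, their extension $G$ inherits finite presentability in the profinite category. Combining finite presentation with UBERG and invoking Proposition~\ref{KV} gives that $G$ is PFR, completing the proof.
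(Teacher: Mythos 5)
Your argument is exactly the one the paper intends: unpack PFR as finitely presented plus UBERG and positively finitely presented as finitely presented plus PFG, deduce UBERG for the extension from Corollary~\ref{UBERGbyPUBERG}, and combine with the standard fact that extensions of finitely presented profinite groups are finitely presented (which follows, e.g., from Lubotzky's $H^2$-criterion and the Lyndon--Hochschild--Serre spectral sequence). The paper leaves this entirely implicit in the one-sentence prelude to the corollary, so your write-up is simply a correct expansion of the same reasoning.
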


Using the same arguments, we can show similarly:

\begin{thm}
	\label{PFPnbyUBERG}
	Let $G$ be a profinite group, $K \unlhd G$. Suppose $G/K$ has UBERG, the extension splits, and $K$ has type $\PFP_n$ over a commutative profinite ring $R$. Then $K$ has relative type $\PFP_n$ in $G$ over $R$.
\end{thm}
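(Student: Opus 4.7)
The proof should mirror that of Theorem \ref{UBERGbyUBERG}, but applied levelwise to a resolution instead of to a single module. The plan is first to produce, from a $\PFP_n$ resolution of $R$ over $R\bra K \ket$, a candidate resolution over $R\bra G \ket$, and then to use UBERG of $G/K$ together with Clifford theory to bound the number of PFG generators needed at each level so that the resulting resolution is the relative $\PFP_n$-resolution demanded by the conclusion.

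Concretely, I would start with a projective resolution $Q_\bullet \to R$ of the trivial $R\bra K \ket$-module in which $Q_0,\ldots,Q_n$ are PFG $R\bra K \ket$-modules, which exists by the hypothesis on $K$. Since the extension splits, $R\bra G \ket$ is free as a right $R\bra K \ket$-module (on any set of coset representatives), so the induction functor $\mathrm{Ind}_K^G = R\bra G \ket\otimes_{R\bra K \ket}-$ is exact, sends projectives to projectives, and sends PFG modules to PFG modules with the same generating probability (simply tensoring the original generators with $1$). Applying this to $Q_\bullet$ yields a projective $R\bra G \ket$-resolution $\mathrm{Ind}_K^G Q_\bullet \to R\bra G/K \ket$ whose terms are PFG over $R\bra G \ket$ and projective over $R\bra K \ket$ in degrees $\leq n$.

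To upgrade from a resolution of $R\bra G/K \ket$ to one of $R$, I would splice in a resolution of $R$ over $R\bra G/K \ket$ via a Lyndon--Hochschild--Serre style double complex; the terms of the resulting total complex, viewed as $R\bra G \ket$-modules, remain $R\bra K \ket$-projective because induction is exact. The assumption that $G/K$ has UBERG enters through Theorem \ref{modulestructure}: exactly as in the proof of Theorem \ref{UBERGbyUBERG}, the simple quotients of each term in the total complex are classified by simple $R\bra K \ket$-modules twisted by an $\alpha$-representation of an inertial subgroup inside $G/K$. The $K$-side is controlled by the PFG data for $Q_i$, and the $G/K$-side is controlled by UBERG of $G/K$ exactly as in the bound $r(G,F,n) \leq |F|^{(a+b+3)n}$ at the end of that proof. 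Summing over bidegrees then bounds the number of $R\bra G \ket$-generators needed at each total degree $\leq n$.

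The principal obstacle is verifying that the PFG bounds from $K$ and from $G/K$ compose correctly across the finitely many bidegrees contributing to each total degree, and that the total complex really gives an $R\bra K \ket$-exact resolution with PFG $R\bra G \ket$-terms rather than just a spectral-sequence gadget. The split hypothesis is what makes this manageable: it guarantees exactness of induction, so no higher derived functors intervene, and it allows us to use the Clifford argument of Theorem \ref{UBERGbyUBERG} essentially verbatim at each level rather than having to develop a genuinely homological refinement.
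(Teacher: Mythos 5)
Your proposal contains two genuine gaps, one of which is fatal as stated.

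First, the claim that induction ``sends PFG modules to PFG modules with the same generating probability (simply tensoring the original generators with $1$)'' is false, and in fact this claim, if true, would make the UBERG hypothesis on $G/K$ superfluous. Tensoring generators with $1$ does give a generating set of the same cardinality, so $\Ind_K^G P$ is finitely generated if $P$ is; but PFG is a genuinely probabilistic condition, equivalent (via the appropriate analogue of the Mann--Shalev criterion, \cite[Theorem 4.10]{CCV}) to a polynomial bound on the number of irreducible quotients, and this is not preserved by $\Ind_K^G$ in general. Consider $K = 1$, $P = R$: then $\Ind_K^G R = R\bra G \ket$, which is $1$-generated, but it is PFG if and only if $G$ has UBERG. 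Establishing that $\Ind_K^G P$ is PFG is precisely the content of the paper's proof: one uses Frobenius reciprocity to rewrite $\Hom_{R\bra G\ket}(\Ind_K^G P, W)$ as $\Hom_{R\bra K\ket}(P, \Res_K^G W)$, then invokes Clifford theory and \cite[Theorem 4.10]{CCV} to reduce to bounding pairs $(V, W)$ with $V$ an irreducible quotient of $P$ appearing as a summand of $\Res_K^G W$; the first count is controlled by PFG of $P$, and the second by the count carried out in the proof of Theorem~\ref{UBERGbyUBERG}, which is where UBERG of $G/K$ and the split hypothesis are used. In your plan, UBERG only enters after splicing; that is too late.

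Second, the splicing step attempts to prove more than the theorem asserts, and the extra strength is not available from the hypotheses. You are trying to produce a $\PFP_n$-resolution of $R$ over $R\bra G\ket$, i.e.\ to show $G$ itself has type $\PFP_n$ over $R$, but the conclusion is only that $K$ has \emph{relative} type $\PFP_n$ in $G$. To splice in a resolution of $R$ over $R\bra G/K\ket$ with PFG terms you would need $G/K$ to have type $\PFP_n$ over $R$, which UBERG alone does not give. Indeed, Corollary~\ref{PFPnbyPFPm} records exactly the extra hypothesis needed to upgrade the relative conclusion to $G$ having type $\PFP_{\min(m,n)}$, namely $G/K$ being of type $\PFP_m$. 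The paper's proof instead observes that it suffices to show $R\bra G/K\ket$ has type $\PFP_n$ over $R\bra G\ket$ (using that summands of type $\PFP_n$ modules have type $\PFP_n$), and then applies $\Ind_K^G$ to a $\PFP_n$-resolution of $R$ over $R\bra K\ket$; no splicing is needed or possible.
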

\begin{proof}
	We can assume $R$ is PFG as an $R$-module; if not, $K$ does not have type $\PFP_0$ over $R$, so the result is vacuously true. Summands of modules of type $\PFP_n$ over any ring have type $\PFP_n$ by \cite[Theorem 4.9]{CCV}, since the $\mathrm{Ext}$-functors are additive, so it suffices to show $R \bra G/K \ket$ has type $\PFP_n$ over $R \bra G \ket$.
	
	Start with a type $\PFP_n$ resolution for $R$ as an $R \bra K \ket$-module, and apply $\Ind^G_K$: we just need to show $\Ind^G_K P$ is PFG for $P$ a PFG projective $R \bra K \ket$-module.
	
	For an irreducible $R \bra G \ket$-module $W$, from the definition of $\Ind^G_K$ we have $$\Hom_{R \bra G \ket}(\Ind^G_K P,W) \cong \Hom_{R \bra K \ket}(P,\Res^G_K W),$$ so by \cite[Theorem 4.10]{CCV} it is enough to polynomially bound the number of (isomorphism types of) irreducible $R \bra G \ket$-modules $W$ such that $\Hom_{R \bra K \ket}(P,\Res^G_K W)$ is non-trivial. Clifford theory tells us $\Res^G_K W$ is a direct sum of irreducible $R \bra K \ket$-modules, so a non-trivial map $P \to \Res^G_K W$ shows that some irreducible summand $V$ appears as a quotient of $P$. Since $P$ is PFG, the number of possibilities for $V$ is polynomially bounded in the order of $V$. So we fix $V$, and consider only the irreducible $R \bra G \ket$-modules $W$ such that $\Res^G_K W$ contains $V$ as a summand. Since $G/K$ has UBERG and the extension is split, the proof of Theorem \ref{UBERGbyUBERG} shows that the number of possibilities for such $W$ is polynomially bounded in the order of $W$ (uniformly over the possible $V$), and the conclusion follows.
\end{proof}

\begin{cor}
	\label{PFPnbyPFPm}
	In the situation of the theorem above:
	\begin{enumerate}[(i)]
		\item if $G/K$ has type $\PFP_m$, $G$ has type $\PFP_{\min(m,n)}$;
		\item if $G$ has type $\PFP_m$, $G/K$ has type $\PFP_{\min(m,n+1)}$.
	\end{enumerate}
\end{cor}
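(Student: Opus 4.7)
The plan is to chain together the finiteness data provided by Theorem~\ref{PFPnbyUBERG}, which supplies a projective resolution $F_\bullet \to R\bra G/K\ket$ over $R\bra G\ket$ whose first $n+1$ terms are PFG projective.

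For part (i), I would use a Cartan--Eilenberg / horseshoe assembly. A type $\PFP_m$ resolution $Q_\bullet \to R$ over $R\bra G/K\ket$ gives PFG projective $Q_0,\ldots,Q_m$. Each $Q_j$ is a direct summand of a PFG free $R\bra G/K\ket$-module, so, viewed as an $R\bra G\ket$-module via inflation, it admits a projective $R\bra G\ket$-resolution $P^{(j)}_\bullet \to Q_j$ obtained as the corresponding retract of a finite sum of copies of $F_\bullet$; in particular $P^{(j)}_i$ is PFG whenever $i \leq n$. The horseshoe lemma lets us splice these vertically into a double complex $P^{(\bullet)}_\bullet$ of projective $R\bra G\ket$-modules whose rows resolve the $Q_j$. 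The total complex is a projective $R\bra G\ket$-resolution of $R$, and its degree-$k$ term $\bigoplus_{i+j=k} P^{(j)}_i$ is PFG whenever every contributing summand is, i.e.\ whenever $k \leq \min(m,n)$. This gives $G$ of type $\PFP_{\min(m,n)}$ over $R$.

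For part (ii), I would invoke the Lyndon--Hochschild--Serre spectral sequence
\[
E_2^{p,q} = H^p(G/K, H^q(K, V)) \Longrightarrow H^{p+q}(G, V)
\]
combined with the characterisation of type $\PFP_n$ in terms of polynomial bounds on the $\F_p$-dimensions of $\Ext$-groups against simple modules (as in \cite[Theorem 4.10]{CCV}). The hypotheses then translate to polynomial bounds on $\dim_{\F_p} H^i(G,V)$ for $i \leq m$ and on $\dim_{\F_p} H^j(K,V)$ for $j \leq n$, uniformly in simple $V$. I would argue by induction on $p$ that $\dim_{\F_p} H^p(G/K, W)$ is polynomially bounded for every simple $\F_p\bra G/K\ket$-module $W$ (inflated to $G$) and every $p \leq \min(m,n+1)$. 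The edge term $E_2^{p,0} = H^p(G/K,W)$ differs from the abutment $H^p(G,W)$ only by contributions from $E_2^{i,j}$ with $i+j = p$, $j \geq 1$, and these inner terms are controlled inductively as long as $H^j(K,W)$ is itself a PFG $R\bra G/K\ket$-module for $j \leq n$, which follows from $K$ having type $\PFP_n$ together with Theorem~\ref{PFPnbyUBERG}. Re-characterising the resulting polynomial bounds as type $\PFP_{\min(m,n+1)}$ then gives the claim.

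The crux of the argument is part (ii): the inductive bound closes exactly at $p \leq n+1$ because the spectral sequence forces us to use $H^j(K,-)$ for $j$ ranging up to $p-1$, and these are only under PFG control for $j \leq n$. The main technical obstacle is verifying that $H^j(K,W)$ behaves well enough as an $R\bra G/K\ket$-module (being PFG, with sizes uniformly controlled by $|W|$ to a polynomial) to let the induction propagate uniformly; this is where the relative type $\PFP_n$ of $R\bra G/K\ket$ over $R\bra G\ket$ supplied by Theorem~\ref{PFPnbyUBERG} plays its essential role.
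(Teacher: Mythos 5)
The paper's own proof of this corollary is a one-line citation to \cite[Theorem 5.22]{CCV}, a general change-of-rings result for relative type $\PFP_n$ proved in that earlier paper. You instead try to rebuild the homological argument from scratch, which is a genuinely different route — so let me assess it on its own terms.

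For part (i), your total-complex argument is essentially sound, but the phrase ``obtained as the corresponding retract of a finite sum of copies of $F_\bullet$'' is not quite the right mechanism: one cannot in general take a retract of a chain complex and expect it to resolve the summand. The clean step, and the one the paper itself uses earlier in the section, is that type $\PFP_n$ passes to direct summands because the $\Ext$-functors are additive (\cite[Theorem 4.9]{CCV}); each $Q_j$ with $j\leq m$, being a summand of a PFG free $R\bra G/K\ket$-module, therefore has type $\PFP_n$ over $R\bra G\ket$, and then your Cartan--Eilenberg assembly and the degree count for $\bigoplus_{i+j=k}P^{(j)}_i$ go through.

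Part (ii) is where the genuine gap lies. You invoke ``the characterisation of type $\PFP_n$ in terms of polynomial bounds on the $\F_p$-dimensions of $\Ext$-groups against simple modules (as in \cite[Theorem 4.10]{CCV})''. That result characterises when a \emph{single} projective module is PFG, by polynomially bounding the \emph{number} of irreducibles $W$ with $\Hom(P,W)\neq 0$; it does not, as stated, characterise $\PFP_n$ via uniform dimension bounds on higher cohomology. The LHS spectral sequence shape you sketch is plausible, and the incoming differentials $d_r\colon E_r^{p-r,r-1}\to E_r^{p,0}$ do account correctly for the $+1$ shift (they require control of $H^j(K,-)$ only for $j\leq p-1$, hence $p\leq n+1$). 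But you stop short of closing the argument: the step you yourself identify as the ``main technical obstacle'' — that $H^j(K,W)$, as a $G/K$-module, has its size and its simple composition factors uniformly controlled in $|W|$ for $j\leq n$ — is exactly the content that \cite[Theorem 5.22]{CCV} packages, and it is asserted rather than proved in your sketch. Until that uniformity is established (and a correct cohomological reformulation of $\PFP_n$ is stated), part (ii) is not a complete proof.
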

\begin{proof}
	This follows immediately by \cite[Theorem 5.22]{CCV}.
\end{proof}

As for Corollary \ref{UBERGbyPUBERG}, we get results analogous to Theorem \ref{PFPnbyUBERG} and Corollary \ref{PFPnbyPFPm} when we consider $G$ with $K \unlhd G$ of type $\PFP_n$, such that $G/K$ is finitely generated and has proj-UBERG.

\section{Equivalent conditions for \texorpdfstring{$\FP_1$}{FP1} and \texorpdfstring{$\PFP_1$}{PFP1}}\label{sec:PFP1}
In this section we give a semi-structural condition which is necessary and sufficient for a finitely generated profinite group $G$ to have type $\PFP_1$. As a preparation we briefly discuss an equivalent condition for type $\FP_1$.

\subsection{Type \texorpdfstring{$\FP_1$}{FP1}}

A necessary and sufficient condition for a profinite group to have type $\FP_1$ is given in \cite[Corollary 5.10]{CCV}.

\begin{prop}
	\label{prop:augmentation}
	$G$ has type $\FP_1$ if and only if there exists $d \in \mathbb{N}$ such that $(\delta_G(M)+h'_G(M))/r_G(M) \leq d$ for any $M \in \mathrm{Irr}(\hat{\Z}\bra G \ket)$. For $f: P \to \hat{\Z}$ a projective cover in the category of $\hat{\Z}\bra G \ket$-modules, the minimum number of generators of $\ker(f)$ is $$\sup_{M \in \mathrm{Irr}(\hat{\Z}\bra G \ket)} \left \lceil \frac{\delta_G(M)+h'_G(M)}{r_G(M)} \right \rceil.$$
\end{prop}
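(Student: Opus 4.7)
The plan is to combine a Nakayama/Burnside-type formula for the minimum number of generators of a profinite $\hat{\Z}\bra G \ket$-module with the long exact sequence coming from the defining short exact sequence $0 \to \ker(f) \to P \to \hat{\Z} \to 0$ of the projective cover. Since $G$ having type $\FP_1$ is equivalent to $\ker(f)$ being topologically finitely generated, the first assertion of the proposition follows once the formula for the minimum number of generators $d(\ker(f))$ is established, by observing that this quantity is finite precisely when the supremum is bounded.

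For the generation formula, one uses that, in the category of profinite $\hat{\Z}\bra G \ket$-modules, a topologically finitely generated module $N$ is $d$-generated if and only if for every simple $M \in \Irr(\hat{\Z}\bra G \ket)$ one has $\dim_{\End_G(M)}\Hom_G(N, M) \leq d \cdot r_G(M)$. This is a Nakayama-type statement for pseudocompact modules, reducing $d(N)$ to the minimal number of generators of the semisimple quotient $N/\mathrm{rad}(N)$, with $r_G(M)$ the weight measuring how many copies of $M$ a single free generator of $\hat{\Z}\bra G \ket$ can hit. Consequently, $d(N) = \sup_M \lceil \dim_{\End_G(M)} \Hom_G(N,M)/r_G(M) \rceil$.

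Applying $\Hom_G(-,M)$ to the short exact sequence and using $\mathrm{Ext}^1_G(P, M) = 0$ (by projectivity of $P$) produces
\[ 0 \to \Hom_G(\hat{\Z}, M) \to \Hom_G(P, M) \to \Hom_G(\ker(f), M) \to H^1(G, M) \to 0. \]
Taking $\End_G(M)$-dimensions and abbreviating $h'_G(M) = \dim_{\End_G(M)} H^1(G,M)$, one obtains
\[ \dim_{\End_G(M)} \Hom_G(\ker(f), M) = \bigl( \dim_{\End_G(M)} \Hom_G(P,M) - \dim_{\End_G(M)} \Hom_G(\hat{\Z}, M) \bigr) + h'_G(M). \]

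The crux of the argument is to identify the bracketed term with $\delta_G(M)\, r_G(M)$. Intuitively, for a projective cover, the simple quotients of $P$ of type $M$ that fail to come directly from $\hat{\Z}$ correspond to non-Frattini chief factors of $G$ that are $G$-equivalent to $M$: the crown-theoretic structure of $P$ shows that each non-Frattini chief factor contributes a copy of the corresponding simple module to $P/\mathrm{rad}(P)$ beyond what is forced by covering $\hat{\Z}/\mathrm{rad}(\hat{\Z})$. Once this identification is secured, dividing by $r_G(M)$ and taking the supremum over $M$ yields the claimed formula for $d(\ker(f))$. The main obstacle is precisely this comparison of the chief factor multiplicity $\delta_G(M)$ with the homomorphism-count $\dim \Hom_G(P, M) - \dim \Hom_G(\hat{\Z}, M)$, which requires a careful analysis of the projective cover in the profinite setting; in particular the trivial simple modules $\bbF_p$ (which appear already in $\hat{\Z}/\mathrm{rad}(\hat{\Z})$) need to be handled separately from nontrivial simple modules.
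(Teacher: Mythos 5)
The paper does not prove this proposition itself; it cites \cite[Corollary 5.10]{CCV}, so there is no in-paper proof to compare against. Evaluating your proposal on its own merits: the framework (Nakayama-type generation formula plus the long exact sequence from $0 \to \ker f \to P \to \hat{\Z} \to 0$) is the right one, but the key identification step is wrong, in two interlocking ways.

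First, by definition of projective cover, $\ker f \subseteq \mathrm{rad}(P)$, so the induced map $P/\mathrm{rad}(P) \to \hat{\Z}/\mathrm{rad}(\hat{\Z})$ is an isomorphism. Hence for every simple $M$ one has $\dim_{\End_G(M)}\Hom_G(P,M) = \dim_{\End_G(M)}\Hom_G(\hat{\Z},M)$; the bracketed term is $0$, not $\delta_G(M)\,r_G(M)$. Your intuition that ``each non-Frattini chief factor contributes a copy of $M$ to $P/\mathrm{rad}(P)$ beyond what is forced by covering $\hat{\Z}/\mathrm{rad}(\hat{\Z})$'' contradicts the very meaning of projective cover; no such extra summands exist. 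The exact sequence therefore reduces cleanly to $\dim_{\End_G(M)}\Hom_G(\ker f, M) = \dim_{\End_G(M)} H^1(G,M)$ with no further correction (as it must, since $\Hom(\Omega\hat{\Z}, M) \cong \mathrm{Ext}^1(\hat{\Z},M)$ for $M$ simple when the resolution is minimal).

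Second, you silently redefine $h'_G(M)$ as $\dim_{\End_G(M)} H^1(G,M)$, whereas the paper defines it as $\dim_{\End_G(M)} H^1(G/C_G(M),M)$. These differ, and the discrepancy is exactly where $\delta_G(M)$ enters. The missing ingredient is the Aschbacher--Guralnick formula (Proposition~\ref{AG} here): $|H^1(G,M)| = q^{\delta_G(M)}\,|H^1(G/C_G(M),M)|$ with $q = |\End_G(M)|$, i.e.\ $\dim_{\End_G(M)} H^1(G,M) = \delta_G(M) + h'_G(M)$. Substituting this into the (correctly simplified) exact sequence and then into the Nakayama formula gives the stated $\sup_M \lceil (\delta_G(M)+h'_G(M))/r_G(M) \rceil$. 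You should fix the definition of $h'_G$, delete the attempted crown-theoretic identification of the bracketed term, and invoke Aschbacher--Guralnick instead. The final aside about handling the trivial modules $\bbF_p$ separately is a non-issue once one observes the bracketed term vanishes identically.
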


Here $\mathrm{Irr}(\hat{\Z}\bra G \ket)$ is the set of irreducible $\hat{\Z}\bra G \ket$-modules, $\delta_G(M)$ is the number of non-Frattini chief factors $G$-isomorphic to $M$ in a chief series of $G$, $r_G(M)$ is defined by $M \cong \End_G(M)^{r_G(M)}$ as $\End_G(M)$-modules, and $h'(M)$ to be the dimension of $H^1(G/C_G(M),M)$ over $\End_G(M)$.

We can reformulate Proposition~\ref{prop:augmentation} in terms of the crown-based powers appearing as quotients of $G$. 
\begin{cor}
	\label{crownFP1}
	A profinite group $G$ has type $\FP_1$ if and only if there exists $d$ such that, for any 
	monolithic primitive group $L$ with abelian minimal normal subgroup $M$, the size $k$ of a crown-based power $L_k$ of $L$ which occurs as a quotient of $G$ is at most $dr_G(M)$.
\end{cor}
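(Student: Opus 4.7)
The plan is to deduce Corollary~\ref{crownFP1} from Proposition~\ref{prop:augmentation} by matching the bound on $(\delta_G(M) + h'_G(M))/r_G(M)$ with the crown-based-power condition. The crown theory of Section~\ref{sec:crowns} identifies $G/R_G(M) \cong L_M^{\delta_G(M)}$ for each abelian irreducible $\hat{\Z}\bra G \ket$-module $M$, so the maximum $k$ for which $L_M^k$ is a quotient of $G$ is exactly $\delta_G(M)$ (since any monolithic primitive group $L$ with abelian socle $M$ is $L_M$, and for abelian chief factors $G$-isomorphism and $G$-equivalence can be aligned via the same split semidirect-product structure). Thus the corollary's condition reduces to a uniform bound $\delta_G(M)/r_G(M) \leq d$, and what needs to be shown is its equivalence to the Proposition~\ref{prop:augmentation} bound $(\delta_G(M) + h'_G(M))/r_G(M) \leq d'$.

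The forward direction is immediate: assuming $\FP_1$ and fixing $d'$ from Proposition~\ref{prop:augmentation}, any crown-based-power quotient $L_k$ of $G$ with abelian socle $M$ satisfies $k \leq \delta_G(M) \leq d' r_G(M)$. For the backward direction I would start from the crown-based-power bound $k \leq d r_G(M)$, which gives $\delta_G(M) \leq d r_G(M)$ after taking the maximal $k = \delta_G(M)$ realised by $G \twoheadrightarrow G/R_G(M)$. I would then invoke Lucchini's generation formula for the finite quotients $G/N$: the rank $d(G/N)$ is bounded above by $\max_M \lceil (\delta_{G/N}(M)+\chi(M))/r_{G/N}(M) \rceil$ where $\chi(M)\leq 1$ is a correction for the trivial module, and the uniform bound $\delta_{G/N}(M) \leq \delta_G(M) \leq d r_G(M)$ gives $d(G/N) \leq d+1$ uniformly in $N$; hence $G$ is finitely generated with $d(G) \leq d+1$. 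Finally, the standard profinite estimate $\dim_{\F_p} H^1(G,M) \leq d(G) \cdot \dim_{\F_p} M$ combined with the inflation-restriction injection $H^1(G/C_G(M),M) \hookrightarrow H^1(G,M)$ (valid because $C_G(M)$ acts trivially on $M$) yields $h'_G(M)/r_G(M) \leq d(G) \leq d+1$. Assembling, $(\delta_G(M) + h'_G(M))/r_G(M) \leq 2d+1$ uniformly, and Proposition~\ref{prop:augmentation} gives type $\FP_1$.

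The main obstacle will be the careful invocation of Lucchini's formula in the profinite limit to pass from the crown-based-power hypothesis to the uniform generation bound $d(G) = O(d)$; there is a small nuisance in confirming that the crown-equivalence count controls the isomorphism-count $\delta_G(M)$ that appears in Proposition~\ref{prop:augmentation} cleanly enough for the estimate to be applied termwise over all finite quotients. Once finite generation of $G$ with controlled rank is in hand, the $H^1$ bound is routine, and the corollary drops out.
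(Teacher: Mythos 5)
Your forward direction is fine, and you correctly identify the step $k \leq \delta_G(M)$ via crown theory (the issue you flag about $G$-equivalence versus $G$-isomorphism for abelian chief factors is resolved by the remark after Definition~1 of~\cite{DL}: for abelian chief factors the two notions coincide, so the maximal $k$ is exactly $\delta_G(M)$).

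The backward direction, however, has a genuine gap. You attempt to deduce that $G$ is finitely generated from the hypothesis, and then to bound $h'_G(M)$ by the standard estimate $\dim_{\F_p} H^1(G,M) \leq d(G)\dim_{\F_p} M$. But the corollary's hypothesis only controls crown-based powers $L_k$ with \emph{abelian} socle, while the Dalla Volta--Lucchini generation formula you invoke for $d(G/N)$ depends on crown-based power quotients with \emph{non-abelian} socle as well, and these contributions are unconstrained by the hypothesis. Indeed, the conclusion you aim for is simply false: the infinite product $\prod A_5$ (cf.\ \cite[Example~2.6]{Damian}) has no non-Frattini abelian chief factors at all, so it satisfies the corollary's crown-based power condition vacuously, has type $\FP_1$, but is not finitely generated. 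Hence any route through finite generation cannot work.

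The missing ingredient is Theorem~A of Aschbacher--Guralnick~\cite{AG}, which gives the \emph{unconditional} bound $h'_G(M)/r_G(M) < 1$ for every irreducible module $M$, with no finite generation hypothesis on $G$. With this in hand, the quantity $(\delta_G(M) + h'_G(M))/r_G(M)$ from Proposition~\ref{prop:augmentation} is uniformly bounded if and only if $\delta_G(M)/r_G(M)$ is, and the corollary then follows directly from the identification $\delta_G(M) = \max\{k : L_k \text{ is a quotient of } G\}$. This is the paper's argument; it is much shorter than what you propose and avoids the false detour through finite generation.
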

\begin{proof}
Let $M \in \mathrm{Irr}(\hat{\Z}\bra G \ket)$.
Note that $h'_G(M)/r_G(M) < 1$ by \cite[Theorem A]{AG}.
So, by Proposition \ref{prop:augmentation}, $G$ has type $\FP_1$ exactly if $\delta_G(M)/r_G(M) \leq d$ for some $d$ which does not depend on $M$.

If $L_{k}$ is a crown-based power of a monolithic primitive group $L$ with abelian minimal normal subgroup $M$ which appears as a quotient of $G$, then, by \cite[Theorem 11]{DL}, $k$ is at most the cardinality of the set of chief factors of $G$ which are $G$-equivalent to $M$. For abelian chief factors, $G$-equivalence is the same as $G$-isomorphism by the remark after \cite[Definition 1]{DL}, i.e., $k \leq \delta_G(M)$. On the other hand, for every $k'\leq \delta_G(M)$ the crown-based power $L_{k'}$ appears as a quotient of $G$ by \cite[Theorem 11]{DL}. 
\end{proof}

It is interesting to compare this result with the equivalent condition for PFG given in \cite[Theorem 11.1(3)]{JP}, which is expressed entirely in terms of sizes of the crown-based powers of monolithic groups with non-abelian minimal normal subgroup which appear as quotients of $G$. On the other hand, the minimum number of generators for a profinite group can be determined by the crown-based powers of all monolithic primitive groups which appear as quotients of $G$ (see \cite[Lemma 4.2]{Damian}).

We will see below, in Remark \ref{PFP1rem}(ii), that whether a finitely generated profinite group with at most exponential subgroup growth has type $\PFP_1$ can also be determined by the crown-based powers of all monolithic primitive groups which appear as quotients of $G$.

\subsection{Type \texorpdfstring{$\PFP_1$}{PFP1}}

Recall from \cite{CCV} that a finitely generated profinite group $G$ has type $\PFP_1$ if and only if there is some constant $c$ such that, for all $m$, the number of irreducible $G$-modules $M$ of order $m$ such that $H^1(G,M) \neq 0$ is at most $m^c$. To control the number of such $G$-modules, we use the following result.

\begin{prop}{\cite[(2.10)]{AG}}
	\label{AG}
	$|H^1(G,M)| = q^n|H^1(G/C_G(M),M)|$, where $q = |\End_G(M)|$ and $n$ is the number of non-Frattini chief factors of $G$ $G$-isomorphic to $M$.
\end{prop}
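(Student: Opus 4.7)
My plan is to apply the Lyndon--Hochschild--Serre five-term exact sequence to the extension $1\to C_G(M)\to G\to G/C_G(M)\to 1$ with coefficients in $M$. Since $C_G(M)$ acts trivially on $M$, one has $H^1(C_G(M),M)=\Hom(C_G(M),M)$ and the sequence reads
\[
0\to H^1(G/C_G(M),M)\to H^1(G,M)\to \Hom_G(C_G(M),M)\xrightarrow{\tau} H^2(G/C_G(M),M).
\]
Exactness immediately gives $|H^1(G,M)|=|H^1(G/C_G(M),M)|\cdot|\ker\tau|$, so it suffices to show $|\ker\tau|=q^n$.

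The central step is to identify $\ker\tau$ in terms of chief factors. Since $M$ is irreducible, any nonzero $\phi\in \Hom_G(C_G(M),M)$ is surjective with some kernel $K_\phi\unlhd G$ satisfying $C_G(M)/K_\phi\cong_G M$, and two such maps share the same kernel exactly when they differ by an element of $\Aut_G(M)=\End_G(M)^\times$. Using the standard interpretation of the transgression as pushforward of the extension class of $1\to C_G(M)\to G\to G/C_G(M)\to 1$ along $\phi$, one verifies that $\phi\in\ker\tau$ iff the induced extension $0\to M\to G/K_\phi\to G/C_G(M)\to 0$ splits, which (since $M$ is an abelian chief factor) is equivalent to $C_G(M)/K_\phi$ being a non-Frattini chief factor of $G$.

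To conclude, I would observe that the $\End_G(M)$-subspace of $\Hom_G(C_G(M),M)$ arising this way has dimension exactly $n$: this may be read off from the crown-based power quotient $G\twoheadrightarrow (L_M)_n$, whose socle $M^n$ yields, via projection onto each of the $n$ factors, $n$ linearly independent elements of $\Hom_G(C_G(M),M)$ spanning precisely $\ker\tau$. Hence $|\ker\tau|=q^n$, and the formula follows. The main obstacle is the middle paragraph: pinning down the transgression--chief factor correspondence precisely, which requires either a careful cocycle-level computation or an appeal to the general theory of group extensions relating split extensions to vanishing cohomology classes.
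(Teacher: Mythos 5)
The paper does not prove this proposition; it is cited from Aschbacher--Guralnick \cite[(2.10)]{AG}, so there is no internal argument to compare yours against. Your Lyndon--Hochschild--Serre approach is the natural one and is, in outline, correct: the five-term sequence
\[
0\to H^1(G/C_G(M),M)\to H^1(G,M)\to \Hom_G(C_G(M),M)\xrightarrow{\ \tau\ } H^2(G/C_G(M),M)
\]
does reduce the problem to $|\ker\tau|=q^n$, the transgression is $\End_G(M)$-linear by functoriality (so $\ker\tau$ really is a $q$-power), and the identification of $\tau(\phi)$ (for nonzero $\phi$, necessarily surjective since $M$ is simple and $\phi$ is $G$-equivariant) with the class of the pushed-out extension $1\to M\to G/K_\phi\to G/C_G(M)\to 1$ is the standard cocycle computation you flag. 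Combined with the fact that an abelian chief factor is non-Frattini if and only if it is complemented, this gives exactly the asserted criterion: $0\neq\phi\in\ker\tau$ if and only if $C_G(M)/K_\phi$ is a non-Frattini chief factor $G$-isomorphic to $M$.

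The step that needs more than an appeal to the crown-based power is the final dimension count, and specifically the inclusion $\dim_{\End_G(M)}\ker\tau\le n$. The quotient $G\twoheadrightarrow (L_M)_n$ with socle $M^n$ does furnish $n$ independent elements of $\ker\tau$ (the extension $1\to M^n\to (L_M)_n\to G/C_G(M)\to 1$ is split, so all the projections lie in $\ker\tau$), but you assert without argument that these \emph{span} $\ker\tau$. To close this you must show that every nonzero $\phi\in\ker\tau$ factors through $C_G(M)/R_G(M)\cong M^n$. The point is that $G/K_\phi\cong M\rtimes G/C_G(M)=L_M$ with $\mathrm{soc}(G/K_\phi)=C_G(M)/K_\phi\cong_G M$, so $K_\phi\in\mathcal{N}_M$ and hence $K_\phi\ge R_G(M)=\bigcap_{N\in\mathcal{N}_M}N$; this, together with the identification $C_G(M)/R_G(M)\cong M^n$ coming from $G/R_G(M)\cong (L_M)_{\delta_G(M)}$ and $\delta_G(M)=n$ (for abelian factors $G$-equivalence is $G$-isomorphism), gives $\ker\tau\cong\Hom_G(M^n,M)\cong\End_G(M)^n$. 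With that addition your argument is complete; note also that all of this is stated for profinite $G$, so one should either reduce to a finite quotient (possible since $M$ is finite and discrete) or invoke the profinite crown theory of \cite{DL}.
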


Thus we may consider separately the irreducible $G$-modules $M$ such that $H^1(G/C_G(M),M) \neq 0$, and the $M$ such that $n \neq 0$. (Note these sets need not be disjoint.)

\begin{define}\label{defn:A}
	We say a profinite group $G$ satisfies \emph{condition (A)} if there is some constant $a$ such that, for all $m$, the number of $G$-isomorphism classes of non-Frattini abelian chief factors $M$ of $G$ of order $m$ is at most $m^a$.
\end{define}

\begin{lem}
	\label{(A)}
	If $G$ has type $\FP_1$ and satisfies (A), there is a constant $a'$ such that, for all $m$, the number of non-Frattini abelian chief factors $M$ of $G$ of order $m$ is at most $m^{a'}$.
\end{lem}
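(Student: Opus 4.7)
The plan is to convert the bound on the number of $G$-isomorphism classes into a bound on the number of chief factors (with multiplicity) by using type $\FP_1$ to control how often a single isomorphism class can appear as a chief factor.

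First, I would invoke Corollary~\ref{crownFP1}: since $G$ has type $\FP_1$, there is a constant $d$ such that for every monolithic primitive group $L$ with abelian minimal normal subgroup $M$, every crown-based power $L_k$ occurring as a quotient of $G$ satisfies $k \leq d\, r_G(M)$. By \cite[Theorem 11]{DL}, the multiplicity $\delta_G(M)$ is exactly the largest $k$ for which $(L_M)_k$ is a quotient of $G$ (where $L_M$ is the monolithic primitive group associated to $M$), so
\[
\delta_G(M) \leq d\, r_G(M).
\]

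Second, I would use the definition $M \cong \End_G(M)^{r_G(M)}$ to conclude $|M| = |\End_G(M)|^{r_G(M)} \geq 2^{r_G(M)}$, and therefore $r_G(M) \leq \log_2 |M|$. Combining with the previous step gives $\delta_G(M) \leq d \log_2 |M|$.

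Finally, the number of non-Frattini abelian chief factors of order $m$ (in any chief series) equals
\[
\sum_{[M]} \delta_G(M),
\]
where the sum is over $G$-isomorphism classes $[M]$ of non-Frattini abelian chief factors of order $m$. Condition (A) gives at most $m^a$ such classes, and each contributes at most $d \log_2 m$, so the total is at most $d\, m^a \log_2 m$. Since $d \log_2 m \leq m$ for all $m$ larger than some absolute constant, and the count is trivially bounded for small $m$, we obtain the desired bound with $a' = a + 1$ (after possibly enlarging $a'$ to absorb the finitely many small values of $m$). The main (small) obstacle is simply keeping track of the definitions of $\delta_G$, $r_G$, and the $G$-isomorphism vs.\ $G$-equivalence distinction, but for abelian factors these coincide by the remark after \cite[Definition 1]{DL}, so no issue arises.
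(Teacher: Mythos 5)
Your proof is correct and uses essentially the same mechanism as the paper, packaged differently. The paper bounds the multiplicity $\delta_G(M)$ by combining type $\FP_1$ (giving $|H^1(G,M)| \leq |M|^{a''}$) with Proposition~\ref{AG}, whereas you route through Corollary~\ref{crownFP1} to get $\delta_G(M) \leq d\,r_G(M)$ directly; but Corollary~\ref{crownFP1} is itself derived from the same underlying characterisation of $\FP_1$, so the two arguments are extracting the same information. Your additional step $r_G(M) \leq \log_2 m$ is a clean way to finish; the paper simply uses the cruder bound $\delta_G(M) \leq m^{a''}$, which already suffices. No gaps.
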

\begin{proof}
	Since $G$ has type $\FP_1$, there is some $a''$ such that $|H^1(G,M)| \leq |M|^{a''}$. So by Proposition \ref{AG}, there are at most $m^{a''}$ non-Frattini chief factors of $G$ which are $G$-isomorphic to $M$, and hence at most $m^{a+a''}$ non-Frattini abelian chief factors of $G$ of order $m$.
\end{proof}

We progress by recalling the following result.

\begin{prop}{\cite[Lemma 5.2]{GKKL}}
	\label{GKKL}
	Suppose $T$ is a finite group, and $M$ is a faithful $T$-module such that $H^1(T,M) \neq 0$. Then $T$ has a non-abelian unique minimal normal subgroup.
\end{prop}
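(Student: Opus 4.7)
This is cited as \cite[Lemma 5.2]{GKKL}, so I would follow their strategy. The plan is to prove the contrapositive: after a standard reduction to the case where $M$ is irreducible (passing to a suitable composition factor), the goal is to show that if $T$ either has two distinct minimal normal subgroups, or has an abelian minimal normal subgroup, then $H^1(T,M) = 0$.

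First I would handle the abelian case. Let $N$ be an abelian minimal normal subgroup of $T$, so $N$ is elementary abelian of prime exponent $p$. The inflation-restriction five-term exact sequence yields
\begin{equation*}
0 \to H^1(T/N, M^N) \to H^1(T,M) \to H^1(N,M)^{T/N}.
\end{equation*}
When the characteristic of $M$ is coprime to $p$, the third term vanishes (since $N$ is a $p$-group acting on $M$ in coprime characteristic), so $H^1(T,M) = H^1(T/N, M^N)$, and one inducts on $|T|$; the faithfulness of $M$ is leveraged to identify a smaller faithful quotient on which to apply the inductive hypothesis. In modular characteristic ($\operatorname{char}(M) = p$), neither term need vanish, and one must analyze the $T$-module structure of $M$ directly using the faithful action of $N$.

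Second I would handle the uniqueness statement: if $N_1, N_2$ are distinct minimal normal subgroups of $T$, then $[N_1, N_2] \subseteq N_1 \cap N_2 = 1$ and the natural map $T \to T/N_1 \times T/N_2$ is injective. Clifford theory expresses the faithful irreducible $T$-module $M$ (possibly after extending scalars) as related to an outer tensor product $V_1 \boxtimes V_2$ of modules pulled back from $T/N_1$ and $T/N_2$; the K\"unneth formula then forces $H^1(T,M) = 0$ unless one of the tensor factors has non-trivial $H^0$, i.e.\ is the trivial module --- but then one of the $N_i$ acts trivially on $M$, contradicting faithfulness.

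The main obstacle is the abelian case in modular characteristic $p$ dividing $|N|$, where inflation-restriction does not collapse cleanly; here one has to work explicitly with the module structure of $M$ over $N \rtimes (T/C_T(N))$ and exploit the interplay between faithfulness of $M$ and the fact that any non-trivial normal $p$-subgroup would have non-zero fixed points in $M$.
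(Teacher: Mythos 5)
The paper offers no proof of its own (it cites \cite[Lemma 5.2]{GKKL}), so let me compare with the actual argument. The decisive issue is your ``standard reduction to the case where $M$ is irreducible.'' The statement in fact \emph{requires} $M$ to be irreducible (this hypothesis appears in GKKL and is implicitly in force in every application in the paper), and without it the claim is simply false: for $T = C_p$ with $p$ odd and $M$ the indecomposable two-dimensional unipotent $\F_p[T]$-module, $M$ is faithful and a short computation gives $H^1(T,M) \neq 0$, yet $T$ is abelian. There is consequently no reduction to be made; and even if one tries, passing to a composition factor of $M$ destroys faithfulness, so the hypotheses of the statement cannot be recovered. This is a genuine gap, not a routine preliminary.

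Once $M$ is taken irreducible, two further points. First, you have the abelian case exactly backwards: if $N$ is an abelian minimal normal subgroup of exponent $q$, then $M^N$ is a $T$-submodule, so $M^N \in \{0, M\}$ and faithfulness excludes $M^N = M$; if $\operatorname{char}(M) = q$ the $q$-group $N$ automatically has nonzero fixed points on the nonzero $\F_q$-space $M$, an immediate contradiction, so the ``modular'' case you flag as the main obstacle is in fact vacuous; if $\operatorname{char}(M) \neq q$ then $H^1(N,M)=0$ and $M^N = 0$, and inflation--restriction kills $H^1(T,M)$ outright, with no induction on $|T|$. Second, for uniqueness the K\"unneth idea is the right one, but the tensor decomposition must be taken for $M$ restricted to $N_1 \times N_2$, not for ``modules pulled back from $T/N_1 \times T/N_2$'': $T$ sits in $T/N_1 \times T/N_2$ only as a subdirect product, irreducible $T$-modules need not extend to that product, and an inner tensor product of pullbacks is not governed by K\"unneth for $H^1(T,-)$. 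The correct route is: both $N_i$ are non-abelian by the first part, $M^{N_1N_2}=0$, so inflation--restriction embeds $H^1(T,M)$ into $H^1(N_1 \times N_2, M)$; after a scalar extension, Clifford theory decomposes $M|_{N_1\times N_2}$ into conjugates of $W_1\boxtimes W_2$ with each $W_i$ a nontrivial $N_i$-module (nontrivial since $N_i \trianglelefteq T$ and $M$ is faithful), whence $W_i^{N_i}=0$ and K\"unneth gives $H^1(N_1\times N_2, W_1\boxtimes W_2) = W_1^{N_1}\otimes H^1(N_2,W_2) \oplus H^1(N_1,W_1)\otimes W_2^{N_2} = 0$.
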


Note that, for such a $T$ with non-abelian unique minimal normal subgroup $K$, $C_T(K)$ is a normal subgroup of $T$ so must be trivial. So $T$ is the monolithic group associated with the non-abelian characteristically simple group $K$. For the rest of the section, $T$ will denote such a monolithic group, and $K$ its non-abelian minimal normal subgroup.

We define \emph{the $H^1$-length of $T$}, $l^{H^1}(T)$, to be the order of the smallest faithful irreducible $T$-module $M$ (over any field) such that $H^1(T,M) \neq 0$. If no such $M$ exists, we set $l^{H^1}(T) = \infty$.

Given a profinite group $G$, we define \emph{the $T$-rank of $G$}, $\rk_T(G)$, to be the maximal $r \geq 0$ such that there is an epimorphism $\phi$ from $G$ to a subdirect product of $T^r$ such that $K^r \leq \phi(G)$.

\begin{lem}
	$\rk_T(G)$ is the number of non-abelian chief factors $A$ of $G$ such that $G/C_G(A) \cong T$ (and thus $A \cong K$).
\end{lem}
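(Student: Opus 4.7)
The plan is to show that both sides of the claimed equality coincide with the cardinality of $\mathcal{N} = \{N \unlhd G \text{ open} : G/N \cong T\}$. For the identification of $|\mathcal{N}|$ with the count of non-abelian chief factors $A$ satisfying $G/C_G(A) \cong T$, I would observe that $A \mapsto C_G(A)$ sends any such chief factor in a chief series to an element of $\mathcal{N}$, because for non-abelian $A \cong K$ the quotient $G/C_G(A)$ acts faithfully on $K$ and is therefore isomorphic to the monolithic group $T$; conversely, each $N \in \mathcal{N}$ yields the chief factor $M_N/N$, where $M_N$ is the preimage of the socle $K$ under $G \twoheadrightarrow G/N \cong T$. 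Grouping $\mathcal{N}$ by $G$-equivalence class $\mathcal{C}$ and invoking the crown-based-power structure $G/R_G(\mathcal{C}) \cong T_{\delta_G(\mathcal{C})}$, together with the observation that the crown-based power $T_k$ has precisely $k$ normal subgroups with quotient $T$ (one for each direct factor of its socle $K^k$, as follows from a routine analysis of $T/K$-invariant normal subgroups of $K^k$), gives $|\mathcal{N}_\mathcal{C}| = \delta_G(\mathcal{C})$; summing over classes and applying Jordan--H\"older then yields the required equality with the chief factor count.

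For the inequality $\rk_T(G) \leq |\mathcal{N}|$, suppose one is given an epimorphism $\phi \colon G \twoheadrightarrow H$ with $H \leq T^r$ a subdirect product containing $K^r$. Each coordinate projection $\pi_i \colon H \to T$ is surjective by subdirectness, so $N_i := \ker(\pi_i \circ \phi) \in \mathcal{N}$. I would verify these are pairwise distinct by noting that a non-trivial element of the $j$-th factor of $K^r \leq H$ lifts (via $\phi$) to an element of $G$ which lies in $N_i$ for every $i \neq j$ but not in $N_j$, so $r \leq |\mathcal{N}|$.

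For the reverse inequality $\rk_T(G) \geq |\mathcal{N}|$, I would consider the natural subdirect map $\phi \colon G \to \prod_{N \in \mathcal{N}} G/N \cong T^{|\mathcal{N}|}$ and show that its image contains the socle $K^{|\mathcal{N}|}$. Grouping $\mathcal{N}$ by $G$-equivalence class $\mathcal{C}$, crown theory gives $G \twoheadrightarrow G/R_G(\mathcal{C}) \cong T_{\delta_G(\mathcal{C})}$; the independence property of non-abelian crowns (a standard consequence of \cite{DL}, namely that $R_G(\mathcal{C}_1) R_G(\mathcal{C}_2) = G$ for distinct $G$-equivalence classes $\mathcal{C}_1 \neq \mathcal{C}_2$ of non-abelian chief factors) then yields, by a Chinese-remainder argument, that the combined map $G \to \prod_\mathcal{C} T_{\delta_G(\mathcal{C})}$ is surjective. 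Composing with the subdirect embeddings $T_\delta \hookrightarrow T^\delta$, whose images contain the respective socles $K^\delta$, produces the required subdirect product of $T^{|\mathcal{N}|}$ whose image contains $K^{|\mathcal{N}|} = \prod_\mathcal{C} K^{\delta_G(\mathcal{C})}$. The main obstacle will be justifying carefully the independence of distinct non-abelian $G$-equivalence classes and the bijective correspondence $\mathcal{N} \leftrightarrow \{A : G/C_G(A) \cong T\}$; both should follow from the crown-based-power machinery, but deserve explicit verification.
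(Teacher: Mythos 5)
Your proof takes a genuinely different route from the paper's: you factor the equality through the cardinality of the set $\mathcal{N}$ of open normal subgroups $N \unlhd G$ with $G/N \cong T$, grouped by $G$-equivalence class, and then lean heavily on the crown-based-power structure. The paper argues directly in both directions: for $s \leq \rk_T(G)$ it constructs $\phi \colon G \to \prod_i G/C_G(A_i)$ from the $s$ chief factors and asserts $K^s \leq \phi(G)$; for $s \geq \rk_T(G)$ it reads the required $r$ chief factors off the subdirect image $\phi(G) \leq T^r$ containing $K^r$, using that each coordinate copy of $K$ is a minimal normal subgroup of $\phi(G)$ on which it acts through the surjective $i$-th projection to $T$. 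Your steps $\rk_T(G) \leq |\mathcal{N}|$ and the identification of $|\mathcal{N}|$ with the chief-factor count (via the fact that $T_k$ has exactly $k$ normal subgroups with quotient $T$, namely the $k$ codimension-one coordinate blocks of the socle $K^k$) are sound.

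The gap is exactly where you flag uncertainty. The asserted independence $R_G(\mathcal{C}_1) R_G(\mathcal{C}_2) = G$ for distinct $G$-equivalence classes of non-abelian chief factors is false. Take $T$ monolithic primitive with non-abelian socle $K$ such that $T/K$ has a proper non-trivial quotient $Q$ (e.g.\ $T = S \wr C_4$, $K = S^4$, $T/K = C_4$, $Q = C_2$), and let $G = T \times_Q T$ be the fibre product. Then $K \times 1$ and $1 \times K$ are non-abelian chief factors of $G$ with centralisers $1 \times \ker(T\to Q)$ and $\ker(T\to Q) \times 1$, hence lie in distinct $G$-equivalence classes, yet
\[
  R_G(\mathcal{C}_1) R_G(\mathcal{C}_2) = \ker(T\to Q) \times \ker(T\to Q),
\]
which is a proper subgroup of $G$ since $G/(\ker(T\to Q)\times\ker(T\to Q)) \cong Q \neq 1$. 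Thus the combined map $G \to \prod_{\mathcal{C}} T_{\delta_G(\mathcal{C})}$ need not be surjective, and the Chinese-remainder step fails. What you actually need --- and what is still true in this example, since the image is $G \leq T^2$ and $K \times K \leq G$ --- is only that the image of the combined map contains $\prod_{\mathcal{C}} K^{\delta_G(\mathcal{C})}$. That weaker statement is the content of the paper's assertion $K^s \leq \phi(G)$; it must be argued on its own terms (for instance by showing that for each $i$, the normal subgroup $\bigcap_{j \neq i} C_G(A_j)$ has non-trivial, hence socle-containing, image in $G/C_G(A_i) \cong T$) and cannot be deduced from full independence of the crowns, which does not hold.
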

\begin{proof}
	If there are $s$ such chief factors $A_1, \ldots, A_s$, pick a map $G \to G/C_G(A_i)$ for each $i$. Then the product $\phi: G \to \prod_i G/C_G(A_i)$ of these maps has as its image a subdirect product, with $K^s \leq \phi(G)$. So $s \leq \rk_T(G)$.
	
	Conversely, given $\phi: G \to T^r$ with $r=\rk_T(G)$ such that $K^r \leq \phi(G)$, the image $\phi(G)$ has $r$ composition factors $K_i$ such that $\phi(G)/C_{\phi(G)}(K_i) \cong T$: these are the $r$ copies of $K$. So $G$ has at least this many, and $s \geq \rk_T(G)$.
\end{proof}

\begin{define}\label{defn:B_C}
	\begin{enumerate}[(i)]
		\item We say that $G$ satisfies \emph{condition (B)} if there is some $b$ such that
		 for all $m$ and all vector spaces $M$ with $|M|=m$ the number of $GL(M)$-conjugacy classes of irreducible subgroups $L$ of $GL(M)$, with $H^1(L,M) \neq 0$, appearing as quotients of $G$ is at most $m^b$. Note, by Proposition \ref{GKKL}, that in this case $L$ must be a monolithic group with a non-abelian minimal normal subgroup.
		\item We say that $G$ satisfies \emph{condition (C)} if, for all monolithic groups $L$ with non-abelian minimal normal subgroup, there is some $c$ such that $\rk_L(G) \leq l^{H^1}(L)^c$.
	\end{enumerate}
\end{define}

Write $Epi(G,T)_T$ for the set of $T$-conjugacy classes of epimorphisms $G \to T$. Then, as for Lemma \ref{epi}, we get that $Epi(G,T)_T = |Epi(G,T)|/|T|$, because $T$ has trivial centre, so acts faithfully on $Epi(G,T)$ by conjugation; we will use this repeatedly in the proof below.

\begin{thm}
	\label{thm:PFP1}
	Assume $G$ has type $\FP_1$. The following are equivalent:
	\begin{enumerate}[(i)]
		\item $G$ has type $\PFP_1$.
		\item There is some $c$ such that the number of simple $G$-modules $M$ of order $n$ such that $H^1(G,M) \neq 0$ is $\leq n^c$.
		\item $G$ satisfies (A) and (B), and there is some $c$ such that for any group $L$ associated with a characteristically simple non-abelian group $A$, $|Epi(G,L)| \leq |L|l^{H^1}(L)^c$.
		\item $G$ satisfies (A), (B) and (C).
	\end{enumerate}
\end{thm}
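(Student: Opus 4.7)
The strategy is the cycle $(i) \Leftrightarrow (ii) \Rightarrow (iv) \Leftrightarrow (iii) \Rightarrow (ii)$. The equivalence $(i) \Leftrightarrow (ii)$ is the characterisation of $\PFP_1$ recalled at the start of this subsection in terms of the number of simple $G$-modules $M$ with $H^1(G,M) \neq 0$.

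For $(ii) \Rightarrow (iv)$, I would apply Proposition~\ref{AG}. Condition (A) is immediate, since a non-Frattini abelian chief factor of order $n$ satisfies $H^1(G,M) \neq 0$ by the formula there. For (B), each $GL(M)$-conjugacy class of irreducible $L \leq GL(M)$ with $H^1(L,M) \neq 0$ and $L$ a quotient of $G$ determines, via any epimorphism $G \twoheadrightarrow L$, a simple $G$-module structure on $M$ with $H^1(G,M) \neq 0$ by inflation; distinct conjugacy classes give non-isomorphic $G$-modules. For (C), given a monolithic $L$ with non-abelian socle and $r = \rk_L(G)$ associated non-abelian chief factors $A_1, \ldots, A_r$, fix a faithful irreducible $L$-module $M$ of size $l^{H^1}(L)$ with $H^1(L,M) \neq 0$; the compositions $G \to G/C_G(A_i) \cong L \to GL(M)$ yield $r$ simple $G$-modules with pairwise distinct centralisers $C_G(A_i)$, so (ii) forces $r \leq l^{H^1}(L)^c$.

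The equivalence $(iv) \Leftrightarrow (iii)$ rests on the counting identity
\[
|\Epi(G,L)|/|L| = \rk_L(G) \cdot |\mathrm{Out}(L)|,
\]
valid for $L$ monolithic with non-abelian socle: the left equality comes from $Z(L) = 1$ (so conjugation is free on $\Epi(G,L)$), the factor $\rk_L(G)$ arises from the bijection $A \leftrightarrow C_G(A)$ between non-abelian chief factors associated to $L$ and normal subgroups $N \unlhd G$ with $G/N \cong L$, and $|\mathrm{Out}(L)|$ records isomorphisms $G/N \to L$ modulo inner automorphisms. The main technical step is absorbing $|\mathrm{Out}(L)|$ into a polynomial in $l^{H^1}(L)$: since $K$ is characteristic in $L$ with $C_L(K) = 1$, restriction embeds $\Aut(L) \hookrightarrow \Aut(K) = \Aut(S) \wr \Sym(s)$ for $K = S^s$, and this can be controlled using Lemma~7.7 of \cite{GMP} together with Proposition~\ref{primitiverep} applied to any faithful irreducible $L$-module, in particular one witnessing $l^{H^1}(L)$.

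For $(iii) \Rightarrow (ii)$, I would split simple $G$-modules $M$ of order $n$ with $H^1(G,M) \neq 0$ via Proposition~\ref{AG}: either $M$ is a non-Frattini abelian chief factor, covered by (A); or $H^1(L,M) \neq 0$ where $L := G/C_G(M)$, which by Proposition~\ref{GKKL} is monolithic with non-abelian socle, acting irreducibly and faithfully on $M$. For the latter class, I would sum over $GL(M)$-conjugacy classes of such $L$ (bounded by $n^b$ via (B), up to a logarithmic factor from the few choices of subfield of $\End_G(M)$ of appropriate order); for each class, the number of simple $G$-module structures on $M$ with image in this class equals $|\Epi(G,L)|/|N_{GL(M)}(L)| \leq |\Epi(G,L)|/|L| \leq l^{H^1}(L)^c \leq n^c$, using $L \leq N_{GL(M)}(L)$ and the epi bound of (iii). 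I expect the most delicate step to be the careful $|\mathrm{Out}(L)|$ bookkeeping in the $(iv) \Leftrightarrow (iii)$ equivalence, since a clean polynomial bound requires leveraging the non-abelian socle structure to trade factorials and outer automorphism counts against the size of faithful $L$-modules.
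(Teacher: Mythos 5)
Most of your outline coincides with the paper's proof: (i)$\Leftrightarrow$(ii) is \cite[Corollary 5.13]{CCV}, your (ii)$\Rightarrow$(iv) and (iii)$\Rightarrow$(ii) arguments via Proposition~\ref{AG} and Proposition~\ref{GKKL} are essentially the ones given there (your intermediate claim that the number of module structures ``equals $|\Epi(G,L)|/|N_{GL(M)}(L)|$'' is not an equality, but the bound you actually use, namely the number of $L$-conjugacy orbits $|\Epi(G,L)|/|L|$, is correct since $Z(L)=1$), and your exact identity $|\Epi(G,L)|/|L|=\rk_L(G)\,|\mathrm{Out}(L)|$ is a legitimate sharpening of the inequality the paper imports.

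The genuine gap is exactly at the step you flag as delicate, and your proposed tools do not close it. For (iv)$\Rightarrow$(iii) you must bound $|\mathrm{Out}(L)|$ polynomially in $l^{H^1}(L)$. The embedding $\Aut(L)\hookrightarrow \Aut(K)=\Aut(S)\wr\Sym(s)$ only yields $|\mathrm{Out}(L)|\leq |\mathrm{Out}(S)|^s\, s!$, and while \cite[Lemma 7.7]{GMP} together with Proposition~\ref{primitiverep} handles the factor $|\mathrm{Out}(S)|^s\leq l^{proj}(K)\leq l^{H^1}(L)$, the factorial cannot be traded against the module size: $l^{H^1}(L)\geq l^{proj}(S)^s$ is only exponential in $s$ with base bounded when $S$ is fixed (take $S=A_5$ and $s\to\infty$), whereas $s!$ is not $O(C^{cs})$ for any constants. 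So one needs a non-trivial bound on the symmetric-group part of $\mathrm{Out}(L)$, e.g.\ on $|N_{\Sym(s)}(P):P|$ for the transitive group $P$ induced by $L$ on the $s$ socle factors; Lemma~\ref{lem:aut-1} only controls automorphisms acting trivially on $L/K$, not all of $\mathrm{Out}(L)$. This is precisely the content the paper outsources to \cite[Lemma 2.12, Remark 2.14]{JP}, which gives $|\Epi(G,L)|\leq \rk_L(G)(5|\mathrm{Out}(S)|)^s|L|$ with no factorial, after which $(5|\mathrm{Out}(S)|)^s\leq l^{proj}(K)^4\leq l^{H^1}(L)^4$ finishes the argument. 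Your proof becomes complete if you replace the wreath-product estimate by this citation (or prove an analogous $|\mathrm{Out}(L)|\leq (c\,|\mathrm{Out}(S)|)^s$ bound); as written, that inequality is asserted rather than proved. A second, smaller point: the bijection between chief factors $A$ with $G/C_G(A)\cong L$ and normal subgroups $N$ with $G/N\cong L$ underlying your identity does hold, but it needs the observation that distinct such chief factors in a chief series have distinct centralisers (a Dedekind-type argument), which you should record rather than assume.
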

\begin{proof}
	(i) $\Leftrightarrow$ (ii) is in \cite[Corollary 5.13]{CCV}.
	
	(ii) $\Rightarrow$ (iii): For any $L$, there is some simple $L$-module $M$ such that $|M| = l^{H^1}(L)$ and $H^1(L,M) \neq 0$. Now, for each conjugacy class of epimorphisms $G \to L$, restricting the $L$-action to $G$ gives a $G$-module $M$ such that $H^1(G,M) \neq 0$ by Proposition \ref{AG}. So (ii) implies $|Epi(G,L)_L| \leq l^{H^1}(L)^c$. $G$ satisfies (A) by Proposition \ref{AG}; (B) is clear from the definition.
	(iii) $\Rightarrow$ (ii): For any $L$, $|Epi(G,L)_L| \leq l^{H^1}(L)^c$. Since $G$ satisfies (A), there is some $a$ such that the number of simple $G$-modules $M$ of order $n$ such that $H^1(G/C_G(M),M)=0$ and $H^1(G,M) \neq 0$ is $\leq n^a$. If $M$ is a simple $G$-module of order $n$ such that $H^1(G/C_G(M),M) \neq 0$, by (B), there are $\leq n^b$ possibilities for $L = G/C_G(M)$ up to conjugacy in $GL(M)$. Fix one such possibility. By Proposition \ref{GKKL}, $L=G/C_G(M)$ is the monolithic group associated to its non-abelian minimal normal subgroup $A$; clearly $l^{H^1}(L) \leq n$, so $|Epi(G,L)_L| \leq n^c$. We conclude that condition (ii) holds with constant $a+b+c$.
	
	(iii) $\Rightarrow$ (iv): If for all $c$ there is some $L$ with $\rk_L(G) > l^{H^1}(L)^c$, we have an epimorphism $\phi$ from $G$ to a subdirect product of $L^{\rk_L(G)}$ such that $A^{\rk_L(G)} \leq \phi(G)$. The projections from $G$ onto each factor of the subdirect product have different kernels, so we know $|\Epi(G,L)_L| > l^{H^1}(L)^c$; therefore $|\Epi(G,L)| > |L| l^{H^1}(L)^c$.
	
	(iv) $\Rightarrow$ (iii): By \cite[Lemma 2.12, Remark 2.14]{JP}, we know $|\Epi(G,L)| \leq \rk_L(G)(5|Out(S)|)^s|L|$, where $A \cong S^s$ with $S$ simple. We have $|Out(S)| \leq l(S)$ by \cite[Lemma 7.7]{GMP}, and clearly $l(S) \geq 2$ for all $S$, so $$(5|Out(S)|)^s \leq (5l(S))^s \leq l(S)^{4s} \leq l^{proj}(A)^4 \leq l^{H^1}(L)^4;$$ hence $|\Epi(G,L)| \leq |L| l^{H^1}(L)^{c+4}$, as required.
\end{proof}
\begin{rem}
	\label{PFP1rem}
	\begin{enumerate}[(i)]
		\item In the proof of \cite[Proposition 7.1]{JP}, to which (iv) $\Rightarrow$ (iii) in our theorem is analogous, the inequality $5|Out(S)| \leq l^{proj}(S)$, for all non-abelian simple $S$, is implicitly used. In fact this is not true, e.g. $S = PSL_3(\F_2)$ has $l^{proj}(S)=8$ and $|Out(S)|=2$. But it is true `up to a constant', and this makes no difference to the argument, as here.
		\item We would like to have an equivalent condition for $\PFP_1$ using crown-based powers, along the lines of Theorem \ref{crownpowersUBERG} or \cite[Theorem 11.1(3)]{JP}. Those conditions work because PFG and finitely generated UBERG groups have at most exponential subgroup growth (see \cite[Theorem 10.2]{JP}, \cite[Proposition 5.4]{KV}). This fails in general for finitely generated groups of type $\PFP_1$: see Corollary \ref{superexponential} below. If we restricted to finitely generated groups with at most exponential subgroup growth, an equivalent condition in terms of crown-based powers would hold.
		\item Note, from the proof of the theorem, that the only place we require $G$ to have type $\FP_1$ is in (ii) $\Rightarrow$ (i). With no assumptions on $G$, (i) $\Rightarrow$ (ii) $\Leftrightarrow$ (iii) $\Leftrightarrow$ (iv).
	\end{enumerate}
\end{rem}
\begin{cor}
\label{cor:PFP1-products}
Let $(Q_n)_n$ be a sequence of finite quasisimple groups and let $(a(n))_n$ be a sequence of natural numbers. Let $Z_n$ be the center of $Q_n$ and assume that all $Q_n/Z_n$ are pairwise distinct.
Then $G = \prod_{n}Q_n^{a(n)}$ has type $\PFP_1$ if and only if there is $c > 0$ such that
$a(n) \leq l^{H^1}(Q_n/Z_n)^c$ for all $n$.
\end{cor}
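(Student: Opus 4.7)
My plan is to apply Theorem \ref{thm:PFP1}: after checking that $G$ has type $\FP_1$, the group $G$ will have type $\PFP_1$ if and only if conditions (A), (B), (C) of that theorem all hold. I will show that (A) and (B) are automatic for this $G$, and that (C) reduces exactly to the stated inequality, via an explicit identification of all monolithic quotients of $G$ with non-abelian socle.

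First, the chief-factor analysis. Since the $Q_n/Z_n$ are pairwise non-isomorphic non-abelian simple groups, they exhaust the non-abelian composition factors of $G$, and each appears exactly $a(n)$ times in any chief series. The centre of each quasisimple $Q_n$ is contained in $\Phi(Q_n)$ (if a maximal subgroup $M \le Q_n$ did not contain $Z_n$, then $M Z_n = Q_n$ would give an abelian quotient of the perfect group $Q_n$), so $\prod_n Z_n^{a(n)} \le \Phi(G)$ and every abelian chief factor of $G$ is Frattini. Thus $\delta_G(M) = 0$ for every finite simple $G$-module $M$, which combined with $h'_G(M)/r_G(M) < 1$ (as quoted from \cite[Theorem A]{AG} in Proposition \ref{prop:augmentation}) gives type $\FP_1$ and makes condition (A) vacuous. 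Next, any monolithic quotient $L$ of $G$ with non-abelian socle has trivial centre (its socle is non-abelian characteristically simple, hence self-centralising), so the surjection $G \twoheadrightarrow L$ kills $Z(G) = \prod_n Z_n^{a(n)}$ and factors through $\bar G := \prod_n S_n^{a(n)}$, where $S_n := Q_n/Z_n$. Since the $S_n$ are pairwise non-isomorphic non-abelian simple groups, every normal subgroup of $\bar G$ is a sub-product of simple factors, and so every monolithic quotient of $\bar G$ with non-abelian socle is isomorphic to some single $S_n$. This automatically delivers condition (B): for $|M| = m$, only polynomially many simple groups can embed irreducibly in $GL(M)$ (by classification-based bounds on the number of simple groups of bounded order, together with the polynomial representation growth of individual simple groups).

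Finally, condition (C) of Theorem \ref{thm:PFP1}(iv) is precisely the statement in question. Using the lemma preceding Definition \ref{defn:B_C}, $\rk_{S_n}(G)$ equals the number of non-abelian chief factors of $G$ whose associated primitive group is $S_n$, which is exactly $a(n)$ (one contribution from each of the $a(n)$ blocks in $Q_n^{a(n)}$); and for every monolithic $L$ not isomorphic to some $S_n$, $\rk_L(G) = 0$ by the previous paragraph. Hence (C) holds if and only if there exists $c > 0$ such that $a(n) \le l^{H^1}(S_n)^c$ for all $n$, which completes the proof. The main obstacle is the structural identification of monolithic quotients in the middle step: one must rule out almost-simple overgroups of $S_n$ as well as cross-block diagonal quotients, which is where the pairwise-distinctness hypothesis and the centrelessness of $L$ are both essential.
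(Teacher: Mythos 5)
Your proof follows essentially the same route as the paper: apply \Cref{thm:PFP1}, establish type $\FP_1$ via $\Phi(G) \supseteq \prod_n Z_n^{a(n)}$, observe (A) is vacuous since there are no abelian non-Frattini chief factors, verify (B), and translate (C) into the stated inequality via the $T$-rank lemma and the identification of the monolithic quotients as the $S_n = Q_n/Z_n$. Your structural analysis of the monolithic quotients (closed normal subgroups of $\prod_n S_n^{a(n)}$ are sub-products, so a quotient with a unique non-abelian minimal normal subgroup is a single $S_n$) is correct and makes explicit what the paper simply asserts.

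The one place where you diverge from the paper and where your justification does not close is condition (B). You write that ``only polynomially many simple groups can embed irreducibly in $GL(M)$ (by classification-based bounds on the number of simple groups of bounded order, together with the polynomial representation growth of individual simple groups).'' As stated this does not work: the order of an irreducible simple subgroup $S \leq GL(M)$ is not polynomially bounded in $|M|$ (it is bounded only by $|GL(M)|$, which is superpolynomial), so counting simple groups by order alone is not enough, and neither is informally invoking ``polynomial representation growth'' without a uniform bound over the whole family of simple groups. The paper's argument sidesteps this entirely: by \Cref{GKKL} each such $L$ has non-abelian unique minimal normal subgroup, hence is one of the $S_n$, hence is $2$-generated, and then \cite[Proposition 6.1]{JP} directly bounds the number of conjugacy classes of $2$-generated irreducible subgroups of $GL(M)$ polynomially in $|M|$. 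Your appeal to ``classification-based bounds'' is in fact encoded in that JP proposition; replacing your heuristic with the explicit citation is the cleanest fix, and matches what the paper does.
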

\begin{proof}
Clearly, $\Phi(G) = \prod_n Z_n^{a(n)}$ and $G$ does not have abelian non-Frattini chief factors.
Using Proposition \ref{prop:augmentation} and $h'_G(M)/r_G(M) \leq 1$ (see \cite[Theorem A]{AG}) it follows that $G$ always has type $\FP_1$ and satisfies condition (A).

We verify that condition (B) is satisfied. By Proposition \ref{GKKL}, for any vector space $M$, $|M|=m$, any irreducible subgroup $L$ of $GL(M)$ with $H^1(L,M) \neq 0$ appearing as a quotient of $G$ must be isomorphic to some $Q_n/Z_n$ and is thus $2$-generated. By \cite[Proposition 6.1]{JP}, there is some $b$ such that, for all $M$, the number of $2$-generated irreducible subgroups of $GL(M)$ up to conjugacy is $\leq m^b$, so $G$ satisfies (B). 

The monolithic primitive factors of $G$ are exactly the groups $T_n = Q_n/Z_n$ and we have $\mathrm{rk}_{T_n}(G) = a(n)$. So condition (C) holds exactly if there is $c > 0$ such that $a(n) \leq l^{H^1}(T_n)^c$ for all $n$.
\end{proof}
\begin{cor}
	\label{fgPFP1}
	Suppose $G$ is finitely generated. The following are equivalent:
	\begin{enumerate}[(i)]
		\item $G$ has type $\PFP_1$.
		\item $G$ satisfies (A) and (C).
	\end{enumerate}
\end{cor}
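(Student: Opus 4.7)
The plan is to deduce this corollary from Theorem \ref{thm:PFP1}, whose proof already contains most of the work; the only new observation is that for finitely generated groups, condition (B) comes for free.

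For the forward direction (i) $\Rightarrow$ (ii), note that finitely generated profinite groups have type $\FP_1$, so Theorem \ref{thm:PFP1} applies and tells us that $\PFP_1$ implies conditions (A), (B) and (C) all hold simultaneously; in particular (A) and (C) hold.

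For the converse (ii) $\Rightarrow$ (i), the idea is that finite generation already implies condition (B). Indeed, suppose $G$ is $d$-generated, and let $M$ be a finite vector space of order $m$. Any quotient of $G$ is also $d$-generated. By Proposition \ref{GKKL}, any irreducible subgroup $L$ of $GL(M)$ with $H^1(L,M) \neq 0$ which is a quotient of $G$ is a monolithic group with non-abelian minimal normal subgroup, and in particular it is $d$-generated. We then invoke \cite[Proposition 6.1]{JP}, which gives a constant $b$ (depending on $d$) such that the number of $d$-generated irreducible subgroups of $GL(M)$ up to $GL(M)$-conjugacy is at most $m^b$. This yields condition (B). Combined with the assumed conditions (A) and (C), Theorem \ref{thm:PFP1} (in the form (iv) $\Rightarrow$ (i), using that $G$ has type $\FP_1$) gives $G$ type $\PFP_1$.

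The two steps are essentially bookkeeping once Theorem \ref{thm:PFP1} is in hand; there is no real obstacle. The only minor point to check is that the polynomial bound from \cite[Proposition 6.1]{JP} is indeed phrased in terms of $d$-generation as needed, but this is the same input used in the proof of Corollary \ref{cor:PFP1-products}, so the argument transfers verbatim.
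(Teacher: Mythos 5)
Your proof is correct and follows essentially the same approach as the paper: the paper's one-line proof is "For $G$ finitely generated, (B) holds by [JP, Proposition 6.1]," which you have simply spelled out in detail, including the observation that finitely generated profinite groups have type $\FP_1$ so Theorem~\ref{thm:PFP1} applies. Your additional detail about quotients being $d$-generated and the appeal to Proposition~\ref{GKKL} is accurate, and indeed mirrors the argument used in the proof of Corollary~\ref{cor:PFP1-products}.
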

\begin{proof}
	For $G$ finitely generated, (B) holds by \cite[Proposition 6.1]{JP}.
\end{proof}

We can use Theorem \ref{thm:PFP1} to re-prove the following result, which was proved in \cite{CCV} in a different way.

\begin{cor}
	Suppose $G$ is has UBERG and type $\FP_1$. Then $G$ has type $\PFP_1$.
\end{cor}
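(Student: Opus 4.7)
The plan is to reduce the claim to verifying condition (ii) of Theorem~\ref{thm:PFP1}, which, by Remark~\ref{PFP1rem}(iii), only requires type $\FP_1$ for the implication (ii) $\Rightarrow$ (i). That is, I would exhibit a constant $c > 0$ such that for every positive integer $n$, the number of simple $G$-modules $M$ of order $n$ with $H^1(G,M) \neq 0$ is at most $n^c$.

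The key observation is that UBERG already gives the stronger statement that the total number of simple $G$-modules of order $n$ is polynomially bounded in $n$, independent of whether $H^1$ vanishes. Indeed, the UBERG hypothesis yields a constant $c > 0$ with $r(G,\F_p,k) \leq p^{ck}$ for all primes $p$ and all $k \geq 1$. Every finite simple $G$-module is an irreducible $\F_p\bra G \ket$-module for a unique prime $p$, so its order is a prime power $n = p^k$, and hence the number of simple $G$-modules of order $n$ is at most $p^{ck} = n^c$ (and equals zero unless $n$ is a prime power). A fortiori, the number of those with $H^1(G,M) \neq 0$ is at most $n^c$, verifying (ii).

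Since $G$ has type $\FP_1$ by assumption, Theorem~\ref{thm:PFP1} applies and the implication (ii) $\Rightarrow$ (i) concludes that $G$ has type $\PFP_1$. There is no genuine obstacle here; the content of the result sits inside the characterisation already proved in Theorem~\ref{thm:PFP1}, and the additional input from UBERG is simply the uniform polynomial bound on $r(G,\F_p,k)$.
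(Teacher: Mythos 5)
Your proof is correct and is genuinely more direct than the paper's. The paper proves the result by verifying conditions (A), (B) and (C) of Theorem~\ref{thm:PFP1}(iv): conditions (A) and (B) follow immediately from UBERG exactly as you observe, but to verify (C) the paper additionally invokes the necessary condition for UBERG from Theorem~\ref{necessary}, the automorphism-counting argument from Theorem~\ref{crownpowersUBERG} via Lemma~\ref{lem:aut-1}, and the lower bound $l^{H^1}(T) \geq l^{lin}(K)^e$ from Proposition~\ref{primitiverep}. Your argument short-circuits all of this: since every finite simple $\hat{\Z}\bra G\ket$-module is a simple $\F_p\bra G\ket$-module for a unique prime $p$ (so its order is a prime power $p^k$), the UBERG bound $r(G,\F_p,k) \leq p^{ck}$ already shows that the total number of simple $G$-modules of order $n$ is at most $n^c$, and condition~(ii) of Theorem~\ref{thm:PFP1} follows a fortiori. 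The paper in fact records this same polynomial bound on all irreducible modules in the first sentence of its proof, but uses it only to dispatch (A) and (B) before turning to the more elaborate verification of (C), rather than exiting directly through condition (ii). Your route avoids the crown-based power machinery entirely; the paper's longer argument does confirm as a byproduct that $G$ satisfies the structural condition (C), but this extra information is not needed for the corollary itself.
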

\begin{proof}
	We show that $G$ satisfies (A), (B) and (C). Since $G$ has UBERG, there is some $a$ such that the number of irreducible $G$-modules of order $m$ is at most $m^a$. So (A) and (B) are immediate. By Theorem \ref{necessary}, there is some $b$ such that, for all monolithic groups $L$ with a non-abelian minimal normal subgroup, the size of a crown-based power of $L$ occurring as a quotient of $G$ is $\leq l^{lin}(K)^b$. As in Theorem \ref{crownpowersUBERG}, we deduce from Lemma~\ref{lem:aut-1} that $\rk_L(G) \leq l^{lin}(K)^c$ for some constant $c$. Finally, by Proposition \ref{primitiverep}, $l^{H^1}(T) \geq l^{lin}(K)^e$ for the constant $e$ used in that proposition, so we conclude $G$ satisfies (C).
\end{proof}

In fact, in Theorem~\ref{thm:UBERG+FP_1=FG} we will prove a stronger result: a profinite group with UBERG and $\FP_1$ must be finitely generated. Now, we will deal with extensions of groups of type $\PFP_1$. 

\begin{thm}\label{thm:PFP1_by_PFP1}
Let $G$ be a profinite group and $N \unlhd G$. If $N$ and $G/N$ have type $\PFP_1$, then $G$ has type $\PFP_1$
\end{thm}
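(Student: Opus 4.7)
The plan is to reduce to criterion (ii) of Theorem~\ref{thm:PFP1}, which, given type $\FP_1$, characterises type $\PFP_1$ by a polynomial bound on the number of simple $G$-modules $M$ of order $m$ with $H^1(G,M)\ne 0$. First, since $N$ and $G/N$ have type $\PFP_1$, they have type $\FP_1$, and extensions of $\FP_1$ profinite groups are $\FP_1$ (standard spectral sequence argument, as in \cite{CCV}), so $G$ has type $\FP_1$. The remaining task is to bound
\[ \#\{ M \in \Irr(\hat{\Z}\bra G\ket) : |M|=m, H^1(G,M)\ne 0\} \]
polynomially in $m$.

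For this I would apply the five-term exact sequence from the Lyndon--Hochschild--Serre spectral sequence for $N\unlhd G$:
\[ 0\to H^1(G/N,M^N)\to H^1(G,M)\to H^1(N,M)^{G/N}\to H^2(G/N,M^N). \]
Since $M$ is simple and $M^N$ is a $G$-submodule, either $M^N=M$ or $M^N=0$. In the first case $M$ is a simple $G/N$-module with $H^1(G/N,M)\ne 0$, and the number of such $M$ of order $m$ is at most $m^{c_{G/N}}$ by $\PFP_1$ for $G/N$. In the second case $H^1(N,M)^{G/N}\ne 0$; Clifford's theorem says $\Res^G_N M$ is semisimple with pairwise $G$-conjugate simple constituents, so picking one such constituent $V$ yields $H^1(N,V)\ne 0$, and by $\PFP_1$ for $N$ there are at most $\sum_{v\le m}v^{c_N}\le m^{c_N+1}$ isomorphism classes of such $V$ with $|V|\le m$.

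For each such $V$, I next count the number of simple $G$-modules $M$ of order $m$ with $V$ a constituent of $\Res^G_N M$ and $H^1(G,M)\ne 0$. By Theorem~\ref{modulestructure}, every such $M$ is of the form $M\cong \Ind^G_H(U'\otimes_E \ext(V))$, where $H$ is the inertia subgroup of $V$, $E=\End_{F\bra N\ket}(V)$, and $U'$ is a simple $E^{\alpha^{-1}}\bra H/N\ket$-module for the cocycle $\alpha\in Z^2(H/N,E^\times)$ produced by the theorem. Shapiro's lemma gives $H^1(G,M)\cong H^1(H,U'\otimes_E\ext(V))$; since $V$ is a non-trivial simple $N$-module we have $(U'\otimes_E\ext(V))^N=U'\otimes_E V^N=0$, so a second application of the 5-term sequence (now for $N\unlhd H$) yields an injection
\[ H^1(H,U'\otimes_E\ext(V))\hookrightarrow H^1(N,U'\otimes_E V)^{H/N}=(U'\otimes_E H^1(N,V))^{H/N}. \]
By the tensor-hom adjunction the right-hand side is $\Hom_{E^{\alpha^{-1}}\bra H/N\ket}(H^1(N,V)^*,U')$, so $H^1(G,M)\ne 0$ forces $U'$ to be a simple quotient of $H^1(N,V)^*$ as an $E^{\alpha^{-1}}\bra H/N\ket$-module. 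The number of such $U'$ is therefore bounded by the length of $H^1(N,V)^*$, hence by $\dim_E H^1(N,V)\le c_N\dim_E V\le c_N\log_{|F|}m$, where the first inequality uses $\PFP_1$ for $N$.

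Combining: the total number of candidate $M$'s of order $m$ with $H^1(G,M)\ne 0$ is at most
\[ m^{c_{G/N}}+c_N(\log m)\, m^{c_N+1}, \]
which is polynomially bounded in $m$, so $G$ has type $\PFP_1$. The principal obstacle is the bookkeeping in the third paragraph: one must check that $H^1(N,V)$ naturally carries the structure of an $E^\alpha\bra H/N\ket$-module (so that its dual and $U'$ live in matching twisted categories) and that Shapiro's lemma and the inflation-restriction sequence apply in the profinite setting with twisted coefficients. The invocation of the $\FP_1$-by-$\FP_1$ closure is the other point that needs to be quoted from the literature rather than reproved.
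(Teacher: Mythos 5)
Your overall strategy --- reduce to criterion (ii) of Theorem~\ref{thm:PFP1}, split according to whether $N$ acts trivially on $M$ or not --- matches the paper's, but there is a genuine gap in the trivial-action case. From the five-term exact sequence
\[
0\to H^1(G/N,M)\to H^1(G,M)\to H^1(N,M)^{G/N}\to H^2(G/N,M),
\]
valid when $M^N=M$, the hypothesis $H^1(G,M)\ne 0$ does \emph{not} imply $H^1(G/N,M)\ne 0$: the nonvanishing may come from the middle term. Since $N$ acts trivially, $H^1(N,M)^{G/N}=\Hom_{G/N}(H_1(N,\bbF_p),M)$, which is nonzero precisely when $M$ is a simple $G/N$-factor of $H_1(N,\bbF_p)$. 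These $M$ are missed by your count. The fix is short: $N$ has type $\FP_1$, so $|H_1(N,\bbF_p)|\le p^r$ for a constant $r$, hence $H_1(N,\bbF_p)$ has at most $r$ simple factors in each characteristic, contributing an additive constant. As written, though, your "first case" conclusion is false, and the final bound must be augmented by this term.

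For the nontrivial-action case, you take a genuinely different (and heavier) route than the paper. The paper fixes a nontrivial irreducible $N$-constituent $U$ of $\Res^G_N M$, observes $M$ embeds in $\mathrm{Coind}^G_N(U)$, and runs the long exact sequence for $S\hookrightarrow\mathrm{Coind}^G_N(U)$ (with $S$ the socle), using Shapiro's lemma and the vanishing of $H^0$ on the quotient to bound the number of candidate $M$ by $|H^1(N,U)|\le|U|^r$. You instead invoke the crossed/twisted decomposition $M\cong\Ind^G_H(U'\otimes_E\ext(V))$ from Theorem~\ref{modulestructure}, apply Shapiro for $\Ind=\Coind$ (legitimate since $H$ is open), and then a second five-term sequence plus tensor--hom adjunction to see that $U'$ is a simple quotient of $H^1(N,V)^*$ in the twisted category. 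This is plausible and would yield a comparable bound, but it genuinely requires checking that $H^1(N,\ext(V))$ carries a natural $E^\alpha\bra H/N\ket$-module structure compatible with the cocycle of $U'$, and that duality and the adjunction interact correctly with the $\gamma$-semilinear, $\alpha$-twisted actions --- the very bookkeeping you flag as an obstacle. The paper's coinduction argument sidesteps twisted modules entirely, which is why it is cleaner; your route would work once the twisted-cohomology compatibilities are verified, but they are not free. Finally, a small attribution slip: the inequality $\dim_E H^1(N,V)\le c_N\dim_E V$ is the $\FP_1$ bound (Proposition~\ref{prop:augmentation}), not $\PFP_1$; for $\PFP_1$ groups this of course holds, but it should be cited as the $\FP_1$ part.
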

\begin{proof}
Since $\FP_1$ is preserved under extensions, we know that $G$ has type $\FP_1$.
Let $r$ be such that $|H^1(N,U)| \leq |U|^{r}$ for all finite $N$-modules $U$.
Since $N$ is of type $\PFP_1$ there is $d \geq 0$ be such the number of simple $N$-modules of order $\leq m$ is at most $m^d$ (and similarly there is a constant $d'$ for $G/N$).
In view of (ii) in Theorem \ref{thm:PFP1} we need to count the finite simple $G$-modules $V$ with $H^1(G,V) \neq 0$ and $|V| = m$.
We will distinguish whether or not $N$ acts trivially on $V$.

\textit{Case 1}: $\Res_N^G(V)$ is non-trivial. In this case the restriction is semisimple and contains a non-trivial direct summand $U$.
Then
\[
	\Hom_N(\Res_N^G(V), U) = \Hom_G(V,\mathrm{Coind}_N^G(U))
\]
and $V$ is a simple submodule of the coinduced module $\mathrm{Coind}_N^G(U)$.
We fix $U$ and define $t(G,U)$ to be the number of simple $G$-modules $V$ which contain $U$ and satisfy $H^1(G,V) \neq 0$. 
Define $S$ to be the sum of all simple submodules of $\mathrm{Coind}_N^G(U)$.
Then $t(G,U) \leq |H^1(G,S)|$.
The short exact sequence $S \to \mathrm{Coind}_N^G(U) \to  \mathrm{Coind}_N^G(U)/S$ gives rise to a long exact sequence
\[
	H^0(G,\mathrm{Coind}_N^G(U)/S) \longrightarrow H^1(G,S) \longrightarrow  H^1(G,\mathrm{Coind}_N^G(U))
\]
where the right hand side is isomorphic to $H^1(N,U)$ by Shapiro's Lemma \cite[6.10.5]{RZ}.
We note that composition factors of $\Res_N^G(\mathrm{Coind}_N^G(U))$ are $G$-conjugates of $U$ and, in particular, are non-trivial.
This means that all composition factors of $\mathrm{Coind}_N^G(U)/S$ are non-trivial and  $H^0(G,\mathrm{Coind}_N^G(U)/S) = 0$.
We deduce $t(G,U) \leq |H^1(G,S)| \leq |H^1(N,U)|$
and so if $t(G,U) \neq 0$ then $H^1(N,U) \neq 0$ and $|H^1(N,U)| \leq |U|^r \leq m^r$. 
Since $N$ has $\PFP_1$ there are at most $m^d$ distinct modules $U$ with $|U| \leq m$ and $H^1(N,U) \neq 0$.

\textit{Case 2}: The action on $V$ factors through $G/N$. Say $V$ an $\F_p\bra{G}\ket$-module.
Consider the initial piece of the five term exact sequence of the Lyndon-Hochschild-Serre spectral sequence (e.g. \cite[Theorem 3.7]{Ged}))
\[
	0 \longrightarrow H^1(G/N,V) \longrightarrow H^1(G,V) \longrightarrow H^1(N,V)^{G/N} 
\]
and observe that the last term is 
\[
	H^1(N,V)^{G/N} = \Hom_{G/N}(N,V) = \Hom_{G/N}(H_1(N,\F_p),V).
\]
So if $H^1(G,V) \neq 0$, then either $H^1(G/N,V) \neq 0$ or $V$ is a $G/N$-factor of $H_1(N,\F_p)$. Since $G/N$ has $\PFP_1$, there are at most $m^{d'}$ simple $G/N$-modules of order $m$ of the former kind. Moreover, since 
$|H_1(N,\F_p)| \leq p^r$ is follows that $H_1(N,\F_p)$ has at most $r$ distinct simple factors; this means, there are at most $r$ modules of the latter kind. 

In total there are at most 
\[
	m^{r+d} + rm^{d'}\leq m^{r(d+d')}
\]
simple modules $V$ of order $|V| = m$ and $H^1(G,V) \neq 0$, i.e., $G$ has type $\PFP_1$.
\end{proof}

\section{\texorpdfstring{$\FP_1$}{FP1}, UBERG and pronilpotent groups}\label{sec:pronilp}

Type $\FP_n$ and $\PFP_n$ will always be understood to mean over $\hat{\mathbb{Z}}$ in this paper, except when we specify otherwise. In this section we take a closer look at these properties for groups with normal pronilpotent subgroups.
\begin{thm}\label{thm:fp1-pronilpotent}
	Let $\widetilde{G}$ be a profinite group with a normal pronilpotent subgroup $P \trianglelefteq \widetilde{G}$ such that $G = \widetilde{G}/P$ is finitely generated. If $\widetilde{G}$ is $\FP_1$, then $\widetilde{G}$ is finitely generated.
\end{thm}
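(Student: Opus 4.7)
The strategy is to combine the characterisation of type $\FP_1$ in terms of crown-based powers (Corollary~\ref{crownFP1}) with the parallel characterisation of finite generation for profinite groups via crown-based powers (cf.~\cite[Lemma 4.2]{Damian}). The key structural observation is that the pronilpotency of $P$ forces every non-abelian chief factor of $\widetilde{G}$ to descend from $G$, which together with the hypothesis that $G$ is finitely generated controls all monolithic primitive quotients of $\widetilde{G}$.

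First, I would show that for every non-abelian chief factor $X/Y$ of $\widetilde{G}$, the subgroup $P$ acts trivially on $X/Y$. The $\widetilde{G}$-submodule $(X \cap YP)/Y$ of $X/Y$ is either trivial or equal to $X/Y$ by minimality; in the latter case $X \subseteq YP$ and $X/Y$ embeds as a finite closed subgroup of the pronilpotent group $YP/Y$, which is therefore nilpotent. Its centre is then a non-trivial characteristic subgroup, and minimality forces $X/Y$ to be abelian, contradicting the hypothesis. Hence $X \cap YP = Y$ and by the modular law $X \cap P \leq Y$; therefore $[P,X] \leq X \cap P \leq Y$, so $P$ acts trivially on $X/Y$. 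Consequently every monolithic primitive quotient $L$ of $\widetilde{G}$ with non-abelian socle factors through $G$, as do its crown-based powers $L_k$; in particular $d(L_k) \leq d(G)$.

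For a monolithic primitive quotient $L$ of $\widetilde{G}$ with abelian socle $M$, primitivity gives $C_L(M) = M$. Since $L$ is monolithic with $F(L)$ nilpotent normal, the centre $Z(F(L))$ is a non-trivial normal subgroup of $L$ and hence contains $M$; but $M \leq Z(F(L))$ means $F(L)$ centralises $M$, so $F(L) \leq C_L(M) = M$ and thus $F(L) = M$. The nilpotent normal image of $P$ in $L$ therefore lies in $M$, so $L/M$ is a quotient of $G$ and hence $d(L) \leq d(L/M) + 1 \leq d(G) + 1$. By Corollary~\ref{crownFP1} there is a constant $d$ with $k \leq d \cdot r_{\widetilde{G}}(M)$ for every such crown-based power $L_k$ occurring as a quotient of $\widetilde{G}$; since $M^k$ can be generated as an $(L/M)$-module by $\lceil k/r_{\widetilde{G}}(M) \rceil \leq d$ elements, $d(L_k)$ is bounded by a constant depending only on $d$ and $d(G)$.

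Combining the two cases gives a uniform bound on $d(L_k)$ across all crown-based power quotients $L_k$ of $\widetilde{G}$, and the characterisation of finite generation via crown-based powers \cite[Lemma~4.2]{Damian} then implies $d(\widetilde{G}) < \infty$. The main obstacle is this last step: converting uniform bounds on crown-based power quotients into finite generation of $\widetilde{G}$ itself relies on the Frattini-type reduction $d(\widetilde{G}) = d(\widetilde{G}/\Phi(\widetilde{G}))$ together with the structural decomposition of $\widetilde{G}/\Phi(\widetilde{G})$ into its crowns, each of which is accounted for by a crown-based power quotient of $\widetilde{G}$.
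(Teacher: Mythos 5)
Your proof is correct, but it follows a genuinely different route from the one in the paper. The paper first reduces to the case where $P$ is elementary abelian and the extension $P \to \widetilde{G} \to G$ is split: one passes to $\widetilde{G}/(P \cap \Phi(\widetilde{G}))$, then produces a complement to $P$ by finding a minimal closed supplement via Zorn's lemma. In that split abelian setting, it computes $|H^1(\widetilde{G},M)| = |H^1(G,M)|\cdot|\Hom_G(A,M)|$ from the five-term exact sequence, decomposes the head of $A$ as $\prod_M M^{m(M)}$, and uses the $\FP_1$ bound on $|H^1(\widetilde{G},M)|$ to show $m(M)/r_G(M)$ is uniformly bounded, so that $A$ is a finitely generated $G$-module. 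Your argument instead stays entirely in the language of crown-based powers: the key structural lemma that $P$ centralises every non-abelian chief factor (and hence that primitive monolithic quotients with non-abelian socle, together with their crown-based powers, factor through $G$), the analysis $F(L) = M$ and $L/M$ a quotient of $G$ in the abelian-socle case, and then Corollary~\ref{crownFP1} combined with the crown-based characterisation of $d(\cdot)$ (\cite[Lemma 4.2]{Damian}, or more precisely \cite[Theorem 1.4]{VL} as invoked later in the paper for Theorem~\ref{thm:UBERG+FP_1=FG}). Your route is more uniform with the paper's own proof of Theorem~\ref{thm:UBERG+FP_1=FG} and avoids the minimal-supplement argument entirely, at the cost of relying on the heavier crown-based-power black boxes; the paper's route is more elementary and yields the sharper structural conclusion that, after killing $P \cap \Phi(\widetilde{G})$, $P$ itself becomes a finitely generated $\widetilde{G}$-module. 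One small point to tighten: you should write $\lceil k/r_{\widetilde{G}}(M)\rceil \leq \lceil d\rceil$ rather than $\leq d$ (the constant from Corollary~\ref{crownFP1} need not be an integer), but this does not affect the uniform bound on $d(L_k)$.
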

The following proposition covers the special case where $P = A$ is abelian and $\widetilde{G}$ is a split extension. The general result will be reduced to this situation.
\begin{prop}\label{prop:fp1-pronilpotent_split}
	Let $G$ be a finitely generated profinite group and let $A$ be an abelian profinite $G$-module. If $\widetilde{G} = A \rtimes G$ is $\FP_1$, then $\widetilde{G}$ is finitely generated.
\end{prop}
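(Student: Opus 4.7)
The plan is to prove the contrapositive: assuming $\widetilde{G} = A \rtimes G$ is not finitely generated, I will contradict the chief-factor criterion for type $\FP_1$ given in Proposition~\ref{prop:augmentation}. Since $G$ is finitely generated and the extension is split, $\widetilde{G}$ is finitely generated if and only if $A$ is finitely generated as a profinite $\hat{\mathbb{Z}}\bra G \ket$-module; hence we may assume $A$ is not finitely generated as a $G$-module. By the profinite version of Nakayama's lemma this is equivalent to the semisimple quotient $A/\mathrm{Rad}_G(A)$ not being finitely generated.

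The next step is to extract chief-factor data from this failure. I would describe $A/\mathrm{Rad}_G(A)$ as a profinite direct product $\prod_M M^{n_M}$ indexed by isomorphism classes of simple $G$-modules, with multiplicities $n_M \in \{0,1,\dots,\infty\}$. Using the Wedderburn-type decomposition of $\hat{\mathbb{Z}}\bra G \ket/\mathrm{Rad}$ as a product of matrix algebras $\mathrm{Mat}_{r_G(M)}(\End_G(M))$, such a semisimple profinite module is finitely generated if and only if $\sup_M (n_M/r_G(M)) < \infty$. So under our assumption the ratios $n_M/r_G(M)$ are unbounded as $M$ varies.

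To promote each multiplicity $n_M$ to a lower bound on $\delta_{\widetilde{G}}(M)$, I would refine the normal series $A \supseteq \mathrm{Rad}_G(A)$ by inserting a chain $A = A_0 \supseteq A_1 \supseteq \dots \supseteq A_{n_M}$ of $G$-submodules with each successive quotient isomorphic to $M$. Each such quotient $A_i/A_{i+1}$ is a $\widetilde{G}$-chief factor, and it is non-Frattini: since $A/A_{i+1}$ is still semisimple, $A_i/A_{i+1}$ is a $G$-direct summand of it, say with complement $B_i/A_{i+1}$, and then $(B_i/A_{i+1}) \rtimes G$ complements $A_i/A_{i+1}$ inside $\widetilde{G}/A_{i+1}$. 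Hence $\delta_{\widetilde{G}}(M) \ge n_M$. Since $A$ acts trivially on any such $M$, we also have $\End_{\widetilde{G}}(M) = \End_G(M)$ and therefore $r_{\widetilde{G}}(M) = r_G(M)$. Combining these,
\[
	\sup_M \frac{\delta_{\widetilde{G}}(M) + h'_{\widetilde{G}}(M)}{r_{\widetilde{G}}(M)} \;\geq\; \sup_M \frac{n_M}{r_G(M)} \;=\; \infty,
\]
contradicting Proposition~\ref{prop:augmentation}.

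The main obstacle I anticipate is the precise passage from ``$A$ not finitely generated as a $G$-module'' to ``$\sup_M n_M/r_G(M) = \infty$'': this relies on the profinite version of Nakayama's lemma and on identifying the multiplicity of each simple module $M$ inside the Wedderburn decomposition of $\hat{\mathbb{Z}}\bra G \ket/\mathrm{Rad}$ with the invariant $r_G(M)$. The verification that the chief factors constructed above are non-Frattini is then a direct consequence of the splitting hypothesis, which is precisely why the argument in this form is tailored to split abelian extensions.
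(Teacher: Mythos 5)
Your proof is correct, but it follows a genuinely different route from the paper's. The paper argues directly: it uses the five-term exact sequence of Lyndon--Hochschild--Serre together with the splitting (which makes $H^2(G,M) \to H^2(\widetilde{G},M)$ split) to obtain $|H^1(\widetilde{G},M)| = |H^1(G,M)|\cdot|\Hom_G(A,M)|$, decomposes the head of $A$ as $\prod_M M^{m(M)}$ to read off $|\Hom_G(A,M)|=|\End_G(M)|^{m(M)}$, and then invokes the cohomological form of the $\FP_1$ criterion, $|H^1(\widetilde{G},M)|\le |M|^b$, to conclude $m(M)/r_G(M)\le b$ and hence that $A$ is finitely generated over $G$. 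You instead argue by contraposition and feed the same multiplicity data into the chief-factor form of the criterion (Proposition~\ref{prop:augmentation}): you construct $n_M$ non-Frattini $\widetilde{G}$-chief factors isomorphic to $M$ inside $A$, using the splitting to exhibit complements, and observe $r_{\widetilde{G}}(M)=r_G(M)$ since $A$ acts trivially. These are really two expressions of the same computation, linked by the Aschbacher--Guralnick formula (Proposition~\ref{AG}): the paper's identity $|H^1(\widetilde{G},M)| = |H^1(G,M)|\cdot q^{m(M)}$ is the cohomological avatar of the chief-factor count $\delta_{\widetilde{G}}(M) = \delta_G(M) + m(M)$, of which you establish the lower bound $\delta_{\widetilde{G}}(M)\ge n_M$ directly. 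The paper's argument is a bit more self-contained (it needs only the $|H^1|$-bound, not the structure theory of crowns), whereas yours highlights the structural content of the splitting hypothesis by turning it into explicit complemented chief factors. One small point of care in your version: when some $n_M$ is infinite the finite chain does not exist literally, but then $\delta_{\widetilde{G}}(M)$ (equivalently $H^1(\widetilde{G},M)$) is already infinite and $\FP_1$ fails immediately, so the argument still closes; it is worth saying this rather than leaving it implicit.
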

\begin{proof}
	For every $G$-module $M$ the sequence
	\[
	0 \longrightarrow H^1(G,M) \longrightarrow H^1(\widetilde{G},M) \longrightarrow \Hom_G(A,M) \longrightarrow 0
	\]
	is exact. This follows from looking at the 5-term exact sequence and the observation, that the map $H^2(G,M) \to H^2(\widetilde{G},M)$ admits a splitting.
	In particular
	\[
	|H^1(\widetilde{G},M)| = |H^1(G,M)|\cdot|\Hom_G(A,M)|.
	\]
	The head $A^h$ of $A$ is an infinite product of simple $G$-module
	\[
	A^h \cong \prod_{M \text{simple}} M^{m(M)}
	\]
	where each finite simple module $M$ occurs a certain number of times. For every finite simple $G$-module $M$ we have
	\[
	|H^1(\widetilde{G},M)| = |H^1(G,M)|\cdot|\mathrm{End}_G(M)|^{m(M)}.
	\]
	By assumption $\widetilde{G}$ has $\FP_1$ and finiteness of $H^1(\widetilde{G},M)$ implies that $m(M)$ is finite for every simple module $M$. If $M$ is a simple $\bbF_p\bra{G}\ket$ module, then $\End_G(M) = \bbF_{q_M}$ for a prime power $q_M = p^{f_M}$ (with $f_M \in \mathbb{N}$
	and $|M| = q^{k_M} = p^{f_Mk_M}$ with $k_M \in \mathbb{N}$.
	We observe that $M$ occurs exactly $k_M$ times in the head of $\mathbb{Z}\bra{G}\ket$. In particular, a module of the form $M^m$ is generated by no less than $\lfloor\frac{m}{k_M}\rfloor$ elements.
	
	We claim that $\frac{m(M)}{k_M}$ is bounded independently of $M$, so that $A$ is a finitely generated $G$-module and is a finitely normally generated subgroup of $\widetilde{G}$. In particular, $\widetilde{G}$ is finitely generated.
	
	Since $\widetilde{G}$ is of type $FP_1$, there is a constant $b$ such that for all primes $p$ and all simple modules $M$ of order $|M|=p^c$ we have
	\begin{align*}
		|M|^b \geq |H^1(\widetilde{G},M)|
		\geq |\End_G(M)|^{m(M)}	
	\end{align*}
	Let $M$ be a simple module and write $|M|=p^{k_{M}f_{M}}$ ,then
	\[
	p^{f_Mk_Mb} = |M|^b \geq |\End_G(M)|^{m(M)} = p^{f_{M}m(M)}
	\]
	and we deduce $bf_{M}k_{M} \geq f_{M}m(M)$ and hence
	$b \geq \frac{m(M)}{k_{M}}$.
\end{proof}
\begin{proof}[Proof of Theorem \ref{thm:fp1-pronilpotent}]
	Let $\Phi(\widetilde{G})$ be the Frattini subgroup of $\widetilde{G}$. Then $\widetilde{G}$ is finitely generated if and only if $\widetilde{G}/(P\cap\Phi(\widetilde{G}))$ is finitely generated. 
	As factor group of an $FP_1$ group $\widetilde{G}/(P\cap\Phi(\widetilde{G}))$ is still $FP_1$. In particular, we may assume that $\Phi(\widetilde{G}) \cap P = \{e\}$. In this case we also have $\Phi(P) \subseteq P \cap \Phi(\widetilde{G}) = \{e\}$, i.e., $P$ is a product of profinite abelian groups of prime exponent.
	
	We claim that $\widetilde{G}$ is a split extension by $P$. As a first step we find a minimal supplement.
	
	Let $\mathcal{S}$ be the set of closed supplements to $P$ in $\widetilde{G}$, i.e., the set of closed subgroups  $K \leq_c \widetilde{G}$ with $PK = \widetilde{G}$.
	Every descending chain $C$ in $\mathcal{S}$ satisfies $P\bigcap_{K \in C} K = \widetilde{G}$. Indeed, this follows from compactness: for all $g\in \widetilde{G}$, $K \in C$ the set $K_g = \{k \in K \mid g \in Pk \}$ is compact and hence $\bigcap_{K\in C} K_g \neq \emptyset$.
	By Zorn's Lemma there is a minimal supplement $K \leq_c \widetilde{G}$ to $P$. 
	
	We show that a minimal supplement is a complement; i.e. $K \cap P = \{e\}$. Suppose for a contradiction that $K \cap P$ is non-trivial. Since $\Phi(\widetilde{G})\cap P = \{e\}$, there is a maximal subgroup $H \leq \widetilde{G}$ which does not contain $K \cap P$. 
	Note that $K \cap P$ is normal in $\widetilde{G}$, since $K$ is a supplement and $P$ is abelian. Therefore $H(P\cap K) = \widetilde{G}$. Let $k \in K$. Then $k = h q$ with $h \in H$ and $q \in K\cap P$. Hence $h \in K \cap H$. We deduce that $K \subseteq (H\cap K)(K \cap P)$ and so $(H\cap K)P = KP = \widetilde{G}$. This means that $H\cap K$ is a supplement to $P$ and provides a contradiction to minimality of $K$. 
	
	As $\tilde{G}$ is a split extension by $P$, Proposition~\ref{prop:fp1-pronilpotent_split} concludes the proof.
\end{proof}
As an application we deduce that the properties of interest to us are all equivalent for pronilpotent groups.
\begin{thm}\label{thm:pronilpotent}
	Let $P$ be pronilpotent group. The following are equivalent:
	\begin{enumerate}[(i)]
		\item\label{it:p-fg} $P$ is finitely generated,
		\item\label{it:p-uberg} $P$ has UBERG,
		\item\label{it:p-pfp1} $P$ is of type $\PFP_1$.
		\item\label{it:p-fp1} $P$ is of type $\FP_1$,
	\end{enumerate}
\end{thm}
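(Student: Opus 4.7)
The plan is to prove $(i) \Leftrightarrow (iv)$ directly and then route $(ii)$ and $(iii)$ through $(i)$. First, $(i) \Rightarrow (iv)$ is immediate: if $P = \langle g_1,\ldots,g_d\rangle$ topologically, then $g_1-1,\ldots,g_d-1$ generate the augmentation ideal over $\hat{\mathbb{Z}}\bra P\ket$, which gives a type $\FP_1$ resolution. The converse $(iv) \Rightarrow (i)$ is Theorem~\ref{thm:fp1-pronilpotent} applied with $\widetilde{G} = P$ (so that the quotient $\widetilde{G}/P$ is trivial and thus trivially finitely generated).

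For $(i) \Rightarrow (ii)$ and $(i) \Rightarrow (iii)$ I would show that a finitely generated pronilpotent group is in fact PFG. If $d(P) = d$, then $P/\Phi(P) \cong \prod_p \bbF_p^{d_p}$ with $d_p \leq d$, so the number of open maximal subgroups of index $p$ is at most $(p^d - 1)/(p-1) \leq p^{d-1}$. Summing over primes $p \leq n$ shows $P$ has polynomial maximal subgroup growth, and hence is PFG by Mann--Shalev. Then $P$ is UBERG by Corollary~\ref{projUBERG} and APFG by \cite[Theorem 4.4]{Damian}, and APFG implies type $\PFP_1$ via \cite[Lemma 5.14]{CCV}. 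The implication $(iii) \Rightarrow (iv)$ is immediate from the definitions, since PFG modules are in particular finitely generated.

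The substantive step is $(ii) \Rightarrow (i)$. Since UBERG descends to quotients, the abelianization $A := P^{ab}$ is abelian pronilpotent and has UBERG. Writing $A = \prod_p A_p$ with $A_p$ abelian pro-$p$ and $d_p := d(A_p)$, pronilpotency of $P$ yields
\[
 d(P) = \sup_p d(P_p) = \sup_p \dim_{\bbF_p}\!\bigl(A_p/pA_p\bigr) = \sup_p d_p,
\]
so it suffices to bound $d_p$ uniformly in $p$. For any prime power $q$ with $p \mid q-1$, the unique subgroup of $\bbF_q^\times$ of order $p$ produces exactly $p^{d_p}$ characters $A \to \bbF_q^\times$ of order dividing $p$ (the factors $A_\ell$ with $\ell\neq p$ contribute trivially). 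Hence $r(A,\bbF_q,1) = |\Hom(A,\bbF_q^\times)| \geq p^{d_p}$, and the UBERG hypothesis provides an absolute constant $c$ with $p^{d_p} \leq q^c$.

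The remaining obstacle---and the part I expect to be the most delicate---is to ensure that one can take such a $q$ of size at most polynomial in $p$. Here I would invoke Linnik's theorem on the least prime in an arithmetic progression, which provides an absolute constant $L$ such that, for every prime $p$, some prime $q \equiv 1 \pmod p$ satisfies $q \leq p^L$. Substituting closes the argument with $d_p \leq cL$ uniformly, so $P$ is finitely generated and hence (by the already-established implication $(i) \Rightarrow (iv)$) is type $\FP_1$. The reliance on Linnik (or an equivalent effective form of Dirichlet's theorem) seems essentially unavoidable: starting solely from the UBERG condition on finite-field representations, uniform control over small primes in the residue class $1 \pmod p$ is precisely what is needed to convert the pointwise inequality $p^{d_p} \leq q^c$ into a $p$-independent bound on $d_p$.
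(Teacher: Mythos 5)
Your proposal is correct and takes essentially the same approach as the paper's: both reduce (iv)$\Rightarrow$(i) to Theorem~\ref{thm:fp1-pronilpotent}, and both prove (ii)$\Rightarrow$(i) by counting one-dimensional representations over $\bbF_q$ with $q\equiv 1\pmod p$, using Linnik's theorem to keep $q$ polynomial in $p$. The only differences are minor routing choices (you argue (ii)$\Rightarrow$(i) directly via the abelianisation rather than by contraposition via the Frattini quotient, and you prove that finitely generated pronilpotent groups are PFG instead of citing \cite[Corollary 6.12]{KV}).
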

\begin{proof}
	The implications
	``\eqref{it:p-pfp1} $\Rightarrow$ \eqref{it:p-fp1}'' and  ``\eqref{it:p-fg} $\Rightarrow$ \eqref{it:p-fp1}'' are clear. If $P$ has $\FP_1$, then Theorem \ref{thm:fp1-pronilpotent} implies that $P$ is finitely generated (since the trivial group is finitely generated).
	If $P$ is finitely generated and has UBERG, then it has type $\PFP_1$ (see \cite{CCV}). A finitely generated pronilpotent group $P$ has UBERG by \cite[Corollary 6.12]{KV}.
	
	It remains to show that that \eqref{it:p-uberg} implies \eqref{it:p-fg}.
	We argue by contraposition and assume that $P$ is cannot be finitely generated.
	Consider the Frattini quotient $P/\Phi(P)$ of $P$: this is a product of the form $\prod_p (\mathbb{Z}/p\mathbb{Z})^{d_p(G)}$, where $d_p(G)$ is the minimal number of generators of the Sylow $p$-subgroup of $G$. As a function of the primes $p$, $d_p(G)$ is unbounded. We will show this quotient does not have UBERG. We will show the number of irreducible representations order at most $k$ grows faster than polynomially in $k$.
	
	For each $p$, write $M_p$ for a non-trivial irreducible $\mathbb{Z}/p\mathbb{Z}$-module of minimal order. The number of quotients of $G$ isomorphic to $\mathbb{Z}/p\mathbb{Z}$ is $$(p^{d_p(G)}-1)/(p-1) \geq p^{d_p(G)-1},$$ and the restriction to $G$ of the action of each of these copies of $\mathbb{Z}/p\mathbb{Z}$ on $M_p$ is different because the kernel of the action is different.
	
	By Linnick's theorem \cite{Linnick}, there is a constant $c$ such that, for all primes $p$, there is a prime $q \equiv 1$ mod $p$ with $q \leq p^c$. Standard arguments of representation theory show that $\mathbb{Z}/p\mathbb{Z}$ has non-trivial modules in dimension $1$ over $\mathbb{F}_q$, giving at least $p^{d_p(G)-1}$ non-isomorphic irreducible $G$-modules of order $\leq p^c$. But $p^{d_p(G)-1}$ grows faster than polynomially in $p^c$ for any $c$, because $d_p(G)$ is unbounded.
\end{proof}

\begin{prop}
	Let $G$ be a finitely generated profinite group without UBERG. Then there is a finitely generated profinite group $\widetilde{G}$ which is abelian-by-$G$ that is not $PFP_1$.
\end{prop}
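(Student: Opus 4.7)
The plan is to set $A = \hat{\Z}\bra G \ket$, viewed as a profinite abelian group equipped with the continuous left-multiplication action of $G$, and to take $\widetilde{G} = A \rtimes G$. By construction $\widetilde{G}$ is abelian-by-$G$. Since $A$ is topologically cyclic as a $\hat{\Z}\bra G\ket$-module (generated by $1$), the corresponding element of $\widetilde{G}$ normally generates $A$: its $G$-conjugates are the translates $g \cdot 1 = g$ for $g \in G$, whose closed additive span is all of $A$. Combined with a finite topological generating set of $G$, this gives finite generation of $\widetilde{G}$.

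The main step is to compute $H^1(\widetilde{G}, M)$ when $M$ is a simple $G$-module, inflated to a simple $\widetilde{G}$-module along the quotient $\widetilde{G} \to G$. Exactly as in the proof of Proposition~\ref{prop:fp1-pronilpotent_split}, the five-term exact sequence of the Hochschild--Serre spectral sequence, combined with the splitting of $A \to \widetilde{G} \to G$ (which forces the transgression $\Hom_G(A,M) \to H^2(G,M)$ to vanish because $H^2(G,M) \to H^2(\widetilde{G},M)$ is split injective), yields the short exact sequence
\[ 0 \longrightarrow H^1(G, M) \longrightarrow H^1(\widetilde{G}, M) \longrightarrow \Hom_G(A, M) \longrightarrow 0. \]
Since $A = \hat{\Z}\bra G \ket$ is free of rank one as a $\hat{\Z}\bra G \ket$-module, evaluation at $1$ identifies $\Hom_G(A, M)$ with $M$. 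In particular $H^1(\widetilde{G}, M) \neq 0$ for every non-trivial simple $G$-module $M$.

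To conclude, I would invoke Remark~\ref{PFP1rem}(iii), which (without any $\FP_1$ hypothesis) states that $\PFP_1$ implies condition (ii) of Theorem~\ref{thm:PFP1}, namely a uniform polynomial bound on the number of simple modules of a given order with non-trivial $H^1$. However, the failure of UBERG for $G$ means that for every $c>0$ there is some finite field $\bbF_q$ and some $n$ with $r(G,\bbF_q,n) > q^{cn}$, i.e.\ more than $m^c$ pairwise non-isomorphic simple $G$-modules of order $m=q^n$. Their inflations are pairwise non-isomorphic simple $\widetilde{G}$-modules of the same order, each with non-trivial $H^1(\widetilde{G},\cdot)$, contradicting condition (ii). The only delicate point is the vanishing of the transgression, which is standard for split extensions but worth making explicit; everything else reduces to the free-module identification $\Hom_G(\hat{\Z}\bra G\ket, M) \cong M$ and a direct application of Theorem~\ref{thm:PFP1}.
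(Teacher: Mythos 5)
Your proof is correct and follows the same overall strategy as the paper: form a split abelian extension $\widetilde{G}=A\rtimes G$ with $A$ a $1$-generated profinite $G$-module having every simple $G$-module as a factor, use the five-term exact sequence with the vanishing transgression to extract $\Hom_G(A,M)$ as a subquotient of $H^1(\widetilde{G},M)$, and then let the failure of UBERG supply too many simple modules of a given order with non-trivial $H^1$. The only substantive difference is the choice of $A$: the paper takes $A=\prod_M M$, the product over all isomorphism classes of finite simple $G$-modules (so that $\Hom_G(A,M)\cong\End_G(M)$, of order $\geq p$), while you take the regular module $A=\hat{\Z}\bra G\ket$ (so that $\Hom_G(A,M)\cong M$). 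Both are topologically $1$-generated over $\hat{\Z}\bra G\ket$, and both make $\widetilde{G}$ finitely generated and force $H^1(\widetilde{G},M)\neq 0$ for every simple $M$, so the bookkeeping is the same either way. You also conclude by citing condition (ii) of Theorem~\ref{thm:PFP1} via Remark~\ref{PFP1rem}(iii), whereas the paper sums the quantities $|H^1(\widetilde{G},M)|-1$ and appeals to the sum criterion of \cite{CCV}; these are interchangeable. One small point to be precise about: when unwinding the failure of UBERG, you should work over prime fields, since $r(G,\bbF_q,n)$ for $q$ a proper prime power counts $\bbF_q$-representations rather than isomorphism classes of abstract simple $G$-modules; the characterisation $r(G,\F_p,n)\leq p^{cn}$ from \cite[Theorem~A]{KV} quoted in the paper handles this without further work.
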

\begin{proof}
	Let $A$ be  the product over all finite simple $G$-modules. Note that $A$ a $1$-generated profinite $G$-module.
	Let $\widetilde{G} = A \rtimes G$. Then $A$ is a finitely normally generated subgroup and with finitely generated factor $G$, so that $\widetilde{G}$ is finitely generated.
	
	We claim that $\widetilde{G}$ is not $PFP_1$.
	Let $M$ be a simple $G$-module, then (as above) we have the exact sequence
	\[
	0 \longrightarrow H^1(G,M) \longrightarrow H^1(\widetilde{G},M) \longrightarrow \Hom_G(A,M) \longrightarrow 0.
	\]
	Since $M$ is a factor of $A$ we have $|H^1(\widetilde{G},M)| \geq |\End_G(M)|$.
	Summing over all modules of order $p^c$ we obtain
	\[
	\sum_{|M|=p^c} |H^1(\widetilde{G},M)|-1 \geq (p-1) r_c(G,\bbF_p).
	\]
	Since the last term is not polynomially bounded in $p^c$, we deduce that $\widetilde{G}$ is not of type $PFP_1$ (see \cite{CCV}).
\end{proof}

\section{Universal Frattini covers and examples}\label{sec:examples}

\begin{thm}
	Suppose $\pi: H \to G$ is a Frattini cover of $G$ (that is, an epimorphim with $\ker\pi \leq \Phi(H)$). Then $H$ has type $\FP_1$ if and only if $G$ does.
\end{thm}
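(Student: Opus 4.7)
My approach is to apply the characterisation of type $\FP_1$ from Proposition~\ref{prop:augmentation}, which identifies $\FP_1$ with the finiteness of $\sup_M (\delta(M) + h'(M))/r(M)$, the supremum taken over simple modules $M$. The plan is to show, using the Frattini hypothesis $K := \ker\pi \leq \Phi(H)$, that this supremum is finite for $H$ exactly when it is finite for $G$.

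First I would establish a bijection between non-Frattini chief factors of $H$ and of $G$. Refining any chief series of $H$ so that it passes through $K$, the factors contained in $K$ are automatically Frattini (since $K \leq \Phi(H)$), while the factors above $K$ correspond under $\pi$ to chief factors of $G$. Moreover, $\Phi(H) = \pi^{-1}(\Phi(G))$, because every maximal open subgroup of $H$ contains $K$ and hence these are in bijection with the maximal open subgroups of $G$; this ensures that Frattini-ness is preserved under the correspondence. In particular every non-Frattini chief factor of $H$ has trivial $K$-action and is $H$-isomorphic to the corresponding non-Frattini chief factor of $G$.

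Now I would split simple $H$-modules into two cases. If $K$ acts trivially on a simple $H$-module $M$, then $M$ is naturally a simple $G$-module, $\End_{\hat{\Z}\bra H\ket}(M) = \End_{\hat{\Z}\bra G\ket}(M)$, and $C_H(M) \supseteq K$ with $H/C_H(M) \cong G/C_G(M)$, so the invariants match: $\delta_H(M) = \delta_G(M)$, $r_H(M) = r_G(M)$ and $h'_H(M) = h'_G(M)$. If instead $K$ acts non-trivially on $M$, then no non-Frattini chief factor of $H$ can be $H$-isomorphic to $M$ (since all such factors have trivial $K$-action), so $\delta_H(M) = 0$; combined with the universal bound $h'_H(M)/r_H(M) < 1$ from \cite[Theorem A]{AG}, these modules contribute strictly less than $1$ to the supremum.

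Putting the two cases together, the supremum taken over simple $H$-modules equals the maximum of the supremum taken over simple $G$-modules and a quantity bounded above by $1$. These two suprema are therefore simultaneously finite, and the characterisation in Proposition~\ref{prop:augmentation} yields $H$ of type $\FP_1$ if and only if $G$ is. The only real bookkeeping is the chief-factor bijection; the contribution of simple modules on which $K$ acts non-trivially is controlled for free by the Aschbacher--Guralnick bound and so poses no obstacle.
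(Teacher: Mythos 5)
Your proposal is correct and takes essentially the same approach as the paper's proof: both invoke Proposition~\ref{prop:augmentation}, observe that the non-Frattini chief factors of $H$ lie above $K$ and correspond to those of $G$ (so that $\delta_H(M)$ either equals $\delta_G(M)$ or vanishes), and control the remaining contribution via the Aschbacher--Guralnick bound $h'/r<1$. The paper states this much more tersely, leaving the chief-factor correspondence and the $h'$ bookkeeping implicit, while you spell them out; there is no substantive difference in method.
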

\begin{proof}
	Clearly if $H$ has type $\FP_1$, $G$ does. For the converse, we use Proposition~\ref{prop:augmentation}.
	
	Any non-Frattini chief factor of $H$ is a non-Frattini chief factor of $G$. So if $M$ is an irreducible $H$-module, either $\delta_H(M) = 0$ or $M$ is a $G$-module, in which case there is some $d$ such that $\delta_H(M)/r_H(M) = \delta_G(M)/r_G(M) \leq d$.
\end{proof}

\begin{cor}
	Suppose $G$ has type $\FP_1$. The universal Frattini cover $\tilde{G}$ of $G$ has type $\FP$.
\end{cor}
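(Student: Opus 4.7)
The plan is to combine the previous theorem (Frattini covers preserve type $\FP_1$) with the defining feature of the universal Frattini cover, namely that $\tilde{G}$ is a projective profinite group. First, since $\tilde{G} \twoheadrightarrow G$ is a Frattini cover and $G$ has type $\FP_1$, the previous theorem yields that $\tilde{G}$ has type $\FP_1$ as well. Choose a projective cover $f \colon P_0 \to \hat{\Z}$ in the category of profinite $\hat{\Z}\bra \tilde{G} \ket$-modules; since $\hat{\Z}$ is cyclic, $P_0$ is finitely generated, and type $\FP_1$ forces $\Omega := \ker(f)$ to be finitely generated as well (this is the content of Proposition~\ref{prop:augmentation}).

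The second ingredient is that the universal Frattini cover $\tilde{G}$ is projective as a profinite group, so $\operatorname{cd}(\tilde{G}) \le 1$; equivalently $H^2(\tilde{G}, A) = 0$ for every (finite, hence every profinite) $\tilde{G}$-module $A$. Dimension-shifting along the short exact sequence $0 \to \Omega \to P_0 \to \hat{\Z} \to 0$ gives
\[
	\operatorname{Ext}^1_{\hat{\Z}\bra \tilde{G} \ket}(\Omega, A) \;\cong\; \operatorname{Ext}^2_{\hat{\Z}\bra \tilde{G} \ket}(\hat{\Z}, A) \;=\; H^2(\tilde{G}, A) \;=\; 0
\]
for all finite $\tilde{G}$-modules $A$. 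Since vanishing of $\operatorname{Ext}^1$ against all finite modules detects projectivity for finitely generated profinite modules over $\hat{\Z}\bra \tilde{G} \ket$, we conclude that $\Omega$ is projective.

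Thus $0 \to \Omega \to P_0 \to \hat{\Z} \to 0$ is a projective resolution of $\hat{\Z}$ of length $1$ by finitely generated projective $\hat{\Z}\bra \tilde{G} \ket$-modules, and $\tilde{G}$ has type $\FP$ (equivalently type $\FP_\infty$, with all $P_n = 0$ for $n \ge 2$). The only subtle point is the passage from $\operatorname{Ext}^1 = 0$ to projectivity in the profinite module category; this is a standard fact in the setting of pseudocompact modules over $\hat{\Z}\bra \tilde{G} \ket$, and is the step most deserving of a citation.
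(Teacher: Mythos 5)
Your argument is correct and takes essentially the same route as the paper: invoke the preceding theorem to get that $\tilde{G}$ has type $\FP_1$, note that $\tilde{G}$ is projective so $\operatorname{cd}(\tilde{G}) \leq 1$, and conclude type $\FP$. The paper states this in one line without unpacking the dimension-shift and the $\operatorname{Ext}^1$-vanishing criterion for projectivity; your version supplies those details, which are exactly the standard facts the paper is implicitly appealing to.
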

\begin{proof}
	$\tilde{G}$ has type $\FP_1$, and since it is projective, it has cohomological dimension $1$.
\end{proof}

This is interesting because we have profinite groups of type $\FP_1$ which are not finitely generated. Proposition \ref{prop:augmentation} shows that any infinite product of non-abelian finite simple groups has type $\FP_1$; \cite[Example 2.6]{Damian} gives $A = \prod A_5$, a countably infinite product of copies of $A_5$, as an example which is not finitely generated -- but countability is not needed: we can take products over indexing sets of arbitrary cardinality and the result still holds. So the universal Frattini cover $\tilde{A}$ of $A$ has type $\FP$ but is not finitely generated.

Analogously to these results for type $\FP_1$, we may prove results for type $\PFP_1$.

\begin{thm}
	Suppose $\pi: H \to G$ is a Frattini cover of $G$ (that is, an epimorphim with $\ker\pi \leq \Phi(H)$). Then $H$ has type $\PFP_1$ if and only if $G$ does.
\end{thm}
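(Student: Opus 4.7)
The preceding theorem shows type $\FP_1$ is preserved under Frattini covers, and since $\PFP_1$ implies $\FP_1$, we may assume both $H$ and $G$ have type $\FP_1$ from the outset. My plan is to use characterization~(ii) of Theorem~\ref{thm:PFP1}: a group of type $\FP_1$ has type $\PFP_1$ if and only if there is a constant $c$ such that for every $n$, the number of simple modules $M$ of order $n$ with non-vanishing first cohomology is at most $n^c$. It thus suffices to show that pullback along $\pi$ induces an order-preserving bijection between the set of simple $H$-modules $M$ with $H^1(H,M)\neq 0$ and the set of simple $G$-modules $M$ with $H^1(G,M)\neq 0$; the polynomial bound then transfers in both directions.

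The key claim is that every simple $H$-module $M$ with $H^1(H,M)\neq 0$ has $K:=\ker\pi$ in its kernel, so that $M$ descends to a simple $G$-module. By Proposition~\ref{AG}, $|H^1(H,M)|=|\End_H(M)|^{\delta_H(M)}\cdot|H^1(H/C_H(M),M)|$, so at least one of the two factors on the right is non-trivial. In the first case, $H^1(H/C_H(M),M)\neq 0$ and Proposition~\ref{GKKL} says $L:=H/C_H(M)$ is monolithic with non-abelian minimal normal subgroup $N$; but then $\Phi(L)\cap N$ is a normal subgroup of $L$ properly contained in $N$ (non-abelian chief factors are non-Frattini), so $\Phi(L)\cap N=1$, and $N$ being the unique minimal normal subgroup forces $\Phi(L)=1$ itself. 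Since the image of the Frattini subgroup under a surjection lies in the Frattini subgroup of the target, $\pi(\Phi(H))\leq\Phi(L)=1$ yields $K\leq\Phi(H)\leq C_H(M)$. In the second case, $\delta_H(M)\geq 1$, so $M$ is $H$-isomorphic to some non-Frattini chief factor $X/Y$ of $H$, realised by a maximal subgroup $M'\leq H$ with $Y\leq M'$ and $M'X=H$; since $K\leq\Phi(H)\leq M'$ and $X\cap M'=Y$, we deduce $[K,X]\leq K\cap X\leq M'\cap X=Y$, so $K$ acts trivially on $X/Y\cong M$.

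For the remaining comparison, let $M$ be a simple $G$-module. Then $\End_H(M)=\End_G(M)$ and $H/C_H(M)\cong G/C_G(M)$, so applying Proposition~\ref{AG} on both sides reduces $H^1(H,M)\neq 0\Leftrightarrow H^1(G,M)\neq 0$ to the identity $\delta_H(M)=\delta_G(M)$. This holds because non-Frattini chief factors of $H$ and of $G$ are in $G$-isomorphism-preserving bijection: a non-Frattini chief factor $X/Y$ of $H$ with $K\not\leq Y$ satisfies $K\cap X\leq Y$ by the computation of the previous paragraph, and pushes forward to a $G$-isomorphic non-Frattini chief factor $XK/YK$ of $G$, while conversely non-Frattini chief factors of $G$ pull back along $\pi$ to non-Frattini chief factors of $H$. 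The main technical point is the case split in the key claim; once that is in place, the remainder is bookkeeping with Proposition~\ref{AG}.
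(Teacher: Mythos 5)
Your proof is correct, but takes a genuinely different route from the paper's. The paper transfers conditions (A), (B), (C) of Theorem~\ref{thm:PFP1} from $G$ to $H$: it uses the pronilpotence of $\ker\pi$ to see that the non-abelian chief factors of $H$ and $G$ (and hence the quantities $\rk_L$) agree, and the inclusion $\ker\pi\leq\Phi(H)$ to match the non-Frattini abelian chief factors. You instead verify condition (ii) directly, showing that inflation along $\pi$ is an order-preserving bijection between simple $G$-modules with non-vanishing first cohomology and simple $H$-modules with non-vanishing first cohomology. Your key claim --- that $\ker\pi$ acts trivially on any simple $H$-module $M$ with $H^1(H,M)\neq 0$ --- is proved via a case split through Proposition~\ref{AG}, and repackages the same two facts that underlie the paper's argument (monolithic groups with non-abelian socle, detected by Proposition~\ref{GKKL}, are Frattini-free; and non-Frattini abelian chief factors are complemented above $\Phi(H)$), but in a module-theoretic form that never actually invokes the pronilpotence of the kernel, only $\ker\pi\leq\Phi(H)$. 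One small notational slip: in Case 1 the relevant surjection is the quotient map $H\to H/C_H(M)$ rather than $\pi\colon H\to G$, so the displayed inclusion should read $q(\Phi(H))\leq\Phi(L)=1$ for $q$ that quotient map; the surrounding logic is unaffected.
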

\begin{proof}
	Clearly if $H$ has type $\PFP_1$, $G$ does. For the converse, we use \cite[Lemma 5.2]{GKKL}.
	
	Recall that the kernel of a Frattini cover is pronilpotent by \cite[Corollary 2.8.4]{RZ}. So for any group $L$ associated with a characteristically simple non-abelian group $A$, $\rk_L(H) = \rk_L(G)$, and for any $H$-module $M$, the non-Frattini chief factors of $H$ $H$-isomorphic to $M$ are precisely the non-Frattini chief factors of $G$ $G$-isomorphic to $M$. By Theorem \ref{thm:PFP1}, if $G$ has type $\PFP_1$, $H$ does too.
\end{proof}

\begin{cor}
	Suppose $G$ has type $\PFP_1$. The universal Frattini cover $\tilde{G}$ of $G$ has type $\PFP$.
\end{cor}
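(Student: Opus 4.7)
The plan is to imitate the proof of the analogous corollary for type $\FP$ stated just above. First, I would invoke the preceding theorem applied to the canonical Frattini cover $\tilde{G}\to G$: since $G$ has type $\PFP_1$, so does $\tilde{G}$. Thus there is a partial projective resolution
\[
	P_1 \xrightarrow{\partial} P_0 \to \hat{\Z} \to 0
\]
of $\hat{\Z}$ over $\hat{\Z}\bra{\tilde{G}}\ket$ in which $P_0$ and $P_1$ are PFG.

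Next I would exploit the defining property of the universal Frattini cover, namely that $\tilde{G}$ is a projective profinite group and therefore has cohomological dimension at most one over $\hat{\Z}$. This immediately forces the kernel $K:=\ker(P_0\to \hat{\Z})$ to be a projective $\hat{\Z}\bra{\tilde{G}}\ket$-module. By exactness at $P_0$, we have $K=\mathrm{im}(\partial)$, so $K$ is a continuous image of the PFG module $P_1$ and hence itself PFG. Replacing $P_1$ by $K$ and extending by zero in higher degrees produces a projective resolution of $\hat{\Z}$ all of whose terms are PFG; this is by definition the assertion that $\tilde{G}$ has type $\PFP$.

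There is really no substantive obstacle here: the argument hinges only on the fact that, over a ring of cohomological dimension one, the first syzygy in any resolution is automatically projective and inherits generation properties from the given $\PFP_1$-resolution, exactly as in the $\FP$-case. The only point worth double-checking is that the convention for type $\PFP$ (a projective resolution \emph{all} of whose terms are PFG) matches the construction above, which follows directly from the definition recalled in Section~\ref{sec:PFG_PFR_PFP}.
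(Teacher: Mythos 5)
Your argument is correct and is essentially the paper's proof, which condenses to the single line ``$\tilde{G}$ has type $\PFP_1$, and since it is projective, it has cohomological dimension $1$'' -- you have simply unpacked the syzygy argument that these two facts imply. The only stylistic slip is saying ``a ring of cohomological dimension one''; it is the group $\tilde{G}$ (equivalently, the trivial module $\hat{\Z}$) that has cohomological/projective dimension $\leq 1$, not the group ring $\hat{\Z}\bra{\tilde{G}}\ket$, but the substance of your argument is unaffected.
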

\begin{proof}
	$\tilde{G}$ has type $\PFP_1$, and since it is projective, it has cohomological dimension $1$.
\end{proof}

We finish by using these results to construct groups $G$ of type $\PFP_1$ that do not have UBERG: then the universal Frattini cover $\tilde{G}$ of $G$ has type $\PFP$ but does not have UBERG.
We first give an example with $G$ not finitely generated.

\subsection{Products of special linear groups}
For every prime number $p$ let $m(p)$ be a non-negative integer. We consider the profinite group
\[
	G = \prod_{p} \SL_2(\bbF_p)^{m(p)}.
\]

\begin{thm}\label{thm:product-of-sl2s}
The group $G = \prod_{p} \SL_2(\bbF_p)^{m(p)}$ has the following properties:
\begin{enumerate}[(i)]
\item\label{it:sl2s-finitely-generated} $G$ is finitely generated if and only if $m(p)$ grows at most polynomially in~$p$.
\item\label{it:sl2s-pfp1} $G$ is $\PFP_1$ if and only if $m(p)$ grows at most exponentially in $p$.
\item\label{it:sl2s-pfp2} If $G$ is $\PFP_2$ then $G$ is finitely generated.  
\item\label{it:sl2s-uberg} $G$ has UBERG if and only if $G$ is finitely generated.
\end{enumerate}
\end{thm}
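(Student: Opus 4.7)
For (i), note that $\SL_2(\bbF_p)$ is a universal central extension of $\PSL_2(\bbF_p)$ for $p \geq 5$, so $\Phi(G) = Z(G) = \prod_p \{\pm 1\}^{m(p)}$ and $G/\Phi(G) \cong \prod_p \PSL_2(\bbF_p)^{m(p)}$. Since the $\PSL_2(\bbF_p)$ are pairwise non-isomorphic simple groups, $G$ is $d$-generated iff every block $\PSL_2(\bbF_p)^{m(p)}$ is, which by the standard count (as in \cite[Theorem 1.3]{MQRD2013}) happens iff $m(p) \lesssim |\PSL_2(\bbF_p)|^{d-1} \sim p^{3(d-1)}$. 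Hence $G$ is finitely generated iff $m(p)$ grows polynomially in $p$.

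Part (ii) is a direct application of Corollary~\ref{cor:PFP1-products} with $Q_p = \SL_2(\bbF_p)$ and $a(p) = m(p)$: the problem reduces to showing that $l^{H^1}(\PSL_2(\bbF_p))$ has exponential growth in $p$. For the lower bound, the Cline--Parshall--Scott calculations give $H^1(\SL_2(\bbF_p), L(r)) = 0$ for $0 \leq r < p$, $r \neq p-2$, and since $p-2$ is odd the module $L(p-2)$ does not descend to $\PSL_2(\bbF_p)$; thus every simple $\PSL_2(\bbF_p)$-module with $H^1 \neq 0$ originates in cross-characteristic, where any faithful irreducible has size at least $2^{(p-1)/2}$ by Landazuri--Seitz. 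The matching upper bound is obtained by exhibiting, in characteristic $\ell \mid p \pm 1$, a small cuspidal module with non-trivial $H^1$.

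For (iv), the forward direction uses Theorem~\ref{introthm:UBERG_FP_1_FG} once we observe that $G$ has type $\FP_1$ unconditionally: every non-Frattini chief factor of $G$ is a copy of $\PSL_2(\bbF_p)$, hence non-abelian, so $\delta_G(M) = 0$ for every simple $\hat{\Z}\bra{G}\ket$-module $M$, and Proposition~\ref{prop:augmentation} combined with $h'_G(M)/r_G(M) < 1$ from \cite[Theorem A]{AG} gives $\FP_1$. For the converse, assume $m(p) \leq p^k$ and apply Lemma~\ref{lem:uberg-criterion}. In defining characteristic $\SL_2(\bbF_p)$ has at most $p$ simples, giving $S^\ast_{p^j}(\SL_2(\bbF_p))(q^{-c}) \leq 1 + C q^{-c}$; in cross-characteristic every non-trivial simple has dimension at least $(p-1)/2$, giving $S^\ast_q(\SL_2(\bbF_p))(q^{-c}) \leq 1 + Cp\, q^{-c(p-3)/2}$. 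The cross-characteristic factors in the product over $p$ contribute $\exp\bigl(\sum_p p^{k+1} q^{-c(p-3)/2}\bigr)$, uniformly bounded in $q$ for $c$ large, while the single defining-characteristic factor is bounded by $\exp(C q^{k-c})$ since $q \geq p$.

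Finally, for (iii), $\PFP_2$ forces $P_2$ to be PFG, equivalently (by \cite[Theorem 4.10]{CCV}) that the number of simple $G$-modules $V$ of order $\leq n$ with $H^2(G,V) \neq 0$ is polynomially bounded in $n$. For any simple $\SL_2(\bbF_p)$-module $V_p$ and any projection $\pi_{p,i}\colon G \to \SL_2(\bbF_p)$ ($i=1,\dots,m(p)$), the inflation $H^2(\SL_2(\bbF_p), V_p) \hookrightarrow H^2(G, \pi_{p,i}^\ast V_p)$ is injective: the kernel of $\pi_{p,i}$ is a product of perfect groups with trivial abelianisation, so $H^1(\ker \pi_{p,i}, V_p) = 0$ for the trivial $\ker \pi_{p,i}$-action, and the five-term exact sequence yields injectivity. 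Taking $V_p$ to be a faithful simple $\PSL_2(\bbF_p)$-module of polynomial size in $p$ with $H^2(\SL_2(\bbF_p), V_p) \neq 0$ (the adjoint $L(2) \cong \mathfrak{sl}_2$ of size $p^3$ is the natural candidate) produces $m(p)$ distinct simple $G$-modules of polynomial size with non-vanishing $H^2$, so $\PFP_2$ forces $m(p) \lesssim p^c$ and (i) concludes. The main obstacle is the cohomological non-vanishing $H^2(\SL_2(\bbF_p), L(2)) \neq 0$ for $p \geq 5$, a modular computation for $\SL_2$; if it fails for some $p$ one substitutes a slightly larger but still polynomial-size faithful module.
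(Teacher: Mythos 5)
Your plan follows the paper's overall strategy for (i)--(iii) (compute $d$ of the product; bound $l^{H^1}(\PSL_2(\bbF_p))$ and invoke Corollary~\ref{cor:PFP1-products}; use the adjoint module to rule out $\PFP_2$ for fast-growing $m(p)$), but there are a few points that need correcting or filling in, and in (iv) you take a genuinely different route.

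In (ii), the defining-characteristic claim is mis-stated. For $\SL_2(\bbF_p)$, $p \geq 5$, the unique non-trivial simple module in characteristic $p$ with non-vanishing $H^1$ is $L(p-3)$, not $L(p-2)$: for $p=5$ the fully deleted permutation module for $A_5 \cong \PSL_2(\bbF_5)$ has dimension $3 = p-2$, and one checks directly from the projective cover of the trivial module (which has Loewy structure $L(0)\,|\,L(p-3)\,|\,L(0)$ of dimension $p$) that $H^1(\PSL_2(\bbF_p),L(p-3)) \neq 0$ while $L(p-2)$ lives in the non-principal block. Since $p-3$ is even, $L(p-3)$ \emph{does} descend to $\PSL_2(\bbF_p)$, so your assertion that every simple $\PSL_2(\bbF_p)$-module with nonzero $H^1$ lives in cross-characteristic is false. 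Your lower bound $2^{(p-1)/2}$ is nonetheless correct, because the defining-characteristic module has size $p^{p-2} \geq 2^{(p-1)/2}$; the argument should simply take the minimum of the two bounds (as the paper's Proposition~\ref{prop:sl2-no-small-reps} does). For the upper bound, ``exhibit a small cuspidal module with nontrivial $H^1$'' is not a proof, and the module the paper produces (Lemma~\ref{lem:existence-small-module-sl2}) is not cuspidal: it is a composition factor of $\mathrm{Ind}_B^{\SL_2(\bbF_p)}\bbF_2$ with $B$ the Borel, so it is a principal-series constituent; Shapiro's lemma forces $H^1 \neq 0$ for some such factor, and its size is $\leq 2^p$.

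In (iii), the idea (inflate the adjoint module from each coordinate and count) is exactly the paper's, and your five-term-sequence argument for injectivity of inflation is fine. The step you flag as uncertain, $H^2(\SL_2(\bbF_p), \mathfrak{sl}_2) \neq 0$ for $p \geq 5$, is not a murky modular computation: the extension $1 \to \mathfrak{sl}_2(\bbF_p) \to \SL_2(\mathbb{Z}/p^2\mathbb{Z}) \to \SL_2(\bbF_p) \to 1$ is non-split, which gives a nonzero class directly. This is the one-line observation you were missing.

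In (iv), you prove the ``UBERG $\Rightarrow$ finitely generated'' direction by first checking $G$ always has type $\FP_1$ (correct, since all non-Frattini chief factors are non-abelian and $h'_G/r_G < 1$ by \cite[Theorem A]{AG}) and then invoking Theorem~\ref{thm:UBERG+FP_1=FG}. This is not circular, since the proof of that theorem does not rely on the present example, but it is much heavier than necessary: $G$ visibly has $\geq m(p)$ absolutely irreducible representations of dimension $2$ over $\bbF_p$ (the natural representations of the $m(p)$ coordinates), so UBERG directly forces $m(p) \leq p^{2c}$ and hence polynomial growth, and (i) finishes. For the converse, your explicit estimate via Lemma~\ref{lem:uberg-criterion} (separating defining and cross characteristic, with the Landazuri--Seitz bound $(p-1)/2$ and the conjugacy-class count for the number of simples) is a valid and self-contained alternative to the paper, which instead just observes that $G$ does not involve every finite group and cites \cite[Theorem 6.10]{KV}. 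Your route has the advantage of staying within the tools of Section~\ref{sec:inf_prod_fin_grps}; the paper's is shorter.

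Finally, in (i) the cited \cite[Theorem 1.3]{MQRD2013} is specifically a $2$-generation statement; for the equivalence of finite generation with polynomial growth of $m(p)$ you want the general estimate on $d(S^m)$ as in \cite[Proposition 3.4]{Thevenaz}, which is what the paper uses. The conclusion is the same.
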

\begin{cor}
For $m(p) = 2^p$ the group $G$ is $\PFP_1$ but not finitely generated and not $\PFP_2$. 
The universal Frattini cover of $G$ is an infinitely generated projective profinite group of type $\PFP$.
\end{cor}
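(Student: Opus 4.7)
The plan is to read off each assertion of the corollary as an immediate consequence of Theorem~\ref{thm:product-of-sl2s} together with the Frattini-cover corollary established just above it.

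For the first sentence, the key observation is that $m(p)=2^p$ grows exponentially in $p$: in particular it grows at most exponentially (trivially) but not at most polynomially (since $2^p / p^c \to \infty$ for every $c$). Applying the contrapositive of part~\ref{it:sl2s-finitely-generated} of Theorem~\ref{thm:product-of-sl2s} therefore shows that $G$ is not finitely generated, while part~\ref{it:sl2s-pfp1} shows that $G$ has type $\PFP_1$. Finally, the contrapositive of part~\ref{it:sl2s-pfp2} turns the failure of finite generation into the failure of type $\PFP_2$.

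For the statement about the universal Frattini cover $\tilde{G} \to G$, recall that $\tilde{G}$ is by construction a projective profinite group and that the kernel of the covering map lies in $\Phi(\tilde{G})$. Since any (topological) generating set of $\tilde{G}$ projects to a generating set of $G$, the fact that $G$ is not finitely generated forces $\tilde{G}$ to be infinitely generated as well. Since $G$ has type $\PFP_1$, the corollary immediately preceding this subsection (asserting that universal Frattini covers of $\PFP_1$ groups are of type $\PFP$) shows that $\tilde{G}$ has type $\PFP$.

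There is really no obstacle here: every assertion is a direct specialisation of a previously proved result, once one notes the position of $p \mapsto 2^p$ in the polynomial--exponential hierarchy. If anything deserves emphasis, it is that the crucial structural ingredient is that $\tilde{G}$ is projective and therefore has cohomological dimension at most $1$, so type $\PFP_1$ alone promotes to type $\PFP$; this is exactly what the preceding Frattini-cover corollary packages.
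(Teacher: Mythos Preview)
Your proposal is correct and matches the paper's approach: the paper states this corollary without proof, treating it as an immediate consequence of Theorem~\ref{thm:product-of-sl2s} together with the preceding Frattini-cover corollary, which is exactly what you have spelled out.
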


We collect some observations in order to prove the theorem.
\begin{prop}\label{prop:sl2-no-small-reps}
Let $p\geq 5$ be a prime number. Let $k$ be a field and let $V$ be a non-trivial simple $k[\SL_2(\bbF_p)]$-module.
\begin{enumerate}[(i)]
\item If $\mathrm{char}(k)\neq p$, then $\dim_k(V) \geq \frac{1}{2}(p-1)$.
\item If $\mathrm{char}(k)= p$ and $H^1(\SL_2(\bbF_p),V) \neq 0$, then $\dim_k(V) \geq p-2$.
 \end{enumerate}
\end{prop}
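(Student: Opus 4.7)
My plan is to reduce both statements to the case where $k$ is algebraically closed and then appeal to classical classifications. If $V$ is a simple $k[G]$-module with $G = \SL_2(\bbF_p)$, then $V\otimes_k\bar{k}$ decomposes as a direct sum of Galois-conjugate simple $\bar{k}[G]$-modules of common $\bar{k}$-dimension $\dim_k V/r$ for some $r$, and $H^1(G, V\otimes_k \bar{k}) = H^1(G, V)\otimes_k \bar{k}$ splits accordingly. Hence a lower bound on the $\bar{k}$-dimension of a $\bar{k}$-simple summand (respectively, of a summand with non-vanishing $H^1$) transfers to the same lower bound for $\dim_k V$.

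For part~(i), $G=\SL_2(\bbF_p)$ is perfect for $p\geq 5$ and its only proper non-trivial normal subgroup is the centre of order $2$; hence any non-trivial simple $\bar{k}[G]$-module is a faithful representation of either $G$ or $\PSL_2(p)$. The minimum dimension of a non-trivial irreducible representation of $\SL_2(\bbF_p)$ in characteristic coprime to $p$ is the classical Landazuri--Seitz value $(p-1)/2$, which can for example be read off from the (Brauer) character tables of $\SL_2(\bbF_p)$ and $\PSL_2(p)$, or from standard references such as \cite{KL}.

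For part~(ii), with $k=\bar{\bbF}_p$, Steinberg's restriction theorem identifies the simple $k[G]$-modules with $L(0),L(1),\dots,L(p-1)$, where $L(n)$ is the head of the $n$-th symmetric power of the natural $2$-dimensional module and has dimension $n+1$. The trivial module $L(0)$ has $H^1(G,L(0)) = \Hom(G,k) = 0$ since $G$ is perfect. For $1 \leq n \leq p-1$, the first cohomology $H^1(G, L(n))$ is computed via the Lyndon--Hochschild--Serre spectral sequence applied to the Borel subgroup $B = U\rtimes T$, where $U$ is a $p$-Sylow and $T$ a complementary torus of order $p-1$: the restriction map $H^1(G, L(n))\to H^1(B, L(n))$ is injective because $[G:B] = p+1$ is invertible in $k$, and $H^1(B, L(n))$ can be read off from the $T$-weights of $L(n)$ (the $T$-action on $H^1(U,L(n))$ being diagonalisable since $|T|$ is coprime to $p$). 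The classical outcome, valid for $p\geq 5$, is that $H^1(G, L(n))$ vanishes except when $n = p-3$, in which case $\dim L(p-3) = p - 2$, giving the desired bound.

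The main obstacle is the precise bookkeeping and sourcing of the defining-characteristic $H^1$ computation in part~(ii); part~(i) reduces to an invocation of well-documented lower bounds on cross-characteristic representation degrees, while the spectral-sequence calculation in part~(ii) is standard but requires explicit knowledge of the $T$-weight structure of each $L(n)$, which is where care is needed.
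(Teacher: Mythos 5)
For part~(i) you take essentially the same route as the paper: reduce to the algebraically closed case and quote the known lower bound $(p-1)/2$ on cross-characteristic projective representation degrees of $\PSL_2(p)$ (the paper cites Seitz--Zalesskii; you mention Landazuri--Seitz and \cite{KL}). Your reduction step is fine since any composition factor of $V\otimes_k\bar{k}$ has $\bar{k}$-dimension at most $\dim_k V$, and a standard long-exact-sequence argument shows that $H^1(G,V)\neq 0$ forces $H^1(G,W)\neq 0$ for some composition factor $W$ of $V\otimes_k\bar{k}$.

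For part~(ii) you take a genuinely different route. The paper works with the beginning of the minimal projective resolution of $k$ over $k[\SL_2(\bbF_p)]$: since $P_0=R_0$ is the principal indecomposable and $P_1$ is the projective cover of $W=\ker(R_0\to k)$, one has $H^1(G,V)\cong\Hom_{kG}(W,V)$ for $V$ simple, hence $V$ must be a composition factor of $R_0$. Reading off the known composition series $M_0,M_{p-1},M_0,M_{p-3}$ of $R_0$ from Humphreys and discarding $M_0$ (perfectness) gives $\dim V\geq p-2$ without ever needing to know \emph{which} of these actually has non-vanishing $H^1$. Your proposal instead restricts to the Borel subgroup $B=U\rtimes T$, uses injectivity of $H^1(G,-)\to H^1(B,-)$ (index coprime to $p$), and proposes computing $H^1(U,L(n))^T$ via $T$-weights. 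This is a valid and in fact sharper strategy --- it pinpoints $L(p-3)$ as the unique simple with $H^1\neq 0$ --- but you leave the decisive weight computation as ``the classical outcome'' rather than carrying it out, and you yourself flag this as the point requiring care. The paper's PIM argument buys a shorter, self-contained proof at the cost of a slightly less precise conclusion; your approach buys the exact answer at the cost of a $T$-weight bookkeeping step that would need to be spelled out (or given a precise citation) to make the proof complete.
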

\begin{proof}
Assume that $\mathrm{char}(k)\neq p$. Extending scalars, we may assume that $k$ is algebraically closed. Every absolutely irreducible representation of $\SL_2(\bbF_p)$ gives rise to an irreducible projective representation of $\mathrm{PSL}_2(\bbF_p)$ and so the assertion follows from \cite[p.234]{Seitz-Zalesskii}.

Assume that $\mathrm{char}(k) = p$.
Let $M_\lambda$ be the irreducible representation of weight $\lambda \in \{0,\dots, p-1\}$. Recall that $M_\lambda$ has degree $\lambda+1$; in particular $M_0$ is the trivial representation.
Let $R_\lambda$ denote the associated PIM. It is known that the composition factors of $R_\lambda$ are 
$M_\lambda, M_{p-1-\lambda}, M_{\lambda}, M_{p-3-\lambda}$ (the last term only occurs if $p-3-\lambda > 0$); see \cite{Humphreys} and references therein.
Let 
\[
	\cdots \longrightarrow P_2 \longrightarrow  P_1 \longrightarrow  P_0 \longrightarrow  k
\]
be the minimal projective resolution of $k$ as $k[\SL_2(\bbF_p)]$-module. Then 
\[
	H^1(\SL_2(\bbF_p),V) = \Hom_{k[\SL_2(\bbF_p)]}(P_1,V) 
\]
In this case $P_0 = R_0$ and $P_1$ is the projective cover of the kernel $W$ of $R_0 \to k$. As such every homomorphism into a simple module factors through $W$ and we have
\[
	H^1(\SL_2(\bbF_p),V) = \Hom_{k[\SL_2(\bbF_p)]}(W,V). 
\]
Every simple factor of $W$ is a composition factor of $R_0$. If $H^1(\SL_2(\bbF_p),V) \neq 0$, then $V$ is either $M_{p-3}, M_{p-1}$ or the trivial module $M_0=k$. However, for $p\geq 5$ the group $\SL_2(\bbF_p)$ is perfect and so $H_1(\SL_2(\bbF_p),M_0) = 0$. We conclude that $V$ is $M_{p-1}$ or $M_{p-3}$ and has dimension at least $\dim_k M_{p-3} = p-2$.
\end{proof}
\begin{lem}\label{lem:existence-small-module-sl2}
Let $p\geq 3$ be an odd prime. There is an irreducible representation $V$ of $\SL_2(\bbF_p)$ over $\bbF_2$ such that 
\[
	\dim_{\bbF_2} (V) \leq p  \quad \text{ and } \quad H^1(\SL_2(\bbF_p), V) \neq 0.
\]
Moreover, the centre of $\SL_2(\bbF_p)$ acts trivially on $V$.
\end{lem}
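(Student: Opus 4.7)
The plan is to realise such a $V$ as a composition factor of the permutation module associated to the natural action of $\SL_2(\bbF_p)$ on the projective line $\mathbb{P}^1(\bbF_p)$.

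Set $G = \SL_2(\bbF_p)$ and consider the $\bbF_2[G]$-module $M = \bbF_2[\mathbb{P}^1(\bbF_p)]$; it has dimension $p+1$ over $\bbF_2$, and the centre $Z(G)$ acts trivially because it acts trivially on $\mathbb{P}^1(\bbF_p)$. Let $B \leq G$ be the Borel subgroup stabilising the point at infinity, so that $M \cong \Ind_B^G \bbF_2$. By Shapiro's lemma,
\[
	H^1(G,M) \cong H^1(B,\bbF_2) \cong \Hom(B^{\mathrm{ab}},\bbF_2).
\]
Write $B = U \rtimes T$ with unipotent radical $U \cong \bbF_p$ (additive) and torus $T \cong \bbF_p^\times$, where $T$ acts on $U$ by squaring. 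Since $p$ is odd, squaring on $\bbF_p^\times$ is nontrivial, so $U \subseteq [B,B]$ and $B^{\mathrm{ab}} \cong T \cong \Z/(p-1)\Z$. As $p-1$ is even, $H^1(G,M) \neq 0$.

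Since $p+1$ is even, the constant vector lies in the augmentation submodule $I = \ker(M \to \bbF_2)$, and $\dim_{\bbF_2} I = p$. Applying $H^*(G,-)$ to $0 \to I \to M \to \bbF_2 \to 0$ yields an exact sequence
\[
	H^1(G,I) \longrightarrow H^1(G,M) \longrightarrow H^1(G,\bbF_2),
\]
so $H^1(G,I) \neq 0$ as long as $H^1(G,\bbF_2) = 0$. The latter holds for every odd $p \geq 3$: for $p \geq 5$ the group $G$ is perfect, and for $p = 3$ one has $\SL_2(\bbF_3)^{\mathrm{ab}} \cong \Z/3\Z$, which admits no nontrivial map to $\bbF_2$.

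To extract an \emph{irreducible} example, fix any composition series $0 = I_0 \subset I_1 \subset \cdots \subset I_n = I$. A straightforward induction on $n$, using the long exact sequence associated to $0 \to I_1 \to I \to I/I_1 \to 0$, produces a composition factor $V = I_i/I_{i-1}$ with $H^1(G,V) \neq 0$; the vanishing $H^1(G,\bbF_2)=0$ forces $V$ to be nontrivial. By construction $\dim_{\bbF_2} V \leq \dim_{\bbF_2} I = p$, and $Z(G)$ acts trivially on $V$ because it does on $M$. The only step requiring a genuine computation is the identification of $B^{\mathrm{ab}}$, which is elementary, so I do not expect any real obstacle.
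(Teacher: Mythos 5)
Your approach is essentially the same as the paper's: both realise $V$ as a composition factor of $M = \Ind_B^G\bbF_2$, both apply Shapiro's lemma to get $H^1(G,M)\cong H^1(B,\bbF_2)\neq 0$, and both descend through a composition series. You add two useful refinements the paper leaves implicit: you work inside the augmentation submodule $I$ to get the dimension bound cleanly, and you verify explicitly that $V$ must be nontrivial because $H^1(G,\bbF_2)=0$ (the paper instead infers $\dim V\leq p$ from the existence of a trivial composition factor, and only handles nontriviality later in Corollary~\ref{cor:H1-length}).

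One small error: the claim that squaring on $\bbF_p^\times$ is nontrivial for every odd prime fails at $p=3$, where $\bbF_3^\times = \{\pm 1\}$ and both elements square to $1$; hence $T$ acts trivially on $U$, $B$ is abelian, and $B^{\mathrm{ab}}\cong \Z/3\times\Z/2$, not $\Z/(p-1)$. This does not affect your conclusion, since all you need is that $B$ surjects onto $T\cong \Z/(p-1)$ (which always holds), giving $\Hom(B,\bbF_2)\neq 0$ because $p-1$ is even — which is exactly how the paper phrases it. Either state that observation directly, or restrict the $B^{\mathrm{ab}}\cong T$ identification to $p\geq 5$.
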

\begin{proof}
Let $B \leq \SL_2(\bbF_p)$ the subgroup of upper triangular matrices.
Since $\bbF_p^\times$ is a factor of $B$, we have $H^1(B,\bbF_2) = \Hom(B,\bbF_2) \neq 0$. Let $M =\mathrm{Ind}_B^{\SL_2(\bbF_p)}(\bbF_2)$ be the induced representation. The centre of $\SL_2(\bbF_p)$ is contained in $B$ and acts trivially on $M$.
By Shapiro's Lemma
\[
  H^1(\SL_2(\bbF_p), M) \cong H^1(B,\bbF_2) \neq 0.
\]
Let $V_0$ be a simple factor of $M$ and consider the exact sequence
\[
	0 \longrightarrow M' \longrightarrow M \longrightarrow V_0 \longrightarrow 0.
\]
The associated long exact sequence shows that at least one of $H^1(\SL_2(\bbF_p),V_0)$ and $H^1(\SL_2(\bbF_p),M')$ is non-trivial.
By induction, we deduce that some composition factor $V$ of $M$ also satisfies $H^1(\SL_2(\bbF_p),V)\neq 0$. Since $\dim_{\bbF_2} M = p+1$ and $M$ has a trivial composition factor, it follows that $\dim_{\bbF_2} V \leq p$.
\end{proof}
\begin{cor}\label{cor:H1-length}
For every odd prime $p$
	\[
		2^{\frac{1}{2}(p-1)}\leq l^{H^1}(\PSL_2(\bbF_p)) \leq 2^p.
	\]
\end{cor}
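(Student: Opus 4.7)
The plan is to prove both bounds by reducing from $\PSL_2(\bbF_p)$ to $\SL_2(\bbF_p)$ cohomology, applying Proposition \ref{prop:sl2-no-small-reps} for the lower bound and Lemma \ref{lem:existence-small-module-sl2} for the upper bound. Throughout I would assume $p \geq 5$, so that $\PSL_2(\bbF_p)$ is simple and every non-trivial representation is automatically faithful; the case $p=3$ can be checked directly if needed.

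For the \emph{upper bound}, I would take the irreducible $\bbF_2[\SL_2(\bbF_p)]$-module $V$ produced by Lemma \ref{lem:existence-small-module-sl2}: it satisfies $\dim_{\bbF_2}(V) \leq p$, $H^1(\SL_2(\bbF_p), V) \neq 0$, and the centre $Z = \{\pm I\}$ acts trivially, so $V$ descends to a non-trivial irreducible $\PSL_2(\bbF_p)$-module (non-trivial because $\SL_2(\bbF_p)$ is perfect). To transfer the cohomological non-vanishing, I would apply the inflation-restriction sequence
\[
0 \to H^1(\PSL_2(\bbF_p), V) \to H^1(\SL_2(\bbF_p), V) \to H^1(Z, V)^{\PSL_2(\bbF_p)}.
\]
Because $|Z|=2$ and $V$ is in characteristic $2$, we have $H^1(Z,V) = \Hom(Z, V) \cong V$ as $\PSL_2(\bbF_p)$-modules; by irreducibility, $V^{\PSL_2(\bbF_p)} = 0$. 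Hence the inflation map is an isomorphism, $H^1(\PSL_2(\bbF_p), V) \neq 0$, and we conclude $l^{H^1}(\PSL_2(\bbF_p)) \leq |V| \leq 2^p$.

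For the \emph{lower bound}, let $M$ be any faithful irreducible $\PSL_2(\bbF_p)$-module over a field $F$ with $H^1(\PSL_2(\bbF_p), M) \neq 0$. Pulling back along $\SL_2(\bbF_p) \twoheadrightarrow \PSL_2(\bbF_p)$, $M$ becomes an irreducible $\SL_2(\bbF_p)$-module on which the centre acts trivially; the inflation map $H^1(\PSL_2(\bbF_p), M) \hookrightarrow H^1(\SL_2(\bbF_p), M)$ is always injective, so $H^1(\SL_2(\bbF_p), M) \neq 0$. Proposition \ref{prop:sl2-no-small-reps} then gives $\dim_F(M) \geq (p-1)/2$ if $\mathrm{char}(F) \neq p$, whence $|M| \geq 2^{(p-1)/2}$; and $\dim_F(M) \geq p-2$ if $\mathrm{char}(F) = p$, whence $|M| \geq p^{p-2}$, which exceeds $2^{(p-1)/2}$ for every odd prime $p$ (since $(p-2)\log 3 \geq \tfrac{p-1}{2}\log 2$ for $p \geq 3$). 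In either case $l^{H^1}(\PSL_2(\bbF_p)) \geq 2^{(p-1)/2}$.

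The only mildly delicate point is the inflation-restriction computation identifying $H^1(\PSL_2(\bbF_p), V)$ with $H^1(\SL_2(\bbF_p), V)$; this goes through cleanly precisely because $|Z|=2$ and the coefficients are in characteristic $2$, which makes $\Hom(Z, V) \cong V$ and kills the restriction map by irreducibility.
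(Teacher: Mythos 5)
Your proof is correct, and at a high level it follows the same skeleton as the paper's: use Lemma \ref{lem:existence-small-module-sl2} to produce the small module for the upper bound, and use Proposition \ref{prop:sl2-no-small-reps} together with $\tfrac{1}{2}(p-1)\leq p-2$ for the lower bound. The one genuine difference is the mechanism you use to pass between $H^1(\SL_2(\bbF_p),-)$ and $H^1(\PSL_2(\bbF_p),-)$. The paper cites Proposition \ref{AG}, the Aschbacher--Guralnick formula $|H^1(G,M)|=q^n|H^1(G/C_G(M),M)|$; since no (non-Frattini) chief factor of $\SL_2(\bbF_p)$ is $G$-isomorphic to a nontrivial irreducible $M$, this gives $n=0$ and hence an equality of cardinalities in one line, making the direction of transfer a non-issue. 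You instead unwind the five-term inflation--restriction sequence by hand: injectivity of inflation takes care of the lower bound, and for the upper bound you supply the extra step that $H^1(Z,V)^{\PSL_2(\bbF_p)}\cong V^{\PSL_2(\bbF_p)}=0$ (valid because $|Z|=2$, the coefficients are in characteristic $2$, and $V$ is a nontrivial simple module), which forces inflation to be onto. Your route is more self-contained and makes the mechanism transparent, at the cost of a case-specific computation; the paper's use of Proposition \ref{AG} is shorter and symmetric but leans on a heavier black box. Both are sound.
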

\begin{proof}
Let $M$ be an simple module for $\PSL_2(\bbF_p)$ with $H^1(\PSL_2(\bbF_p),M) \neq 0$.
Then $H^1(\SL_2(\bbF_p),M)\neq 0$ (see Proposition~\ref{AG}) and hence 
the lower bound can be deduced from Proposition~\ref{prop:sl2-no-small-reps} and the inequality $\frac{1}{2}(p-1) \leq p-2$.
Conversely, let $V$ be the irreducible representation provided by Lemma \ref{lem:existence-small-module-sl2}. Then $V$ is non-trivial, factors through $\PSL_2(\bbF_p)$ and $H^1(\PSL_2(\bbF_p),V)\neq 0$ by Proposition~\ref{AG}. This proves the upper bound. 
\end{proof}
\begin{proof}[Proof of Theorem \ref{thm:product-of-sl2s}]
For all assertions, we may assume that $m(2)=0, m(3) = 0$; in particular all factors are quasisimple.

\medskip

\eqref{it:sl2s-finitely-generated}: 
Since $\SL_2(\bbF_p)$ for $p\geq 5$ is quasisimple and the simple factors of these groups are pairwise non-isomorphic, 
we have $d(G)= \max_{p\geq 5} d(\SL_2(\bbF_p)^{m(p)})$. It follows from
\cite[Proposition 3.4]{Thevenaz}
\[
	\left\vert d(\SL_2(\bbF_p)^{m(p)}) - \frac{\log(m(p))}{\log(|\PSL_2(\bbF_p)|)} \right\vert \leq C_p
\]
where $C_p \leq \frac{\log(2|\Aut(\PSL_2(\bbF_p))|)}{|\PSL_2(\bbF_p)|} + 1$. Using $\Aut(\PSL_2(\bbF_p)) = \mathrm{PGL}_2(\bbF_p)$ we can bound the constants $C_p$ uniformly by $C_p \leq 3$.

In particular, $G$ is finitely generated if and only if 
\[
	\frac{\log(m(p))}{\log(|\PSL_2(\bbF_p)|)} \leq c
\]
for some constant $c$ independent of $p$. Since $\log(|\PSL_2(\bbF_p)|) \sim 3 \log(p)$ as $p$ tends to infinity,  we deduce that $G$ is finitely generated exactly when $m(p)$ grows at most polynomially in $p$.

\medskip

\eqref{it:sl2s-pfp1}:
This follows from Corollary \ref{cor:H1-length} and Corollary \ref{cor:PFP1-products}.

\medskip

\eqref{it:sl2s-pfp2}: Assume that $G$ is $\PFP_2$. Let $p \geq 5$ be a prime and let $V_{\mathrm{Ad}}$ denote the adjoint representation of $\SL_2(\bbF_p)$.  We have $\dim_{\bbF_p} V_{\mathrm{Ad}} = 3$ and 
\[
	H^2(\SL_2(\bbF_p), V_{\mathrm{Ad}}) \neq 0,
\]
since $\SL_2(\mathbb{Z}/p^2\mathbb{Z})$ is a non-split extension of $V_{\mathrm{Ad}}$ by $\SL_2(\bbF_p)$.
Inflating these representations to $G$ we obtain
\[
	\sum_{|W| = p^3} |H^2(G,W)| -1 \geq m(p) 
\]
Since $G$ is assumed to have $\PFP_2$, there is a constant $c$ independent of $p$ such that
\[
	p^{3c} \geq m(p)
\]
and $m(p)$ grows at most polynomially in $p$. By \eqref{it:sl2s-finitely-generated} the group $G$ is finitely generated.

\eqref{it:sl2s-uberg}: Note that $G$ does not involve every finite group as a continuous subfactor. Therefore, assuming that $G$ is finitely generated, it follows from \cite[Theorem 6.10]{KV} that $G$ has UBERG. 
Conversely, assume that $G$ has UBERG. Every $\SL_2(\bbF_p)$ has a an irreducible representation of dimension $2$ over $\bbF_p$. In particular, $G$ has at least $m(p)$ such representations and we conclude that $m(p) \leq p^{2c}$ for a constant which does not depend on $p$.
\end{proof}

\subsection{Products of alternating groups}
In this section we study $\PFP_1$ for products of alternating groups.

\begin{thm}
	\label{altlength}
	For large primes $b$, $l^{H^1}(Alt(b)) = b^{b-2}$
\end{thm}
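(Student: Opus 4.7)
The plan is to establish matching upper and lower bounds $l^{H^1}(\mathrm{Alt}(b)) = b^{b-2}$ for $b$ a sufficiently large prime. The upper bound exhibits an explicit faithful irreducible $\mathrm{Alt}(b)$-module of size $b^{b-2}$ admitting non-vanishing first cohomology; the lower bound rules out any strictly smaller candidate.

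For the upper bound I would use the fully deleted permutation module in defining characteristic. Let $P = \mathbb{F}_b^b$ carry the natural action of $\mathrm{Alt}(b)$, let $V \subset P$ be the sum-zero hyperplane, and let $T \subset V$ be the line of constant vectors; note $T \subseteq V$ because $b \cdot c = 0$ in characteristic $b$. The quotient $M := V/T$ is the classical fully deleted permutation module: irreducible by James's work on modular representations of symmetric groups, faithful because $\mathrm{Alt}(b)$ is simple and the action is non-trivial, and of $\mathbb{F}_b$-dimension $b-2$, hence $|M| = b^{b-2}$. To verify $H^1(\mathrm{Alt}(b), M) \neq 0$, I would chase the long exact cohomology sequences of $0 \to V \to P \to \mathbb{F}_b \to 0$ and $0 \to T \to V \to M \to 0$. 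Shapiro's lemma identifies $H^1(G, P)$ with $H^1(\mathrm{Alt}(b-1), \mathbb{F}_b)$, which vanishes for $b \geq 6$ since $\mathrm{Alt}(b-1)$ is perfect. Both $V^G$ and $P^G$ equal $T \cong \mathbb{F}_b$, while the induced map $P^G \to (\mathbb{F}_b)^G$ is zero (since $c \mapsto bc \equiv 0$); the first sequence thus forces an injection $\mathbb{F}_b \hookrightarrow H^1(G, V)$. Combined with $H^1(G, T) = \mathrm{Hom}(G, \mathbb{F}_b) = 0$, the second sequence then injects $H^1(G, V)$ into $H^1(G, M)$, so $H^1(\mathrm{Alt}(b), M) \neq 0$.

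For the lower bound, let $N$ be any faithful irreducible $\mathrm{Alt}(b)$-module over a finite field $F$ with $H^1(\mathrm{Alt}(b), N) \neq 0$; the argument splits according to $\mathrm{char}(F)$. In defining characteristic $\mathrm{char}(F) = b$, the classical theorem of James asserts that the smallest non-trivial irreducible $\overline{\mathbb{F}}_b \mathrm{Alt}(b)$-module has $\overline{\mathbb{F}}_b$-dimension $b-2$ and is realised (up to isomorphism) by the heart $M$ above; since $\mathrm{Alt}(b)$ is simple every non-trivial module is faithful, and so descending to any $\mathbb{F}_{b^k}$-form gives $|N| \geq b^{b-2}$.

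The cross-characteristic case $\mathrm{char}(F) = \ell \neq b$ is the main obstacle. Here the permutation module splits as $P \cong V' \oplus \mathbb{F}_\ell$ (because $\ell \nmid b$), so the natural deleted module $V'$ satisfies $H^1(G, V') = 0$ by Shapiro applied to the summand. To rule out all other small faithful cross-characteristic irreducibles with non-vanishing $H^1$, I would invoke results on the first cohomology of alternating groups in non-defining characteristic---for instance vanishing and dimension-bound theorems in the spirit of Bell, Kleshchev--Nakano, or Guralnick--Tiep---to show that, for $b$ sufficiently large, no cross-characteristic irreducible $N$ of size $|N| < b^{b-2}$ can support $H^1(\mathrm{Alt}(b), N) \neq 0$. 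The delicate representation-theoretic classification underlying this vanishing, and its dependence on $b$ being large, is where the bulk of the work sits; the defining-characteristic lower bound and the upper bound are both essentially bookkeeping on top of the construction of the heart.
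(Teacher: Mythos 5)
Your overall strategy is the same as the paper's (exhibit the defining-characteristic heart of size $b^{b-2}$ with non-vanishing $H^1$, then eliminate everything smaller), but two of your sub-arguments take a genuinely different and more elementary route. For the upper bound the paper quotes \cite[5.1, Lemma]{KN} for $H^1(\mathrm{Sym}(b),D^{(b-1,1)})\neq 0$ in characteristic $b$ and transfers it to $\mathrm{Alt}(b)$ via induction and Shapiro; your connecting-map computation with the two exact sequences $0\to V\to P\to \F_b\to 0$ and $0\to T\to V\to M\to 0$ is correct and self-contained (the constants map to $bc=0$, so the connecting map $\F_b\to H^1(G,V)$ is injective, and $H^1(G,T)=\Hom(G,\F_b)=0$ forces $H^1(G,V)\hookrightarrow H^1(G,M)$). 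Likewise, for the cross-characteristic deleted module your one-line argument --- $P\cong V'\oplus\F_\ell$ when $\ell\nmid b$, and $H^1(\mathrm{Alt}(b),P)\cong H^1(\mathrm{Alt}(b-1),\F_\ell)=0$ by Shapiro and perfectness --- replaces the paper's much heavier treatment, which goes through $\mathrm{Sym}(b)$: Clifford theory for $\Ind^{\mathrm{Sym}(b)}_{\mathrm{Alt}(b)}M_p$, the identification $D^{(b-1,1)}=S^{(b-1,1)}=Y^{(b-1,1)}$, block theory at $p=2$, and the vanishing theorems of \cite{KN} and \cite{BKM} for the Young module and its sign twist. That is a real simplification.

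Where your proposal goes soft is exactly the step you describe as carrying ``the bulk of the work'': ruling out other cross-characteristic irreducibles of size less than $b^{b-2}$. You defer this to unspecified $H^1$-vanishing results (Bell, Kleshchev--Nakano, Guralnick--Tiep style), but no cohomological input is needed there: James's minimal-dimension theorem \cite{James} --- the result you already invoke for irreducibility and for the defining-characteristic bound, and which the paper uses at the outset --- shows that for large $b$, in every characteristic $p$, any non-trivial irreducible $\mathrm{Alt}(b)$-module other than the fully deleted permutation module has size at least $p^{b^2/4}>b^{b-2}$, while for $p\geq b$ the fully deleted module itself already has size $\geq b^{b-2}$. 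Since $b$ is prime, the only faithful irreducibles below the threshold are the modules $M_p$ with $p<b$, all of which your summand-plus-Shapiro argument disposes of. So your plan closes once you make this reduction explicit; as written, the appeal to an unspecified ``delicate representation-theoretic classification'' is the one genuine gap, and a misdiagnosis of where the difficulty lies. (Two minor points, shared with the paper: the dimension bounds must be checked to survive restriction from $\mathrm{Sym}(b)$ to $\mathrm{Alt}(b)$, which is how the paper states its use of \cite{James}; and modules over extension fields only have larger size, so they create no new small candidates.)
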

\begin{proof}
	Our approach is to first enumerate the $Alt(b)$-modules smaller than $b^{b-2}$, and then show they have trivial first cohomology.
	
	By \cite{James} (restricting representations from symmetric to alternating groups), in characteristic $p$, for large $b$, the only non-trivial representation of $Alt(b)$ smaller than $p^{b^2/4}$ is the fully deleted permutation module described in \cite[Section 5.3, Alternating groups]{KL}. For large $b$, this is greater than $b^{b-2}$, so we only need to consider the fully deleted permutation modules $M_p$ over $\F_p$ for primes $p < b$: for $p \geq b$, the fully deleted permutation module has size $\geq b^{b-2}$.
	
	We refer the reader to \cite[Section 4.6]{Martin} for background on Young modules, and \cite[Section 5.1]{Martin} for which ones belong to the principal block: given a field $\F_p$ and $\lambda \vdash b$, the Young module $Y^\lambda$ (over $p$) is the indecomposable summand of the permutation module $M^\lambda$ containing the Specht module $S^\lambda$, which, when $\lambda$ is a restricted partition of $b$, has as its unique simple quotient the simple module $D^\lambda$.
	
	Fix a field $\F_p$, for $p < b$ prime. Using \cite{James}, we can describe the $\F_p[Sym(b)]$-module $\Ind^{Sym(b)}_{Alt(b)} M_p$. Since $|Sym(b):Alt(b)|=2$, using Clifford theory, there are at most $2$ irreducible $\F_p[Sym(b)]$-modules restricting to the $\F_p[Alt(b)]$-module $M_p$. In odd characteristic, these are $D^{(b-1,1)}$ and $D^{(b-1,1)} \otimes \mathrm{sgn}$, where $\mathrm{sgn}$ is the sign representation. (Recall that $M^{(b-1,1)}$ is the natural $b$-dimensional permutation module for $Sym(b)$.) In characteristic $2$, $\mathrm{sgn}$ is trivial, and it is easy to see by counting dimensions that $D^{(b-1,1)}$ is the only irreducible $\F_p[Sym(b)]$-module restricting to $M_p$. In either case, by the standard argument, these are the only possible composition factors of $\Ind^{Sym(b)}_{Alt(b)} M_p$.
	
	Since $p \nmid b$, the argument of \cite[Section 5.3, Alternating groups]{KL} shows that $D^{(b-1,1)} = S^{(b-1,1)}$ is a direct summand of $M^{(b-1,1)}$, and hence it is the Young module $Y^{(b-1,1)}$.
	
	For $b$ odd, $Y^{(b-1,1)}$ is not in the principal block for $p=2$, so $H^1(Sym(b),Y^{(b-1,1)})=0$. For $3 \leq p$, \cite[6.3, Corollary]{KN} shows $H^1(Sym(b),Y^\lambda)=0$ for all $\lambda$. Finally, \cite[Theorem 2.4]{BKM} shows for $3 \leq p$ that $H^1(Sym(b),S^{(b-1,1)} \otimes \mathrm{sgn})=0$. We conclude that,for all primes $p<b$, $H^1(Sym(b),D^{(b-1,1)})=H^1(Sym(b),D^{(b-1,1)} \otimes \mathrm{sgn})=0$, so $H^1(Sym(b),\Ind^{Sym(b)}_{Alt(b)} M_p)=0$, and hence by Shapiro's lemma $H^1(Alt(b),M_p)=0$. (\cite[6.3, Corollary]{KN} states that  $H^i(Sym(b),Y^\lambda)=0$ for all $\lambda$ and all $1 \leq i \leq 2p-3$, but in fact the correct bound is $1 \leq i \leq 2p-4$: see \cite[2.4]{HN}.)
	
	The smallest $Alt(b)$-module not yet accounted for is the fully deleted permutation module $M_b$, and $|M_b| = b^{b-2}$. Therefore $l^{H^1}(Alt(b)) \geq b^{b-2}$. Moreover, by \cite[5.1, Lemma]{KN}, $H^1(Sym(b),D^{(b-1,1)}) \neq 0$, and we deduce as above that $H^1(Alt(b),M_b) \neq 0$, so $l^{H^1}(Alt(b))=b^{b-2}$.
\end{proof}

\begin{thm}
	\label{thm:prod-alt}
 Let $f \colon \mathbb{N}_{\geq 5} \to \mathbb{N}$ and let
\[
	G = \prod_{b \geq 5} Alt(b)^{f(b)}.
\]
Then the following are equivalent:
\begin{enumerate}[(i)]
\item $G$ is finitely generated,
\item $G$ is of type $\PFP_1$,
\item there is $c> 0$ such that $f(b) \leq (b!)^{c}$ for all $b$.
\end{enumerate}
\end{thm}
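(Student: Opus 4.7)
The plan is to prove the cycle of implications $(\mathrm{iii}) \Rightarrow (\mathrm{i}) \Rightarrow (\mathrm{ii}) \Rightarrow (\mathrm{iii})$, combining Th\'evenaz's asymptotic for the number of generators of a power of a finite simple group, the characterisation of $\PFP_1$ in Corollary \ref{cor:PFP1-products}, and the $H^1$-length computation of Theorem \ref{altlength}.

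First I will establish $(\mathrm{i}) \Leftrightarrow (\mathrm{iii})$ exactly as in the proof of Theorem \ref{thm:product-of-sl2s}(i): since the groups $Alt(b)$ for $b \geq 5$ are pairwise non-isomorphic non-abelian finite simple groups, one has $d(G) = \sup_b d(Alt(b)^{f(b)})$ up to bounded error, and \cite[Proposition 3.4]{Thevenaz} gives $d(Alt(b)^{f(b)}) = \log f(b)/\log |Alt(b)| + O(1)$ with the implicit constant uniform in $b$. Since $|Alt(b)| = b!/2$, finite generation of $G$ is equivalent to the existence of $c>0$ such that $f(b) \leq (b!)^c$ for all $b$.

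Next, for $(\mathrm{i}) \Rightarrow (\mathrm{ii})$, I will use that every continuous composition factor of $G$ is an alternating group, so $G$ does not involve every finite simple group as a continuous subfactor; by \cite[Theorem 6.10]{KV}, finite generation then forces UBERG. Moreover, $G$ has no abelian non-Frattini chief factors, so Proposition \ref{prop:augmentation} (together with the bound $h'_G(M)/r_G(M) \leq 1$ from \cite[Theorem A]{AG}) shows that $G$ is automatically of type $\FP_1$. The corollary preceding Theorem \ref{thm:PFP1_by_PFP1} then upgrades UBERG plus $\FP_1$ to $\PFP_1$.

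Finally, for $(\mathrm{ii}) \Rightarrow (\mathrm{iii})$, Corollary \ref{cor:PFP1-products} provides $c>0$ with $f(b) \leq l^{H^1}(Alt(b))^c$ for every $b \geq 5$, so it suffices to establish a uniform upper bound $l^{H^1}(Alt(b)) \leq (b!)^{c''}$. For primes $b$ this is immediate from Theorem \ref{altlength} and Stirling's estimate $b^{b-2} \leq (b!)^2$ for $b$ sufficiently large. For composite $b \geq 5$, I will fix any prime $p \mid b$ and consider the fully deleted permutation module $D^{(b-1,1)}$ over $\bbF_p$ of dimension $b-2$, which satisfies $H^1(\Sym(b), D^{(b-1,1)}) \neq 0$ by the reference used in the proof of Theorem \ref{altlength}; since $D^{(b-1,1)}$ has trivial $Alt(b)$-invariants (as $Alt(b)$ acts faithfully and irreducibly on it), the inflation-restriction sequence embeds $H^1(\Sym(b), D^{(b-1,1)})$ into $H^1(Alt(b), D^{(b-1,1)}|_{Alt(b)})$, so some simple summand $U$ of $D^{(b-1,1)}|_{Alt(b)}$ has $H^1(Alt(b), U) \neq 0$, giving $l^{H^1}(Alt(b)) \leq |U| \leq p^{b-2} \leq b^{b-2}$. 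I expect the main obstacle to be this last step, which requires carefully verifying non-vanishing of $H^1(\Sym(b), D^{(b-1,1)})$ at every prime $p \mid b$ (in particular at $p = 2$ for even $b$) and controlling the inflation-restriction when $D^{(b-1,1)}$ does not restrict irreducibly to $Alt(b)$.
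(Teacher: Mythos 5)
Your proposal reorganises the argument into a cycle $(\mathrm{iii}) \Rightarrow (\mathrm{i}) \Rightarrow (\mathrm{ii}) \Rightarrow (\mathrm{iii})$, whereas the paper establishes $(\mathrm{i}) \Leftrightarrow (\mathrm{iii})$ (via Hall) and $(\mathrm{ii}) \Leftrightarrow (\mathrm{iii})$ (via Corollary~\ref{cor:PFP1-products} and Theorem~\ref{altlength}) separately. Your $(\mathrm{i}) \Leftrightarrow (\mathrm{iii})$ via Th\'evenaz is fine, and you rightly notice that Theorem~\ref{altlength} is only stated and proved for prime $b$ (its proof uses $p \nmid b$), a point the paper's one-line deduction elides.

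The step $(\mathrm{i}) \Rightarrow (\mathrm{ii})$, however, has a genuine error. You assert that $G$ does not involve every finite simple group as a continuous subfactor on the grounds that every composition factor of $G$ is alternating. But ``subfactor'' in the sense of \cite[Theorem 6.10]{KV} means a continuous section (quotient of a closed subgroup), not a composition factor, and since every finite group embeds in $\mathrm{Alt}(b)$ for $b$ large, \emph{every} finite simple group is a section of $G$ as soon as $f(b) \geq 1$ for infinitely many $b$. So \cite[Theorem 6.10]{KV} does not apply, and indeed $G$ need not have UBERG under hypothesis $(\mathrm{i})$: Corollary~\ref{superexponential} exhibits a $2$-generated $\prod_{b \text{ prime}} \mathrm{Alt}(b)^{b!/8}$ of type $\PFP_1$ which does \emph{not} have UBERG. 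Your route to $\PFP_1$ through ``UBERG $+\, \FP_1 \Rightarrow \PFP_1$'' therefore collapses, and you are left needing the implication $(\mathrm{iii}) \Rightarrow (\mathrm{ii})$ directly, which requires a \emph{lower} bound $l^{H^1}(\mathrm{Alt}(b)) \geq (b!)^{\varepsilon}$ valid for all large $b$.

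Worse, your own upper-bound sketch shows that this lower bound is exactly where the composite case bites: for a prime $p \mid b$, the inflation-restriction argument you outline for $D^{(b-1,1)}$ over $\F_p$ produces a faithful simple $\mathrm{Alt}(b)$-module of size at most $p^{b-2}$ with non-vanishing $H^1$, so $l^{H^1}(\mathrm{Alt}(b)) \leq p^{b-2}$. For even $b$ and $p=2$ this is $\leq 2^{b-2}$, which is far smaller than $(b!)^{\varepsilon}$ for any $\varepsilon>0$. So the composite-$b$ gap you flagged cannot be repaired by the same device you use for $(\mathrm{ii}) \Rightarrow (\mathrm{iii})$; it is a real gap in the proof as written, and closing $(\mathrm{iii}) \Rightarrow (\mathrm{ii})$ for composite $b$ needs a substantially different idea.
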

\begin{proof}
Indeed, $G$ is finitely generated if and only if $f(b) \leq (b!)^c$ for some $c$, by \cite[Section 1]{Hall} (since $|\mathrm{Out}(Alt(b))| \leq 4$ for all $b$).
By Corollary \ref{cor:PFP1-products} and Theorem \ref{altlength}, $G$ has type $\PFP_1$ if and only if $f(b) \leq b^{(b-2)c'}$ for some $c'$. Since $(b!)^c \leq b^{(b-2)c} \leq (b!)^{2c}$ by Stirling's approximation, this is equivalent to statement~(iii).
\end{proof}
Using this we obtain a finitely generated example which has type $\PFP_1$ but not UBERG. This example has superexponential subgroup growth, in contrast to the PFG and UBERG conditions which both, for finitely generated groups, imply at most exponential subgroup growth (see \cite[Theorem 10.2]{JP}, \cite[Proposition 5.4]{KV}).
\begin{cor}
	\label{superexponential}
	Let $G = \prod_{b \geq N, b \text{ prime}} Alt(b)^{b!/8}$, for some large $N$. Then $G$ is $2$-generated, has superexponential subgroup growth, does not have UBERG and has type $\PFP_1$.
\end{cor}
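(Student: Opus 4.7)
The plan is to verify each of the four claims separately; each follows from a result stated earlier in the paper or a standard fact about alternating groups, with only a small explicit count required for the subgroup growth bound.

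For $2$-generation I would invoke P.~Hall's formula \cite{Hall}: a direct power $S^m$ of a non-abelian finite simple group $S$ is $2$-generated exactly when $m \leq h(S) := |\mathrm{Epi}(F_2,S)|/|\mathrm{Aut}(S)|$. For $S = Alt(b)$ with $b \neq 6$ one has $|\mathrm{Aut}(Alt(b))| = b!$ and, by Dixon's theorem, $|\mathrm{Epi}(F_2, Alt(b))| \sim (b!)^2/4$, so $h(Alt(b)) \sim b!/4 > b!/8$ for $b$ large. Hence each block $Alt(b)^{b!/8}$ is $2$-generated, and since the factors for distinct primes $b \geq N$ are pairwise non-isomorphic non-abelian simple groups, $G$ itself is $2$-generated. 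Type $\PFP_1$ is then immediate from Theorem~\ref{thm:prod-alt}, since $f(b) = b!/8 \leq b!$; the proof of that theorem goes through verbatim when the index set is restricted to primes.

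For superexponential subgroup growth I would count subgroups of index exactly $b$ in the single block $Alt(b)^{b!/8}$. Because $Alt(b)$ is simple for $b \geq 5$ and has minimal faithful permutation degree $b$, every transitive homomorphism $Alt(b)^{b!/8} \to Sym(b)$ factors through a single coordinate projection composed with the natural action of $Alt(b)$ on $b$ symbols; counting such homomorphisms modulo $Sym(b-1)$ yields $b \cdot b!/8$ distinct index-$b$ subgroups. Thus $\log s_b(G)/b \gtrsim \log(b!)/b \sim \log b \to \infty$ along the primes $b$, which is precisely superexponential subgroup growth. By \cite[Corollary~5.5]{KV}, a finitely generated UBERG group has at most exponential subgroup growth, so $G$ cannot have UBERG.

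No step presents a real obstacle: everything reduces to Hall's formula, Theorem~\ref{thm:prod-alt}, the elementary index-$b$ count above, and \cite[Corollary~5.5]{KV}. The point of the corollary is that combining these ingredients exhibits a concrete $2$-generated group of type $\PFP_1$ whose subgroup growth is already too fast to admit UBERG.
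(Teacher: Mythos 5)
Your proof is correct and follows essentially the same route as the paper: reduce type $\PFP_1$ (and finite generation) to Theorem~\ref{thm:prod-alt}, establish $2$-generation, verify superexponential subgroup growth, and conclude non-UBERG from \cite[Corollary 5.5]{KV}. The only difference is that you supply self-contained arguments (Hall's enumeration formula together with Dixon's theorem for $2$-generation, and a direct count of index-$b$ subgroups for subgroup growth) where the paper simply cites \cite[Corollary 1.2]{MRD} and \cite[Proposition 10.2]{JP}.
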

\begin{proof}
By Theorem~\ref{thm:prod-alt} $G$ is finitely generated and has type $\PFP_1$.
The group $G$ is $2$-generated by \cite[Corollary 1.2]{MRD}, and has superexponential subgroup growth by \cite[Proposition 10.2]{JP}. This implies that $G$ does not have UBERG by \cite[Corollary 5.5]{KV}. 
\end{proof}
\begin{rem}
	Once again, the universal Frattini cover of $G$ in Corollary~\ref{superexponential} is a finitely generated projective profinite group of type $\PFP$ with superexponential subgroup growth, which cannot occur for PFG or UBERG groups.
\end{rem}

\section{\texorpdfstring{$\FP_1$}{FP1} and UBERG}\label{sec:UBERG_FP1}

Given all the examples above, one remaining gap is an example of an infinitely generated group which has UBERG and type $\PFP_1$. It is surprising to discover that no such examples exist.

\begin{lem}
	\label{linorder}
	There is some constant $f$ such that, for any non-abelian simple group $S$, $l^{lin}(S) \leq |S|^f$.
\end{lem}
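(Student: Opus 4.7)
The plan is to use Lemma~\ref{smallestrep} to reduce the claim to bounding the size $\sigma(S)$ of the smallest faithful irreducible linear representation of $S$ by a fixed polynomial in $|S|$. Indeed, that lemma gives a universal constant $e > 0$ with $l^{lin}(S)^e \leq \sigma(S)$, so it suffices to produce, for each non-abelian finite simple group $S$, a faithful irreducible linear representation of size at most $|S|^g$ for some universal $g$; then $l^{lin}(S) \leq |S|^{g/e}$ and we may take $f = g/e$.

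Such a representation is constructed case-by-case via the classification of finite simple groups. The $26$ sporadic groups contribute only a finite supremum. For $S = \mathrm{Alt}(n)$ with $n \geq 5$, the fully deleted permutation module over $\mathbb{F}_2$ is a faithful irreducible representation of dimension at most $n$ and hence size at most $2^n$; comparing with $|\mathrm{Alt}(n)| = n!/2$ via Stirling's estimate yields $2^n \leq |\mathrm{Alt}(n)|^2$ (with a direct check for small $n$). For a simple group of Lie type $S$ defined over $\mathbb{F}_q$ inside a simple algebraic group $\mathbf{G}$ of dimension $D$, the adjoint action on $\mathrm{Lie}(\mathbf{G})$ restricts to a linear representation of $S$ over $\mathbb{F}_q$ of size $q^D$, and Lemma~\ref{lem:faithful-factors} (applied with $S$ as its own non-abelian minimal normal subgroup) extracts a faithful irreducible composition factor of size at most $q^D$. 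The Steinberg order formula then gives $|S| \geq q^D/C_S$, where $C_S$ is polynomially bounded in $q$ with a uniform exponent across all classical and exceptional families, so that $q^D \leq |S|^g$ for a universal $g$.

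The main obstacle is ensuring uniformity across all Lie-type families, particularly in the low-rank or small-field regime where $|S|$ is only slightly larger than the size of the adjoint representation. For such finitely many awkward cases (for instance $\PSL_2(q)$, where the adjoint module is three-dimensional over $\mathbb{F}_q$) one verifies the bound directly from the explicit formula for $|S|$; for large rank the ratio $|S|/q^D$ is automatically a polynomial of bounded degree in $q$, and the required uniform exponent follows at once.
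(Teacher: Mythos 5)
Your proposal is correct but takes a genuinely different route from the paper for the Lie-type case, which is the substantive part of the argument. The paper bounds $l^{lin}(S)$ directly from its definition: it identifies the smallest non-trivial projective representation of $S$ as the defining-characteristic module of dimension $k$ over $\F_q$ (using \cite[Propositions 5.4.6, 5.4.13, Theorem 5.3.9]{KL}), concludes $l^{lin}(S) \leq q^{k^2}$ by definition, and then compares $q^{k^2}$ against $|S|$ by reading $k$ and $|S|$ off the tables \cite[Tables 5.1.A, 5.1.B, 5.4.C]{KL}. You instead construct an explicit small faithful linear representation (a composition factor of the adjoint module, of size at most $q^{\dim\mathbf{G}}$), and use the Steinberg order formula to compare $q^{\dim\mathbf{G}}$ with $|S|$; you then route the conclusion through the lower bound of Lemma~\ref{smallestrep}, which the paper does not need. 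Your approach is more conceptual and avoids table-lookup, at the cost of having to check uniformity of the constant across ranks. The one point where your write-up is loose is the claim that $|S|/q^{\dim\mathbf{G}}$ is ``a polynomial of bounded degree in $q$'': the ratio $q^{\dim\mathbf{G}}/|S|$ is bounded by roughly $2^\ell |Z(\mathbf{G}^F)|$, where $\ell$ is the rank, and this is not bounded by a $q$-polynomial of uniform degree; what saves the argument is that $\dim\mathbf{G}$ grows quadratically in $\ell$ while the correction grows only exponentially in $\ell$, so for large $\ell$ the correction is swallowed by a bounded power of $|S|$. Stated carefully this gives the result, but as written the uniformity claim is not quite what is true. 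Your reduction via Lemma~\ref{smallestrep} also introduces the constant $e$ from that lemma, which is harmless but unnecessary: for the alternating and Lie-type cases you already exhibit a specific small projective (indeed linear) representation and could bound $l^{lin}(S)$ directly, as the paper does.
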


Note that there is no constant $f'$ such that $|S|^{f'} \leq l^{lin}(S)$ for all $S$: the alternating groups give a counter-example.

\begin{proof}
	This is trivial for the sporadic groups. For the alternating groups, it follows from $l^{lin}(Alt(n)) \leq 2^{n-1} < n!/2$ for $n \geq 5$. So we may suppose $S$ is a group of Lie type.
	
	Suppose $S$ is defined over $\F_q$, $q$ a power of a prime $p$. We conclude from \cite[Proposition 5.4.6, Remark 5.4.7]{KL} that $\F_q$ is the smallest field over which a non-trivial irreducible representation of $S$ of minimal dimension $k$ in characteristic $p$ is realised, and any irreducible representation of $S$ in characteristic $p$ has size at least $q^k$. By comparing \cite[Proposition 5.4.13]{KL} and \cite[Theorem 5.3.9]{KL}, we see that except possibly in finitely many cases (which we can ignore), the smallest non-trivial irreducible representation of $S$ occurs in characteristic $p$. So $l^{lin}(S) \leq q^{k^2}$, where the value of $k$ for each $S$ can be read from \cite[Table 5.4.C]{KL}. Meanwhile, $|S|$ can be read from \cite[Table 5.1.A, Table 5.1.B]{KL}: comparing these gives the result.
\end{proof}

\begin{thm}\label{thm:UBERG+FP_1=FG}
	Suppose $G$ is a profinite group with UBERG and type $\FP_1$. Then $G$ is finitely generated.
\end{thm}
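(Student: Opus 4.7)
I argue by contradiction: assume $G$ has UBERG (with growth constant $c$) and type $\FP_1$ (with constant $d_1$, in the sense of Proposition~\ref{prop:augmentation}), yet $d(G) = \infty$. The strategy is to invoke a Dalla Volta--Lucchini style structural formula (as in \cite[Lemma 4.2]{Damian})
\[
  d(G) = \sup_L d\bigl(L_{\delta_G(L)}\bigr),
\]
where $L$ ranges over the primitive monolithic quotients of $G$ and $\delta_G(L)$ counts the non-Frattini chief factors of $G$ that are $G$-equivalent to $\mathrm{soc}(L)$. The assumption $d(G)=\infty$ then yields a sequence $(L_n)_{n \in \mathbb{N}}$ of primitive monolithic quotients with $d((L_n)_{\delta_n}) \to \infty$, where $\delta_n = \delta_G(\mathrm{soc}(L_n))$.

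First, suppose infinitely many $L_n$ have non-abelian socle $K_n = S_n^{t_n}$. For such crown-based powers there is a classical bound of the form $d(L_k) \leq d(L) + O(\log_{|K|}(k))$; moreover, primitive monolithic groups with non-abelian socle are generated by a universally bounded number of elements (since they embed into $\mathrm{Aut}(S)\wr\mathrm{Sym}(t)$, whose generation is bounded uniformly). Theorem~\ref{necessary} gives $\delta_n \leq l^{lin}(K_n)^b$ for some $b$ depending only on the UBERG data, and Lemma~\ref{linorder} combined with $l^{lin}(K_n) = l^{lin}(S_n)^{t_n} \leq |S_n|^{f t_n} = |K_n|^f$ implies $\log_{|K_n|}(\delta_n) \leq bf$. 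Hence $d((L_n)_{\delta_n})$ is uniformly bounded in this case, contradicting $d \to \infty$.

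Second, suppose infinitely many $L_n$ have abelian socle $A_n$, so $L_n = A_n \rtimes H_n$ with $H_n = L_n/A_n$. For such crowns one has $d(L_k) \leq d(L/A) + \lceil k/r_G(A)\rceil + O(1)$. Proposition~\ref{prop:augmentation} yields $\delta_n/r_G(A_n) \leq d_1$ uniformly, forcing $d(H_n) \to \infty$. Each $H_n$ is a quotient of $G$ and hence inherits UBERG and $\FP_1$ with the same constants, so the argument may be iterated on $H_n$. At each iteration the order of the relevant finite quotient strictly decreases (by at least a factor of $|A|\geq 2$), so the iteration terminates. It must eventually produce a non-abelian-socle crown---yielding the contradiction of the previous paragraph---since otherwise every chief factor encountered would be abelian, forcing the relevant limiting quotient of $G$ to be prosoluble with $\FP_1$, and hence finitely generated by \cite[Corollary 6.12]{KV}, again a contradiction.

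The main technical obstacle is to control the cascade in the abelian case so as to yield a \emph{uniform} bound on $d(G)$, depending only on the constants $c$, $b$, $d_1$, $f$, and the universal generation bound for primitive monolithic groups with non-abelian socle---rather than a bound growing with the depth of the cascade. Making this uniformity precise (so that the accumulated $d_1+O(1)$ losses cannot swamp the argument), and verifying the standard crown-based-power estimates $d(L_k) \leq d(L) + O(\log_{|K|}(k))$ and $d(L_k) \leq d(L/A) + \lceil k/r_G(A)\rceil + O(1)$ in the profinite setting, constitutes the technical heart of the proof.
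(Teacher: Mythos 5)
Your approach differs from the paper's. You start from the Dalla Volta--Lucchini/Damian formula $d(G)=\sup_L d\bigl(L_{\delta_G(L)}\bigr)$ over primitive monolithic quotients, and then analyse the abelian- and non-abelian-socle cases separately. The paper instead applies \cite[Theorem 1.4]{VL} to each finite quotient $H_i$ of $G$: this produces, for each $H_i$, a single crown-based power $(L_i)_{k_i}$ which \emph{realises} $d(H_i)$ and, crucially, satisfies $d(J)<d\bigl((L_i)_{k_i}\bigr)$ for every proper quotient $J$ --- in particular $d(L_i/K_i)<d\bigl((L_i)_{k_i}\bigr)$. This extra inequality is the linchpin of the paper's argument, and it is absent from your set-up. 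That absence causes two genuine gaps.

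First, your non-abelian case rests on the claim that primitive monolithic groups $L$ with non-abelian socle have $d(L)$ universally bounded, justified by the embedding $L\hookrightarrow\Aut(S)\wr\Sym(t)$. That inference is invalid: being a \emph{subgroup} of a boundedly generated group does not bound one's own number of generators, and in fact $d(L)$ is unbounded across this class (take socle $S^r$ with $L/K$ a transitive group of degree $r$ needing many generators). The estimate $d(L_k)\le d(L)+O\bigl(\log_{|K|}k\bigr)$ together with your UBERG bound controls only the second term; without a bound on $d(L_n)$, the case does not close. The paper avoids this: with $d(L_i/K_i)<d\bigl((L_i)_{k_i}\bigr)$ in hand, the bound $d\bigl((L_i)_{k_i}\bigr)\le\max\bigl\{d(L_i/K_i),\ cf-\log(\gamma)/\log|K_i|+2\bigr\}$ (from \cite{VLM} and \cite{GMP} combined with the UBERG hypothesis via Lemma~\ref{linorder}) forces the maximum to be the second term, which is an absolute constant.

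Second, in the abelian case your cascade is left unfinished, as you yourself flag. Descending from $L_n$ to $H_n=L_n/A_n$ and iterating accumulates a loss of $d_1+O(1)$ at each step, and the depth of the cascade is not a priori bounded by anything in the hypotheses, so no uniform bound on $d(G)$ emerges. (Additionally, \cite[Corollary 6.12]{KV} is the wrong citation for the prosoluble fallback --- the fact that a soluble group of type $\FP_1$ must be finitely generated is \cite[Corollary 2.4]{Damian}, used elsewhere in the paper --- but fixing the reference would not repair the cascade.) The paper's approach has no iteration at all: for abelian $K_i$, Corollary~\ref{crownFP1} (type $\FP_1$) gives $k_i\le d\,r_G(K_i)$, then \cite[Proposition 6]{VLM} identifies $d\bigl((L_i)_{k_i}\bigr)$ with $h_{L_i,k_i}$ (using again $d(L_i/K_i)<d\bigl((L_i)_{k_i}\bigr)$), and \cite[Theorem A]{AG} bounds $h_{L_i,k_i}$ by $d+2$. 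The single application of \cite{VL} per finite quotient is what makes both cases close in one step.
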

\begin{proof}
	We may assume $d(G) > 2$; otherwise the result is trivial.
	
	From \cite[Theorem 1.4]{VL}, it follows that for any finite group $H$ with $d(H) > 2$, there is a monolithic primitive group $L$ with minimal normal subgroup $K$ such that the crown-based power $L_k$ is isomorphic to a quotient of $H$, $d(H) = d(L_k)$ for some $k$, and for any proper quotient $J$ of $L_k$, $d(J) < d(L_k)$. Write $G$ as an inverse limit $\varprojlim_{i \in \mathbb{N}} H_i$ of finite groups (this is possible because $G$ has UBERG, so it is countably based by \cite[Proposition 1.3]{CCV}). So $d(G) = \sup_i d(H_i) = \sup_i d((L_i)_{k_i})$, where $L_i, k_i$ are chosen for each $H_i$ with $d(H_i) > 2$ as above.
	
	For each $i$, let $K_i$ be the unique minimal normal subgroup of $L_i$. Suppose first that $K_i$ is abelian. Since $L_i$ is a quotient of $G$, and $G$ has type $\FP_1$, by Corollary \ref{crownFP1} there is some constant $d$ such that $k_i \leq dr_G(K_i)$. By \cite[Proposition 6]{VLM}, and in the notation used there, since $d(L_i/K_i) < d((L_i)_{k_i})$, $d((L_i)_{k_i}) = h_{L_i,k_i}$. But $h_{L_i,k_i} \leq d+2$, by \cite[Theorem A]{AG}.
	
	Now suppose that $K_i$ is non-abelian, $K_i = S^r$ with $S$ simple. Because $G$ has UBERG, there is some $c$ such that $k_i \leq l^{lin}(S)^{cr}$ for all $L$. By \cite[Corollary 8]{VLM}, for $s \geq \max(2,d(L_i/K_i))$, $d((L_i)_{k_i}) \leq s$ if and only if $k_i \leq \psi_{L_i}(s)$ (where $\psi_{L_i}(s)$ is defined in \cite{VLM}). For $\gamma$ the constant defined in \cite[Proposition 9]{VLM}, we have $k_i \leq l^{lin}(S)^{cr} \leq |S|^{cfr}$ by Lemma \ref{linorder}, which is $\leq \gamma|S|^{r(s-2)}$ when $$s \geq cf - \log(\gamma)/\log(|K_i|)+2,$$ and $$\gamma|S|^{r(s-2)} \leq (\gamma|S^r|^{s-1})/(r|\mathrm{Out}(S)|) \leq \psi_{L_i}(s)$$ by \cite[Lemma 7.7]{GMP} and \cite[Proposition 10]{VLM}. Overall, we get that $$d((L_i)_{k_i}) \leq \max\{d(L_i/K_i), cf - \log(\gamma)/\log(|K_i|)+2\}.$$ But $d(L_i/K_i) < d((L_i)_{k_i})$, so $$d(L_i^{(k_i)}) \leq cf - \log(\gamma)/\log(|K_i|)+2 \leq cf - \log(\gamma)+2,$$ and $d(G) \leq \max\{d+2, cf - \log(\gamma)+2\}$.
\end{proof}

We know of no direct proof of this fact, without the use of crown-based power characterisations. Recalling that UBERG plus type $\FP_1$ is equivalent to APFG, we conclude that APFG implies finite generation.

\end{document}